\documentclass[11pt,reqno]{amsart}
\usepackage[margin=1in]{geometry}
\usepackage{amsmath,amssymb,amsthm,graphicx,amsxtra, setspace}
\usepackage[utf8]{inputenc}
\usepackage{mathrsfs}
\usepackage{hyperref}
\usepackage{upgreek}
\usepackage{mathtools}
\usepackage[dvipsnames]{xcolor}
\usepackage[mathcal]{euscript}
\usepackage{amsmath,units}
\usepackage{pgfplots}
\usepackage{tikz}
\usepackage{verbatim}
\allowdisplaybreaks
\usepackage{dsfont}
\usepackage{fontenc}
\usepackage{textcomp}
\usepackage{marvosym}
\usepackage{eurosym}
\usepackage{upgreek}
\usepackage[pagewise]{lineno}

\DeclareMathAlphabet{\mathpzc}{OT1}{pzc}{m}{it}
\DeclareMathOperator*{\esssup}{ess\,sup}

\usepackage[cyr]{aeguill}

\hypersetup{colorlinks=true,%
	citecolor=Red,%
	filecolor=blue,%
	linkcolor=blue,%
}
\usepackage{graphicx}

\usepackage{xpatch}
\makeatletter   
\xpatchcmd{\@tocline}
{\hfil\hbox to\@pnumwidth{\@tocpagenum{#7}}\par}
{\ifnum#1<0\hfill\else\dotfill\fi\hbox to\@pnumwidth{\@tocpagenum{#7}}\par}
{}{}
\makeatother    

\makeatletter
\def\l@subsection{\@tocline{2}{0pt}{4pc}{6pc}{}}
\def\l@subsubsection{\@tocline{3}{0pt}{8pc}{8pc}{}}
\makeatother

\makeatletter
\def\l@section{\@tocline{1}{12pt}{0pt}{}{\bfseries}}
\makeatother

\newtheorem{theorem}{Theorem}[section]
\newtheorem{lemma}[theorem]{Lemma}
\newtheorem{proposition}[theorem]{Proposition}

\newtheorem{corollary}[theorem]{Corollary}
\newtheorem{definition}[theorem]{Definition}

\newtheorem{remark}[theorem]{Remark}
\newtheorem{hypothesis}[theorem]{Hypothesis}

\let\originalleft\left
\let\originalright\right
\renewcommand{\left}{\mathopen{}\mathclose\bgroup\originalleft}
\renewcommand{\right}{\aftergroup\egroup\originalright}

\newcommand{\Tr}{\mathop{\mathrm{Tr}}}
\renewcommand{\d}{\/\mathrm{d}\/}

\def\w{\textbf{W}^{\varepsilon}_{{\theta}^{\varepsilon}}}

\def\L{\mathbb{L}}
\def\A{\mathrm{A}}
\def\I{\mathrm{I}}

\def\C{\mathrm{C}}
\def\f{\boldsymbol{f}}

\def\J{\mathrm{J}}
\def\B{\mathrm{B}}
\def\D{\mathrm{D}}
\def\y{\mathbf{y}}

\def\Y{\mathbf{Y}}
\def\Z{\boldsymbol{Z}}
\def\E{\mathbb{E}}
\def\X{\boldsymbol{X}}
\def\Y{\boldsymbol{Y}}
\def\x{\boldsymbol{x}}

\def\g{\boldsymbol{g}}

\def\y{\boldsymbol{y}}
\def\z{\boldsymbol{z}}
\def\v{\boldsymbol{v}}
\def\w{\boldsymbol{w}}
\def\W{\mathrm{W}}

\def\Q{\mathrm{Q}}

\def\N{\mathbb{N}}

\def\V{\mathbb{V}}
\def\R{\mathbb{R}}
\def\wi{\widetilde}
\def\Q{\mathrm{Q}}

\def\P{\mathbb{P}}
\def\u{\boldsymbol{u}}

\def\H{\mathbb{H}}

\newcommand{\eps}{\varepsilon}

\renewcommand{\d}{\/\mathrm{d}\/}


\newcommand{\Addresses}{{
		\footnote{
			
		\noindent \textsuperscript{1,2}Department of Mathematics, Indian Institute of Technology Roorkee-IIT Roorkee,
		Haridwar Highway, Roorkee, Uttarakhand 247667, INDIA.\par\nopagebreak
		\noindent  \textit{e-mail:} \texttt{Manil T. Mohan: maniltmohan@ma.iitr.ac.in, maniltmohan@gmail.com}
		
		\textit{e-mail:} \texttt{Sagar Gautam: sagar\_g@ma.iitr.ac.in, sagargautamkm@gmail.com}
		
		\noindent \textsuperscript{*}Corresponding author.

			\textit{Key words:} convective Brinkman-Forchheimer equations, damped Navier-Stokes equations, viscosity solutions, Hamilton-Jacobi-Bellman equations, Large deviation principle, Laplace limit
			
			Mathematics Subject Classification (2020): Primary 35D40, 60F10, 49L25; Secondary 76D05, 35R60

}}}

\begin{document}
	
	
	\title[Viscosity solution and LDP for stochastic CBF equations]{
		A viscosity solution approach to the large deviation principle for stochastic convective Brinkman-Forchheimer equations
				\Addresses}
		\author[S. Gautam and M. T. Mohan]
	{Sagar Gautam\textsuperscript{1} and Manil T. Mohan\textsuperscript{2*}}

	\maketitle
   	
	\begin{abstract}

	
	This article develops the viscosity solution approach to the large deviation principle for the following two- and three-dimensional stochastic convective Brinkman-Forchheimer equations on the torus $\mathbb{T}^d,\ d\in\{2,3\}$ with small noise intensity: 
	\begin{align*}
		\d\u_n+[-\mu\Delta\boldsymbol{u}_n+ (\boldsymbol{u}_n\cdot\nabla)\boldsymbol{u}_n +\alpha\boldsymbol{u}_n+\beta|\boldsymbol{u}_n|^{r-1}\boldsymbol{u}_n+\nabla p_n]\d t=\f\d t+\frac{1}{\sqrt{n}}\Q^{\frac12}\d\W, \ \nabla\cdot\boldsymbol{u}_n=0, 
	\end{align*}
	where $\mu,\alpha,\beta>0$, $r\in[1,\infty)$, $\Q$ is a trace class operator and $\W$ is Hilbert-valued calendrical Wiener process.  We build our analysis on the framework of Varadhan and Bryc, together with the techniques of [J. Feng et.al., Large Deviations for Stochastic Processes, American Mathematical Society (2006) vol. \textbf{131}]. 
    By employing the techniques from the comparison principle, we identify the Laplace limit as the convergence of the viscosity solution of the associated second-order singularly perturbed Hamilton-Jacobi-Bellman equation. 
	A key advantage of this method is that it establishes a Laplace principle without relying on additional sufficient conditions such as Bryc's theorem, which the literature commonly requires. 
	Unlike other approaches to large deviation principle, this method reformulates the problem into an infinite-dimensional Hamilton-Jacobi Bellman equation, where well-posedness is ensured through viscosity solutions approach. This multi-step framework highlights both the probabilistic structure of rare events and the rich PDE techniques required for their rigorous study.
	 For $r>3$ and $r=3$ with $2\beta\mu\geq1$, we also derive the exponential moment bounds without imposing the classical orthogonality condition 
	$((\u_n\cdot\nabla)\u_n,\A\u_n)=0$, where $\A=-\Delta$, in both two-and three-dimensions. We first establish the large deviation principle in the Skorohod space. Then, by using the $\C-$exponential tightness, we finally establish the large deviation principle in the continuous space.

	\end{abstract}

	\tableofcontents
	
	\section{Introduction}\setcounter{equation}{0}
	In this article, we investigate the large deviation result for the stochastic convective Brinkman-Forchheimer equations (Navier-Stokes equations with damping) under small random perturbation, employing the framework of viscosity solution methods.
	
		\subsection{The model} The convective Brinkman-Forchheimer (CBF) equations describe the motion of incompressible fluid flows in a saturated porous medium. These equations are applicable when the fluid flow rate is sufficiently high and the porosity is not too small. Let us first provide the mathematical formulation of the stochastic convective Brinkman-Forchheimer (SCBF) equations. Let $t\geq0$ and $\mathbb{T}^d$, for $d\in\{2,3\}$,  be a $d-$dimensional torus. Let  $\big(\Omega,\mathscr{F},\{\mathscr{F}_s^t\}_{s\geq t},\P\big)$ be a complete filtered probability space satisfying the usual conditions (see Section \ref{stchpreLDP}). We aim to establish the large deviation principle of the following stochastic convective Brinkman-Forchheimer (SCBF) equations, which describe the evolution of the velocity vector field $\X_n(\cdot):[t,+\infty)\times\mathbb{T}^d\times\Omega\to\R^d$ and pressure $p_n(\cdot):[t,+\infty)\times\mathbb{T}^d\times\Omega\to\R$, with small noise intensity:
		{\small \begin{equation}\label{pscbf}
			\left\{
			\begin{aligned}
				\d\X_n(s,\xi)+[-\mu\Delta\X_n(s,\xi)+(\X_n(s,\xi)\cdot\nabla)&\X_n(s,\xi) +\alpha\X_n(s,\xi)\\+\beta|\X_n(s,\xi)|^{r-1}\X_n(s,\xi)+\nabla p_n(s,\xi)]
				\d t &=\g(s,\xi)\d s+\frac{1}{\sqrt{n}}\sqrt{\Q}\d\W(s), \ \text{ in } \ (t,+\infty)\times\mathbb{T}^d, \\ \nabla\cdot\X_n(s,\xi)&=0, \ \text{ in } \ [t,+\infty)\times\mathbb{T}^d, \\
				\X_n(t,\xi)&=\x \ \text{ in } \ \mathbb{T}^d,
			\end{aligned}
			\right.
		\end{equation}}
	where $\g(\cdot):[t,+\infty)\times\mathbb{T}^d\times\Omega\to\R^d$ is an external forcing and $\{\W(s)\}_{s\geq t}$ is a Hilbert-valued cylindrical Wiener process defined on a filtered probability space  $\big(\Omega,\mathscr{F},\{\mathscr{F}_s^t\}_{s\geq t},\P\big)$  and $\Q$ is a trace class operator. For the uniqueness of the pressure, one can impose the condition $
	\int_{\mathbb{T}^d}p_n(s,\xi)\d\xi=0,  \text{ in }  [t,+\infty).$ The constant $\mu>0$ denotes the \emph{Brinkman coefficient} (effective viscosity) and the constants $\alpha>0$ and $\beta>0$ are due to Darcy-Forchheimer law which are termed as \emph{Darcy} (permeability of the porous medium) and \emph{Forchheimer} (proportional to the porosity of the material) coefficients, respectively. For $\alpha=\beta=0$, one obtains the classical stochastic Navier-Stokes equations (SNSE). The parameter $r\in[1,+\infty)$ is known as the \emph{absorption exponent}  and the case  $r=3$ is referred as the critical exponent (\cite{KWH}). 

  \subsection{Literature survey}
   We now review the literature on the SCBF equations, the viscosity solution of the associated Hamilton-Jacobi-Bellman (HJB) equation, and the large deviation principle.
   
  \subsubsection{The SCBF equations and viscosity solution of the associated HJB equation}
Let us first discuss the literature available for the solvability of the SCBF equations. By exploiting a monotonicity property of the linear and nonlinear operators as well as a stochastic generalization of the Minty-Browder technique, the authors in \cite{KKMTM,MTM8} established the global existence of a unique strong solution $$\X_n(\cdot)\in\C([0,T];\L^2(\mathbb{T}^d))\cap\mathrm{L}^2(0,T;\H^1(\mathbb{T}^d))\cap\mathrm{L}^{r+1}(0,T;\L^{r+1}(\mathbb{T}^d)),\ \mathbb{P}\text{-a.s.},$$  for $\x\in\L^2(\mathbb{T}^d)$, which satisfies the energy equality (It\^o's formula) for SCBF equations (in bounded domains and on a torus) driven by multiplicative Gaussian noise. 
  Under suitable assumptions on the initial data ($\x\in\H^1(\mathbb{T}^d)$) and noise coefficients, the authors have also showed the following regularity result: 
  \begin{align*}
  	\X_n(\cdot)\in\C([0,T];\H^1(\mathbb{T}^d))\cap\mathrm{L}^2(0,T;\H^2(\mathbb{T}^d))\cap\mathrm{L}^{r+1}(0,T;\L^{p(r+1)}(\mathbb{T}^d)),\ \mathbb{P}\text{-a.s.},
  \end{align*}  
  where $p\in[2,\infty)$ for $d=2$ and $p=3$ for $d=3$. 
 
 The concept of viscosity solutions was first introduced by Crandall and Lions in \cite{MGL} (also see \cite{MGEL,Hsh1}) for the first order HJB equation in finite dimensions. 
Lions later extended this framework to second-order cases \cite{PL1, PL2, PL3}, motivated by optimal control problems involving diffusion processes. For further reading, we refer interested readers to the survey article \cite{MGHL} as well as monographs \cite{WHF,JYXYZ}, which provide a comprehensive treatment of viscosity solutions and a detailed account of finite-dimensional HJB equations. In the infinite dimensional setting, these equations were first studied by Barbu and Da Prato (for instance, see \cite{VBGDP}), setting the problem in classes of convex functions, using semigroup and perturbation methods. Crandall and Lions carried out the development of viscosity solution theory to infinite-dimensional settings through a series of works \cite{MGL1, MGL2, MGL3, MGL4}. Later, Lions \cite{PL4} extended the framework to address the unbounded second-order HJB equations arising in the optimal control of the Zakai equation (see also \cite{Hsh, Kshi}).
 
   \subsubsection{The large deviation principle (LDP)} Large deviation theory is an active field within probability theory, focusing on the asymptotic estimates of probabilities of rare events associated with stochastic processes. The framework for large deviation theory along with its applications can be found in \cite{ABPD,ADOZ,PDRS,JFTGK,MFAW,DWS,SRS1}. 
   
The authors  in \cite{ICAM, SSPS}  established a Wentzell-Freidlin large deviation principle (LDP) for the two-dimensional stochastic Navier-Stokes equations (SNSE) driven by Gaussian noise. In \cite{MR1}, a Wentzell-Freidlin type LDP (see \cite{MFAW}) was established for the three-dimensional stochastic tamed Navier-Stokes equations with multiplicative Gaussian noise, either in the whole space or on the torus, whereas \cite{MR2} addressed a small-time LDP in bounded domains. The ergodic behavior of the two-dimensional SNSE and the LDP for the occupation measure over large times were studied in \cite{MGO}. In \cite{ZBSC}, the authors proved the LDP for the invariant measure associated with the SNSE perturbed by smooth additive noise on $\mathbb{T}^2$. Furthermore, \cite{SCNP} established the LDP for the invariant measure on $\mathbb{T}^2$ for the SNSE driven by Gaussian noise that is,white in time and colored in space.
   
   In the context of SCBF equations, the author in \cite{MTMLDP} established a Wentzell-Freidlin type LDP for two- and three-dimensional SCBF equations for $r>3$ and $r=3$ with $\beta\mu>1$ using the weak convergence approach of Budhiraja and Dupuis \cite{ABPD}, and the well-known results of Varadhan and Bryc \cite{ADOZ, SRS1}. Moreover, for two-dimensional SCBF equations, the authors in \cite{AKMTMLDP} studied the ergodic behaviour and proved the LDP for occupation measures for large time.

   An appealing perspective on studying rare event probabilities is through PDEs that characterize them, which typically take the form of HJB equations. This connection arises because large deviation rate functions often possess a variational representation, and the associated value function naturally satisfies an HJB equation. Consequently, the analysis of rare events provides a direct link between large deviation theory and optimal control. In \cite{WHF1}, exit time probabilities were investigated via stochastic optimal control techniques. By applying a logarithmic transformation to the generators of Markov processes, the author expressed the large deviation behaviour of exit probabilities in terms of the convergence of solutions to a sequence of HJB equations. A significant breakthrough was achieved in \cite{LCH, LCPE}, where the viscosity solution framework was introduced for the first time in this setting, offering a powerful and influential tool for the analysis of such problems. For further developments and related contributions, we refer the reader to \cite{MBACDG,DGH,MKob,JRJW,AS3} and the references therein.
   
   In the current work, we develop a rigorous mathematical framework for establishing the LDP in the setting of SCBF equations. Our approach combines infinite-dimensional stochastic analysis with the viscosity solution theory of HJB equations. In particular, we show that under small random perturbations, the logarithmic transform of expectation functionals naturally gives rise to an infinite-dimensional HJB  equation in the Hilbert space of velocity fields. The viscosity solution framework then provides the essential tool to ensure well-posedness and stability of this HJB formulation, thereby connecting probabilistic large deviation techniques with deterministic control-theoretic structures. This interplay between LDP, viscosity solutions, and HJB theory highlights the deep mathematical structure underlying stochastic fluid dynamics and offers a robust pathway for analyzing rare events in nonlinear dissipative systems.\emph{
   While researchers can study LDPs through various probabilistic tools, the viscosity solution approach distinguishes itself by reformulating the problem into an infinite-dimensional HJB equation and establishing its well-posedness and stability through a delicate analysis of viscosity solutions. This approach carries out several rigorous steps that link stochastic dynamics, nonlinear PDE techniques, and control-theoretic structures, thereby securing the LDP and enriching the mathematical understanding of rare events in stochastic fluid systems.}
    
   In this work, we focus on establishing the large deviation principle for Hilbert-valued diffusion processes subjected to small random perturbations, modeled by the SCBF equations \eqref{pscbf}. The literature on large deviations in the small-noise limit is extensive, and several notable results are presented in \cite{HBAM, ABPD1, SCMR2, SCGG, MHC, FCAM, PLC, JF, JFTGK, GKJX, SP, gdp, RSO}, and references therein.

    \subsection{Comparison with related works, contribution, difficulties and novelties}\label{comcont}
This study investigates two principal aspects of SCBF equations: the viscosity solutions of the associated second-order equation and the establishment of a large deviation principle. We begin by contextualizing our work through a critical comparison with existing literature. The paper then details our key contributions and outlines the primary methodological challenges encountered, explaining how our approach successfully addresses them.
 
   \subsubsection{Extension to three dimensional damped Navier-Stokes equations (NSE)}
   In the study presented in \cite{AS2}, the author explored large deviation results for two-dimensional SNSE using an analytical approach grounded in the theory of viscosity solutions. However, their findings are limited to two dimensions due to the unavailability of global well-posedness results for three-dimensional SNSE. The primary challenge lies in controlling the convective term $(\u\cdot\nabla)\u$, as appropriate Sobolev embeddings are unavailable for this case. 
The present work exploits the distinctive mathematical properties of the absorption term $|\u|^{r-1}\u$, which has a mathematical advantage over the convective term $(\u\cdot\nabla)\u$. Specifically, for $r>3$ in dimension $d\in\{2,3\}$ and $r=3$ in $d=3$ with $2\beta\mu\geq1$, the absorption term along with the diffusion term (that is, $-\Delta\u$) dominate the convective term (see Remark \ref{BLrem}). This dominance leads to the global solvability of  CBF equations (or damped NSE). Consequently, the current study examines the large deviation results for the damped version of the NSE in both two and three-dimensions.
    
     \subsubsection{Difficulties beyond the periodic domains}\label{advtg1}
    Unlike the case of the whole space or periodic domains, the analysis on bounded domains presents additional challenges. A major difficulty arises because the term $\mathcal{P}(|\u|^{r-1}\u)$ typically does not vanish at the boundary. Furthermore, the Helmholtz-Hodge projection operator $\mathcal{P}$ does not, in general, commute with the Laplacian $-\Delta$ (see \cite[Example 2.19]{JCR}). As a consequence, the identity
    \begin{align}\label{toreq}
    	&\int_{\mathcal{O}}(-\Delta\u(\xi))\cdot|\u(\xi)|^{r-1}\u(\xi)\d \xi \nonumber\\&=\int_{\mathcal{O}}|\nabla\u(\xi)|^2|\u(\xi)|^{r-1}\d \xi+ \frac{r-1}{4}\int_{\mathcal{O}}|\u(\xi)|^{r-3}|\nabla|\u(\xi)|^2|^2\d \xi,
    \end{align}
    cannot be applied directly in a bounded domain $\mathcal{O}$. On the other hand, in the periodic setting $\mathbb{T}^d$, the situation is more favourable: the projection $\mathcal{P}$ commutes with the Laplacian $-\Delta$ (cf. \cite[Theorem 2.22]{JCR}). This property allows us to apply the identity \eqref{toreq} and, consequently, is essential for deriving regularity estimates.
    
    \subsubsection{Challenges to handle convective term  on $\mathbb{T}^3$}\label{advtg2} The study referenced in \cite{AS2} examined large deviation results for two-dimensional SNSE in $\mathbb{T}^2$. A key advantage of working in $\mathbb{T}^2$ is the well-established fact that $(\mathcal{B}(\u),\A\u)=0,$ for $\u\in\mathrm{D}(\A),$ where $\mathcal{B}(\u)=\mathcal{P}[(\u\cdot\nabla)\u]$ (cf. \cite[Lemma 3.1, pp. 404]{Te1}). This property simplifies the calculations while deriving regularity  estimates and proving the exponential moment estimate without significant technical difficulties. In our work, we are considering the three-dimensional case, explicitly addressing the damped NSE or CBF equations. In this context, the property $(\mathcal{B}(\u),\A\u)=0$ will no longer be true in $\mathbb{T}^3$. Therefore, we need to estimate this term carefully. At the same time, due to the presence of the absorption term $|\u|^{r-1}\u$, as compared to NSE, we have to deal with the extra term $(\mathcal{C}(\u),\A\u),$ where $\mathcal{C}(\u)=\mathcal{P}(|\u|^{r-1}\u)$. Fortunately, on $\mathbb{T}^d$ for $d\in\{2,3\}$, this term can be handled, thanks to the equality \eqref{torusequ}, which is one of the advantages of our framework. Further, the term corresponding to the bilinear operator, that is, $(\mathcal{B}(\u),\A\u)$, can be handled with the help of diffusion term $\A\u$ and the equality \eqref{torusequ} for $r>3$ and $r=3$ with $2\beta\mu\geq1$ (see Remark \ref{BLrem}). In this way, we relax the fact $(\mathcal{B}(\u),\A\u)=0$ and establish the large deviation result for stochastic damped NSE in both two and three dimensions. 
    
    While the term $(\mathcal{B}(\u),\A\u)$ can be controlled with the help of the identity \eqref{torusequ}, one would typically need to restrict on $\mu$ or $\alpha$ to derive exponential moment estimates. This difficulty arises because, unlike the framework in \cite{AS2}, we cannot exploit the relation $(\mathcal{B}(\u),\A\u)=0$, as we are working on $\mathbb{T}^3$ also.
    	 On $\mathbb{T}^3$, we obtain the exponential moment bound for the solutions of the SCBF equations in \eqref{stap}, under the condition  (see Proposition \ref{propexpest})
    	\begin{align*}
    	\mbox{$\mu\geq \frac{1}{4}\max\left\{\frac{1}{\alpha},\frac{1}{\beta}\right\}$\  or \ $\mu\geq \frac{1}{\alpha^{\frac{r-3}{r-1}}}\left(\frac{(r-3)}{2(r-1)}\right)^{\frac{r-3}{r-1}}\left(\frac{1}{\beta(r-1)}\right)^{\frac{2}{r-1}}$,}
    	\end{align*} 
    On $\mathbb{T}^2$, since $(\mathcal{B}(\u),\A\u)=0$, the results hold true for any $\mu,\alpha,\beta>0$ (see Remark \ref{rem-2D}). 
    	Furthermore, we provide a detailed proof of exponential moment estimates with a detailed explanation, carefully illustrating how the damping mechanism plays a crucial role.
   
    \subsubsection{Monotonicity of the operator $\mu\A+\mathcal{B}(\cdot)+\beta\mathcal{C}(\cdot)$}
   The author  in \cite{AS2} used a truncation  ( or quantization) of the $\mathcal{B}(\cdot)$ operator, that is, the operator $\mathcal{B}^q(\cdot),$ which is defined as
    \begin{equation}\label{eqn-bq}
    	\mathcal{B}^q(\u):=
    	\left\{
    	\begin{aligned}
    		\mathcal{B}(\u), \ \ \text{ when } \ \|\u\|_{\V}\leq q,\\
    		\frac{q^2}{\|\u\|_{\V}^2}\mathcal{B}(\u), \ \ \text{ when } \ \|\u\|_{\V}>q.
    	\end{aligned}
    	\right.
    \end{equation}
    With this truncation, the Navier-Stokes operator $\mu\A+\mathcal{B}^q(\cdot)$ satisfies the following local monotonicity estimate:
    \begin{align*}
    	\langle\mu\A\u+\mathcal{B}^q(\u)-\mu\A\v-\mathcal{B}^q(\v), \u-\v\rangle+\mathpzc{C}\|\u-\v\|_{\H}^2\geq
    	\frac{\mu }{2}\|\nabla(\u-\v)\|_{\H}^2,
    \end{align*}
    for some $\mathpzc{C}$ and all $\u,\v\in\mathcal{D}(\A^{1/2})$. The author  in \cite{AS2} employed this idea to establish the comparison principle for viscosity solutions. In contrast, in the current work, the presence of the nonlinear operator $\mathcal{C}(\cdot)$, corresponding to the absorption term $|\u|^{r-1}\u$, allows us to avoid any truncation procedure, thanks to its inherent monotonicity (see Subsection \ref{C1prop}). More precisely, for $r>3$ and $r=3$ with $2\beta\mu\geq1$ in both two and three dimensions, we do not need to truncate $\mathcal{B}(\cdot)$. In this case, the nonlinear absorption term itself guarantees the required monotonicity, and a corresponding local monotonicity estimate can be derived (see Lemma \ref{monoest}). Consequently, unlike the approach in \cite{AS2}, our proof of the comparison principle proceeds without employing any quantization procedure.
    
    \subsubsection{A PDE approach to large deviation principle for SCBF equations}
    In the study of SCBF equations, the large deviation principle has previously been established  in two and three dimensions under multiplicative Gaussian perturbations (see \cite{MTMLDP}). That work relies on the classical  Wentzell-Freidlin framework, together with the results of Varadhan and Bryc, and proves the Laplace principle by employing the weak convergence approach due to Budhiraja and Dupuis. Further, \cite{MTMLDP} investigates the exit time problem for solutions of the SCBF equations within the framework of small-noise asymptotics provided by large deviation theory. By employing the Sobolev embedding theorem and assuming that $\Tr(\A^{\alpha}\Q)<+\infty$ (in addition to $\Tr(\Q)<+\infty$) for some appropriate value of $\alpha>0$, the author in \cite{MTMLDP} established the LDP through an application of the contraction principle.
    
    In contrast, the present work adopts a completely different route than the one in \cite{MTMLDP}, inspired by the viscosity solution approach developed in \cite{AS2}. It allows us to directly establish the Laplace principle without relying on the weak convergence framework of Budhiraja and Dupuis. Specifically, we first identify the Laplace limit at a single time as a viscosity solution to the second-order HJB equation, and then extend the result to multiple times, using the theory developed in \cite{JFTGK} (see Section \ref{LaplaceLDP}-\ref{LaDePr}). 
     \emph{To the best of our knowledge, this is the first application of the viscosity solution method to establish large deviation results for such systems, and it has not yet been investigated in the context of either the stochastic tamed NSE or the SNSE with damping in two and three dimensions.}
   \subsubsection{Comparison from the earlier works \cite{FGSSA,smtm1} on viscosity solutions} 
    The main objective of this work is to establish the large deviation principle for the SCBF equations. We are following the approach given by \cite{AS2}  as well as the methodology presented in \cite[Chapter 4]{JFTGK}. To sketch the main idea, it is noteworthy that, due to Varadhan and Bryc \cite{PDRS}, the large deviation principle for the sequence of random variables $\{X_n\}_{n \geq 1}$ is equivalent to showing the exponential tightness and the existence of the following Laplace limit:
   \begin{align*}
   	\lim\limits_{n\to+\infty}\frac{1}{n}\log\E[e^{-n f(X_n)}],
   \end{align*}
   for all $f\in\C_b(\mathcal{S})$. However, as mentioned in \cite{AS2}, the Laplace limit described above at a single time can be identified with the viscosity solution of a second-order HJB equation (see \eqref{LDP1}).  Consequently, the central question of establishing the Laplace limit at a single time is equivalent to showing the convergence of viscosity solutions of the singularly perturbed HJB equation, and we then use the entire existing theory of viscosity solutions. \emph{It is unlike the case of our previous work \cite{smtm1}, where the second-order HJB equation is associated with an optimal control problem for the SCBF equations, whose solution we identify with the corresponding value function in the viscosity sense. In contrast, within the current framework, the viscosity solution of the associated second-order HJB equation corresponds to the Laplace limit at a single time (see Section \ref{LaplaceLDP}).} 
   It is worth noting that, in establishing the existence of viscosity solutions, particularly when proving the existence of a Laplace limit at a single time, a key difficulty arises in verifying subsolution (or supersolution) inequalities. This verification involves integrals with exponential-type terms (see Proposition \ref{Lapvscso}).
   To address this issue, following the work \cite{AS2}, we introduce a slight modification in the choice of test functions compared to those used in \cite{smtm1}. More precisely, in Definition \ref{testD}, we require the test function $\varphi$ together with its derivative $\varphi_t,\D\varphi,$ and $\D^2\varphi$ to be bounded. \emph{Consequently, this framework naturally leads to the notion of bounded viscosity solutions, which differs from earlier works \cite{FGSSA,smtm1}, where the viscosity solution, identified with the value function of the associated HJB equation, was not necessarily bounded.}

   \subsection{Main result}
    In this subsection, we present the main result of our work. We begin with the abstract formulation of the model \eqref{pscbf} and then provide the definition of solution to the SCBF equations \eqref{pscbf}. The full definitions of symbols, functions spaces and operators involved here are given in detail in Sections \ref{mathfunop}-\ref{stchpreLDP}. 
      \subsubsection{Abstract formulation} Let $T\in[t,+\infty)$ and we set $\Y_n(\cdot):=\mathcal{P}\X_n(\cdot)$, $\mathcal{P}\x:=\y$,  $\mathcal{P}\g=\f$ and $\mathbf{W}(\cdot):=\mathcal{P}\mathbf{W}(\cdot)$. In this work, we investigate the large deviation principle of the following abstract SCBF equations perturbed with small noise
     \begin{equation}\label{stap}
     	\left\{
     	\begin{aligned}
     		\d\Y_n(s)&=[-\mu\A\Y_n(s)-\mathcal{B}(\Y_n(s))-\alpha\Y_n(s)-\beta\mathcal{C}(\Y_n(s))+\f(s)]\d s 
     		\\&\qquad+\frac{1}{\sqrt{n}}\Q^{\frac12}\d\mathbf{W}(s) ,  \  \text{ in } \ (t,T)\times\H, \\
     		\Y_n(t)&=\y\in\H,
     	\end{aligned}
     	\right.
     \end{equation}
     where $t\geq0$, $\mathbf{Y}_n(\cdot):[t,T]\to\H$ is the projected velocity vector field and $\f:[0,T]\to\V$. 
     We provide the following notions of solutions for the SCBF equations \eqref{stap} (cf. \cite{SMTM}). 
     
     \begin{definition}\label{def-var-strong} 
     	Assume that $\f:[0,T]\to\V$ is bounded continuous and $\Tr(\Q)<+\infty$.
     	A process $\Y_n(\cdot)\in M^2_\nu(t,T;\H)$ is called a \emph{variational solution} of \eqref{stap} with initial condition $\Y_n(t)=\y\in\H$ if 
     	$$\E\bigg[\sup_{s\in[t,T]}\|\Y_n(s)\|_{\H}^2+\int_t^T \|\Y_n(s)\|_{\V}^2\d s+\int_t^T\|\Y_n(s)\|_{\wi\L^{r+1}}^{r+1}\d s \bigg]<+\infty,$$ the process $\Y_n(\cdot)$ having a modification with paths in $\mathrm{C}([t,T];\H)\cap\mathrm{L}^2(t,T;\V)\cap\mathrm{L}^{r+1}(t,T;\wi\L^{r+1})$, $\mathbb{P}-$a.s., 
     	and for every $\phi\in\V\cap\wi\L^{r+1}$ and every $s\in[t,T]$, $\P-\text{a.s.}$, we have
     	\begin{align*}
     		(\Y_n(s),\phi)&=(\y,\phi)+\int_t^s  \langle-\mu\A\Y_n(\tau)-\mathcal{B}(\Y_n(\tau))-
     		\alpha\Y_n(\tau)-\beta\mathcal{C}(\Y_n(\tau))+\f(\tau),\phi\rangle\d\tau\nonumber\\&\quad+
     		\frac{1}{\sqrt{n}}\int_t^s \langle\Q^{\frac12}\d\mathbf{W}(r),\phi\rangle.
     	\end{align*}
     	
     	Moreover, a process $\Y_n(\cdot)\in M^2_\nu(t,T;\H)$ is called a \emph{strong solution} of \eqref{stap} with initial condition $\Y_n(t)=\y\in\V$ if 
     	$$\E\bigg[\sup_{s\in[t,T]}\|\Y_n(s)\|_{\V}^2+\int_t^T \|\A\Y_n(s)\|_{\H}^2\d s+\int_t^T\|\Y_n(s)\|_{\wi\L^{p(r+1)}}^{r+1}\d s\bigg]<+\infty,$$ where $p\in[2,+\infty)$ for $d=2$ and $p=3$ for $d=3$, the process $\Y$ having a modification with paths in $\mathrm{C}([t,T];\V)\cap\mathrm{L}^2(t,T;\V)\cap\mathrm{L}^{r+1}(t,T;\wi\L^{p(r+1)})$, $\mathbb{P}-$a.s., and  for every $s\in[t,T]$, $\P-\text{a.s.}$, we obtain
     	\begin{align*}
     		\Y_n(s)&=\y+\int_t^s  \left(-\mu\A\Y_n(\tau)-\mathcal{B}(\Y_n(\tau))-\alpha\Y_n(\tau)-
     		\beta\mathcal{C}
     		(\Y_n(\tau))+\f(\tau)\right)\d\tau\nonumber\\&\quad+
     		\frac{1}{\sqrt{n}}\int_t^s\Q^{\frac12}\d\mathbf{W}(\tau), \ \text{ in }\H. 
     	\end{align*} 
     \end{definition}
     The existence and uniqueness of variational as well as strong solutions of \eqref{stap}  can be found in \cite[Theorems 3.10 and 4.2]{MTM8}.  The primary objective of this work is to establish the LDP for the solutions $\Y_n(\cdot)$ of \eqref{stap}, using viscosity solution techniques together with the framework developed by Feng and Kurtz (cf. \cite{JFTGK}). 
     
     \subsubsection{Methodology} For  reader’s convenience, we summarise the large deviation framework of Feng and Kurtz in Subsection \ref{FKLDP} and highlight the key results that will be used throughout the paper.  Our approach follows the classical theorem of Varadhan and Bryc, which states that the process $\Y_n(\cdot)$ satisfies the LDP in a metric space $(\mathcal{E},\mathpzc{d})$ if and only if the family $\{\Y_n(\cdot)\}_{n\in\N}$ is exponentially tight and the Laplace principle holds, that is,
     \begin{align*}
     		\lim\limits_{n\to+\infty}\frac{1}{n}\log\E\big[e^{-ng(\Y_n)}\big],
     \end{align*}
     exists for all bounded and continuous function $g$. We now outline the main steps of our methodology, which are elaborated in Sections \ref{detstchjb}, \ref{LaplaceLDP}, and \ref{LaDePr}.
 \vskip 0.2cm
 \noindent
 \textbf{Step-I:} \emph{Laplace limit at a single time (Section \ref{LaplaceLDP}).}  For the single time case, we set $\mathcal{E}=\H$ with the embedding $\V\hookrightarrow\H$ compact (see Subsection \ref{zerofunc} on function spaces). We define 
	\begin{align*}
	\mathpzc{U}_n(t,\y)=-\frac{1}{n}\log\E\big[e^{-ng(\Y_n(T))}\big].
\end{align*}
 By formally applying the It\^o formula, we show that  $\mathpzc{U}_n$ is the viscosity solution of the following second-order PDE (see Proposition \ref{Lapvscso}):
 	\begin{equation}\label{scdhjb}
 	\left\{
 	\begin{aligned}
 		&(\mathpzc{U}_n)_t+\frac{1}{2n}\mathrm{Tr}(\Q\D^2\mathpzc{U}_n)-
 		\frac12\|\Q^{\frac12}\D \mathpzc{U}_n\|_{\H}^2\\&\quad+ (-\mu\A\y-\mathcal{B}(\y)-\alpha\y-\beta\mathcal{C}(\y)+\f(t),\D \mathpzc{U}_n) =0, \ \text{ in } \ (0,T)\times\V, \\
 		&\mathpzc{U}_n(T,\y)=g(\y),
 	\end{aligned}
 	\right.
 \end{equation}
 Passing to the limit as $n\to+\infty$ in above equation, we obtain the following first order PDE:
 	\begin{equation}\label{fsthjb}
 	\left\{
 	\begin{aligned}
 		&\mathpzc{U}_t-
 		\frac12\|\Q^{\frac12}\D \mathpzc{U}_n\|_{\H}^2+ (-\mu\A\y-\mathcal{B}(\y)-\alpha\y-\beta\mathcal{C}(\y)+\f(t),\D \mathpzc{U}) =0, \ \text{ in } \ (0,T)\times\V, \\
 		&\mathpzc{U}(T,\y)=g(\y).
 	\end{aligned}
 	\right.
 \end{equation}
Equation \eqref{fsthjb} is a first order HJB equation associated with the optimal control problem for the deterministic CBF equations (see Subsection \ref{passnhjb}). We prove that the corresponding value function is the unique viscosity solution of \eqref{fsthjb} (see Proposition \ref{Lapvscso1}). Furthermore, we show that the viscosity solutions $\mathpzc{U}_n$ of \eqref{scdhjb} converges to the viscosity solution $\mathpzc{U}$ of \eqref{fsthjb}. This convergence allows us to identify the Laplace limit at a single time (see Corollary \ref{comparisonds}). The proof relies on techniques from the comparison principle (see Theorems \ref{comparison} and \ref{comparisonappr}), which play a central role in establishing the convergence of $\mathpzc{U}_n$. 
 
 \vskip 0.2cm
 \noindent
 \textbf{Step-II:} \emph{Laplace limit at multiple times (Subsection \ref{LapLmuL}).}  After establishing the single time case, the extension to the path space setting follows through a variation of the classical approach introduced in \cite{JFTGK}. In this case, we take $\mathcal{E}=\D([0,T];\H)$ and proceed according to the framework outlined in Subsection 
 \ref{FKLDP}. Exploting the Markov property of the solution $\Y_n(\cdot)$ of the SCBF equations \eqref{stap}, together with the properties of $\mathpzc{U}_n$ (Propositions \ref{LapunLh} and \ref{vfLDP3}), we establish the existence of the Lapalce limit at multiple times (see Subsection \ref{LapLmuL}). This analysis further allows us to explicitly identify the rate function, which is given in  \eqref{ratefun} (see Proposition \ref{exptightD}).
 
 
 \vskip 0.2cm
 \noindent
 \textbf{Step-III:} \emph{Verifying exponential tightness (Subsection \ref{exptghtpath}).} The next step is to establish the exponential tightness of the solutions $\{\Y_n(\cdot)\}_{n\in\N}$ of the SCBF equations \eqref{stap} in the path space $\D([0,T];\H)$, as proved in Theroem \ref{exptightLDP6}. To this end, we first verify that the family $\{\Y_n(\cdot)\}_{n\in\N}$ satisfies the exponential compact containment condition and is weakly exponentially tight. Combining these results with Theorem \ref{LDPsemg6}, we conclude that $\{\Y_n(\cdot)\}_{n\in\N}$  is exponentially tight in $\D([0,T];\H)$.
 
 \vskip 0.2cm
 \noindent
 \textbf{Step-IV:} \emph{LDP in $\D([0,T];\H)$ (Subsection \ref{exptghtpath}).}
 Finally, having verified all the conditions stated in Proposition \ref{exptightD}, we conclude that the family of solutions $\{\Y_n(\cdot)\}_{n\in\N}$ of \eqref{stap} satisfies the LDP in $\D([0,T];\H)$.

 After establishing the LDP in $\D([0,T];\H)$, the remaining task is to extend the result to $\C([0,T];\H)$. The crucial step in this transition is to verify that the family of solutions $\{\Y_n(\cdot)\}_{n\in\N}$ satisfies the $\C-$exponential tightness property (see Definition \ref{cextght}). Once this property is established, Theorem \ref{refthm} can be applied to obtain the desired LDP in $\C([0,T];\H)$. 
    In particular, this yields the following main result (see Theorem \ref{mainthm}):
     
 \begin{theorem}
 	(LDP in $\C([0,T];\H)$) Assume that $\Tr(\Q)<\infty$ and $\f:[0,T]\to\V$ is bounded and continuous. Then, for $r>3$ and $r=3$ with $2\beta\mu\geq1$ in $d\in\{2,3\}$, the sequence of stochastic processes $\{\Y_n(\cdot)\}_{n\geq1}$, where $\Y_n(\cdot)=\Y_n(\cdot;0,\y)$ is a solution to the SCBF system \eqref{stap} with $\Y_n(0)=\y$, satisfies the large deviation principle in $\C([0,T];\H)$ with the rate function $I$ given by \eqref{ratefun}. 
 	  	That is, 
 	  		\begin{itemize}
 		  		\item For each closed set $\mathcal{F}\subset\C([0,T];\H)$, we have
 		  		\begin{align*}
 			  			\limsup\limits_{n\to+\infty}\frac{1}{n}\log\P\{\omega\in\Omega: \Y_n(\cdot,\omega)\in\mathcal{F}\}\leq-\inf\limits_{\xi\in\mathcal{F}}I(\xi).
 			  		\end{align*}
 		  		\item For each open set $\mathcal{G}\subset\C([0,T];\H)$, we have
 		  		\begin{align*}
 			  			\liminf\limits_{n\to+\infty}\frac{1}{n}\log\P\{\omega\in\Omega: \Y_n(\cdot,\omega)\in\mathcal{G}\}\geq-\inf\limits_{\xi\in\mathcal{G}}I(\xi).
 			  		\end{align*}
 		  	\end{itemize}
 \end{theorem}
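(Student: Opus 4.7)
The plan is to combine the Varadhan--Bryc characterization of the LDP with the viscosity-solution and Feng--Kurtz machinery outlined in Steps I--IV of the introduction, first establishing the LDP in the Skorohod space $\D([0,T];\H)$ and then transferring it to $\C([0,T];\H)$ via a $\C$-exponential tightness argument. Concretely, I would verify the existence of the Laplace limit $\lim_{n\to\infty}-\frac{1}{n}\log\E[e^{-ng(\Y_n)}]$ for every bounded continuous $g$, together with the exponential tightness of $\{\Y_n\}_{n\in\N}$, first in $\D([0,T];\H)$ and then in $\C([0,T];\H)$, and combine them through Proposition \ref{exptightD} and Theorem \ref{refthm}.

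For the Laplace limit, I would first treat the single-time case by defining the logarithmic transform $\mathpzc{U}_n(t,\y):=-\frac{1}{n}\log\E[e^{-ng(\Y_n(T;t,\y))}]$ and using a formal It\^o computation (Proposition \ref{Lapvscso}) to identify $\mathpzc{U}_n$ as the unique bounded viscosity solution of the singularly perturbed second-order HJB equation \eqref{scdhjb}. The comparison principle (Theorems \ref{comparison} and \ref{comparisonappr}), combined with a stability-of-viscosity-solutions passage as $n\to\infty$, yields convergence of $\mathpzc{U}_n$ to the unique bounded viscosity solution $\mathpzc{U}$ of the first-order HJB equation \eqref{fsthjb} (Corollary \ref{comparisonds}), which is identified with the value function of the deterministic control problem of Subsection \ref{passnhjb}. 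Extension to multiple times proceeds by a Markov-property induction along a finite partition $0=t_0<t_1<\cdots<t_k=T$ (Propositions \ref{LapunLh} and \ref{vfLDP3}) following the Feng--Kurtz strategy of Subsection \ref{LapLmuL}, producing the rate function $I$ given in \eqref{ratefun}.

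For exponential tightness, I would verify the exponential compact containment condition and the weak exponential tightness in $\D([0,T];\H)$ via Theorem \ref{LDPsemg6}. The decisive analytic input is the exponential moment bound of Proposition \ref{propexpest}, whose three-dimensional version replaces the standard identity $(\mathcal{B}(\u),\A\u)=0$ (only valid on $\mathbb{T}^2$) by exploiting the dominance of $\beta\mathcal{C}(\u)$ together with the viscous dissipation through the equality \eqref{torusequ}; this is precisely what is made possible by the regime $r>3$ or $r=3$ with $2\beta\mu\geq 1$, together with the restriction on $\mu$ stated in Proposition \ref{propexpest}. Combining these ingredients with Proposition \ref{exptightD} gives the LDP in $\D([0,T];\H)$, and then verifying $\C$-exponential tightness (Definition \ref{cextght}) via the $\P$-a.s.\ continuity of sample paths of $\Y_n$ in $\H$, together with a Burkholder--Davis--Gundy estimate on the $\frac{1}{\sqrt{n}}$-scaled stochastic integral to control the modulus of continuity at exponential scale, allows Theorem \ref{refthm} to upgrade the LDP to $\C([0,T];\H)$ with the same rate $I$.

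The hard part, I expect, is the single-time step: establishing the comparison principle and the convergence $\mathpzc{U}_n\to\mathpzc{U}$ in the infinite-dimensional setting. The unboundedness of $\A$, the non-Lipschitz convective term $\mathcal{B}$, and the polynomial absorption $\mathcal{C}$ rule out a direct appeal to the standard Crandall--Lions machinery, forcing the use of the bounded test function framework of Definition \ref{testD} (adapted from \cite{AS2}) together with the local monotonicity of $\mu\A+\mathcal{B}(\cdot)+\beta\mathcal{C}(\cdot)$ guaranteed by the regime $r>3$ or $r=3$ with $2\beta\mu\geq 1$ (Lemma \ref{monoest}); this monotonicity is precisely what allows us to avoid the truncation $\mathcal{B}^q$ of \eqref{eqn-bq} used in the Navier--Stokes setting of \cite{AS2}. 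A secondary, but analytically delicate, obstacle is the three-dimensional exponential moment bound, where the absence of the orthogonality $(\mathcal{B}(\u),\A\u)=0$ forces the specific restriction on $\mu,\alpha,\beta$ recorded in Proposition \ref{propexpest}.
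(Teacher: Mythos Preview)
Your proposal is correct and follows essentially the same architecture as the paper: single-time Laplace limit via the viscosity-solution identification and comparison principle (Propositions \ref{Lapvscso}, \ref{Lapvscso1}, Corollary \ref{comparisonds}), extension to multiple times by Markov iteration (Subsection \ref{LapLmuL}), exponential tightness in $\D([0,T];\H)$ via Theorem \ref{LDPsemg6} using the exponential moment bound of Proposition \ref{propexpest}, and transfer to $\C([0,T];\H)$ via Theorem \ref{refthm}.

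One small simplification worth noting: for the $\C$-exponential tightness step you propose to use both the $\P$-a.s.\ continuity of paths \emph{and} a Burkholder--Davis--Gundy estimate to control the modulus of continuity at exponential scale. The paper observes that the first ingredient alone suffices. The condition in Definition \ref{cextght} concerns only the jump sizes $\|\Y_n(s)-\Y_n(s-)\|_{\H}$; since $\Y_n$ has $\P$-a.s.\ continuous trajectories in $\H$ (Proposition \ref{weLLp}), this quantity is identically zero $\P$-a.s., so the probability in \eqref{cexptght} vanishes for every $\eta>0$ and the condition holds trivially. No exponential modulus-of-continuity estimate is needed at this stage.
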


  \subsection{Organization of the paper} The remainder of this paper is structured as follows. In Section \ref{mathfunop}, we introduce the functional framework required for the study of SCBF equations \eqref{pscbf}, together with a review of fundamental concepts such as linear, bilinear, and nonlinear operators. Section \ref{stchpreLDP} provides essential stochastic preliminaries, followed in Subsection \ref{FKLDP} by the definition and framework of the large deviation principle (LDP). In Section \ref{abscon}, we establish well-posedness results for SCBF equations \eqref{stap} and derive exponential estimates (Proposition \ref{propexpest}). Section \ref{detstchjb} is devoted to proving a comparison principle (Theorem \ref{comparison}) for the Hamilton-Jacobi-Bellman (HJB) equation \eqref{thjbcomp} associated with SCBF equations \eqref{stap}. In Section \ref{LaplaceLDP}, we show the existence of a Laplace limit at a fixed time, which we characterise as the convergence of viscosity solutions to the corresponding second-order HJB equation \eqref{thjbcomp} (Propositions \ref{Lapvscso}, \ref{Lapvscso1}, and Corollary \ref{comparisonds}). Finally, by applying the Feng-Kurtz framework of LDP from Subsection \ref{FKLDP}, we conclude in Section \ref{LaDePr} with the proof of our main result (Theorem \ref{mainthm}).
    
    \section{Mathematical framework}\label{mathfunop}\setcounter{equation}{0}
    	We begin by introducing the essential function spaces that will be used throughout the paper, along with the linear and nonlinear operators required to derive the abstract formulation of the SCBF system given in \eqref{stap}. Our analysis is conducted within the functional framework developed in \cite{JCR1} and adapted to the present setting.
    	
	\subsection{Function spaces}\label{zerofunc}
	Let $\C_{\mathrm{p}}^{\infty}(\mathbb{T}^d;\R^d)$ denote the space of all infinitely differentiable  functions $\u$ satisfying periodic boundary conditions
	$\u(x+\mathrm{L}e_{i},\cdot) = \u(x,\cdot)$, for $x\in \R^d$. The Sobolev space  $\H_{\mathrm{p}}^s(\mathbb{T}^d):=\mathrm{H}_{\mathrm{p}}^s(\mathbb{T}^d;\mathbb{R}^d)$ is the completion of $\C_{\mathrm{p}}^{\infty}(\mathbb{T}^d;\R^d)$  with respect to the $\H^s$-norm and the norm on the space $\H_{\mathrm{p}}^s(\mathbb{T}^d)$ is given by $$\|\u\|_{{\H}^s_{\mathrm{p}}}:=\left(\sum_{0\leq|\boldsymbol\alpha|
		\leq s} \|\D^{\boldsymbol\alpha}\u\|_{\mathbb{L}^2(\mathbb{T}^d)}^2\right)^{1/2}.$$ 	
	It is known from \cite[Proposition 5.39]{JCR1} that the Sobolev space of periodic functions $\H_{\mathrm{p}}^s(\mathbb{T}^d)$, for $s\geq0$ is the same as 
	$$\left\{\u:\u=\sum_{k\in\mathbb{Z}^d}\y_{k}\mathrm{e}^{2\pi i k\cdot \xi /  \mathrm{L}},\ \overline{\u}_{k}=\u_{-k}, \  \|\u\|_{{\H}^s_f}:=\left(\sum_{k\in\mathbb{Z}^d}(1+|k|^{2s})|\u_{k}|^2\right)^{1/2}<\infty\right\}.$$ We infer from \cite[Propositions 5.38]{JCR1} that the norms $\|\cdot\|_{{\H}^s_{\mathrm{p}}}$ and $\|\cdot\|_{{\H}^s_f}$ are equivalent. 
	\begin{remark}
		{We are not assuming the zero mean condition for the velocity field unlike the case of NSE, since the absorption term $\beta|\u|^{r-1}\u$ does not preserve this property (see \cite{MTT}). Therefore, we cannot use the well-known Poincar\'e inequality and we have to deal with the  full $\H^1$-norm.} 
	\end{remark}
	Let us define 
	\begin{align*} 
		\mathcal{V}:=\{\u\in\C_{\mathrm{p}}^{\infty}(\mathbb{T}^d;\R^d):\nabla\cdot\u=0\}.
	\end{align*}
	We define the spaces $\H$ and $\widetilde{\L}^{p}$ as the closure of $\mathcal{V}$ in the Lebesgue spaces $\L^2(\mathbb{T}^d):=\mathrm{L}^2(\mathbb{T}^d;\R^d)$ and $\L^p(\mathbb{T}^d):=\mathrm{L}^p(\mathbb{T}^d;\R^d)$ for $p\in(2,\infty]$, respectively. We endow the space $\H$ with the inner product and norm of $\L^2(\mathbb{T}^d),$ and are denoted by 
	\begin{align*}
		&(\u,\v):=(\u,\v)_{\L^2(\mathbb{T}^d)}=\int_{\mathbb{T}^d}\u(\xi)\cdot\v(\xi)\d \xi\\ \text{ and } \  &\|\u\|_{\H}^2:=\|\u\|_{\L^2(\mathbb{T}^d)}^2=\int_{\mathbb{T}^d}|\u(\xi)|^2
		\d \xi, \ \text{ for } \ \u,\v\in\H.
	\end{align*}
	For $p\in(2,\infty)$, the space $\widetilde{\L}^{p}$ is endowed with the norm of $\L^p(\mathbb{T}^d)$, which is defined by 
	\begin{align*}
		\|\u\|_{\widetilde{\L}^p}^p:=\|\u\|_{\L^p(\mathbb{T}^d)}^p
		=\int_{\mathbb{T}^d}|\u(\xi)|^p\d \xi \  \text{ for } \ \u\in\wi\L^p.
	\end{align*}
	For $p=\infty$, the space $\widetilde{\L}^{\infty}$ is endowed with the norm of $\L^{\infty}(\mathbb{T}^d)$, which is given by 
		\begin{align*}
		\|\u\|_{\widetilde{\L}^{\infty}}:=\|\u\|_{\L^{\infty}(\mathbb{T}^d)}
		=\esssup_{\xi\in\mathbb{T}^d}|\u(\xi)| \  \text{ for } \ \u\in\wi\L^{\infty}.
	\end{align*}
	We also define the space $\V$ as the closure of $\mathcal{V}$ in the Sobolev space $\H^1_{\mathrm{p}}(\mathbb{T}^d)$. We equip the space $\V$ with the inner product
	\begin{align*}
		(\u,\v)_{\V}&:=(\u,\v)_{\L^2(\mathbb{T}^d)}+(\nabla\u,\nabla\v)_{\L^2(\mathbb{T}^d)}\nonumber\\&=
		\int_{\mathbb{T}^d}\u(\xi)\cdot\v(\xi)\d \xi+ \int_{\mathbb{T}^d}\nabla\u(\xi)\cdot\nabla\v(\xi)\d \xi\ \text{ for } \ \u,\v\in\V,
     \end{align*}
     and the norm
     \begin{align*}
		\|\u\|_{\V}^2:=\|\u\|_{\L^2(\mathbb{T}^d)}^2+\|\nabla\u\|_{\L^2(\mathbb{T}^d)}^2=\int_{\mathbb{T}^d}|\u(\xi)|^2\d \xi + \int_{\mathbb{T}^d}|\nabla\u(\xi)|^2\d \xi \ \text{ for } \ \u,\v\in\V.
	\end{align*}
%
	Let $\langle \cdot,\cdot\rangle $ represent the induced duality between the spaces $\V$  and its dual $\V^*$ as well as $\widetilde{\L}^p$ and its dual $\widetilde{\L}^{p'}$, where $\frac{1}{p}+\frac{1}{p'}=1$. Note that $\H$ can be identified with its own dual $\H^*$. From \cite[Subsection 2.1]{FKS}, we have that the sum space $\V^*+\widetilde{\L}^{p'}$ is well defined and  is a Banach space with respect to the norm 
	\begin{align*}
		\|\u\|_{\V^*+\widetilde{\L}^{p'}}&:=\inf\{\|\u_1\|_{\V^*}+\|\u_2\|_{\wi\L^{p'}}:\u=\u_1+\u_2, \u_1\in\V^* \ \text{and} \ \u_2\in\wi\L^{p'}\}\nonumber\\&=
		\sup\left\{\frac{|\langle\u_1+\u_2,\f\rangle|}{\|\f\|_{\V\cap\widetilde{\L}^p}}:\boldsymbol{0}\neq\f\in\V\cap\widetilde{\L}^p\right\},
	\end{align*}
	where $\|\cdot\|_{\V\cap\widetilde{\L}^p}:=\max\{\|\cdot\|_{\V}, \|\cdot\|_{\wi\L^p}\}$ is a norm on the Banach space $\V\cap\widetilde{\L}^p$.  
	Moreover, we have the continuous embeddings $\V\cap\widetilde{\L}^p\hookrightarrow\V\hookrightarrow\H\cong\H^*\hookrightarrow\V^*\hookrightarrow\V^*+\widetilde{\L}^{p'},$ where the embedding $\V\hookrightarrow\H$ is compact. 
	
%

	\subsection{Linear operator}\label{linope}
	We define the Stokes operator 
	\begin{equation*}
		\left\{
		\begin{aligned}
			\A\u&:=-\mathcal{P}\Delta\u=-\Delta\u,\;\u\in\mathcal{D}(\A),\\
			\mathcal{D}(\A)&:=\V\cap{\H}^{2}_\mathrm{p}(\mathbb{T}^d),
		\end{aligned}
		\right.
	\end{equation*}
	where $\mathcal{P}:\L^2(\mathbb{T}^d)\to\H$ is the Leray-Helmholtz orthogonal projection operator, which is bounded and self-adjoint (see \cite[Section 2.1]{JCR}). For the Fourier expansion $\u=\sum\limits_{k\in\mathbb{Z}^d} e^{2\pi i k\cdot \xi} \u_{k} ,$ by making the use of Parseval's identity and the definition of 
	$\|\cdot\|_{\H^2_\mathrm{p}}-$norm, one can show that 
	$\H^2_\mathrm{p}(\mathbb{T}^d)=\mathcal{D}(\I+\A):=\V_2$.

	\begin{remark}\label{rg3L3r}
   1.) For $d=2$, by using the Sobolev embedding, $\H_{\mathrm{p}}^1(\mathbb{T}^d)\hookrightarrow\L^p(\mathbb{T}^d)$, for all $p\in[2,\infty)$, we find 
		\begin{align*}
			\|\u\|^{r+1}_{\L^{p(r+1)}(\mathbb{T}^d)}&=\||\u|^{\frac{r+1}{2}}\|_{\L^{2p}(\mathbb{T}^d)}^2\leq\mathpzc{C}_e \||\u|^{\frac{r+1}{2}}\|_{\H_{\mathrm{p}}^1(\mathbb{T}^d)}^2\nonumber\\&\leq\mathpzc{C}
			\bigg(\int_{\mathbb{T}^d}|\nabla\u(\xi)|^2|\u(\xi)|^{r-1}\d \xi+ \int_{\mathbb{T}^d}|\u(\xi)|^{r+1}\d \xi\bigg),
		\end{align*}
		for all $\u\in\V_2$ and for any $p\in[2,+\infty)$. 
		
		3.) Similarly, for $d=3$, by the Sobolev embedding $\H_{\mathrm{p}}^1(\mathbb{T}^d)\hookrightarrow\L^6(\mathbb{T}^d)$, we find
		\begin{align}\label{371}
			\|\u\|^{r+1}_{\L^{3(r+1)}(\mathbb{T}^d)}&=\||\u|^{\frac{r+1}{2}}\|_{\L^{6}(\mathbb{T}^d)}^2\leq \mathpzc{C}\||\u|^{\frac{r+1}{2}}\|_{\H_{\mathrm{p}}^1(\mathbb{T}^d)}^2
			\nonumber\\&\leq C\bigg(\int_{\mathbb{T}^d}|\nabla\u(\xi)|^2|\u(\xi)|^{r-1}\d \xi+ \int_{\mathbb{T}^d}|\u(\xi)|^{r+1}\d \xi\bigg), 
		\end{align}
		for all $\u\in\V_2$. 
	\end{remark}

	\subsection{Bilinear operator}
		Let $b(\cdot,\cdot,\cdot):\V\times\V\times\V\to\R$ be a continuous trilinear form defined by
		\begin{align*}
			b(\u,\v,\w)=\int_{\mathbb{T}^d}(\u(\xi)\cdot\nabla)\v(\xi)\cdot\w(\xi)\d \xi.
		\end{align*} 
		By the Riesz representation theorem, we can define $\mathcal{B}(\cdot,\cdot):\V\times\V\to\R$ a continuous bilinear operator such that $\langle\mathcal{B}(\u,\v),\w\rangle=b(\u,\v,\w)$ for all $\u,\v,\w\in\V$, which also satisfies (see \cite {Te})
		\begin{align}\label{syymB}
			\langle\mathcal{B}(\u,\v),\w\rangle=-\langle\mathcal{B}(\u,\w),\v\rangle \ \text{ and } \
			\langle\mathcal{B}(\u,\v),\v\rangle=0,
		\end{align}
		for any $\u,\v,\w\in\V$. We also denote $\mathcal{B}(\u) = \mathcal{B}(\u, \u)$. 

		\begin{remark}\label{BLrem}\cite[Theorem 2.5]{SMTM}
			1.) In view of \eqref{syymB}, along with H\"older's and Young's inequalities, we calculate
			\begin{align}\label{syymB1}
				|\langle\mathcal{B}(\u)-\mathcal{B}(\v),\u-\v\rangle|\leq
				\frac{\mu }{2}\|\nabla(\u-\v)\|_{\H}^2+\frac{1}{2\mu }\||\v|(\u-\v)\|_{\H}^2.
			\end{align} 
			Using H\"older's and Young's inequalities, we estimate the term $\||\v|(\u-\v)\|_{\H}^2$ as
			\begin{align}\label{syymB2}
				\int_{\mathbb{T}^d}|\v(\xi)|^2|\u(\xi)-\v(\xi)|^2\d \xi &=\int_{\mathbb{T}^d}|\v(\xi)|^2|\u(\xi)-\v(\xi)|^{\frac{4}{r-1}}|\u(\xi)-\v(\xi)|^{\frac{2(r-3)}{r-1}}\d \xi\nonumber\\&\leq\frac{\beta\mu }{2}\||\v|^{\frac{r-1}{2}}(\u-\v)\|_{\H}^2+\frac{r-3}{r-1}\left[\frac{4}{\beta\mu (r-1)}\right]^{\frac{2}{r-3}}\|\u-\v\|_{\H}^2,
			\end{align}
			for $r>3$. Using \eqref{syymB2} in \eqref{syymB1}, we find 
			\begin{align}\label{3.4}
				|\langle\mathcal{B}(\u)-\mathcal{B}(\v),\u-\v\rangle|\leq
				\frac{\mu }{2}\|\nabla(\u-\v)\|_{\H}^2 +\frac{\beta}{4}\||\v|^{\frac{r-1}{2}}(\u-\v)\|_{\H}^2 +\varrho\|\u-\v\|_{\H}^2,
			\end{align}
			where \begin{align}\label{eqn-varrho}
				\varrho:=\frac{r-3}{2\mu(r-1)}\left[\frac{4}{\beta\mu (r-1)}\right]^{\frac{2}{r-3}}.
				\end{align}
			
			2.) In a similar way, one can establish the following inequality:
			\begin{align}\label{syymB3}
				|(\B(\u),\A\u)|\leq\frac{\mu}{2}\|\A\u\|_{\H}^2+\frac{\beta}{4} \||\u|^{\frac{r-1}{2}}\nabla\u\|_{\H}^2+\varrho\|\nabla\u\|_{\H}^2.
			\end{align}
		\end{remark}

	 \subsection{Nonlinear operator}\label{C1prop}
		Let us define the operator $$\mathcal{C}(\u):=\mathcal{P}(|\u|^{r-1}\u)\ \text{ for }\ \u\in\V\cap\L^{r+1}.$$   Since the projection operator $\mathcal{P}$ is bounded from $\H^1$ into itself (cf. \cite[Remark 1.6]{Te}), the operator $\mathcal{C}(\cdot):\V\cap\widetilde{\L}^{r+1}\to\V^*+\widetilde{\L}^{\frac{r+1}{r}}$ is well-defined and we have $\langle\mathcal{C}(\u),\u\rangle =\|\u\|_{\widetilde{\L}^{r+1}}^{r+1}.$ 
		
		We now state a lemma on the monotonicity properties of the nonlinear operator $\mathcal{C}(\cdot)$, which plays a recurring role in our analysis.
		\begin{lemma}\label{monopropC}\cite[Subsection 2.4]{SMTM}
			For every $r\geq1$ and for all $\u,\v,\w\in\widetilde{\L}^{r+1}$, the nonlinear operator $\mathcal{C}(\cdot)$ satisfies following estimates:
			\begin{align}\label{monoC1}
				\langle\mathcal{C}(\u)-\mathcal{C}(\v),\w\rangle\leq
				r\left(\|\u\|_{\widetilde{\L}^{r+1}}+\|\v\|_{\widetilde{\L}^{r+1}}\right)^{r-1}\|\u-\v\|_{\widetilde{\L}^{r+1}}\|\w\|_{\widetilde{\L}^{r+1}}
			\end{align}
			and 
			\begin{align}\label{monoC2}
				\langle\mathcal{C}(\u)-\mathcal{C}(\v),\u-\v\rangle\geq \frac{1}{2}\||\u|^{\frac{r-1}{2}}(\u-\v)\|_{\H}^2+\frac{1}{2}\||\v|^{\frac{r-1}{2}}(\u-\v)\|_{\H}^2\geq\frac{1}{2^{r-1}}\|\u-\v\|_{\wi\L^{r+1}}^{r+1}.
			\end{align}
	\end{lemma}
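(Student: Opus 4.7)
The plan is to reduce both inequalities to elementary pointwise estimates on $\R^d$ and then integrate over $\mathbb{T}^d$. Because the Helmholtz--Leray projection $\mathcal{P}$ is self-adjoint on $\L^2(\mathbb{T}^d)$ and the test fields appearing in the pairings (namely $\w$ in \eqref{monoC1} and $\u-\v$ in \eqref{monoC2}) already lie in the divergence-free class, one can remove $\mathcal{P}$ from inside the integrals and work directly with $|\u|^{r-1}\u$ and $|\v|^{r-1}\v$ as vector-valued functions on $\mathbb{T}^d$. This one structural step converts the problem into vector calculus on $\R^d$.

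For \eqref{monoC1}, I would apply the mean value theorem to $F(\x):=|\x|^{r-1}\x$ on $\R^d$. A direct differentiation gives $D F(\x)\y=(r-1)|\x|^{r-3}(\x\cdot\y)\x+|\x|^{r-1}\y$, from which one checks that the operator norm satisfies $|D F(\x)|_{\mathrm{op}}=r|\x|^{r-1}$. Integrating $DF$ along the segment from $\v$ to $\u$ and bounding the integrand by its pointwise maximum yields
\[
\bigl||\u|^{r-1}\u-|\v|^{r-1}\v\bigr|\;\leq\;r\bigl(|\u|+|\v|\bigr)^{r-1}|\u-\v|.
\]
Multiplying by $|\w|$, integrating over $\mathbb{T}^d$, and applying H\"older's inequality with exponents $\tfrac{r+1}{r-1},r+1,r+1$ together with the Minkowski inequality $\bigl\||\u|+|\v|\bigr\|_{\wi\L^{r+1}}\leq\|\u\|_{\wi\L^{r+1}}+\|\v\|_{\wi\L^{r+1}}$, produces \eqref{monoC1}.

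The proof of \eqref{monoC2} rests on the algebraic identity
\[
\bigl(|a|^{r-1}a-|b|^{r-1}b\bigr)\cdot(a-b)\;-\;\tfrac{1}{2}\bigl(|a|^{r-1}+|b|^{r-1}\bigr)|a-b|^2\;=\;\tfrac{1}{2}\bigl(|a|^{r-1}-|b|^{r-1}\bigr)\bigl(|a|^2-|b|^2\bigr),
\]
valid for all $a,b\in\R^d$, which I would verify by a short algebraic expansion. The right-hand side is non-negative because both factors share the sign of $|a|-|b|$, by monotonicity of $x\mapsto x^{r-1}$ on $[0,\infty)$; integrating the inequality over $\mathbb{T}^d$ gives the first lower bound in \eqref{monoC2}. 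For the second lower bound, I would combine this with the convexity estimate $|a-b|^{r-1}\leq(|a|+|b|)^{r-1}\leq 2^{r-2}\bigl(|a|^{r-1}+|b|^{r-1}\bigr)$ for $r\geq 2$ (the range $1\leq r<2$ is easier, since then $(|a|+|b|)^{r-1}\leq |a|^{r-1}+|b|^{r-1}$ directly); multiplying by $\tfrac{1}{2}|a-b|^2$ and rearranging yields exactly the stated constant $\tfrac{1}{2^{r-1}}$.

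The only mildly non-routine step is the algebraic identity used for \eqref{monoC2}: it is essentially one line of expansion but must be unpacked carefully to recognise the cancellation that produces the favourable sign on the right. Once this identity is established, the remainder of the proof reduces to classical H\"older--Minkowski manipulations and elementary convexity, so I do not expect any serious technical obstacle.
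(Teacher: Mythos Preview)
The paper does not prove this lemma; it is quoted verbatim from \cite{SMTM}. Your approach is the standard one and is correct for \eqref{monoC1} and for the first inequality in \eqref{monoC2}: the mean-value bound $\bigl||\u|^{r-1}\u-|\v|^{r-1}\v\bigr|\leq r(|\u|+|\v|)^{r-1}|\u-\v|$ followed by H\"older, and the algebraic identity you wrote (which indeed expands out in one line and has non-negative right-hand side) are exactly how these estimates are usually derived.

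There is, however, a genuine gap in your treatment of the second inequality in \eqref{monoC2} for $1<r<2$. Subadditivity $(|a|+|b|)^{r-1}\leq|a|^{r-1}+|b|^{r-1}$ only yields the constant $\tfrac{1}{2}$, which for $r<2$ is \emph{weaker} than the stated $\tfrac{1}{2^{r-1}}$ (since $2^{r-1}<2$). In fact the second inequality in the chain is simply false for $1<r<2$: take $\v=\boldsymbol{0}$ pointwise, so the middle expression equals $\tfrac{1}{2}\|\u\|_{\wi\L^{r+1}}^{r+1}$ while the right-hand side is $\tfrac{1}{2^{r-1}}\|\u\|_{\wi\L^{r+1}}^{r+1}$, and $\tfrac{1}{2}<\tfrac{1}{2^{r-1}}$. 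The \emph{outer} inequality $\langle\mathcal{C}(\u)-\mathcal{C}(\v),\u-\v\rangle\geq 2^{1-r}\|\u-\v\|_{\wi\L^{r+1}}^{r+1}$ does hold for all $r\geq1$ (with equality at $a=-b$), but not through this particular intermediate step. Since the paper uses the lemma only for $r\geq3$, this discrepancy is harmless for the applications; for $r\geq2$ your argument is complete and matches the standard proof.
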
 

	\begin{lemma}\label{monoest}\cite[Theorem 2.5]{SMTM}
	Let $d\in\{2,3\}$. For $r>3$, we have the following monotonicity estimate:
	\begin{align}\label{monoest1}
	&\langle\mu\A\u+\mathcal{B}(\u)+\beta\mathcal{C}(\u)-\mu\A\v-\mathcal{B}(\v)-\beta\mathcal{C}(\v), \u-\v\rangle+\varrho\|\u-\v\|_{\H}^2
	\nonumber\\&\geq 
	\frac{1}{2}\||\u|^{\frac{r-1}{2}}(\u-\v)\|_{\H}^2+
	\frac{\mu }{2}\|\nabla(\u-\v)\|_{\H}^2,
	\end{align}
	where $\varrho$ is the constant given in \eqref{eqn-varrho}. Moreover, for $r=3$ with $2\beta\mu\geq1$, we have following global monotonicity estimate:
	\begin{align*}
		&\langle\mu\A\u+\mathcal{B}(\u)+\beta\mathcal{C}(\u)-\mu\A\v-\mathcal{B}(\v)-\beta\mathcal{C}(\v), \u-\v\rangle
		\nonumber\\&\geq 
		\frac{1}{2}\left(\beta-\frac{1}{2\mu }\right)\|\v(\u-\v)\|_{\H}^2.
	\end{align*}
	\end{lemma}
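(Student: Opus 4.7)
The plan is to decompose the monotonicity expression into its three natural pieces (Stokes, convective bilinear, Forchheimer absorption) and combine the already-proved bounds from Remark \ref{BLrem} and Lemma \ref{monopropC}. For the linear piece, integration by parts (equivalently, the commutation of $\mathcal{P}$ with $-\Delta$ on $\mathbb{T}^d$) immediately yields
\begin{align*}
\langle \mu\A(\u-\v),\u-\v\rangle=\mu\|\nabla(\u-\v)\|_{\H}^2.
\end{align*}
For the bilinear piece I would write $\mathcal{B}(\u)-\mathcal{B}(\v)=\mathcal{B}(\u-\v,\u)+\mathcal{B}(\v,\u-\v)$, use $\langle\mathcal{B}(\v,\u-\v),\u-\v\rangle=0$ by \eqref{syymB}, and then reduce to estimating $b(\u-\v,\v,\u-\v)=-b(\u-\v,\u-\v,\v)$. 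This is exactly what delivers the ready-made bound \eqref{syymB1} (and its refinement \eqref{3.4} via the Young split \eqref{syymB2}). For the nonlinear piece, Lemma \ref{monopropC} gives the coercivity
\begin{align*}
\beta\langle\mathcal{C}(\u)-\mathcal{C}(\v),\u-\v\rangle\geq \tfrac{\beta}{2}\||\u|^{\frac{r-1}{2}}(\u-\v)\|_{\H}^2+\tfrac{\beta}{2}\||\v|^{\frac{r-1}{2}}(\u-\v)\|_{\H}^2.
\end{align*}

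For the super-critical case $r>3$, I would invoke \eqref{3.4}. Summing the three contributions, half of the Stokes dissipation ($\frac{\mu}{2}\|\nabla(\u-\v)\|_{\H}^2$) cancels the gradient term produced by the convective estimate, leaving the desired $\frac{\mu}{2}\|\nabla(\u-\v)\|_{\H}^2$. On the $|\v|^{\frac{r-1}{2}}$-weighted side, the nonlinear coercivity contributes $\frac{\beta}{2}\||\v|^{\frac{r-1}{2}}(\u-\v)\|_{\H}^2$, which absorbs the $\frac{\beta}{4}\||\v|^{\frac{r-1}{2}}(\u-\v)\|_{\H}^2$ arising from \eqref{3.4} with room to spare (one may then discard the positive remainder, or simply keep the $\||\u|^{\frac{r-1}{2}}(\u-\v)\|_{\H}^2$ piece alone). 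Finally, the residual $-\varrho\|\u-\v\|_{\H}^2$ coming from the Young exponents in \eqref{syymB2} is moved to the left-hand side, yielding the stated estimate.

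For the critical case $r=3$, the Young split \eqref{syymB2} blows up (note that $\varrho\to\infty$ as $r\to 3^+$), so instead I would use \eqref{syymB1} directly, which produces $\frac{1}{2\mu}\|\v(\u-\v)\|_{\H}^2$. The nonlinear coercivity (with $r=3$) contributes $\frac{\beta}{2}\|\v(\u-\v)\|_{\H}^2$, and the combination yields the coefficient $\frac{1}{2}(\beta-\frac{1}{2\mu})$ on $\|\v(\u-\v)\|_{\H}^2$; nonnegativity of this coefficient is precisely the threshold $2\beta\mu\geq 1$. The positive $\frac{\beta}{2}\|\u(\u-\v)\|_{\H}^2$ and $\frac{\mu}{2}\|\nabla(\u-\v)\|_{\H}^2$ terms are then simply dropped to obtain the lower bound in the stated form.

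The main technical point, and the subtle step of the argument, is the $r>3$ case: one must perform the Young split of $\||\v|(\u-\v)\|_{\H}^2$ with the specific conjugate pair $(\frac{r-1}{2},\frac{r-1}{r-3})$ so that the $|\v|^{\frac{r-1}{2}}(\u-\v)$ piece is absorbed by the $\beta\mathcal{C}$-coercivity with a strictly positive margin, while the residual in $\|\u-\v\|_{\H}^2$ is controlled by the explicit constant $\varrho$ defined in \eqref{eqn-varrho}. Once this balancing is calibrated, the rest of the proof is purely algebraic rearrangement of the three estimates.
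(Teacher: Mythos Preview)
Your strategy is exactly the one behind the cited result: split into the Stokes, convective, and absorption pieces and combine the ready-made bounds in Remark~\ref{BLrem} and Lemma~\ref{monopropC}. For $r>3$ the argument is complete as you describe (the paper itself does not reprove the lemma, it only cites \cite{SMTM}, but the ingredients you invoke are precisely those laid out just before the statement).

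There is one arithmetic slip in your $r=3$ treatment. Applying \eqref{syymB1} as stated (the $\tfrac{\mu}{2}$ split), the coefficient you obtain on $\||\v|(\u-\v)\|_{\H}^2$ is
\[
\frac{\beta}{2}-\frac{1}{2\mu}=\frac{1}{2}\Bigl(\beta-\frac{1}{\mu}\Bigr),
\]
not $\tfrac{1}{2}\bigl(\beta-\tfrac{1}{2\mu}\bigr)$; nonnegativity would then require the stronger condition $\beta\mu\ge 1$. To recover the constant stated in the lemma (and hence the threshold $2\beta\mu\ge 1$), one must feed the \emph{entire} Stokes dissipation into the convective bound, i.e.\ use Young's inequality with parameter $2\mu$ in place of $\mu$ to get
\[
|\langle\mathcal{B}(\u)-\mathcal{B}(\v),\u-\v\rangle|\le \mu\|\nabla(\u-\v)\|_{\H}^2+\frac{1}{4\mu}\||\v|(\u-\v)\|_{\H}^2,
\]
after which the gradient term cancels completely and one is left with $\tfrac{\beta}{2}\||\u|(\u-\v)\|_{\H}^2+\tfrac{1}{2}\bigl(\beta-\tfrac{1}{2\mu}\bigr)\||\v|(\u-\v)\|_{\H}^2$, as required. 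With this adjustment your proof is correct and coincides with the intended argument.
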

	\begin{remark}
		Note that for $r>3$ in both $d=2,3$, the presence of the linear damping term $\alpha\u$ strengthens the local monotonicity estimate \eqref{monoest}, yielding the following global monotonicity estimate:
		\begin{align*}
			&\langle\mu\A\u+\mathcal{B}(\u)+\alpha\u+\beta\mathcal{C}(\u)-\mu\A\v-\mathcal{B}(\v)-\alpha\u-\beta\mathcal{C}(\v), \u-\v\rangle
			\nonumber\\&\geq 
			\frac{1}{2}\||\u|^{\frac{r-1}{2}}(\u-\v)\|_{\H}^2+
			\frac{\mu }{2}\|\nabla(\u-\v)\|_{\H}^2+(\alpha-\varrho)\|\u-\v\|_{\H}^2,
		\end{align*}
		provided $\alpha\geq\varrho$.
	\end{remark}
	
	\begin{remark}[{\cite[Lemma 2.1]{KWH}}]
		We shall frequently use the following identity on $\mathbb{T}^d$:
		\begin{align}\label{torusequ}
		(\mathcal{C}(\u),\A\u)=\||\u|^{\frac{r-1}{2}}\nabla\u\|_{\H}^{2} +4\left[\frac{r-1}{(r+1)^2}\right]\|\nabla|\u|^{\frac{r+1}{2}}\|_{\H}^{2}.
		\end{align}
	\end{remark}

	\subsection{Some useful functional inequalities}
	The following inequalities and definitions are frequently used throughout the paper: 
	
	1.) \emph{Interpolation inequality:} Let $0\leq s_1\leq s\leq s_2\leq\infty$ and $0\leq\theta\leq1$ be such that $\frac{1}{s}=\frac{\theta}{s_1}+\frac{1-\theta}{s_2}$. Then for $\u\in\L^{s_2}(\mathbb{T}^d)$, we have
	\begin{align*}
		\|\u\|_{\L^s}\leq\|\u\|_{\L^{s_1}}^{\theta}\|\u\|_{\L^{s_2}}^{1-\theta}.
	\end{align*} 
	
	5.) \emph{Agmon's inequality:} For all $\u\in\H^2_{\mathrm{p}}(\mathbb{T}^d)$, $d\in\{2,3\}$, we have 
	\begin{align*}
		\|\u\|_{\L^{\infty}(\mathbb{T}^d)}\leq C\|\u\|_{\H}^{1-\frac{d}{4}}
		\|\u\|_{\H^2_{\mathrm{p}}(\mathbb{T}^d)}^{\frac{d}{4}}
		=\|\u\|_{\H}^{1-\frac{d}{4}}
		\|(\I+\A)\u\|_{\H}^{\frac{d}{4}}.
	\end{align*}

  	\section{Stochastic preliminaries}\label{stchpreLDP} \setcounter{equation}{0}
     Let $(\Omega,\mathscr{F},\P)$ be a complete probability space equipped with the filtration $\{\mathscr{F}_s^t\}_{s\geq t}$ such that 
    \begin{itemize}
    	\item $\{\mathscr{F}_s^t\}_{s\geq t}$ is \emph{right-continuous}, that is, for all $s\geq t$, we have $\mathscr{F}_{s+}^t:=\bigcup\limits_{\mathpzc{r}>s} \mathscr{F}_{\mathpzc{r}}^t =\mathscr{F}_s^t$.
    	\item $\mathscr{F}_t^t$ contains all $\P$-null sets of $\mathscr{F}$.
    \end{itemize}
    Let $\Q$ be a bounded, self-adjoint, and non-negative linear operator on $\H$ such that $\Tr(\Q)<\infty$. Let $\mathbf{W}$ denote an $\H$-valued $\Q$-Wiener process satisfying $\mathbf{W}(t)=0,$ $\P-$a.s.
    We shall work under the following standing assumption throughout the article:
    \begin{hypothesis}\label{trQ1}
    	$\A^{\frac12}\Q^{\frac12}$ is a Hilbert-Schimdt operator.
    \end{hypothesis}
    
  Let us write $\Q_1:=\A^{\frac12}\Q\A^{\frac12}$. Since both $\A^{\frac12}$ and $\Q^{\frac12}$ are self-adjoint, Hypothesis \ref{trQ1} directly implies that 
    \begin{align*}
    	\Tr(\Q_1)=\Tr(\A^{\frac12}\Q\A^{\frac12})=
    	\Tr(\A^{\frac12}\Q^{\frac12}\Q^{\frac12}\A^{\frac12})
    	=\Tr\big(\A^{\frac12}\Q^{\frac12}\big(\A^{\frac12}\Q^{\frac12}\big)^{*}\big)
    	=\|\A^{\frac12}\Q^{\frac12}\|_{\mathscr{L}_2(\H)}^2<\infty,
    \end{align*}
   where $\mathscr{L}_2(\H)$ is the Hilbert space of all Hilbert-Schmidt operators on $\H$.
   
    
    The $5$-tuple $$\nu:=(\Omega,\mathscr{F},\{\mathscr{F}_s^t\}_{s\geq t},\P,\mathbf{W}),$$ introduced above, is referred to as a \emph{generalized reference probability space} (see \cite[Definition 1.100, Chapter 1, pp. 35]{GFAS}). 
    We denote by $M^2_\nu(t,T;\H)$ (a subset of $\mathrm{L}^2((t,T)\times\Omega;\H)$), the space of all $\H$-valued progressively measurable processes $\Y(\cdot)$ such that
    \begin{align*}
    	\|\Y(\cdot)\|_{M^2_\nu(t,T;\H)}:=\left(\E\left[\int_t^T \|\Y(s)\|_{\H}^2\d s\right]\right)^{\frac12}<+\infty.
    \end{align*}
    The notation $M^2_\nu(t,T;\H)$ emphasizes the dependence on the generalized reference probability space $\nu$. Processes in $M^2_\nu(t,T;\H)$ are identified if they are equal $\P\otimes\d t$-a.e.
    
     Let us now provide some preliminaries on large deviation theory which is used throughout this work and has been taken from \cite{ABPD,JFTGK}.
    \subsection{Large deviation principle}\label{mainLDP}
    Let $(\mathcal{E},\mathpzc{d})$ be a Polish space, which is a complete and separable metric space, and denote by $\mathscr{B}(\mathcal{E}),$ the $\sigma-$field of Borel sets in $\mathcal{E}$.
    Let $\{X_n\}_{n\in\N}$ be a sequence of $\mathcal{E}-$valued random variables defined on a probability space $(\Omega,\mathscr{F},\P)$. The theory of large deviations is the study of probabilities $\P_n(\B):=\P\{X_n\in\B\}, \ \B\in\mathscr{B}(\mathcal{E})$, which converges to zero exponentially fast as $n\to+\infty$. The exponential decay of these probabilities is characterised by a \emph{rate function}, defined as follows: 
    \begin{definition}
    	A function $\mathcal{I}:\mathcal{E}\to[0,+\infty]$ is called a \emph{rate function} if $\mathcal{I}$ is lower semicontinuous function. In addition, if for each $r<+\infty$, the level set $\{x\in\mathcal{E}:\mathcal{I}(x)\leq r\}$ is a compact subset of $\mathcal{E}$, then we say that the rate function $\mathcal{I}$ is \emph{good}.
    \end{definition}
    Next, we introduce the notion of  large deviation principle (LDP). For any measurable set $\B\subset\mathcal{E}$, we denote $\inf\limits_{x\in\B}\mathcal{I}(x):=\mathcal{I}(\B)$. 
    \begin{definition}
    	Let $\mathcal{I}$ be a rate function on $\mathcal{E}$. The sequence of $\mathcal{E}-$valued random variables $\{X_n\}_{n\in\N}$ is said to satisfy the \emph{large deviation principle} on $\mathcal{E}$ with rate function $\mathcal{I}$ if the following two conditions hold:
    	\begin{itemize}
    		\item \textbf{Large deviation upper bound:} For each closed set $F\subset\mathcal{E}$, we have
    		\begin{align*}
    			\limsup\limits_{n\to+\infty}\frac{1}{n}\log\P_n(F)\leq-\mathcal{I}(F).
    		\end{align*}
    		\item \textbf{Large deviation lower bound:} For each open set $G\subset\mathcal{E}$, we have
    		\begin{align*}
    			\liminf\limits_{n\to+\infty}\frac{1}{n}\log\P_n(F)\geq-\mathcal{I}(G).
    		\end{align*}
    	\end{itemize}
    \end{definition}
  In proving large deviation principles, it is useful to introduce the notion of exponential tightness, which plays a role analogous to tightness in the theory of weak convergence.
    \begin{definition}
    	A sequence of $\mathcal{E}-$valued random variables $\{X_n\}_{n\in\N}$  is said to be \emph{exponentially tight} if for each $\eps>0$, there exists a compact set $K=K(\eps)\subset\mathcal{E}$ such that
    	\begin{align*}
    		\limsup\limits_{n\to+\infty} \frac{1}{n}\P\{X_n\notin K^c\}\leq-\eps.
    	\end{align*}
    \end{definition}
    The following framework of large deviation principle, which we adopt in this work, is taken from \cite{JFTGK}. For reader's convenience, we present here the key aspects of this framework:
    \subsection{Feng and Kurtz framework for LDP({\cite{JFTGK}})}\label{FKLDP}
    Let us denote by $\D([0,+\infty);\mathcal{E})$, the set of $\mathcal{E}-$valued functions defined on $[0,+\infty)$, which are right continuous and have left limits at every $t\in[0,+\infty)$. We endow the space $\D([0,+\infty);\mathcal{E})$ with the Skorohod $\J-$topology (see \cite{Ajakub,AJMM} for definition and its properties). For notational convienience, we call $\D([0,+\infty);\mathcal{E})$, the \emph{path space}. We write $\C_b(\mathcal{E})$ for the set of all bounded continuous functions defined on $\mathcal{E}$. 
    The following definition is used in the sequel, which is required for the exponential tightness in path space and the existence of a Laplace limit (see Theorem \ref{LDPsemg6}).
    \begin{definition}[{\cite[Definition 3.18, Chapter 3]{JFTGK}}]\label{isopoi}
    	A collection of functions $\mathscr{A}\subset\C_b(\mathcal{E})$ is said to \emph{isolates points in $\mathcal{E}$} if, for every $x\in\mathcal{E}$, every $\eps>0$, and every compact set $\mathcal{K}\subset\mathcal{E}$, there exists $g\in\mathscr{A}$ such that
    	\begin{itemize}
    		\item $g(x)$ is small in magnitude: $|g(x)|<\eps$,
    		\item $g$ is non-positive on $\mathcal{K}$: 
    		$\sup\limits_{y\in\mathcal{K}} g(y)\leq0,$
    		\item $g$ attains significantly negative values outside an $\eps-$neighbourhood of $x$:
    		\begin{align*}
    			\sup\limits_{y\in \mathcal{K}\cap B_{\eps}^c(x)} g(y)<-\frac{1}{\eps},
    		\end{align*}
    		where $B_{\eps}^c(x)$ is the complement of $\eps-$open ball with centre at $x$.
    	\end{itemize}
    	If $g$ satisfies these conditions, we say that $g$ isolates $x$ relative to $\eps$ and $\mathcal{K}$. Moreover, if 
    	\begin{align*}
    		\sup\limits_{g\in\mathscr{A}}\sup\limits_{y} g(y)<+\infty,
    	\end{align*}
    	then the set $\mathscr{A}$ is said to be bounded above.
    \end{definition}

    \subsubsection{Verifying the exponential tightness in path space $\D([0,+\infty);\mathcal{E})$.} The following result demonstrates the exponential tightness in the space $\D([0,+\infty);\mathcal{E})$. It allows us to reduce the problem of verifying the exponential tightness of the $\mathcal{E}-$valued process $\{X_n(\cdot)\}_{n\geq1}$ to that of real valued process  $\{g(X_n(\cdot))\}_{n\geq1}$, where $g:\mathcal{E}\to\R$ is some real-valued function.

    \begin{theorem}\cite[Theorem 4.4, Chapter 4]{JFTGK}\label{LDPsemg6}
    	Let $(\mathcal{E},\mathpzc{d})$ be a complete separable metric space. The sequence of $\mathcal{E}-$valued stochastic processes $\{X_n(\cdot)\}_{n\geq1}$ is \emph{exponentially tight} in $\D([0,+\infty);\mathcal{E})$ if and only if the following two conditions are satisfied:
    	\begin{itemize}
    		\item \textbf{Exponential compact containment.} For every $T>0$ and $M>0$, there exists a compact set $\mathcal{K}_{M,T}\subset\mathcal{E}$ such that
    		\begin{align*}
    			\limsup\limits_{n\to+\infty}
    			\frac{1}{n}\log\P\big(\{\text{there exists } \ 0\leq t\leq T \ \text{ such that } \ X_n(t)\notin\mathcal{K}_{M,T}\}\big)\leq-M.
    		\end{align*}
    		\item \textbf{Weak exponential tightness.} There exists a family $\mathscr{A}\subset\C(\mathcal{E})$ of continuous functions which is closed under addition and isolates points in $\mathcal{E}$ such that for every $g\in\mathscr{A}$, the sequence of real valued processes $\{g(X_n)\}_{n\geq1}$ is exponentially tight in $\D([0,+\infty);\R)$.
    	\end{itemize}
    \end{theorem}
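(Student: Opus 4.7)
The plan is to establish both directions of the equivalence separately, with the converse being substantially more delicate.

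For the forward direction, suppose $\{X_n(\cdot)\}_{n\geq 1}$ is exponentially tight in $\D([0,+\infty);\mathcal{E})$. For each $M>0$, let $\mathcal{K}_M\subset \D([0,+\infty);\mathcal{E})$ be a compact set witnessing the exponential tightness at level $M$. By the Skorohod compactness criterion, for each $T>0$ the set of all values assumed by paths in $\mathcal{K}_M$ on $[0,T]$ has compact closure $\mathcal{K}_{M,T}\subset\mathcal{E}$; the exponential compact containment then follows from the inclusion $\{X_n(t)\notin\mathcal{K}_{M,T}\ \text{for some}\ t\in[0,T]\}\subset\{X_n\notin\mathcal{K}_M\}$. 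For weak exponential tightness, produce any family $\mathscr{A}\subset\C_b(\mathcal{E})$ that isolates points and is closed under addition (for instance, finite sums of suitably scaled truncated bump functions centered at a countable dense subset of $\mathcal{E}$); each $g\in\mathscr{A}$ induces a continuous map $\D([0,+\infty);\mathcal{E})\to\D([0,+\infty);\R)$ by pointwise composition, so $\{g(X_n)\}$ inherits exponential tightness.

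For the converse, given $M>0$ the goal is to construct a compact $\mathcal{K}_M\subset\D([0,+\infty);\mathcal{E})$ such that $\limsup_n \frac{1}{n}\log\P(X_n\notin\mathcal{K}_M)\leq -M$. Two ingredients are needed to invoke Skorohod compactness: (i) uniform relative compactness of the values $\{X_n(t):t\in[0,T]\}$ in $\mathcal{E}$ on each bounded interval, and (ii) a uniform bound on the Skorohod modulus $w'_\delta(X_n,T)$ as $\delta\to 0$. Ingredient (i) is provided directly by the first hypothesis: for each $k\in\mathbb{N}$, choose a compact $\mathcal{K}_k\subset\mathcal{E}$ realizing compact containment at level $M+k+1$ on $[0,k]$, and combine the events over $k$ by an exponential union bound so that the exceptional set has probability of order $e^{-nM}$. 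Ingredient (ii) is obtained by lifting the weak exponential tightness: for each $g\in\mathscr{A}$, $\{g(X_n)\}$ is exponentially tight in $\D([0,+\infty);\R)$, so its scalar Skorohod modulus can be controlled exponentially.

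The crucial lift from scalar moduli to the $\mathcal{E}$-valued modulus exploits the isolating property on each $\mathcal{K}_k$. Given $\eps>0$, cover $\mathcal{K}_k$ by finitely many balls of radius $\eps/4$ around centers $x_1,\ldots,x_N$; by Definition \ref{isopoi}, for each $i$ there exists $g_i\in\mathscr{A}$ with $|g_i(x_i)|<\eps'$, $g_i\leq 0$ on $\mathcal{K}_k$, and $g_i<-1/\eps'$ outside the $\eps'$-neighborhood of $x_i$. On the containment event, a $\mathpzc{d}$-jump of $X_n$ of size at least $\eps$ at some time $t\leq k$ must move the path from some ball around $x_i$ to the complement of the corresponding $\eps'$-neighborhood, forcing $g_i(X_n)$ (and its additively generated relatives in $\mathscr{A}$) to execute a jump of magnitude at least $1/\eps'-\eps'$. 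Hence control of $w'_\delta(g_i(X_n),k)$ for $i=1,\ldots,N$ translates into control of $w'_\delta(X_n,k)$, up to the choice of parameters. Intersecting the containment event with the modulus-bound events, diagonalizing over $k\to\infty$, $\eps\to 0$, and $\delta\to 0$, and using Skorohod's theorem in $\D([0,+\infty);\mathcal{E})$, produces the desired compact $\mathcal{K}_M$.

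The principal obstacle is the quantitative execution of this lift. The covering cardinality $N=N(\eps,k)$, the choice of $g_i\in\mathscr{A}$, and the isolating constants all depend on $\eps$ and $k$, and must be coordinated so that the resulting countable union of bad events over $(k,\eps,\delta)$ still decays at exponential rate at least $-M$. Equally delicate is the reconciliation between the scalar Skorohod modulus $w'_\delta(g_i(X_n),T)$ and the $\mathcal{E}$-valued modulus: the time reparameterizations permitted by $w'_\delta$ must remain compatible across composition with the test functions $g_i$, which is where the flexibility granted by closure of $\mathscr{A}$ under addition and by the strong quantitative form of the isolating property is used in full.
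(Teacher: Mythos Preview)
The paper does not supply a proof of this statement. Theorem~\ref{LDPsemg6} is quoted verbatim from \cite[Theorem 4.4, Chapter 4]{JFTGK} and used as a black box in Section~\ref{LaDePr}; the authors cite it as part of the Feng--Kurtz framework and never attempt to reprove it. So there is no ``paper's own proof'' to compare your proposal against.

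That said, your sketch is broadly aligned with the argument in the Feng--Kurtz monograph. The forward direction is essentially as you describe: compactness in $\D([0,+\infty);\mathcal{E})$ forces compact containment of values, and continuous composition transfers exponential tightness to the scalar processes. For the converse, the mechanism you identify---using the isolating functions on the compact sets $\mathcal{K}_k$ to convert control of scalar Skorohod moduli into control of the $\mathcal{E}$-valued modulus---is indeed the heart of the matter. Your honest flagging of the quantitative bookkeeping (covering numbers depending on $\eps$ and $k$, compatibility of time reparameterizations across the $g_i$, and the exponential union bound over a countable grid of parameters) is accurate: these are precisely the technicalities that make the full proof in \cite{JFTGK} several pages long, and your outline does not resolve them. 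In particular, the step where you claim that a $\mathpzc{d}$-jump of size $\eps$ forces a jump in some $g_i(X_n)$ needs the isolating parameters chosen carefully relative to the covering radius, and closure under addition is used to pass from control of individual $g_i$ to control of their combinations needed for the modulus argument. As a proof sketch it is reasonable; as a complete proof it would require substantially more detail, which is presumably why the present paper simply cites the result.
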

    \subsubsection{Establishing  the Laplace limit}
    To demonstrate the existence of the Laplace limit, we follow the approach as stated in \cite{AS2}. Specifically, we first establish the Laplace limit at a fixed time by identifying it with the limit of the viscosity solution to the associated second-order HJB equation (see Section \ref{LaplaceLDP} for the detailed procedure). Then, one may identify the Laplace limit at multiple times from the Markov property of the solution and Proposition \ref{vfLDP3}.

    \subsubsection{Proving the large deviation principle in the path space $\D([0,+\infty);\mathcal{E})$}
    Once the exponential tightness and the existence of Laplace limit are established, 
    the large deviation principle in the space $\D([0,+\infty);\mathcal{E})$ follows from the following result:
    \begin{proposition}[{\cite[Corollary 4.29, Chapter 4]{JFTGK}}]\label{exptightD}
    	Let $\mathscr{A}\subset\C_b(\mathcal{E})$ be a bounded above set of functions and isolates points. Assume that the following conditions hold:
    	\begin{itemize}
    		\item \textbf{Exponential tightness.} The sequence of $\mathcal{E}-$valued stochastic processes $\{X_n\}_{n\geq1}$ is exponentially tight in $\D([0,+\infty);\mathcal{E})$.
    		\item \textbf{Existence of a Laplace limit.} For every finite collection of times 
    		$0\leq t_1\leq\ldots\leq t_m$ and functions $g_1,\ldots,g_m\in\mathscr{A}$, the limit 
    		\begin{align}\label{expLmt}
    			\lim\limits_{n\to+\infty}\frac{1}{n}\log\E
    			\big[e^{n(g_1(X_n(t_1))+\ldots+g_m(X_n(t_m)))}\big]
    		\end{align}
    		exists.
    	\end{itemize}  
    Let $x\in \D([0,+\infty);\mathcal{E})$, and let $\triangle_x$ denote the set of discontinuities of $x$. 	Then, $\{X_n\}_{n\geq1}$ satisfies the large deviation principle in 
    	$\D([0,+\infty);\mathcal{E})$ with good rate function given by 
    		\begin{align}\label{ratefun}
    			I(x)=\sup\limits_{m}\sup\limits_{\{t_1,\ldots,t_m\}\subset\triangle_x^c}
    			\sup\limits_{g_1,\ldots,g_m\in\mathscr{A}}
    			\bigg\{&g(x(t_1))+\cdots+g_m(x(t_m))
    			\nonumber\\&\quad-
    			\lim\limits_{n\to+\infty}\frac{1}{n}\log\E
    			\big[e^{n(g_1(X_n(t_1))+\cdots+g_m(X_n(t_m)))}\big]\bigg\}.
    		\end{align}
    \end{proposition}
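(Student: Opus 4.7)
The plan is to establish the theorem by applying the Feng--Kurtz machinery (Proposition \ref{exptightD}) to first obtain an LDP on the Skorohod space $\D([0,T];\H)$, and then upgrade it to $\C([0,T];\H)$ via the $\C$-exponential tightness criterion (Theorem \ref{refthm}). Concretely, I would fix $\mathcal{E}=\H$ (whose embedding $\V\hookrightarrow\H$ is compact) and take $\mathscr{A}\subset\C_b(\H)$ to be a suitable class of bounded test functions isolating points; then I need to verify the two hypotheses of Proposition \ref{exptightD}: exponential tightness of $\{\Y_n(\cdot)\}$ in $\D([0,T];\H)$ and the existence of a joint Laplace limit at arbitrary finite collections of times.

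For the Laplace limit, I would first handle a single time. Set $\mathpzc{U}_n(t,\y)=-\tfrac{1}{n}\log\E[e^{-ng(\Y_n(T;t,\y))}]$ for $g\in\C_b(\H)$. A formal It\^o computation (made rigorous via the strong solution of \eqref{stap} guaranteed by the regularity in Definition \ref{def-var-strong} and the exponential bounds in Proposition \ref{propexpest}) shows that $\mathpzc{U}_n$ is a bounded viscosity solution of the singularly perturbed second-order HJB equation \eqref{scdhjb} (this is Proposition \ref{Lapvscso}). Sending $n\to\infty$ one formally arrives at the first-order HJB \eqref{fsthjb}, whose unique bounded viscosity solution $\mathpzc{U}$ is identified with the value function of the deterministic control problem for the CBF equations (Proposition \ref{Lapvscso1}). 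The comparison principle (Theorems \ref{comparison}--\ref{comparisonappr}), whose proof exploits the monotonicity of the absorption term $\mathcal{C}(\cdot)$ from Lemma \ref{monoest} to avoid any quantization of $\mathcal{B}(\cdot)$, then yields $\mathpzc{U}_n\to\mathpzc{U}$, and hence the single-time Laplace limit (Corollary \ref{comparisonds}). The extension to the multiple-time limit \eqref{expLmt} follows by the standard Feng--Kurtz reduction: iteratively condition on $\mathscr{F}_{t_k}^{0}$, use the Markov property of $\Y_n$, and apply the single-time convergence together with the uniform boundedness and continuity properties of $\mathpzc{U}_n$ in $\y$ (Propositions \ref{LapunLh} and \ref{vfLDP3}).

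For the exponential tightness in $\D([0,T];\H)$, I invoke Theorem \ref{LDPsemg6}. The exponential compact containment condition reduces to showing that for each $M,T$ there is a compact $\mathcal{K}_{M,T}\subset\H$ (take a ball in $\V$, which is compact in $\H$) such that $\frac{1}{n}\log\P(\sup_{s\le T}\|\Y_n(s)\|_{\V}>R)\to-\infty$ as $R\to\infty$. This is precisely the content of the exponential moment estimate in Proposition \ref{propexpest}, proved via It\^o's formula applied to $e^{\lambda\|\Y_n(s)\|_{\V}^2}$, using the identity \eqref{torusequ} to control $(\mathcal{C}(\u),\A\u)$ and the dissipation produced by $\A\u$ together with the damping $\alpha\u$ and the Forchheimer term to absorb $(\mathcal{B}(\u),\A\u)$ under the condition $r>3$ or $r=3$ with $2\beta\mu\ge 1$ (as explained in Remark \ref{BLrem}). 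The weak exponential tightness, i.e. exponential tightness of $\{g(\Y_n(\cdot))\}$ in $\D([0,T];\R)$ for $g$ in a point-isolating family $\mathscr{A}$, is obtained by the standard Aldous-type criterion combined once more with the exponential $\V$-bound and a Burkholder-type exponential estimate for the stochastic integral against $\Q^{1/2}\d\mathbf{W}$; choosing $\mathscr{A}$ as, say, Lipschitz functions on $\H$ with compact support suffices. Proposition \ref{exptightD} then delivers the LDP in $\D([0,T];\H)$ with the explicit good rate function \eqref{ratefun}.

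Finally, to upgrade from $\D([0,T];\H)$ to $\C([0,T];\H)$, I verify $\C$-exponential tightness in the sense of Definition \ref{cextght}: it suffices to control the exponential probability of large jumps, and since the paths of $\Y_n$ are actually continuous in $\H$ (from the variational solution theory in Definition \ref{def-var-strong}), the modulus of continuity estimates follow directly from the stochastic integral and from uniform bounds on the drift in $\V^*+\wi\L^{(r+1)/r}$, both controlled exponentially via Proposition \ref{propexpest}. Applying Theorem \ref{refthm} then transfers the LDP from $\D([0,T];\H)$ to $\C([0,T];\H)$ with the same rate function $I$. The main obstacle I foresee is the first step --- verifying the subsolution/supersolution inequalities for $\mathpzc{U}_n$ in Proposition \ref{Lapvscso}, where the presence of the exponential factor $e^{-ng(\Y_n(T))}$ inside expectations interacts delicately with the unbounded nonlinearities $\mathcal{B}(\cdot)$ and $\mathcal{C}(\cdot)$; this forces the use of bounded test functions (Definition \ref{testD}) and a careful stopping-time argument to justify the It\^o expansion, after which the rest of the scheme proceeds by the Feng--Kurtz template described above.
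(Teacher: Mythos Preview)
There is a fundamental mismatch here: the statement you were asked to prove is Proposition~\ref{exptightD}, which is an \emph{abstract} result about general $\mathcal{E}$-valued processes, quoted verbatim from Feng--Kurtz \cite[Corollary~4.29]{JFTGK}. The paper does not prove this proposition at all; it is stated without proof as an external input to the Feng--Kurtz framework (Subsection~\ref{FKLDP}). There is therefore no ``paper's own proof'' to compare against, and any genuine proof would have to reproduce the Feng--Kurtz argument (exponential tightness plus existence of the Laplace limit on a point-isolating class implies the LDP with the variational rate function \eqref{ratefun}).

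What you have written instead is a sketch of Theorem~\ref{mainthm}, the main result of the paper, which \emph{applies} Proposition~\ref{exptightD} as a black box. Your outline of that application is broadly faithful to the paper's strategy (Sections~\ref{LaplaceLDP}--\ref{LaDePr}): single-time Laplace limit via viscosity solutions, extension to multiple times via the Markov property, exponential tightness via Theorem~\ref{LDPsemg6} and Proposition~\ref{propexpest}, and the upgrade to $\C([0,T];\H)$ via Theorem~\ref{refthm}. A couple of details differ from the paper: the paper takes $\mathscr{A}=\mathscr{B}$ as in Lemma~\ref{mathscrA} (sums of radial $\C^2$ functions centered at points of $\V$), not Lipschitz functions with compact support; and the $\C$-exponential tightness step is trivial because $\Y_n$ has $\P$-a.s.\ continuous paths in $\H$, so no modulus-of-continuity estimate is needed (see the one-line proof of Theorem~\ref{mainthm}). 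But none of this addresses the actual statement you were given.
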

    

    \subsubsection{Large deviation principle in the compact uniform topology}
    As outlined in \cite[Section 4, Chapter 4]{JFTGK}, this follows from the following two key ingredients:
    \begin{itemize}
    	\item[(i)] The large deviation principle in the path space $\D([0,+\infty);\mathcal{E})$;
    	\item[(ii)] $\C-$exponential tightness of the process $\{X_n(\cdot)\}_{n\geq1}$. 
    \end{itemize} 
    Let us first define the notion of $\C-$exponential tightness, which plays a crucial role in transferring the large deviation principle from Skorohod space to continuous space.
    
    \begin{definition}\cite[Definition 4.12, Chapter 4]{JFTGK}\label{cextght}
    	Let $(\mathcal{E},\mathpzc{d})$ be a complete separable metric space. Consider a  sequence of $\mathcal{E}-$valued stochastic processes $\{X_n(\cdot)\}_{n\geq1}$ which is exponentially tight in $\D([0,+\infty);\mathcal{E})$. We say that the sequence is \emph{$\C-$exponentially tight} if for every $\eta>0$ and $T>0$, the following condition holds:
    	\begin{align}\label{cexptght}
    		\limsup\limits_{n\to+\infty}
    		\frac{1}{n}\log\P\left(\sup\limits_{s\leq T} \mathpzc{d}(X_n(s),X_n(s-))\geq\eta\right)=
    		-\infty,
    	\end{align}
    	where $\mathpzc{d}(X_n(s),X_n(s-))$ measures the jump size of the process $X_n(\cdot)$ at $s$ and $X_n(s-)$ is the left limit of the process $X_n(\cdot)$ at $s$.
    \end{definition}
    We then utilize the following result, together with the $\C-$exponential tightness, which gives the large deviation principle in the continuous space.
    \begin{theorem}[{\cite[Theorem 4.14, Section 4.4]{JFTGK}}]\label{refthm}
    	Let $\{\mathbb{P}_n\}_{n\in\N}$ be a sequence of probability measures on $\D([0,+\infty);\mathcal{E})$. Suppose that
    	\begin{itemize}
    		\item $\{\mathbb{P}_n\}_{n\in\N}$ is $\C-$exponentially tight, and
    		\item The large deviation principle holds for $\{\mathbb{P}_n\}_{n\in\N}$ (in the Skorohod topology) with rate function $I$.
    	\end{itemize}  
    	Then, the large deviation principle holds in the compact uniform topology with the same rate function $I$.
    \end{theorem}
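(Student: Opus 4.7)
The plan is to bridge from the Skorohod LDP to the compact uniform LDP by combining two ingredients: (i) the $\C$-exponential tightness forces $I$ to be infinite on every discontinuous path, and (ii) the Skorohod $\J_1$-topology and the compact uniform topology coincide on the subspace of continuous paths. Throughout I denote the Skorohod distance by $d_{\J_1}$ and its open balls by $B_\delta^{\J_1}(x)$.

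\textbf{Step 1: concentration of $I$ on $\C([0,+\infty);\mathcal{E})$.} Fix $x \in \D([0,+\infty);\mathcal{E}) \setminus \C([0,+\infty);\mathcal{E})$ with a jump $\eta_0 := \mathpzc{d}(x(t_0), x(t_0-)) > 0$ at some $t_0 \leq T$. A standard quantitative property of the Skorohod metric shows that any time change $\lambda$ realising the distance $d_{\J_1}(x, y) < \delta$ moves the discontinuity of $x$ at $t_0$ to a discontinuity of $y$ at $\lambda(t_0)$ of size at least $\eta_0 - 2\delta$. Taking $\delta < \eta_0/4$, one obtains
\begin{align*}
    B_\delta^{\J_1}(x) \subset \Bigl\{ y \in \D([0,+\infty);\mathcal{E}) : \sup_{s \leq T+1} \mathpzc{d}(y(s), y(s-)) \geq \tfrac{\eta_0}{2} \Bigr\}.
\end{align*}
The $\C$-exponential tightness condition \eqref{cexptght} then gives $\limsup_{n \to \infty} \frac{1}{n} \log \P_n(B_\delta^{\J_1}(x)) = -\infty$, whereas the Skorohod LDP lower bound provides $\liminf_{n \to \infty} \frac{1}{n} \log \P_n(B_\delta^{\J_1}(x)) \geq -I(x)$, forcing $I(x) = +\infty$.

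\textbf{Step 2: passage to the compact uniform topology.} For the upper bound, let $F$ be closed in the compact uniform topology and $\bar F^{\J_1}$ its Skorohod closure. The Skorohod LDP upper bound yields
\begin{align*}
    \limsup_{n \to \infty} \frac{1}{n} \log \P_n(F) \leq \limsup_{n \to \infty} \frac{1}{n} \log \P_n(\bar F^{\J_1}) \leq -\inf_{y \in \bar F^{\J_1}} I(y).
\end{align*}
Any $y \in \bar F^{\J_1} \setminus F$ is a Skorohod limit of points of $F$ that is not a uniform limit; since Skorohod convergence to a continuous limit is automatically uniform, such a $y$ must be discontinuous, and Step 1 gives $I(y) = +\infty$. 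Hence the infimum is attained on $F$. For the lower bound, let $G$ be open in the compact uniform topology and $x \in G$ with $I(x) < +\infty$; by Step 1, $x$ is continuous. Choose $T, \delta > 0$ with $\{y : \sup_{s \leq T} \mathpzc{d}(y(s), x(s)) < \delta\} \subset G$. The estimate
\begin{align*}
    \sup_{s \leq T} \mathpzc{d}(y(s), x(s)) \leq d_{\J_1}(x, y) + \omega_x\bigl(d_{\J_1}(x, y)\bigr),
\end{align*}
valid whenever $x$ is continuous on $[0, T]$ with modulus $\omega_x$ and obtained by composing with the time change realising $d_{\J_1}$, yields $B_{\delta'}^{\J_1}(x) \subset G$ for sufficiently small $\delta'$. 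The Skorohod lower bound then delivers $\liminf_{n \to \infty} \frac{1}{n} \log \P_n(G) \geq -I(x)$, and taking the infimum over $x \in G$ completes the argument.

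The main obstacle is Step 1: one must quantitatively control the preservation of jumps under Skorohod perturbations so that a Skorohod ball can be embedded into one of the ``large-jump'' sets of \eqref{cexptght}, and then invert the direction of the bound by reading the Skorohod LDP \emph{lower} bound at $x$ rather than its upper bound. Once the rate function is known to be concentrated on continuous paths, the conversion between the two topologies in Step 2 reduces to the well-known fact that $\J_1$ and the compact uniform topology coincide on $\C([0,+\infty);\mathcal{E})$.
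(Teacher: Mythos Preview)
The paper does not prove this theorem: it is quoted verbatim from Feng and Kurtz \cite[Theorem 4.14]{JFTGK} and used as a black box in the proof of Theorem~\ref{mainthm}. There is therefore no ``paper's own proof'' to compare against.

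That said, your argument is the standard one and is essentially correct. The two ingredients you isolate --- (i) $\C$-exponential tightness forces $I\equiv+\infty$ off $\C([0,+\infty);\mathcal{E})$, and (ii) the Skorohod and compact uniform topologies agree on continuous paths --- are exactly what drives the result in \cite{JFTGK}. One small point of care in Step~1: the time change $\lambda$ in the $\J_1$ metric is a homeomorphism, so $\lambda(t_0-)=\lambda(t_0)$; the jump of $y$ appears at $\lambda^{-1}(t_0)$ (or $\lambda(t_0)$, depending on the convention for which coordinate is reparametrised), and you should make sure $\lambda^{-1}(t_0)\leq T+1$ by taking $\delta<1$. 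In Step~2, the claim that any $y\in\bar F^{\J_1}\setminus F$ is discontinuous uses that Skorohod convergence to a \emph{continuous} limit is uniform on compacta; this is correct but worth stating explicitly, since it is the crux of why the infimum does not drop when passing from $F$ to $\bar F^{\J_1}$.
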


   \section{Auxiliary results}\label{abscon} \setcounter{equation}{0}
   In this section, we discuss some supporting results for SCBF equations \eqref{stap}. We then derive the energy estimates and show that the exponential moment bounds hold.
   The following assumption on $\f$ is taken in this article:
   \begin{hypothesis}\label{fhyp}
   	The function $\f:[0,T]\to\V$ is continuous and there is $R>0$ such that 
   	\begin{align*}
   		\|\f(t)\|_{\V}\leq R, \  \text{ for all } \ t\in[0,+\infty).   
   	\end{align*}
   \end{hypothesis} 
   
   \subsection{Well posedness of the SCBF equations} \eqref{stap} 
  We now recall some existence, uniqueness, and continuous dependence results for both variational and strong solutions of the SCBF equations \eqref{stap}. The proofs of the following well-posedness results can be found in \cite[Propositions 4.3 and 4.4]{smtm1} and \cite[Theorems 3.10 and 4.2]{MTM8} with some minor modifications.
   
   \begin{proposition}\label{weLLp}
   	Assume that the Hypotheses \ref{trQ1}-\ref{fhyp} be satisfied. Let us fix $p\geq2$. For every $\y\in\H$, there exists a unique variational solution  $\Y_n(\cdot)=\Y_n(\cdot;t,\y)$ of \eqref{stap} having $\P-$a.s. continuous trajectories in $\H$ and satisfies the following energy estimates for all $s\in[t,T]$:
   	\begin{align}\label{eqn-conv-1}
   		&\E\left[\|\Y_n(s)\|_{\H}^p\right]+p\mu\E\bigg[\int_t^{s}\|\nabla\Y_n(\tau)\|_{\H}^2\|\Y_n(\tau)\|_{\H}^{p-2}\d\tau\bigg]+ p\beta\E\bigg[\int_t^{s}
   		\|\Y_n(\tau)\|_{\widetilde{\L}^{r+1}}^{r+1}
   		\|\Y_n(\tau)\|_{\H}^{p-2}\d\tau\bigg]\nonumber\\&\leq \|\y\|_{\H}^p+C(p,r,R,\Q,\alpha)(s-t),
   	\end{align}
   	and 
   	\begin{align}\label{vsee1}
   		&\E\bigg[\sup\limits_{s\in[t,T]}\|\Y_n(s)\|_{\H}^p\bigg]+\E\bigg[\int_t^T \|\nabla\Y_n(\tau)\|_{\H}^2 \|\Y_n(\tau)\|_{\H}^{p-2}\d\tau\bigg]+\E\bigg[\int_t^T \|\Y_n(\tau)\|_{\wi\L^{r+1}}^{r+1} \|\Y_n(\tau)\|_{\H}^{p-2}\d\tau\bigg]
   		\nonumber\\&\leq
   		\|\y\|_{\H}^p+C(p,r,R,\Q,\mu,\alpha,\beta,T). 		
   	\end{align}
   	Furthermore, if $\y\in\V$, then the {variational solution} $\Y_n(\cdot)=\Y_n(\cdot;t,\y)$ is a  \emph{strong solution} with continuous trajectories in $\V$. Moreover, for all $s\in[t,T]$, we have following energy estimates:
   	\begin{align}\label{eqn-conv-2}
   		&\E\left[\|\nabla\Y_n(s)\|_{\H}^p\right]+\frac{p\mu}{2} \E\bigg[\int_t^{s}\|\A\Y_n(\tau)\|_{\H}^2\|\nabla\Y_n(\tau)\|_{\H}^{p-2}\d\tau\bigg]\nonumber\\&\quad+\frac{3p\beta}{4} \E\bigg[\int_t^{s}
   		\||\Y_n(\tau)|^{\frac{r-1}{2}}\nabla\Y_n(\tau)\|_{\H}^{2} 
   		\|\nabla\Y_n(\tau)\|_{\H}^{p-2}\d\tau\bigg]\nonumber\\&\leq \big(\|\nabla\y\|_{\H}^p+C(p,\mathrm{Tr}(\Q_1),R,\alpha)(s-t)\big) e^{p\varrho (s-t)},
   	\end{align}
   	and 
   	\begin{align}\label{ssee1}
   		&\E\left[\sup\limits_{s\in[t,T]}\|\nabla\Y_n(s)\|_{\H}^p\right]+
   		\E\bigg[\int_t^T \|\A\Y_n(\tau)\|_{\H}^2 \|\nabla\Y_n(\tau)\|_{\H}^{p-2}\d\tau\bigg]\nonumber\\&\quad+\E\bigg[\int_t^T \||\Y_n(\tau)|^{\frac{r-1}{2}}\nabla\Y_n(\tau)\|_{\H}^2 \|\nabla\Y_n(\tau)\|_{\H}^{p-2}\d\tau\bigg]
   		\nonumber\\&\leq
   		\|\nabla\y\|_{\H}^p+C(p,r,R,\Q_1,\mu,\alpha,\beta,T).  		
   	\end{align}
   \end{proposition}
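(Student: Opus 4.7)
The plan is to adapt the Galerkin approximation plus stochastic Minty-Browder monotonicity scheme of \cite[Theorems 3.10 and 4.2]{MTM8}, using the monotonicity estimate of Lemma \ref{monoest} and the bilinear/nonlinear bounds collected in Remark \ref{BLrem}. Let $\{\boldsymbol{e}_k\}_{k\in\N}$ be the orthonormal basis of $\H$ consisting of eigenfunctions of $\A$, and set $\H_m=\mathrm{span}\{\boldsymbol{e}_1,\ldots,\boldsymbol{e}_m\}$. Projecting \eqref{stap} onto $\H_m$ yields a finite-dimensional It\^o SDE with locally Lipschitz drift and additive noise, which admits a local solution $\Y_n^{(m)}$ classically; the a priori estimates below rule out blow-up, so the Galerkin approximants are global. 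A limiting solution is then obtained by extracting a weakly convergent subsequence from the uniform bounds and identifying the nonlinear limits via the monotonicity \eqref{monoest1} of Lemma \ref{monoest} (respectively, via the $r=3$ global monotonicity when $2\beta\mu\geq 1$) in a standard stochastic Minty-Browder argument. Uniqueness is immediate from the same monotonicity: the noise being additive, applying It\^o's formula to $\|\Y_n-\widetilde{\Y}_n\|_{\H}^2$ kills the stochastic integrals, and Gronwall closes the estimate.

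For the variational bound \eqref{eqn-conv-1}, one applies It\^o's formula to $\|\Y_n^{(m)}(s)\|_{\H}^p$. The cancellation $\langle\mathcal{B}(\u),\u\rangle=0$ from \eqref{syymB} eliminates the convective term; the identity $\langle\mathcal{C}(\u),\u\rangle=\|\u\|_{\wi\L^{r+1}}^{r+1}$ produces the absorption contribution on the left-hand side; the forcing term is controlled by Hypothesis \ref{fhyp}, Cauchy-Schwarz and Young's inequality (partly absorbed into the linear damping $-\alpha\|\u\|_{\H}^p$); and the It\^o correction is bounded by $\frac{1}{n}\Tr(\Q)\leq\Tr(\Q)<\infty$ uniformly in $n$. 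Taking expectations yields \eqref{eqn-conv-1}, which passes to the Galerkin limit by weak lower semicontinuity. The path-supremum version \eqref{vsee1} then follows by applying the Burkholder-Davis-Gundy inequality to the stochastic integral before taking expectations and absorbing a small constant multiple of $\E\sup_{s\in[t,T]}\|\Y_n(s)\|_{\H}^p$ into the left-hand side.

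For the strong-solution estimates one applies It\^o's formula to $\|\nabla\Y_n^{(m)}(s)\|_{\H}^p=\|\A^{1/2}\Y_n^{(m)}(s)\|_{\H}^p$; the It\^o correction is now finite thanks to Hypothesis \ref{trQ1}, which gives $\Tr(\Q_1)=\|\A^{1/2}\Q^{1/2}\|_{\mathscr{L}_2(\H)}^2<\infty$. The absorption term is handled by the torus identity \eqref{torusequ}, which converts $(\mathcal{C}(\u),\A\u)$ into the positive quantity $\||\u|^{(r-1)/2}\nabla\u\|_{\H}^2+\frac{4(r-1)}{(r+1)^2}\|\nabla|\u|^{(r+1)/2}\|_{\H}^2$, feeding the $\frac{3p\beta}{4}$ contribution on the left-hand side of \eqref{eqn-conv-2}. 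The main obstacle is the term $(\mathcal{B}(\u),\A\u)$, which does \emph{not} vanish on $\mathbb{T}^3$; this is exactly where the assumption $r>3$, or $r=3$ with $2\beta\mu\geq 1$, becomes essential. For $r>3$, using \eqref{syymB3}, one absorbs the $\frac{\mu}{2}\|\A\u\|_{\H}^2$ into the diffusion, the $\frac{\beta}{4}\||\u|^{(r-1)/2}\nabla\u\|_{\H}^2$ into the absorption contribution coming from \eqref{torusequ}, and the leftover $\varrho\|\nabla\u\|_{\H}^2$ is paid for by the exponential factor $e^{p\varrho(s-t)}$ produced by Gronwall's inequality. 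The borderline case $r=3$ with $2\beta\mu\geq 1$ is handled analogously via the corresponding global monotonicity from Lemma \ref{monoest}, in which the nonnegative factor $\beta-\frac{1}{2\mu}$ provides the required diffusion-absorption balance. The path-supremum estimate \eqref{ssee1} then follows from \eqref{eqn-conv-2} by the same BDG argument as above, now with $\Q$ replaced by $\Q_1$. Continuity of the sample paths in $\H$ (respectively $\V$) is obtained by a standard Lions-Magenes type argument, using that $\d\Y_n$ lies in $\V^{*}+\wi\L^{(r+1)/r}$ (respectively $\H$) thanks to the derived bounds.
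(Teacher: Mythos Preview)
Your proposal is correct and follows exactly the approach of the references cited in the paper. Note that the paper itself does not give a proof of this proposition: it simply states (immediately before Proposition \ref{weLLp}) that ``the proofs of the following well-posedness results can be found in \cite[Propositions 4.3 and 4.4]{smtm1} and \cite[Theorems 3.10 and 4.2]{MTM8} with some minor modifications.'' Your sketch---Galerkin approximation, It\^o's formula for $\|\cdot\|_{\H}^p$ and $\|\A^{1/2}\cdot\|_{\H}^p$, the cancellation \eqref{syymB} for the variational estimate, the torus identity \eqref{torusequ} together with \eqref{syymB3} to control $(\mathcal{B}(\u),\A\u)$ for the strong estimate (producing the Gronwall factor $e^{p\varrho(s-t)}$), BDG for the path-supremum bounds, and the stochastic Minty-Browder/monotonicity argument of Lemma \ref{monoest} for existence and uniqueness---is precisely the content of those references and is the intended proof.
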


   \begin{proposition}[Continuous dependence of solutions]\label{cts-dep-soln}
   	
   	(i) There exists a constant $C$, independent of $t$, such that for all $s\in[t,T]$, $\P-$a.s., we have
   	\begin{align}\label{ctsdep0}
   		&\|\Y_{1,n}(s)-\Y_{2,n}(s)\|_{\H}^2+\int_t^s \|\nabla(\Y_{1,n}-\Y_{2,n})(\tau)\|_{\H}^2\d\tau+\int_t^s \|\Y_{1,n}(\tau)-\Y_{2,n}(\tau)\|_{\wi\L^{r+1}}^{r+1}\d\tau\nonumber\\&\leq
   		\|\y_1-\y_2\|_{\H}^2 e^{C(s-t)},
   	\end{align} 
   	where $\Y_{1,n}(\cdot)=\Y_{1,n}(\cdot;t,\y_1)$ and $\Y_{2,n}(\cdot)=\Y_{2,n}(\cdot;t,\y_2)$ are two strong  solutions of \eqref{stap} with initial conditions $\Y_{1,n}(t)=\y_1$ in $\H$ and $\Y_{2,n}(t)=\y_2$ in $\H$. 
   	
   	(ii) If $\|\y\|_{\V}\leq R_1$, where $R_1$ is arbitrary, then there exists a constant $$C=C(\mu,\alpha,\beta,T,R,R_1,\mathrm{Tr}(\Q),\mathrm{Tr}(\Q_1))$$ such that for all $s\in[t,T]$, 
   	\begin{align}\label{ctsdep0.1}
   		\E\left[\|\Y_n(s)-\y\|_{\H}^2\right]\leq C(\mu,\alpha,\beta,T,R,R_1, \mathrm{Tr}(\Q),\mathrm{Tr}(\Q_1))(s-t), 
   	\end{align} 
   	where $\Y_n(\cdot)=\Y_n(\cdot;t,\y)$.
   	
   	(iii) For every initial condition $\y\in\V,$ there exists a modulus $\omega$ such that
   	\begin{align}\label{ctsdep0.2}
   		\E\left[\|\Y_n(s)-\y\|_{\V}^p\right]\leq\omega_{\y}(s-t), \ \text{ for all } \ s\in[t,T],
   	\end{align}
   	where $\Y_n(\cdot)=\Y_n(\cdot;t,\y)$.
   \end{proposition}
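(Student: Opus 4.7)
For part (i), I would apply It\^o's formula pathwise to $\|\Y_{1,n}(s) - \Y_{2,n}(s)\|_{\H}^2$. Since both strong solutions of \eqref{stap} are driven by the \emph{same} cylindrical Wiener process with the identical amplitude $n^{-1/2}$, the stochastic integrals cancel in the equation for the difference, leaving a deterministic differential inequality $\P$-a.s. Invoking the local monotonicity estimate of Lemma \ref{monoest} for $r>3$ (with the residual term $\varrho\|\Y_{1,n}-\Y_{2,n}\|_{\H}^2$ moved to the Gronwall side) and the global version for $r=3$ with $2\beta\mu\geq 1$, the dissipative contributions $\tfrac{\mu}{2}\|\nabla(\Y_{1,n}-\Y_{2,n})\|_{\H}^2$ and, via Lemma \ref{monopropC}, $\tfrac{\beta}{2^{r-1}}\|\Y_{1,n}-\Y_{2,n}\|_{\wi\L^{r+1}}^{r+1}$ land on the left-hand side. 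A pathwise Gronwall step then yields \eqref{ctsdep0}.

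For part (ii), I would apply It\^o's formula to $\|\Y_n(s) - \y\|_{\H}^2$ with $\y \in \V$ treated as a constant. Taking expectation kills the stochastic integral and leaves
\begin{align*}
\E\|\Y_n(s) - \y\|_{\H}^2 = -2\,\E\int_t^s \langle \Y_n - \y,\ \mu\A\Y_n + \mathcal{B}(\Y_n) + \alpha\Y_n + \beta\mathcal{C}(\Y_n) - \f\rangle \,d\tau + \frac{(s-t)\Tr(\Q)}{n}.
\end{align*}
Decomposing $\Y_n = (\Y_n - \y) + \y$ inside each pairing and exploiting both $b(\u,\v,\v)=0$ from \eqref{syymB} and the monotonicity of $\mathcal{C}$ from Lemma \ref{monopropC}, the quadratic-in-$(\Y_n-\y)$ dissipation migrates to the left-hand side, while the remaining cross terms involving $\y$ are bounded via Agmon/Sobolev inequalities using $\|\y\|_{\V}\leq R_1$ (so that $\|\y\|_{\wi\L^{r+1}}$ is controlled a priori) and Young's inequality. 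A Gronwall step combined with the strong-solution energy estimates \eqref{eqn-conv-1}--\eqref{ssee1} then produces the required linear-in-$(s-t)$ bound \eqref{ctsdep0.1}.

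For part (iii), Proposition \ref{weLLp} guarantees that $\Y_n(\cdot\,;t,\y)$ has $\P$-a.s. continuous trajectories in $\V$, so $\|\Y_n(s) - \y\|_{\V}^p \to 0$ pointwise a.s. as $s \downarrow t$. Applying the strong-solution bound \eqref{ssee1} with exponent $2p$ in place of $p$ provides an integrable majorant of the form $C\bigl(\sup_{s\in[t,T]}\|\Y_n(s)\|_{\V}^{2p} + \|\y\|_{\V}^{2p}\bigr)$, so Vitali's convergence theorem yields $\E\|\Y_n(s) - \y\|_{\V}^p \to 0$ as $s \downarrow t$. Setting $\omega_\y(h) := \sup_{s \in [t, t+h]} \E\|\Y_n(s) - \y\|_{\V}^p$ then defines a monotone increasing function with $\omega_\y(0+) = 0$, which serves as the required modulus of continuity in \eqref{ctsdep0.2}.

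The main technical obstacle is part (ii), particularly in three dimensions: one must carefully control the cross contributions coming from the bilinear operator, namely $b(\Y_n - \y,\y,\Y_n-\y)$ and $b(\y,\y,\Y_n-\y)$, using Agmon-type inequalities together with the Sobolev embedding $\V\hookrightarrow\wi\L^6$, and absorb the cross absorption pairing $\langle\Y_n-\y,\mathcal{C}(\y)\rangle$ into the $\wi\L^{r+1}$-contribution produced by monotonicity through Young's inequality. The delicate point is to keep all constants polynomial in $R_1$ and uniform in $s-t$, since a misbehaved Gronwall constant would introduce an exponential factor that would spoil the linear-in-$(s-t)$ rate claimed in \eqref{ctsdep0.1}.
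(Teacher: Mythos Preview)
Your proposal is correct and follows the same overall architecture as the paper: It\^o's formula on the squared $\H$-norm of the difference, elimination of the stochastic integral (by cancellation in (i), by expectation in (ii)), control of the cross terms via Sobolev embeddings and the a priori bounds of Proposition~\ref{weLLp}, then Gr\"onwall. The paper writes out only part (ii) and refers to \cite{smtm1} for (i) and (iii).

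For part (ii) there is a minor organisational difference worth noting. You decompose $\Y_n = (\Y_n - \y) + \y$ and use the skew-symmetry of $b$ together with the monotonicity of $\mathcal{C}$ to place the quadratic-in-$(\Y_n-\y)$ dissipation on the left, which forces you to estimate the mixed trilinear terms $b(\Y_n-\y,\y,\Y_n-\y)$ and $b(\y,\y,\Y_n-\y)$ separately. The paper instead splits each pairing as $(\cdot,\Y_n-\y)=(\cdot,\Y_n)-(\cdot,\y)$, so the full energy terms $-\mu\|\nabla\Y_n\|_{\H}^2$, $-\alpha\|\Y_n\|_{\H}^2$, $-2\beta\|\Y_n\|_{\wi\L^{r+1}}^{r+1}$ drop out directly (using $\langle\mathcal{B}(\Y_n),\Y_n\rangle=0$), and only the two pairings $(\mathcal{B}(\Y_n),\y)$ and $(\mathcal{C}(\Y_n),\y)$ remain; these are bounded by $C\|\y\|_{\V}\,\E\big[\sup_s\|\Y_n(s)\|_{\V}^2\big](s-t)$ and $C\|\y\|_{\V}\,\E\big[\sup_s\|\Y_n(s)\|_{\V}^{r-1}\big](s-t)$ via Sobolev and \eqref{ssee1}. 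The paper's route is slightly shorter because it bypasses the Agmon-type estimates you anticipate needing in $d=3$.

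Your closing concern about Gr\"onwall spoiling the linear rate is a non-issue: after Gr\"onwall one has $\E\|\Y_n(s)-\y\|_{\H}^2 \leq C(s-t)e^{C(s-t)}$, and since $s-t\leq T$ the exponential factor is bounded by $e^{CT}$ and simply absorbed into the constant, which is already permitted to depend on $T$.
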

   
   \begin{proof}
   For  reader's convenience, we provide the proof of part $(ii)$ only. For the proof of other parts one can refer \cite[Proposition 4.4]{smtm1}. Let us take $\Z(\cdot):=\Y(\cdot)-\y$. Then, we rewrite from \eqref{stap}, for $s\in[t,T]$, $\P-$a.s., 
   \begin{align*}
   	\Z(s)=&\int_t^s\big[-\mu\A\Y(\tau)-\alpha\Y(\tau)-\mathcal{B}(\Y(\tau))-\beta\mathcal{C}(\Y(\tau))+\f(\tau)\big]\d\tau
   	\nonumber\\&+
   	\frac{1}{\sqrt{n}}\int_t^s\Q^{\frac12}\d\mathbf{W}(\tau).
   \end{align*}
   On applying the infinite-dimensional It\^o formula to the function $\|\cdot\|_{\H}^2$ and to the process $\Z(\cdot)$ and then taking expectation, we get
   \begin{align}\label{ctsdep4}
   	\E\left[\|\Z(s)\|_{\H}^2\right]&=2\E\bigg[\int_t^s \big(-\mu\A\Y(\tau)-\alpha\Y(\tau)- \mathcal{B}(\Y(\tau))-\beta\mathcal{C}(\Y(\tau))+\f(\tau),\Z(\tau)\big)
   	\d\tau\bigg]
   	\nonumber\\&\quad+
   	\mathrm{Tr}(\Q)(s-t).
   \end{align}
   By using the Cauchy Schwarz inequality  and Young's inequality, \eqref{ctsdep4} reduces to
   \begin{align}\label{ctsdep5}
   	\E\left[\|\Z(s)\|_{\H}^2\right]\leq&-\mu\E\bigg[\int_t^s \|\nabla\Y(\tau)\|_{\H}^2\d\tau\bigg]
   	-\alpha\E\bigg[\int_t^s \|\Y(\tau)\|_{\H}^2\d\tau\bigg]
   	-2\beta\E\bigg[\int_t^s
   	\|\Y(\tau)\|_{\wi\L^{r+1}}^{r+1}\d\tau\bigg]
   	\nonumber\\&+2\mu\E\bigg[\int_t^s
   	\big(\A\Y(\tau),\y\big)\d\tau\bigg]+
   	2\alpha\E\bigg[\int_t^s
   	\big(\Y(\tau),\y\big)\d\tau\bigg]
   	\nonumber\\&+
   	2\E\bigg[\int_t^s
   	\big(\f(\tau),\Z(\tau)\big)\d\tau\bigg]+
   	2\E\bigg[\int_t^s
   	\big(\mathcal{B}(\Y(\tau)),\y\big)\d\tau\bigg]
   	\nonumber\\&+
   	2\beta\E\bigg[\int_t^s
   	\big(\mathcal{C}(\Y(\tau)),\y\big)\d\tau\bigg]+
   	\mathrm{Tr}(\Q)(s-t).
   \end{align}
   By using the  Cauchy Schwarz inequality and Sobolev's embedding, we estimate
   	\begin{align}
   		\bigg|\E\left[\int_{t}^{s}(\mathcal{B}(\Y(s)),\y) 
   		\d s\right]\bigg|&\leq\mathpzc{C}
   		\E\left[\int_{t}^{s}
   		\|\Y(s)\|_{\V}^2\|\nabla\y\|_{\H}\d s\right]
   		\nonumber\\&\leq
   		\mathpzc{C}\|\y\|_{\V}\left(\E\left[\sup\limits_{t\leq s \leq T} \|\Y(s)\|_{\V}^2\right]\right)(s-t).\label{ctsdep6}\\
   		\bigg|\E\left[\int_{t}^{s}(\mathcal{C}(\Y(s)),\y) 
   		\d s\right]\bigg|
   		&\leq\E\left[\int_{t}^{s}
   		\|\Y\|_{\wi\L^{r+1}}^{r-1}\|\y\|_{\wi\L^{r+1}}\d s\right]
   		\nonumber\\&\leq
   		\mathpzc{C}\|\y\|_{\V}\left(\E\left[\sup\limits_{t\leq s \leq T} \|\Y(s)\|_{\V}^{r-1}\right]\right)(s-t).\label{ctsdep7}
   \end{align}	
   Plugging \eqref{ctsdep6}-\eqref{ctsdep7} into \eqref{ctsdep5} yields that
   \begin{align}\label{ctsdep55}
   	\E\left[\|\Z(s)\|_{\H}^2\right]\leq&-\frac{\mu}{2}\E\bigg[\int_t^s \|\nabla\Y(\tau)\|_{\H}^2\d\tau\bigg]
   	-\frac{\alpha}{2}\E\bigg[\int_t^s \|\Y(\tau)\|_{\H}^2\d\tau\bigg]-
   	2\beta\E\bigg[\int_t^s
   	\|\Y(\tau)\|_{\wi\L^{r+1}}^{r+1}\d\tau\bigg]
   	\nonumber\\&+
   	2(\mu+\alpha)(s-t)\|\y\|_{\V}^2
   	+\mathrm{Tr}(\Q)(s-t)+
   	\E\int_t^s \|\f(\tau)\|_{\H}^2\d\tau
   	\nonumber\\&
   	+\mathpzc{C}\|\y\|_{\V}\left(\E\left[\sup\limits_{t\leq s \leq T} \|\Y(s)\|_{\V}^2\right]\right)(s-t)
   	+\mathpzc{C}\|\y\|_{\V}\left(\E\left[\sup\limits_{t\leq s \leq T} \|\Y(s)\|_{\V}^{r-1}\right]\right)(s-t)
   	\nonumber\\&
   	+\E\int_t^s\|\Z(\tau)\|_{\H}^2\d\tau,
   \end{align}
   for all $s\in[t,T]$. 
   	Using Hypothesis \ref{fhyp}, Fubini's theorem, \eqref{vsee1} and \eqref{ssee1}, we further simplify \eqref{ctsdep55} as follows:
   	\begin{align*}
   		\E\left[\|\Z(s)\|_{\H}^2\right]\leq&
   		\big(2(\mu+\alpha)\|\y\|_{\V}^2+R^2+
   		\mathrm{Tr}(\Q)\big)(s-t)+
   		\mathpzc{C}\|\y\|_{\V}\left(\E\left[\sup\limits_{t\leq s \leq T} \|\Y(s)\|_{\V}^2\right]\right)(s-t)
   		\nonumber\\&
   		+\mathpzc{C}\|\y\|_{\V}\left(\E\left[\sup\limits_{t\leq s \leq T} \|\Y(s)\|_{\V}^{r-1}\right]\right)(s-t)
   		+\int_t^s\E\|\Z(\tau)\|_{\H}^2\d\tau.
   	\end{align*}  
   	By applying Gr\"onwall's inequality and making use of \eqref{eqn-conv-2}, we deduce that
   	\begin{align}\label{ctsdep8}
   		\E\left[\|\Z(s)\|_{\H}^2\right]\leq\mathpzc{C}(s-t),
   \end{align}
   where $\mathpzc{C}=\mathpzc{C}(\mu,\alpha,\beta,R,\|\y\|_{\V},\Tr(\Q),\Tr(\Q_1))$.
   This completes the proof of \eqref{ctsdep0.1}.
   \end{proof}
   
   \subsection{Exponential moment estimates}
   A key step in establishing the LDP is to derive exponential moment bounds for the solutions of the SCBF equations \eqref{stap}. For reader's convenience, we present a detailed proof of this result below. 
   \begin{proposition}\label{propexpest}
   	For 
   	\begin{align}\label{eqn-con}
   \mbox{$\mu\geq \frac{1}{\alpha^{\frac{r-3}{r-1}}}\left(\frac{(r-3)}{2(r-1)}\right)^{\frac{r-3}{r-1}}\left(\frac{1}{\beta(r-1)}\right)^{\frac{2}{r-1}}$\ \text{ or }\ $\mu\geq \frac{1}{4}\max\left\{\frac{1}{\alpha},\frac{1}{\beta}\right\}$,}
   \end{align}
  there exist constants $\mathpzc{c}_i>0$, $i=1,2,3$, such that if $\y\in\V,$ then
   	\begin{align}\label{expmoest}
   		\E\left[\sup_{s\in[t,T]}e^{n\mathpzc{c}_1\|\Y_n(s)\|_{\V}^2}
   		\right]\leq\mathpzc{c}_2e^{n\mathpzc{c}_3},
   	\end{align}
   	where 
   	\begin{align*}
   		0<\mathpzc{c}_1&<\mathpzc{c}_1^0:=\frac{\alpha-\varrho^*}{\Tr(\Q_1)} \ \text{ or } \\
   		0<\mathpzc{c}_1&<\mathpzc{c}_2^0:=\frac{\alpha-\frac{1}{4\mu}}
   		{\Tr(\Q_1)}, \\
   		\mathpzc{c}_2&=\mathpzc{c}_2\big(\|\y\|_{\V},\mathpzc{c}_1,
   		\mathpzc{c}_1^0,\mathrm{Tr}(\Q_1),R\big) \ \text{ and }\\
   		\mathpzc{c}_3&=2\mathpzc{c}_1\left(\frac{R^2}{2\alpha}+
   		\Tr(\Q_1)\right)T.
   	\end{align*}
   \end{proposition}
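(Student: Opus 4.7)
The plan is an exponential-moment It\^o argument on the functional $\Phi_n(s):=e^{n\mathpzc{c}_1\|\Y_n(s)\|_{\V}^2}$, where the dissipation provided by $\mu\A$, $\alpha\Y_n$, and $\beta\mathcal{C}(\Y_n)$ --- strengthened under the hypothesis \eqref{eqn-con} on $\mu$ --- is used to dominate both the convective term and the quadratic-variation contribution from the noise.

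I would first apply It\^o's formula to $\|\Y_n(s)\|_{\V}^2=\|\Y_n\|_{\H}^2+\|\nabla\Y_n\|_{\H}^2$ using the SPDE \eqref{stap}. The antisymmetry $\langle\mathcal{B}(\Y_n),\Y_n\rangle=0$ removes the convective contribution from the $\H$-part, while on the $\nabla$-part one invokes the torus identity \eqref{torusequ} for $(\mathcal{C}(\Y_n),\A\Y_n)$ and the estimate \eqref{syymB3} (or its counterpart derived from \eqref{syymB1} when $r=3$ and $2\beta\mu\geq1$) to absorb $|(\mathcal{B}(\Y_n),\A\Y_n)|$ into a fraction of $\mu\|\A\Y_n\|_\H^2$ and $\beta\||\Y_n|^{(r-1)/2}\nabla\Y_n\|_\H^2$, leaving only a residue $\varrho^*\|\nabla\Y_n\|_\H^2$ with $\varrho^*\in\{\varrho,\tfrac{1}{4\mu}\}$ depending on the regime. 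Using $(\f,(I+\A)\Y_n)\leq\tfrac{\alpha}{2}\|\Y_n\|_{\V}^2+\tfrac{R^2}{2\alpha}$ and the It\^o correction $\tfrac{1}{n}(\Tr(\Q)+\Tr(\Q_1))$, one obtains
\[
d\|\Y_n\|_{\V}^2 + 2(\alpha-\varrho^*)\|\Y_n\|_{\V}^2\,ds \leq \Big[\tfrac{R^2}{\alpha}+\tfrac{1}{n}(\Tr(\Q)+\Tr(\Q_1))\Big]\,ds + dM_n(s),
\]
where $M_n$ is a local martingale with $d\langle M_n\rangle=\tfrac{4}{n}\|\Q^{1/2}(I+\A)\Y_n\|_\H^2\,ds$. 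Diagonalizing in the common eigenbasis of $\A$ and $\Q$, the Hilbert--Schmidt assumption $\Tr(\Q_1)<\infty$ yields the spectral bound $\|\Q^{1/2}(I+\A)\Y_n\|_\H^2\leq\Tr(\Q_1)\|\Y_n\|_{\V}^2$ up to lower-order terms in $\|\Y_n\|_\H^2$ that can be absorbed into $\mathpzc{c}_2$.

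Next I would apply It\^o to $\Phi_n$ itself. Its generator equals $n\mathpzc{c}_1\Phi_n$ times the drift of $d\|\Y_n\|_{\V}^2$, plus the extra quadratic-variation term $2n\mathpzc{c}_1^2\Phi_n\|\Q^{1/2}(I+\A)\Y_n\|_\H^2$. Combining with the previous spectral bound, the coefficient of $\|\Y_n\|_{\V}^2$ in $\mathcal{L}\Phi_n/\Phi_n$ becomes $-2n\mathpzc{c}_1\big[(\alpha-\varrho^*)-\mathpzc{c}_1\Tr(\Q_1)\big]$, which is non-positive precisely when $\mathpzc{c}_1<\mathpzc{c}_1^0=(\alpha-\varrho^*)/\Tr(\Q_1)$. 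Hence $\mathcal{L}\Phi_n\leq n\mathpzc{c}_1 K\,\Phi_n$ with $K=2(R^2/(2\alpha)+\Tr(\Q_1))$. Localizing by $\tau_N=\inf\{s:\|\Y_n(s)\|_\V>N\}$, taking expectations, and letting $N\to\infty$ via Fatou's lemma and the a priori bound \eqref{ssee1}, Gr\"onwall's inequality gives $\E[\Phi_n(s)]\leq e^{n\mathpzc{c}_1\|\y\|_{\V}^2}e^{n\mathpzc{c}_1 K(s-t)}$. To upgrade this to the supremum, observe that $\mathcal{Z}_n(s):=\Phi_n(s)e^{-n\mathpzc{c}_1 K(s-t)}$ is a non-negative local supermartingale, so $\mathcal{Z}_n(s)\leq\mathcal{Z}_n(t)+\tilde M_n(s)$ with $\tilde M_n$ a local martingale whose quadratic variation is controlled by $\mathcal{Z}_n^2$ times $\|\Q^{1/2}(I+\A)\Y_n\|_\H^2$. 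A Burkholder--Davis--Gundy estimate on $\tilde M_n$, bootstrapping the pointwise bound at the doubled exponent $2\mathpzc{c}_1$ (still admissible after slightly shrinking $\mathpzc{c}_1$ inside $(0,\mathpzc{c}_1^0)$), yields $\E[\sup_{s\in[t,T]}\mathcal{Z}_n(s)]\leq\mathpzc{c}_2'$, from which \eqref{expmoest} follows with $\mathpzc{c}_3=\mathpzc{c}_1 KT=2\mathpzc{c}_1(R^2/(2\alpha)+\Tr(\Q_1))T$.

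The heart of the argument is the delicate balance in the generator of $\Phi_n$: the positive quadratic-variation contribution $2n\mathpzc{c}_1^2\Tr(\Q_1)\|\Y_n\|_{\V}^2$ from the diffusion must be strictly dominated by the dissipative contribution $2n\mathpzc{c}_1(\alpha-\varrho^*)\|\Y_n\|_{\V}^2$. This simultaneously forces the upper threshold $\mathpzc{c}_1^0$ on $\mathpzc{c}_1$ and, through the residue $\varrho^*$ appearing in \eqref{syymB3}, the precise parameter restriction \eqref{eqn-con} on $\mu$. The principal technical difficulty is the three-dimensional bilinear estimate, since $(\mathcal{B}(\Y_n),\A\Y_n)$ does not vanish on $\mathbb{T}^3$: the convective interaction with $\A\Y_n$ must be absorbed jointly by $\mu\|\A\Y_n\|_\H^2$ and $\beta\||\Y_n|^{(r-1)/2}\nabla\Y_n\|_\H^2$, and it is exactly this cancellation --- enabled by the dominance of the absorption term over the convective term --- that selects the admissible regime $r>3$ (or $r=3$ with $2\beta\mu\geq1$) in which $\alpha-\varrho^*>0$.
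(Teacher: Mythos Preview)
Your proposal is correct and takes essentially the same route as the paper: It\^o's formula on the exponential of the $\V$-norm squared (the paper works with $\|\nabla\Y_n\|_\H^2$ only), absorption of $(\mathcal{B}(\Y_n),\A\Y_n)$ via \eqref{syymB3} and \eqref{torusequ}, the threshold $\mathpzc{c}_1<(\alpha-\varrho^*)/\Tr(\Q_1)$ emerging from the balance between the dissipation $2n\mathpzc{c}_1(\alpha-\varrho^*)\|\Y_n\|_\V^2$ and the quadratic-variation term $2n\mathpzc{c}_1^2\Tr(\Q_1)\|\Y_n\|_\V^2$, localization by stopping times plus Gr\"onwall for the pointwise bound, and BDG for the supremum. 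Two minor remarks: your spectral bound does not need $\A$ and $\Q$ to commute---it follows directly from $\|\Q^{1/2}(I+\A)^{1/2}\|_{\mathscr{L}(\H)}^2\leq\|\Q^{1/2}(I+\A)^{1/2}\|_{\mathscr{L}_2(\H)}^2=\Tr(\Q)+\Tr(\Q_1)$---and the ``doubled exponent $2\mathpzc{c}_1$'' in your BDG step is unnecessary, since the paper's splitting trick in \eqref{vse15.1} pulls $\sup_s e^{\mathpzc{a}n\|\nabla\Y_n\|_\H^2}$ out of the square root, absorbs it into the left-hand side via Young's inequality, and controls the remaining integral $\int e^{\mathpzc{a}n\|\nabla\Y_n\|_\H^2}\|\nabla\Y_n\|_\H^2\,d\tau$ using the dissipative bound \eqref{vsedade1} already obtained at the original exponent.
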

 
   \begin{proof}
   	Let us set $Z(\cdot):=\mathpzc{a}n\|\Y_n(\cdot)\|_{\V}^2$, for some $\mathpzc{a}>0$ to be chosen later.
   	Let us now consider the function $\mathpzc{G}(\mathpzc{w})=
   	e^{\mathpzc{a}n\|\A^{\frac12}\mathpzc{w}\|_{\H}^2},
   	\ \mathpzc{w}\in\mathcal{D}(\A^{\frac12})$. Then, we have
   	\begin{align*}
   		\D_{\mathpzc{w}}\mathpzc{G}(\mathpzc{w})
   		\mathpzc{z}&=
   		2\mathpzc{a}n
   		e^{\mathpzc{a}n\|\A^{\frac12}\mathpzc{w}\|_{\H}^2}
   		(\A^{\frac12}\mathpzc{w},\A^{\frac12}\mathpzc{z}) 
   		\ \text{ for } \ \mathpzc{z}\in\mathcal{D}(\A^{\frac12})
   	\end{align*}
   	and
   	\begin{align*}
   		 \D^2_{\mathpzc{w}}\mathpzc{G}(\mathpzc{w})
   		(\mathpzc{z}_1,\mathpzc{z}_2)&=
   		4(\mathpzc{a}n)^2 e^{\mathpzc{a}n\|\A^{\frac12}\mathpzc{w}\|_{\H}^2}
   		\underbrace{(\A^{\frac12}\mathpzc{w},
   			\A^{\frac12}\mathpzc{z}_1)
   			(\A^{\frac12}\mathpzc{w},\A^{\frac12} \mathpzc{z}_2)}_{= (\A^{\frac12}\mathpzc{w}\otimes\A^{\frac12}\mathpzc{w})
   			(\A^{\frac12}\mathpzc{z}_1,\A^{\frac12}\mathpzc{z}_2)}
   		\nonumber\\&\quad+2\mathpzc{a}n
   		e^{\mathpzc{a}n\|\A^{\frac12}\mathpzc{w}\|_{\H}^2}
   		(\A^{\frac12}\mathpzc{z}_1,\A^{\frac12}\mathpzc{z}_2)
   		\ \text{ for } \ \mathpzc{z}_1,\mathpzc{z}_2 \in\mathcal{D}(\A^{\frac12}).
   	\end{align*}
   	Moreover, we find
   	\begin{align*}
   		\Tr(\Q\D^2_{\mathpzc{w}}\mathpzc{G}(\mathpzc{w})) =4(\mathpzc{a}n)^2  \Tr(\Q_1)
   		e^{\mathpzc{a}n\|\A^{\frac12}\mathpzc{w}\|_{\H}^2} \|\A^{\frac12}\mathpzc{w}\|_{\H}^2+
   		2\mathpzc{a}n\Tr(\Q_1).
   	\end{align*}
   	Let us now operate by $\A^{\frac12}$ in \eqref{stap} to obtain the following stochastic differential satisfied the stochastic process $\A^{\frac12}\Y(\cdot)$:
   	\begin{align*}
   		&\d\A^{\frac12}\Y_n(s)+\A^{\frac12}[\mu\A\Y_n(s)+\mathcal{B}(\Y_n(s))
   		+\alpha\Y_n(s)+\beta\mathcal{C}(\Y_n(s))]\d s \nonumber\\&=
   		\A^{\frac12}\f(s)\d s+ \frac{1}{\sqrt{n}}\A^{\frac12}\Q^{\frac12}\d\mathbf{W}(s),
   	\end{align*}
   	for a.e. $s\in[t,T]$. 
   	We define a sequence of  stopping times
   	\begin{align*}
   		\wi\theta_N=\inf\{s\geq t:\|\nabla\Y(s)\|_{\H}>N\}, \ \text{ for any } \ N>0.
   	\end{align*}
   	On applying It\^o's formula (see \cite[Theorem A.1, pp. 294]{ZBSP}, \cite[Theorem 4.3, pp. 1809]{ZBSP1}) to the function $e^{\mathpzc{a}n\|\cdot\|_{\H}^2}$ and to the process $\A^{\frac12}\Y(\cdot)$, we find for all $s\in[t,T]$, $\P-\text{a.s.},$
   	\begin{align}\label{vse7.1}
   		&e^{\mathpzc{a}n\|\nabla\Y_n(s\land\wi\theta_N)\|_{\H}^2}
   		+2\mathpzc{a}n\int_t^{s\land\wi\theta_N} e^{\mathpzc{a}n\|\nabla\Y_n(\tau)\|_{\H}^2}
   		\big(\mu\|\A\Y_n(\tau)\|_{\H}^2+
   		\alpha\|\nabla\Y_n(\tau)\|_{\H}^2\big)\d\tau
   		\nonumber\\&\quad+
   		2\mathpzc{a}n\beta\int_t^{s\land\wi\theta_N}
   		e^{\mathpzc{a}n\|\nabla\Y_n(\tau)\|_{\H}^2}
   		\big(\mathcal{C}(\Y_n(\tau)),\A\Y_n(\tau)\big)\d\tau
   		\nonumber\\&=
   		e^{\mathpzc{a}n\|\nabla\y\|_{\H}^2}+
   		2\mathpzc{a}n\int_t^{s\land\wi\theta_N}
   		e^{\mathpzc{a}n\|\nabla\Y_n(\tau)\|_{\H}^2} \big(\A^{\frac12}\f(\tau),\A^{\frac12}\Y_n(\tau)\big)\d\tau
   		\nonumber\\&\quad-
   		2\mathpzc{a}n\int_t^{s\land\wi\theta_N}
   		e^{\mathpzc{a}n\|\nabla\Y_n(\tau)\|_{\H}^2}
   		\big(\mathcal{B}(\Y_n(\tau)),\A\Y_n(\tau)\big)\d\tau
   		\nonumber\\&\quad
   		+2\mathpzc{a}\sqrt{n}M_{s\land\wi\theta_N}
   		+\mathpzc{a}\Tr(\Q_1)
   		\int_t^{s\land\wi\theta_N}
   		e^{\mathpzc{a}n\|\nabla\Y_n(\tau)\|_{\H}^2}\d\tau
   		\nonumber\\&\quad+
   		2\mathpzc{a}^2n\Tr(\Q_1)
   		\int_t^{s\land\wi\theta_N}
   		e^{\mathpzc{a}n\|\nabla\Y_n(\tau)\|_{\H}^2}
   		\|\nabla\Y_n(\tau)\|_{\H}^2\d\tau, 
   	\end{align}
   	where $${M}_{s\land\wi\theta_N}=
   	\int_t^{s\land\wi\theta_N} 
   	e^{\mathpzc{a}\|\nabla\Y_n(\tau)\|_{\H}^2}
   	\big(\A^{\frac12}\Y(\tau), \A^{\frac12}\Q^{\frac12}\d\W(\tau)\big)$$ is a martingale.
   	Using Hypothesis \ref{fhyp}, Young's inequality, equality \eqref{torusequ} and the estimate \eqref{syymB3}, we calculate following:
   	\begin{align}
   		\big(\A^{\frac12}\f,\A^{\frac12}\Y_n\big) &\leq\frac{R^2}{2(\alpha-\varrho^*-
   			\mathpzc{a}\Tr(\Q_1))}+\frac{\alpha-\varrho^*-
   			\mathpzc{a}\Tr(\Q_1)}{2} \|\nabla\Y_n\|_{\H}^2,\label{vse8}\\
   		\big(\mathcal{C}(\Y_n),\A\Y_n\big)&\geq
   		\||\Y_n|^{\frac{r-1}{2}}\nabla\Y_n\|_{\H}^{2},
   		\label{vse8.2}\\
   		(\mathcal{B}(\Y_n),\A\Y_n)|&\leq
   		\mu\|\A\Y_n\|_{\H}^2
   		+\frac{\beta}{2}\||\Y_n|^{\frac{r-1}{2}}\nabla\Y_n\|_{\H}^2 +\varrho^*\|\nabla\Y_n\|_{\H}^2,\label{vse8.3}
   	\end{align}
   	where $
   		\varrho^*:=\frac{r-3}{2\mu(r-1)}\left[\frac{1}{\beta\mu (r-1)}\right]^{\frac{2}{r-3}}.$ Utilizing the inequalities \eqref{vse8}-\eqref{vse8.3} in \eqref{vse7.1}, we obtain for all $s\in[t,T]$, $\P-\text{a.s.},$
   	\begin{align}\label{vse11}
   		&e^{\mathpzc{a}n\|\nabla\Y_n(s\land\wi\theta_N)\|_{\H}^2}+
   		2\mathpzc{a}n\alpha\int_t^{s\land\wi\theta_N} e^{\mathpzc{a}n\|\nabla\Y_n(\tau)\|_{\H}^2}
   		\|\nabla\Y_n(\tau)\|_{\H}^2\d\tau 
   		\nonumber\\&\quad+
   		\mathpzc{a}n\beta\int_t^{s\land\wi\theta_N}
   		e^{\mathpzc{a}n\|\nabla\Y_n(\tau)\|_{\H}^2}
   		\||\Y_n(\tau)|^{\frac{r-1}{2}}\nabla\Y_n(\tau)\|_{\H}^{2}\d\tau \nonumber\\&\leq 
   		e^{\mathpzc{a}n\|\nabla\y\|_{\H}^2}+
   		\frac{\mathpzc{a}nR^2}{\alpha-\varrho^*-
   			\mathpzc{a}\Tr(\Q_1)}\int_t^{s\land\wi\theta_N}
   		e^{\mathpzc{a}n\|\nabla\Y_n(\tau)\|_{\H}^2}\d\tau+
   			2\mathpzc{a}n\varrho^*\int_t^{s\land\wi\theta_N}
   			e^{\mathpzc{a}n\|\nabla\Y_n(\tau)\|_{\H}^2}
   			\|\nabla\Y_n(\tau)\|_{\H}^2\d\tau
   		\nonumber\\&\quad
   		+2\mathpzc{a}\sqrt{n}M_{s\land\wi\theta_N}
   		+\mathpzc{a}\Tr(\Q_1)
   		\int_t^{s\land\wi\theta_N}
   		e^{\mathpzc{a}n\|\nabla\Y_n(\tau)\|_{\H}^2}\d\tau+
   		2\mathpzc{a}^2n\Tr(\Q_1)
   		\int_t^{s\land\wi\theta_N}
   		e^{\mathpzc{a}n\|\nabla\Y_n(\tau)\|_{\H}^2}
   		\|\nabla\Y_n(\tau)\|_{\H}^2\d\tau
   		\nonumber\\&\quad+
   		\mathpzc{a}n(\alpha-\varrho^*-
   		\mathpzc{a}\Tr(\Q_1))\int_t^{s\land\wi\theta_N} e^{\mathpzc{a}n\|\nabla\Y_n(\tau)\|_{\H}^2}
   		\|\nabla\Y_n(\tau)\|_{\H}^2\d\tau 
   		\nonumber\\&\leq
   		e^{\mathpzc{a}n\|\nabla\y\|_{\H}^2}+\mathpzc{a}n\underbrace{
   			\left(\frac{R^2}{\alpha-\varrho^*-
   				\mathpzc{a}\Tr(\Q_1)}+
   			\Tr(\Q_1)\right)}_{:=\mathpzc{k}}
   		\int_t^{s\land\wi\theta_N}
   		e^{\mathpzc{a}n\|\nabla\Y_n(\tau)\|_{\H}^2}\d\tau+
   		2\mathpzc{a}\sqrt{n}M_{s\land\wi\theta_N}
   		\nonumber\\&\quad+
   		\left(2\mathpzc{a}n\varrho^*+2\mathpzc{a}^2n\Tr(\Q_1)
   		\right)\int_t^{s\land\wi\theta_N}
   		e^{\mathpzc{a}n\|\nabla\Y_n(\tau)\|_{\H}^2}
   		\|\nabla\Y_n(\tau)\|_{\H}^2\d\tau
   			\nonumber\\&\quad+
   		\mathpzc{a}n(\alpha-\varrho^*-
   		\mathpzc{a}\Tr(\Q_1))\int_t^{s\land\wi\theta_N} e^{\mathpzc{a}n\|\nabla\Y_n(\tau)\|_{\H}^2}
   		\|\nabla\Y_n(\tau)\|_{\H}^2\d\tau 
   	\end{align}
   	On rearranging the terms in \eqref{vse11}, we obtain $\P-$a.s.,
   	\begin{align}\label{vse11dade}
   		&e^{\mathpzc{a}n\|\nabla\Y_n(s\land\wi\theta_N)\|_{\H}^2}+
   			\mathpzc{a}n\left(\alpha-\varrho^*-
   			\mathpzc{a}\Tr(\Q_1)\right)
   		\int_t^{s\land\wi\theta_N}
   		e^{\mathpzc{a}n\|\nabla\Y_n(\tau)\|_{\H}^2}
   		\|\nabla\Y_n(\tau)\|_{\H}^2\d\tau
   		\nonumber\\&\quad
   		+
   		\mathpzc{a}n\beta\int_t^{s\land\wi\theta_N}
   		e^{\mathpzc{a}n\|\nabla\Y_n(\tau)\|_{\H}^2}
   		\||\Y_n(\tau)|^{\frac{r-1}{2}}\nabla\Y_n(\tau)\|_{\H}^{2}\d\tau \nonumber\\&\leq 
   		e^{\mathpzc{a}n\|\nabla\y\|_{\H}^2}+
   		\mathpzc{a}n\mathpzc{k}\int_t^{s\land\wi\theta_N}
   		e^{\mathpzc{a}n\|\nabla\Y_n(\tau)\|_{\H}^2}\d\tau+
   		2\mathpzc{a}\sqrt{n}M_{s\land\wi\theta_N}
   	\end{align}
   	Now choose $\mathpzc{a}>0$ in such a way that 
   		\begin{align*}
   			\alpha-\varrho^*-\mathpzc{a}\Tr(\Q_1)>0 \ \text{ or }
   			\  \mathpzc{a}<\frac{\alpha-\varrho^*}{\Tr(\Q_1)}.
   	\end{align*}
   	Then, on taking the expectation in \eqref{vse11dade}, we deduce for all $s\in[t,T]$
   	\begin{align}\label{vse12}
   		&\E\left[e^{\mathpzc{a}n\|\nabla\Y_n(s\land\wi\theta_N)\|_{\H}^2}
   		\right]+\mathpzc{a}n\left(\alpha-\varrho^*-
   		\mathpzc{a}\Tr(\Q_1)\right)
   		\E\bigg[\int_t^{s\land\wi\theta_N} e^{\mathpzc{a}n\|\nabla\Y_n(\tau)\|_{\H}^2}
   		\|\nabla\Y_n(\tau)\|_{\H}^2\d\tau\bigg]
   		\nonumber\\&\quad+
   		\mathpzc{a}n\beta\E\bigg[\int_t^{s\land\wi\theta_N}
   		e^{\mathpzc{a}n\|\nabla\Y_n(\tau)\|_{\H}^2}
   		\||\Y_n(\tau)|^{\frac{r-1}{2}}\nabla\Y_n(\tau)\|_{\H}^{2}
   		\d\tau\bigg] 
   		\nonumber\\&\leq\|\nabla\y\|_{\H}^2+
   		\mathpzc{a}n\mathpzc{k}\E\bigg[\int_t^{s\land\wi\theta_N}
   		e^{\mathpzc{a}n\|\nabla\Y_n(\tau)\|_{\H}^2}\d\tau\bigg],
   	\end{align}
   	for all $s\in[t,T]$. By an application of Fubini's theorem, we write from \eqref{vse12} that
   	\begin{align}\label{vse13}
   		\E\left[e^{\mathpzc{a}n\|\nabla\Y_n(s\land\wi\theta_N)\|_{\H}^2}
   		\right]\leq
   		\|\nabla\y\|_{\H}^2+
   		\mathpzc{a}n\mathpzc{k}\int_t^{s}\E\left[e^{\mathpzc{a}n \|\nabla\Y_n(\tau\land\wi\theta_N)\|_{\H}^2}\right]\d\tau,
   	\end{align}
   	for all $s\in[t,T]$. On employing Gr\"onwall's inequality, we conclude from \eqref{vse13} that for all $s\in[t,T]$
   	\begin{align}\label{vse14}
   		\E\left[e^{\mathpzc{a}n\|\nabla\Y_n(s\land\wi\theta_N)\|_{\H}^2}
   		\right]\leq\mathpzc{C},
   	\end{align}
   	where $\mathpzc{C}=\mathpzc{C} (\mathpzc{k},T,\|\nabla\xi\|_{\H}^2)$. 
   	Using the energy estimate \eqref{eqn-conv-2}, taking limit $N\to+\infty$ in \eqref{vse12} (\cite[Proposition 3.5]{MTM8}) and using the \emph{monotone convergence theorem} together with \eqref{vse14}, we finally obtain  for all $s\in[t,T]$
   		\begin{align}\label{vsedade1}
   			&\E\left[e^{\mathpzc{a}n\|\nabla\Y_n(s)\|_{\H}^2}
   			\right]+\mathpzc{a}n\left(\alpha-\varrho^*-
   				\mathpzc{a}\Tr(\Q_1)\right)
   				\E\bigg[\int_t^{s} e^{\mathpzc{a}n\|\nabla\Y_n(\tau)\|_{\H}^2}
   			\|\nabla\Y_n(\tau)\|_{\H}^2\d\tau\bigg]
   			\nonumber\\&\quad+
   			\mathpzc{a}n\beta\E\bigg[\int_t^{s}
   			e^{\mathpzc{a}n\|\nabla\Y_n(\tau)\|_{\H}^2}
   			\||\Y_n(\tau)|^{\frac{r-1}{2}}\nabla\Y_n(\tau)\|_{\H}^{2}
   			\d\tau\bigg]\leq\mathpzc{C}.
   	\end{align}
   	
   	For $\alpha>\varrho^*$, we now take supremum over $t$ to $T\land\wi\theta_N$ in \eqref{vse11dade} followed by expectation, we find 
   	\begin{align}\label{vse15}
   		\E\bigg[\sup\limits_{s\in[t,T\land\wi\theta_N]}
   		e^{\mathpzc{a}n\|\nabla\Y_n(s)\|_{\H}^2}\bigg]
   		&\leq e^{\mathpzc{a}n\|\nabla\y\|_{\H}^2}+
   			\mathpzc{a}n\mathpzc{k}
   			\E\bigg[\int_t^{T\land\wi\theta_N}
   			e^{\mathpzc{a}n\|\nabla\Y_n(\tau)\|_{\H}^2}\d\tau\bigg]
   		\nonumber\\&\quad+ 
   		2\mathpzc{a}\sqrt{n}\E\bigg[\sup\limits_{s\in[t,T\land\wi\theta_N]} \bigg|\int_t^{s}
   		e^{\mathpzc{a}n\|\nabla\Y_n(\tau)\|_{\H}^2}
   		\big(\A^{\frac12}\Y_n(\tau), \A^{\frac12}\Q^{\frac12}\d\W(\tau)\big)\bigg|\bigg].
   	\end{align} 
   	By an application of the Burkholder-Davis-Gundy inequality (see \cite[Theorem 1.1]{CMMR}),  H\"older's and Young's inequalities, we calculate
   	\begin{align}\label{vse15.1}
   		&2\mathpzc{a}\sqrt{n}
   		\E\bigg[\sup\limits_{s\in[t,T\land\wi\theta_N]}\bigg|\int_t^s
   		e^{\mathpzc{a}n\|\nabla\Y_n(\tau)\|_{\H}^2}
   		\big(\A^{\frac12}\Y_n(\tau),\A^{\frac12}\Q^{\frac12} \d\W(\tau)\big)\bigg|\bigg]
   		\nonumber\\&\leq
   		2\mathpzc{a}\sqrt{n}(\mathrm{Tr}(\Q_1))^{\frac12}
   		\E\bigg[\int_t^{T\land\wi\theta_N}
   		e^{2\mathpzc{a}n\|\nabla\Y_n(\tau)\|_{\H}^2}
   		\|\nabla\Y_n(\tau)\|_{\H}^{2}\d\tau
   		\bigg]^{\frac12}
   		\nonumber\\&\leq
   		2\mathpzc{a}\sqrt{n}(\mathrm{Tr}(\Q_1))^{\frac12}
   		\E\bigg[\left(\sup\limits_{s\in[t,T\land\wi\theta_N]}
   		e^{\mathpzc{a}n\|\nabla\Y_n(s)\|_{\H}^2}\right)^{\frac{1}{2}}
   		\left(\int_t^{T\land\wi\theta_N}
   		e^{\mathpzc{a}n\|\nabla\Y_n(s)\|_{\H}^2}\|\nabla\Y_n(s)\|_{\H}^2
   		\d\tau\right)^{\frac12}\bigg]
   		\nonumber\\&\leq
   		\frac12\E\bigg[\sup\limits_{s\in[t,T\land\wi\theta_N]}
   		e^{\mathpzc{a}n\|\nabla\Y_n(s)\|_{\H}^2}\bigg]+ 2\mathpzc{a}^2n\mathrm{Tr}(\Q_1)
   		\E\bigg[\int_t^{T\land\wi\theta_N}
   		e^{\mathpzc{a}n\|\nabla\Y_n(s)\|_{\H}^2}\|\nabla\Y_n(s)\|_{\H}^2\d\tau
   		\bigg].
   	\end{align}
   	On substituting \eqref{vse15.1} into \eqref{vse15}, we obtain
   	\begin{align*}
   		\frac12\E\bigg[\sup\limits_{s\in[t,T\land\wi\theta_N]}
   		e^{\mathpzc{a}n\|\nabla\Y_n(s)\|_{\H}^2}\bigg]
   		&\leq e^{\mathpzc{a}n\|\nabla\y\|_{\H}^2}+\mathpzc{a}n
   		\mathpzc{k}\E\bigg[\int_t^{T\land\wi\theta_N}
   		e^{\mathpzc{a}n\|\nabla\Y_n(\tau)\|_{\H}^2}\d\tau\bigg]
   		\nonumber\\&\quad+
   		2\mathpzc{a}^2n\mathrm{Tr}(\Q_1)\underbrace{
   			\E\bigg[\int_t^{T\land\wi\theta_N}
   			e^{\mathpzc{a}n\|\nabla\Y_n(s)\|_{\H}^2}\|\nabla\Y_n(s)\|_{\H}^2\d\tau
   			\bigg]}_{\text{ bounded from \eqref{vsedade1} for $\alpha>\varrho^*$}}.
   	\end{align*}
   	On utilizing \eqref{vsedade1} and applying Fubini's theorem, we rewrite above as
   		\begin{align*}
   			\E\bigg[\sup\limits_{s\in[t,T\land\wi\theta_N]}
   			e^{\mathpzc{a}n\|\nabla\Y_n(s)\|_{\H}^2}\bigg]
   			&\leq 2e^{\mathpzc{a}n\|\nabla\y\|_{\H}^2}+
   			2\mathpzc{a}n\mathpzc{k}\int_t^{T}\E\bigg[
   			\sup\limits_{\tau\in[t,s\land\wi\theta_N]}
   			e^{\mathpzc{a}n\|\nabla\Y_n(\tau)\|_{\H}^2}\bigg]\d\tau
   			\nonumber\\&\quad+
   			\frac{2\mathpzc{a}\mathpzc{C}}
   			{\mathpzc{c}_1^0-\mathpzc{a}},
   	\end{align*}
   	provided $\mathpzc{a}<\mathpzc{c}_1^0$, where $\mathpzc{c}_1^0:=\frac{\alpha-\varrho^*}{\Tr(\Q_1)}$. By using  Gr\"onwall's inequality, and then passing to the limit $n\to+\infty$ together with monotone convergence theorem, we finally obtain for all $s\in[t,T]$
   	\begin{align*}
   		\E\bigg[\sup\limits_{s\in[t,T]}
   		e^{\mathpzc{a}n\|\nabla\Y_n(s)\|_{\H}^2}\bigg] 
   		\leq
   		\bigg(2e^{\mathpzc{a}n\|\nabla\y\|_{\H}^2}+\frac{2\mathpzc{a}\mathpzc{C}}
   		{\mathpzc{c}_1^0-\mathpzc{a}}
   		\bigg)e^{2\mathpzc{a}n\mathpzc{k}T}.
   	\end{align*} 
   	
   	One can estimate $(\mathcal{B}(\Y_n),\A\Y_n)$ in the following way also: 
   	\begin{align}\label{BA}
   		|(\mathcal{B}(\Y_n),\A\Y_n)|&\leq\mu\|\A\Y_n\|_{\H}^2+
   		\frac{1}{4\mu}\|\mathcal{B}(\Y_n)\|_{\H}^2
   		\nonumber\\&=
   		\mu\|\A\Y_n\|_{\H}^2+\frac{1}{4\mu}\int_{\mathbb{T}^d}|\Y_n(\xi)|^2
   		|\nabla\Y_n(\xi)|^2\d\xi
   		\nonumber\\&=
     	\mu\|\A\Y_n\|_{\H}^2+\frac{1}{4\mu}
   		\int_{\mathbb{T}^d}|\nabla\Y_n(\xi)|^2\left(|\Y_n(\xi)|^{r-1}+1\right) \underbrace{\frac{|\Y_n(\xi)|^2}{|\Y_n(\xi)|^{r-1}+1}}_{<1 \ \text{for } r\geq3}\d\xi
   		\nonumber\\&<
   			\mu\|\A\Y_n\|_{\H}^2+
   			\frac{1}{4\mu}\int_{\mathbb{T}^d}|\nabla\Y_n(\xi)|^2|\Y_n(\xi)|^{r-1}\d\xi
   			+\frac{1}{4\mu}\int_{\mathbb{T}^d}|\nabla\Y_n(\xi)|^2\d\xi.
   	\end{align}
   	 Moreover, we estimate $\big(\A^{\frac12}\f,\A^{\frac12}\Y_n\big) $ as 
   	 \begin{align}\label{vse81}
   	 		\big(\A^{\frac12}\f,\A^{\frac12}\Y_n\big) &\leq\frac{R^2}{2(\alpha-\frac{1}{4\mu}-
   	 			\mathpzc{a}\Tr(\Q_1))}+\frac{\alpha-\frac{1}{4\mu}-
   	 			\mathpzc{a}\Tr(\Q_1)}{2} \|\nabla\Y_n\|_{\H}^2.
   	 \end{align}
   	 Utilizing \eqref{BA}-\eqref{vse81} together with \eqref{vse8}-\eqref{vse8.2} into \eqref{vse7.1} and employing the Sobolev embedding $\D(\A)\hookrightarrow\D(\A^{\frac12})$, we obtain for all $s\in[t,T]$, $\P-$a.s.,
   	 \begin{align*}
   	 	&e^{\mathpzc{a}n\|\nabla\Y_n(s\land\wi\theta_N)\|_{\H}^2}+
   	 	2\mathpzc{a}n\left(\alpha-\frac{1}{4\mu}-
   	 	\mathpzc{a}\Tr(\Q_1)
   	 	\right)\int_t^{s\land\wi\theta_N}
   	 	e^{\mathpzc{a}n\|\nabla\Y_n(\tau)\|_{\H}^2}
   	 	\|\nabla\Y_n(\tau)\|_{\H}^2\d\tau
   	 	\nonumber\\&\quad+
   	 	\mathpzc{a}n\left(2\beta-\frac{1}{2\mu}\right)\int_t^{s\land\wi\theta_N}
   	 	e^{\mathpzc{a}n\|\nabla\Y_n(\tau)\|_{\H}^2}
   	 	\||\Y_n(\tau)|^{\frac{r-1}{2}}\nabla\Y_n(\tau)\|_{\H}^{2}\d\tau
   	 	\nonumber\\&\leq
   	 	e^{\mathpzc{a}n\|\nabla\y\|_{\H}^2}+\mathpzc{a}n\mathpzc{k}
   	 	\int_t^{s\land\wi\theta_N}
   	 	e^{\mathpzc{a}n\|\nabla\Y_n(\tau)\|_{\H}^2}\d\tau+
   	 	2\mathpzc{a}\sqrt{n}M_{s\land\wi\theta_N}.
   	 \end{align*}
   	 Proceeding in a similar way as we performed above, for $2\alpha>\frac{1}{2\mu}$ and $4\beta\mu\geq1$, we finally obtain
   	\begin{align*}
   	\E\bigg[\sup\limits_{s\in[t,T]}
   e^{\mathpzc{a}n\|\nabla\Y_n(s)\|_{\H}^2}\bigg] 
   \leq
   \bigg(2e^{\mathpzc{a}n\|\nabla\y\|_{\H}^2}+\frac{2\mathpzc{a}\mathpzc{C}}
   {\mathpzc{c}_2^0-\mathpzc{a}}
   \bigg)e^{2\mathpzc{a}n\mathpzc{k}T},
   \end{align*}
   	provided $\mathpzc{a}<\mathpzc{c}_2^0$, where $\mathpzc{c}_2^0:=\frac{\alpha-\frac{1}{4\mu}}
   	{\Tr(\Q_1)}$. This completes the proof of \eqref{expmoest}.
   \end{proof}
   
   \begin{remark}\label{rem-2D} 
   	1.) On $\mathbb{T}^2$, we have $(\mathcal{B}(\Y_n),\A\Y_n)=0$. As a result, the additional term  $$2\mathpzc{a}n\varrho\int_t^{s\land\wi\theta_N}
   		e^{\mathpzc{a}n\|\nabla\Y_n(\tau)\|_{\H}^2}
   		\|\nabla\Y_n(\tau)\|_{\H}^2\d\tau,$$ which is due to calculation \eqref{vse8.3}, will not appear in \eqref{vse11}. Consequently, the assumption $\alpha>\varrho^*$ or $\mu>\frac14\max\left\{\frac{1}{\alpha},\frac{1}{\beta} \right\}$, is no longer necessary in Proposition \ref{propexpest}. This allows us to obtain the bound \eqref{expmoest} without any restriction on $\mu$, $\alpha$ and $\beta$.
   		
   		2.) In Sections \ref{LaplaceLDP} and \ref{LaDePr}, we shall work under condition \eqref{eqn-con} to obtain further results and analysis.
   \end{remark}

       \section{Viscosity solution and comparison principle} \setcounter{equation}{0}\label{detstchjb}
      As mentioned in the introduction, the viscosity solution framework and the comparison principle for the associated HJB equation play a central role in establishing the Laplace limit. The Laplace integral of the solution $\Y_n(\cdot)$ of SCBF equations \eqref{stap} at a single time turns out to satisfy a nonlinear second order HJB equation (see \eqref{LDP1}). However, due to the logarithmic-exponential structure of the Laplace limit expression, the arguments require certain refinements compared to those in \cite{smtm1}. In particular, the choice of test functions need to be modified slightly from those used in \cite{smtm1} to accommodate the specific structure of our problem. Due to this modification in the test functions, the monotonicity of the operator
      $\mu\A+\mathcal{B}(\cdot)+\beta\mathcal{C}$ (Lemma \ref{monoest}) becomes crucial in obtaining the desired contradiction in the proof of the comparison principle (see \eqref{viscdef4}-\eqref{viscdef5} in Theorem \ref{comparison}). It constitutes one of the main advantages of our approach to SCBF equations \eqref{stap} compared to \cite{AS2}, where such monotonicity is not available and the authors instead rely on quantisation (or truncation) techniques to recover a monotonicity estimate.  Moreover, the proof of existence of viscosity solutions in this setting does not rely on stochastic control techniques, unlike in \cite{smtm1}. 
       
        Let us first define a test function, adapted from \cite{AS2} (also see \cite{FGSSA1}).
     \begin{definition}\label{testD}
     	A function $\uppsi:(0,T)\times\V\to\R$ is called a \emph{test function} if $$\uppsi(\cdot,\cdot)=\upvarphi(\cdot,\cdot)\pm\mathfrak{h}(\|\cdot\|_{\V}),$$ where
     	\begin{itemize}
     		\item $\upvarphi\in\mathrm{C}^{1,2}((0,T)\times\H)$ and is such that $\upvarphi,\upvarphi_t,\D\upvarphi$ and $\D^2\upvarphi$ are uniformly continuous on $[\eps,T-\eps]\times\H$ for every $\eps>0$;
     		\item $\mathfrak{h}\in\mathrm{C}^2([0,+\infty))$ and is such that $\mathfrak{h}'(0)=0, \ \mathfrak{h}''(0)>0, \ \mathfrak{h}'(\theta)>0$ for $\theta\in(0,+\infty)$.
     	\end{itemize}
     \end{definition}
     
     \begin{remark}\label{reFrede}
      We remark that even though $\|\cdot\|_{\V}$ is not differentiable at $0$, the function $\mathfrak{h}(\|\cdot\|_{\V})\in\C^2(\V)$. Therefore, the terms involving $\D\mathfrak{h}$ and $\D^2\mathfrak{h}$ have to be understood in a proper way. Following \cite{FGSSA}, we define
     	\begin{align*}
     		\D\mathfrak{h}(\y)&:= \frac{\mathfrak{h}'(\|\y\|_{\V})}{\|\y\|_{\V}}\A\y,\\
     		\D^2\mathfrak{h}(\y)&:=\frac{\mathfrak{h}'(\|\y\|_{\V})}{\|\y\|_{\V}}
     		\left(\A-\frac{1}{\|\y\|_{\V}^2}(\A\y\otimes\A\y)\right)+
     		\frac{\mathfrak{h}''(\|\y\|_{\V})}{\|\y\|_{\V}^2}(\A\y\otimes\A\y)
     	\end{align*}
     	and we write with slight abuse of notation $\D\uppsi:=\D\upvarphi\pm\D\mathfrak{h}$ and $\D^2\uppsi:=\D^2\upvarphi\pm\D^2\mathfrak{h}$ for a function
     	$\uppsi=\upvarphi\pm\mathfrak{h}$.
     \end{remark}
     
     We now define the viscosity solution of the following terminal value problem for HJB equation:
     	\begin{equation}\label{thjb}
     	\left\{
     \begin{aligned}
     	&u_t+\frac{k}{2}\mathrm{Tr}(\Q\D^2u)-(\mu\A\y+\mathcal{B}(\y)+\alpha\y+\beta\mathcal{C}(\y),\D u)+F(t,\y,\D u)=0, \ \text{ in } \ (0,T)\times\V, \\
     	&u(T,\y)=g(\y),
     \end{aligned}
     \right.
     \end{equation}
     where $F:[0, T]\times\V\times\H\to\R$ and $k\geq0$.
     \begin{definition}\label{viscsoLndef}
     	A weakly sequentially upper-semicontinuous (respectively, lower-semicontinuous) function $u:(0,T)\times\V\to\R$ is called \emph{a viscosity subsolution} (respectively, \emph{supersolution}) of \eqref{thjb} if whenever $u-\uppsi$ has a local maximum (respectively, $u+\uppsi$ has a local minimum) at a point $(t,\y)\in(0,T)\times\V$ for every test function $\uppsi$, then $\y\in\V_2$ and 
     	\begin{align*}
     	&\uppsi_t(t,\y)+\frac{k}{2}\mathrm{Tr}(\Q\D^2\uppsi(t,\y))\nonumber\\&\quad-
     	(\mu\A\y+ \mathcal{B}(\y)+\alpha\y+\beta\mathcal{C}(\y),\D\uppsi(t,\y))
     	+F(t,\y,\D\uppsi(t,\y))\geq0
     \end{align*}
     (respectively, 
     \begin{align*}
     	-&\uppsi_t(t,\y)-\frac{k}{2}\mathrm{Tr}(\Q\D^2\uppsi(t,\y))\nonumber\\&+
     	(\mu\A\y+\mathcal{B}(\y)+\alpha\y+\beta\mathcal{C}(\y),\D\uppsi(t,\y))+F(t,\y,-\D\uppsi(t,\y))\leq0.)
     \end{align*}
     	A \emph{viscosity solution} of \eqref{thjb} is a function which is both \emph{viscosity subsolution and viscosity supersolution}.
     \end{definition}
     
     
     \begin{remark}
     	The point of maxima and minima in  Definition \ref{viscsoLndef} can be assumed to be strict and global. Further, for bounded sub and supersolutions,  $\upvarphi,\upvarphi_t,\D\upvarphi$ and $\D^2\upvarphi$ can be assumed to be uniformly continuous on $(0,T)\times\H$.
     \end{remark}
	
	\subsubsection{Comparison Principle}\label{comparisonP} 
	Let us now prove the comparison principle of viscosity solutions for the following terminal value problem, which is inspired by the techniques in  \cite{FGSSA,FGRA,AS2}.
	\begin{equation}\label{thjbcomp}
	\left\{
	\begin{aligned}
	&(u_n)_t+\frac{1}{2n}\mathrm{Tr}(\Q\D^2u_n)-
	\frac12\|\Q^{\frac12}\D u_n\|_{\H}^2\\ &\quad+ (-\mu\A\y-\mathcal{B}(\y)-\alpha\y-\beta\mathcal{C}(\y)+\f(t),\D u_n) =0, \ \text{ in } \ (0,T)\times\V, \\
	&	u_n(T,\y)=g(\y),
	\end{aligned}
	\right.
\end{equation}
   where $n\geq1$ or $n=+\infty$.
	\begin{theorem}\label{comparison}
	Assume that Hypotheses \ref{trQ1} and \ref{fhyp} hold and $g\in\mathrm{Lip}_b(\H)$. Let $u$ be a viscosity subsolution of \eqref{thjbcomp} and $v$ be a viscosity supersolution of \eqref{thjbcomp} such that
		\begin{equation}\label{bdd}
			\left\{
			\begin{aligned}
				u(t,\y), -v(t,\y)&\leq\mathpzc{C}, \  \text{ for some } \ \mathpzc{C}>0,\\
			\lim\limits_{t\to T} \{\big(u(t,\y)-g(\y)\big)_{+}+\big(v(t,\y)-g(\y)\big)_{-}\}
			&=0, \ \text{ uniformly on bounded sets of } \ \V,
			\end{aligned}
			\right.
		\end{equation}
      	{where for any real-valued function $f$, $f_{+}=\max\{f,0\}$ and $f_{-}=\max\{-f,0\}$.} Moreover, for $\psi=u$ or $\psi=v$, we assume that 
      	\begin{align}\label{uvL}
      		|\psi(t,\y)-\psi(t,\x)|\leq\mathfrak{L}\|\x-\y\|_{\H}, \ \text{ for some } \ \mathfrak{L}\geq0,
      	\end{align}
      	for all $t\in(0,T)$ and $\y,\x\in\V$. Then, for $r>3$ and $r=3$ with $2\beta\mu\geq1$, we have
      	$$u\leq v\ \text{ on }\ (0,T)\times\V.$$
	\end{theorem}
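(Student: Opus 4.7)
The proof strategy is a contradiction argument based on a doubling of variables in infinite dimensions, with the monotonicity of the damped operator $\mu\A + \mathcal{B}(\cdot) + \beta\mathcal{C}(\cdot)$ from Lemma \ref{monoest} playing the decisive role. Suppose for contradiction that $M := \sup_{(0,T)\times\V}(u-v) > 0$. By the boundedness in \eqref{bdd} and the behaviour at $t = T$, $M$ is attained in the interior of $(0, T)$; to obtain a strict maximum, perturb $u$ by $-\lambda(T-t)$ for a small $\lambda > 0$. For parameters $\eps, \delta > 0$, introduce the doubled functional
\begin{align*}
\Phi_{\eps,\delta}(t, s, \y, \x) = u(t, \y) - v(s, \x) - \lambda(T - t) - \frac{(t - s)^2}{2\eps} - \frac{\|\y - \x\|_\H^2}{2\eps} - \delta\big(\|\y\|_\V^2 + \|\x\|_\V^2\big),
\end{align*}
where the $\H$-norm quadratic is, if needed, replaced by a smooth bounded truncation in order to comply with the uniform continuity requirement of Definition \ref{testD}. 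Coercivity from the $\delta$-penalty combined with the compact embedding $\V \hookrightarrow \H$, together with the boundedness \eqref{bdd} and the $\H$-Lipschitz hypothesis \eqref{uvL}, guarantees that $\Phi_{\eps,\delta}$ attains its supremum at some $(t_\eps, s_\eps, \y_\eps, \x_\eps)$ with $\y_\eps, \x_\eps$ uniformly bounded in $\V$. Standard penalization estimates yield $\|\y_\eps - \x_\eps\|_\H^2/\eps \to 0$ and $(t_\eps - s_\eps)^2/\eps \to 0$ as $\eps \to 0$, and Definition \ref{viscsoLndef} forces $\y_\eps, \x_\eps \in \V_2$.

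Writing $u - \uppsi_1$ maximized at $(t_\eps, \y_\eps)$ and $v + \uppsi_2$ minimized at $(s_\eps, \x_\eps)$, with
\begin{align*}
\uppsi_1(t, \y) = \tfrac{(t - s_\eps)^2 + \|\y - \x_\eps\|_\H^2}{2\eps} + \delta\|\y\|_\V^2 - \lambda(T - t), \quad \uppsi_2(s, \x) = \tfrac{(t_\eps - s)^2 + \|\y_\eps - \x\|_\H^2}{2\eps} + \delta\|\x\|_\V^2,
\end{align*}
both fitting the form of Definition \ref{testD} with $\mathfrak{h}(\theta) = \delta\theta^2$, one computes via Remark \ref{reFrede} the gradients $\D\uppsi_1 = (\y_\eps - \x_\eps)/\eps + 2\delta\A\y_\eps$ and $\D\uppsi_2 = (\x_\eps - \y_\eps)/\eps + 2\delta\A\x_\eps$. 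Summing the subsolution inequality for $u$ with the sign-flipped supersolution inequality for $v$, the time-derivative terms collapse to $\lambda$ (from the time-penalty); the singular $\|\Q^{1/2}(\y_\eps - \x_\eps)/\eps\|_\H^2$ contribution in the quadratic Hamiltonians cancels by symmetry, leaving only $O(\delta)$ cross terms; and the forcing contributions are controlled by Hypothesis \ref{fhyp} together with $\|\y_\eps - \x_\eps\|_\H \to 0$. The pivotal nonlinear block reads
\begin{align*}
-\tfrac{1}{\eps}\big\langle (\mu\A + \mathcal{B} + \alpha\I + \beta\mathcal{C})(\y_\eps) - (\mu\A + \mathcal{B} + \alpha\I + \beta\mathcal{C})(\x_\eps),\, \y_\eps - \x_\eps\big\rangle + O(\delta),
\end{align*}
and this is where Lemma \ref{monoest} enters decisively: under $r > 3$ or $r = 3$ with $2\beta\mu \geq 1$, the inner product is bounded below by $(\alpha - \varrho)\|\y_\eps - \x_\eps\|_\H^2$, so the whole block is at most $\tfrac{\varrho - \alpha}{\eps}\|\y_\eps - \x_\eps\|_\H^2 + O(\delta)$, which reduces to $O(\delta)$ as $\eps \to 0$. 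Letting $\eps \to 0$ and then $\delta \to 0$ yields $\lambda \leq 0$, contradicting $\lambda > 0$, and hence $u \leq v$.

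\textbf{Main obstacle.} The sharpest technical point is the second-order trace contribution. Since $\D^2\uppsi_i = \tfrac{1}{\eps}\I + 2\delta\A$, naive summation of the sub- and supersolution inequalities produces the divergent piece $\tfrac{\Tr(\Q)}{n\eps}$, which is not compensated by any other term in the HJB equation. This is resolved by invoking a Crandall--Ishii-type matrix inequality for the second-order semijets of $u$ and $v$ at the doubled maximum, which yields effective Hessians $X, Y$ satisfying $X - Y \leq 4\delta\A$ in the quadratic-form sense; consequently $\Tr(\Q(X - Y)) \leq 4\delta\Tr(\Q_1) < \infty$ by Hypothesis \ref{trQ1}, so the offending trace contribution is in fact $O(\delta)$ and vanishes with $\delta$. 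The combination of this matrix inequality with the $\H$-Lipschitz bound \eqref{uvL} and, crucially, the monotonicity of the full damped operator furnished by Lemma \ref{monoest}---which, in contrast to \cite{AS2}, removes any need to truncate the bilinear term $\mathcal{B}(\cdot)$---is what drives the proof through in the critical and supercritical absorption regimes.
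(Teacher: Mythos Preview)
Your overall architecture is right, and you correctly isolate the decisive role of Lemma \ref{monoest} in handling the nonlinear block without any truncation of $\mathcal{B}(\cdot)$; this is exactly the point the paper stresses. However, your resolution of the ``main obstacle'' contains a genuine gap.

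In this paper, viscosity sub/supersolutions are defined in Definition \ref{viscsoLndef} via \emph{test functions} $\uppsi=\upvarphi\pm\mathfrak{h}(\|\cdot\|_{\V})$, not via second-order semijets. There is no infinite-dimensional Crandall--Ishii lemma that you can invoke directly on the semijets of $u$ and $v$ to produce matrices $X,Y$ with $X-Y\leq 4\delta\A$; in fact, even in finite dimensions the Crandall--Ishii inequality gives a matrix bound of the form in \eqref{xnyn}, which yields $Y_N\leq X_N$ (so $\Tr(\Q(Y_N-X_N))\leq 0$), not an estimate in terms of $\delta\A$. The $\delta$-terms in the trace come separately from the $\delta\|\cdot\|_{\V}^2$ penalty, not from the matrix inequality. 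So as written, your argument does not dispose of the divergent piece $\frac{1}{n\eps}\Tr(\Q)$.

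The paper handles this by a finite-dimensional reduction (Steps IV--VI in the proof): one splits $\|\y-\x\|_{\H}^2=\|\mathbf{P}_N(\y-\x)\|_{\H}^2+\|\mathbf{Q}_N(\y-\x)\|_{\H}^2$, defines partial sup/inf functions $\wi u_1,\wi v_1$ over $\H_N^\perp$, and applies the \emph{finite-dimensional} maximum principle \cite[Theorem 8.3]{MGHL} on $\H_N$. This produces genuine $C^{1,2}$ test functions $\varphi_n,\psi_n$ on $\H_N$ whose Hessians converge to $Y_N,X_N$ satisfying \eqref{xnyn}; combined with the $\mathbf{Q}_N$-pieces they yield admissible test functions in the sense of Definition \ref{testD}. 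After subtraction, the dangerous trace contribution becomes $\frac{2}{n\eps}\Tr(\Q\mathbf{Q}_N)$, which vanishes as $N\to+\infty$ because $\Tr(\Q)<+\infty$ (see \eqref{fdeca2}), while the $Y_N-X_N$ part has a sign. This reduction---standard in the lineage \cite{FGSSA,FGRA,AS2}---is the missing ingredient in your proposal, and without it the contradiction at the end does not close.
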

	
	 \begin{proof}
     We only consider the case when $n<+\infty$ in \eqref{thjbcomp}. The case $n=+\infty$ in \eqref{thjbcomp} can be proven in a similar way as it reduces to the first order HJB. The proof is divided into the following  steps:
     \vskip 0.2cm
     \noindent
     \textbf{Step-1:} Define
		\begin{align}\label{ugvg}
			u_{\upgamma}(t,\y):=u(t,\y)-\frac{\upgamma}{t}\  \text{ and } \ v_{\upgamma}(t,\y):=v(t,\y)+\frac{\upgamma}{t},
		\end{align}
        for some $\gamma>0$. It is sufficient to prove that
        $u_{\upgamma}\leq v_{\upgamma}$, for all $(t,\y)\in(0,T)\times\V$ and all $\upgamma>0$. Then, we can obtain $u\leq v$ by letting $\gamma\to0$. 
        
	We assume that $u_{\upgamma}\not\leq v_{\upgamma}$ on $(0,T)\times\V$. Then,  there is a $\kappa>0$ such that for sufficiently small $\upgamma>0$, we have (\cite[Theorem 3.50]{GFAS})
	\begin{align*}
		0<\mathfrak{m}:=\lim\limits_{R\to+\infty}\lim\limits_{q\to0}\lim\limits_{\upnu\to0}\sup\bigg\{& u_{\upgamma}(t,\y)-v_{\upgamma}(s,\x): \|\y-\x\|_{\H}<q, \ \|\y\|_{\V},\|\x\|_{\V}\leq R, \\& |t-s|<\upnu, \ \kappa<t,s\leq T\bigg\}.
	\end{align*}
	We also define
	\begin{align*}
		\mathfrak{m}_{\eps}:=\lim\limits_{q\to0}\lim\limits_{\upnu\to0}\sup\bigg\{&u_{\upgamma}(t,\y)-v_{\upgamma}(s,\x)-\frac{\|\y-\x\|_{\H}^2}{2\eps}:\\& \|\y\|_{\V},\|\x\|_{\V}\leq R, \ \y,\x\in\V, \ \ |t-s|<\upnu, \ \kappa<t,s\leq T\bigg\},\\
		\mathfrak{m}_{\eps,\delta}:=\lim\limits_{\upnu\to0}\sup\bigg\{&u_{\upgamma}(t,\y)-v_{\upgamma}(s,\x)-\delta  \|\y\|_{\V}^2-\delta  \|\x\|_{\V}^2-\frac{\|\y-\x\|_{\H}^2}{2\eps}: \\&\y,\x\in\V, \ \ |t-s|<\upnu, \ \kappa<t,s\leq T\bigg\},\\
		\mathfrak{m}_{\delta,\eps,\eta}:=\sup\bigg\{&u_{\upgamma}(t,\y)-v_{\upgamma}(s,\x)-\delta \|\y\|_{\V}^2-\delta \|\x\|_{\V}^2-\frac{\|\y-\x\|_{\H}^2}{2\eps}\\&-\frac{(t-s)^2}{2\eta}: \ \y,\x\in\V, \ \kappa<t,s\leq T\bigg\}.
	\end{align*}
	From the above definitions, we have the following convergences:
	\begin{align}\label{comconv}
		\mathfrak{m}\leq\lim\limits_{\eps\to0}\mathfrak{m}_\eps, \ \  \mathfrak{m}_\eps=
		\lim\limits_{\delta\to0}\mathfrak{m}_{\eps,\delta}, \ \ \text{ and } \ 
		\mathfrak{m}_{\eps,\delta}=\lim\limits_{\eta\to0}\mathfrak{m}_{\eps,\delta,\eta}.
	\end{align}
		For $\eps,\delta,\eta>0$, we define the function $\Phi$ on $(0,T]\times\H$ by
		\begin{equation*}
			\Phi(t,s,\y,\x)=\left\{
			\begin{aligned}
				&u_{\upgamma}(t,\y)-v_{\upgamma}(s,\x)-
				\frac{\|\y-\x\|_{\H}^2}{2\eps}-\delta\|\y\|_{\V}^2 \\&-\delta\|\x\|_{\V}^2-\frac{(t-s)^2}{2\eta}, \  &&\text{ if } \ \y,\x\in\V,\\
				&-\infty, \  &&\text{ if } \ \y,\x\notin\V.
			\end{aligned}
			\right.
		\end{equation*}
		Clearly, $\Phi\to-\infty$ as $\max\{\|\y\|_{\V},\|\x\|_{\V}\}\to+\infty$.
	
		\vskip 0.2cm
		\noindent
		\textbf{Step-II:} \textbf{\emph{$\Phi$ is weakly sequentially upper-semicontinuous on $(0,T]\times(0,T]\times\H\times\H$.} } 
		By the properties of norms, the functions $\y\mapsto\|\y\|_{\V}^2$, $\x\mapsto\|\x\|_{\V}^2$, and $(\y,\x)\mapsto\|\y-\x\|_{\H}^2$  are weakly sequentially lower-semicontinuous  in  $\H$, $\H$ and $\H\times\H$, respectively. Moreover, $\u_{\upgamma}$ is a weakly sequentially upper-semicontinuous function in $(0,T)\times\V$. We will now show that
			$u_{\upgamma}(t,\y)-\delta\|\y\|_{\V}^2 $ \text{is weakly sequentially upper-semicontinuous on} $ (0,T)\times\H.$
	
		Suppose this is not true. Then, there exists a sequence $(t_n)_{n\geq1}$ in $(0,T)$ with $t_n\to t\in(0,T)$ and a sequence $(\y_n)_{n\geq1}$ in $\H$ with $\y_n\rightharpoonup\y\in\H$ such that
		\begin{align}\label{contra}
			\limsup\limits_{n\to+\infty}\left(u_{\upgamma}(t_n,\y_n)-
			\delta\|\y_n\|_{\V}^2\right)>
			u_{\upgamma}(t,\y)-\delta\|\y\|_{\V}^2.
		\end{align}
		Now, if $\liminf\limits_{n\to+\infty}\|\y_n\|_{\V}=+\infty$, then \eqref{contra} is impossible because of the assumption \eqref{bdd} on $\u$. Therefore, $\liminf\limits_{n\to+\infty}\|\y_n\|_{\V}<+\infty$ and by the properties of limit inferior, there exists a subsequence (still denoted by $(t_n,\y_n)$) such that $\limsup\limits_{n\to+\infty}\|\y_n\|_{\V}<+\infty$. By an application of the Banach-Alaoglu theorem, we then have $\y_n\rightharpoonup\y$ in $\V$ (along a subsequence), which implies $\|\y\|_{\V}\leq\liminf\limits_{n\to+\infty}\|\y_n\|_{\V}$ and therefore, from \eqref{contra}, we further have
		\begin{align*}
	\limsup\limits_{n\to+\infty}u_{\upgamma}(t_n,\y_n)>u_{\upgamma}(t,\y),
		\end{align*}
		 which yields a contradiction to the fact that $u_{\upgamma}$ is weakly sequentially upper-semicontinuous.  In the same way, one shows that $v_{\upgamma}(s,\x)-\delta\|\x\|_{\V}^2$ is weakly sequentially lower-semicontinuous on $(0,T)\times\H$. This establishes the desired claim. Consequently, by the definition of viscosity solution, $\Phi$ attains a global maximum over $(0,T]\times(0,T]\times\H\times\H$ at some point $(\bar{t},\bar{s},\bar{\y},\bar{\x})\in(0,T]\times(0,T]\times\V\times\V$. 
		\vskip 0.1 cm 
	\textbf{Claim:} \emph{$\|\bar{\y}\|_{\V},\|\bar{\x}\|_{\V}$ are bounded independently of $\eps$, for a fixed $\delta>0$.} 
		 Indeed, for any $\y\in\V$, we have 
		 \begin{align*}
		  \Phi(\bar{t},\bar{s},\y,\y)\leq\Phi(\bar{t},\bar{s},\bar{\y},\bar{\x}).
		 \end{align*}
		 Using the definition of $\Phi$ and \eqref{bdd}, we obtain
		 \begin{align*}
		 \delta\|\bar{\y}\|_{\V}^2+\delta\|\bar{\x}\|_{\V}^2&\leq
		 u_{\upgamma}(\bar{t},\bar{\y})-u_{\upgamma}(\bar{t},\y)+
		 v_{\upgamma}(\bar{s},\y)-v_{\upgamma}(\bar{s},\bar{\x})
		 +2\delta\|\y\|_{\V}^2
		 \nonumber\\&\leq\mathpzc{C}+2\delta\|\y\|_{\V}^2,
		 \end{align*}
		 for all $\y\in\V$ and for all $0<\bar{t},\bar{s}<T$. In particular for $\y=\boldsymbol{0}$, we find 
		 \begin{align*}
		 \delta\|\bar{\y}\|_{\V}^2+\delta\|\bar{\x}\|_{\V}^2
		 \leq\mathpzc{C},
		 \end{align*}
		 Thus, for a fixed $\delta$, we conclude that $\bar{\x}$ and $\bar{\y}$ are bounded independently of $\eps$ in $\V$.
                    
          By using \eqref{comconv}, we have the following:
          \begin{align}\label{copm1}
          	\limsup\limits_{\eta\to0}\frac{(\bar{t}-\bar{s})^2}{2\eta}=0 \ \text{ for fixed } \ \delta>0, \eps>0,
          \end{align}
          and
          \begin{align}\label{copm2}
          	\limsup\limits_{\delta\to0}\limsup\limits_{\eta\to0} 
          	\delta(\|\bar{\y}\|_{\V}^2+\|\bar{\x}\|_{\V}^2)=0 \ \text{ for fixed } \ \eps>0.
          \end{align}
          We can assume this maximum point to be strict (for instance, see \cite[Lemma 3.37, Chapter 3]{GFAS}) and it follows by the definition of viscosity solution that $\bar{\y},\bar{\x}\in\V_2$. Consequently, from \eqref{copm1}-\eqref{copm2}, the fact that $g\in\mathrm{Lip}_b(\H)$ and 
          from \eqref{bdd}, it follows that for sufficiently small $\upgamma$ and $\delta$, we must have $$\bar{t},\bar{s}<T,$$ provided that $\eta$ and $\eps$ are chosen small enough.
        
        Moreover, by using the definition of $\Phi$ and the fact that $\Phi(\bar{t},\bar{s},\bar{\y},\bar{\y})\leq\Phi(\bar{t},\bar{s},\bar{\y},\bar{\x})$ yield
          \begin{align*}
          \frac{\|\bar{\y}-\bar{\x}\|_{\H}^2}{2\eps}\leq\mathfrak{L}\|\bar{\y}-\bar{\x}\|_{\H}+\delta\|\bar{\y}\|_{\H}^2,
          \end{align*}
          which in view of \eqref{copm2}, implies that
          \begin{align}\label{copm3}
          	\limsup\limits_{\delta\to0}\limsup\limits_{\eta\to0}
          	\frac{\|\bar{\y}-\bar{\x}\|_{\H}}{\eps}\leq2\mathfrak{L}.
          \end{align}
       
       \textbf{Step-IV:} \emph{Reduction to finite-dimensional space.} 
       Let $\H_1\subset\H_2\subset\ldots$ be finite-dimensional subspaces of $\H$, spanned by the eigenfunctions of $\A$, with $\overline{\bigcup_{N=1}^\infty}\H_N=\H$. For $N>1$, let $\mathbf{P}_N$ denote the orthogonal projection onto $\H_N$. It is clear that $\mathbf{P}_N$  is a bounded linear operator on $\H$ and so is $\mathbf{Q}_N:=\I-\mathbf{P}_N$. Let us denote  $\H_N^{\perp}=\mathbf{Q}_N\H$. We then have an orthogonal decomposition $\H=\H_N\times\H_N^{\perp}$. For any $\y\in\H$, we write $\y_N:=\mathbf{P}_N\y\in\H_N$ and $\y_N^{\perp}:=\mathbf{Q}_N\y\in\H_N^{\perp}$ so that $\y=(\mathbf{P}_N\y,\mathbf{Q}_N\y)=(\y_N,\y_N^{\perp})$. 
       
       Let us now fix $N\in\N$. By using the properties of $\mathbf{P}_N$ and $\mathbf{Q}_N$, we have the following straightforward identities:
       \begin{align*}
       	\|\y-\x\|_{\H}^2&=\|\mathbf{P}_N(\y-\x)\|_{\H}^2
       	+\|\mathbf{Q}_N(\y-\x)\|_{\H}^2,\label{fnPQ1}\\
       	\|\mathbf{Q}_N(\y-\x)\|_{\H}^2
       	&\leq2(\mathbf{Q}_N(\bar{\y}-\bar{\x}),\y-\x)-
       	\|\mathbf{Q}_N(\bar{\y}-\bar{\x})\|_{\H}^2+2\|\mathbf{Q}_N(\y-\bar{\y})\|_{\H}^2\nonumber\\&\quad
       	+2\|\mathbf{Q}_N(\x-\bar{\x})\|_{\H}^2\nonumber,
       \end{align*}
       with the equality in the second inequality if $\y=\bar{\y}$ and $\x=\bar{\x}$.
       Let us now define 
       \begin{equation*}
       	\wi u(t,\y):=
       	\left\{
       	\begin{aligned}
       		&u_{\upgamma}(t,\y)-\frac{(\y,\mathbf{Q}_N(\bar{\y}-\bar{\x}))}{\eps}+\frac{\|\mathbf{Q}_N(\bar{\y}-\bar{\x})\|_{\H}^2}{2\eps}-
       		\frac{\|\mathbf{Q}_N(\y-\bar{\y})\|_{\H}^2}{\eps}\nonumber\\&\quad-
       		\delta \|\y\|_{\V}^2, \ \text{ when } \ \y,\x\in\V,\\
       		&-\infty, \ \text{ when } \ \y,\x\notin\V,
       	\end{aligned}
       	\right.
       \end{equation*}
       and 
       \begin{equation*}
       	\wi v(s,\x):=
       	\left\{
       	\begin{aligned}
       		&v_{\upgamma}(s,\x)-\frac{(\x,\mathbf{Q}_N(\bar{\y}-\bar{\x}))}{\eps}+
       		\frac{\|\mathbf{Q}_N(\x-\bar{\x})\|_{\H}^2}{\eps}+
       		\delta \|\x\|_{\V}^2, \ \text{ when } \ \y,\x\in\V,\\
       		&+\infty, \ \text{ when } \ \y,\x\notin\V.
       	\end{aligned}
       	\right.
       \end{equation*}
       We emphasize that such extended $\wi{u}$ and $\wi{v}$ are weakly sequentially upper-semicontinuous and lower-semicontinuous on $(0,T]\times\H$, respectively. It follows that the function
       \begin{align*}
       	\wi\Phi(t,s,\y,\x):=\wi u(t,\y)-\wi v(s,\x)-\frac{\|\mathbf{P}_N(\y-\x) \|_{\H}^2}{2\eps}-\frac{(t-s)^2}{2\eta},
       \end{align*}
       always satisfies $\wi\Phi\leq\Phi$ whenever $\y,\x\in\V$ and it attains a strict global maximum over $(0,T]\times(0,T]\times\H\times\H$ at $(\bar{t},\bar{s},\bar{\y},\bar{\x})$ where $\wi\Phi(\bar{t},\bar{s},\bar{\y},\bar{\x}) =\Phi(\bar{t},\bar{s},\bar{\y},\bar{\x})$.

       \vskip 0.2cm
       \noindent	
       \textbf{Step-V:} \emph{Finite-dimensional maximum principle.} 
       We now define, for $\y_N,\x_N\in\H_N$, the functions 
       \begin{align}\label{supNminN}
       	\wi{u}_1(t,\y_N):=\sup\limits_{\y_N^{\perp}\in\H_N^{\perp}} \wi{u}(t,\y_N,\y_N^{\perp}) \ \text{ and } \ 
       	\wi{v}_1(s,\x_N):=\inf\limits_{\x_N^{\perp}\in\H_N^{\perp}} \wi{v}(s,\x_N,\x_N^{\perp}). 
       \end{align}
       Since $\wi{u}$ and $-\wi{v}$ are weakly sequentially upper-semicontinuous on $(0,T]\times\H$, the functions $\wi{u}_1$ and $-\wi{v}_1$ are upper-semicontinuous on $(0,T]\times\H_N$. 
       Note that
       	\begin{align*}
       		\wi\Phi(\bar{t},\bar{s},\bar{\y},\bar{\x})&=
       		\wi u(\bar{t},\bar{\y})-\wi v(\bar{s},\bar{\x})- \frac{\|\mathbf{P}_N(\bar{\y}-\bar{\x}) \|_{\H}^2}{2\eps}-\frac{(\bar{t}-\bar{s})^2}{2\eta}
       		\nonumber\\&=
       		\wi u(\bar{t},\mathbf{P}_N\bar{\y},\mathbf{Q}_N\bar{\y})-\wi v(\bar{s},\mathbf{P}_N\bar{\x},\mathbf{Q}_N\bar{\x})- \frac{\|\mathbf{P}_N(\bar{\y}-\bar{\x}) \|_{\H}^2}{2\eps}-\frac{(\bar{t}-\bar{s})^2}{2\eta}
       		\nonumber\\&\leq
       		\wi u_1(\bar{t},\mathbf{P}_N\bar{\y})-\wi v_1 (\bar{s},\mathbf{P}_N\bar{\x})- \frac{\|\mathbf{P}_N(\bar{\y}-\bar{\x}) \|_{\H}^2}{2\eps}-\frac{(\bar{t}-\bar{s})^2}{2\eta}.
       	\end{align*}
       	Since, $\wi\Phi$ has global strict maxima at $(\bar{t},\bar{s},\bar{\y},\bar{\x}),$ where $\wi\Phi(\bar{t},\bar{s},\bar{\y},\bar{\x}) =\Phi(\bar{t},\bar{s},\bar{\y},\bar{\x})$, therefore, by the definition of $\wi{u}$ and $\wi{v}$, it follows that
       	\begin{align}\label{fndmax}
       		\wi{u}_1(\bar{t},\mathbf{P}_N\bar{\y})=\wi{u}(\bar{t},\bar{\y})  \ \text{ and } \ 
       		\wi{v}_1(\bar{s},\mathbf{P}_N\bar{\x})=\wi{v}(\bar{s},\bar{\x}).
       \end{align}
       Let us now define a map $\Phi_N:(0,T]\times(0,T]\times\H_N\times\H_N\to\R$ by
       \begin{align*}
       	\Phi_N(t,s,\y_N,\x_N)&=\wi{u}_1(t,\y_N)-\wi{v}_1(s,\x_N) -\frac{\|\y_N-\x_N\|_{\H}^2}{2\eps}-\frac{(t-s)^2}{2\eta}
       	\nonumber\\&=
       	\sup\limits_{\y_N^{\perp},\x_N^{\perp}\in\H_N^{\perp}}
       	\wi\Phi\left(t,s,(\y_N,\y_N^{\perp}),(\x_N,\x_N^{\perp})\right).
       \end{align*}
       In view of \eqref{fndmax} and the fact that $\wi\Phi$ has a global strict maxima at $(\bar{t},\bar{s},\bar{\y},\bar{\x})$, it is easy to check that $\Phi_N$ attains a strict global maximum over $(0,T]\times(0,T]\times\H_N\times\H_N$ at $(\bar{t},\bar{s},\bar{\y}_N,\bar{\x}_N)=(\bar{t},\bar{s},\mathbf{P}_N\bar{\y},\mathbf{P}_N\bar{\x})$. By an application of the finite-dimensional maximum principle \cite[Theorem 8.3]{MGHL} (also see \cite[Theorem E.11, pp. 871]{GFAS}), for every $n\in\N$ there exist points $t^n,s^n\in(0,T)$ and $\y_{N}^n,\x_N^n\in\H_N$ such that
       \begin{equation}\label{limits1}
       	\left\{
       	\begin{aligned}
       		t^n\to\bar{t}, \ s^n\to\bar{s}, \ \y_N^n\to\bar{\y}_N, \  \x_N^n\to\bar{\x}_N, \ \text{ as } \ n\to+\infty,\\
       		\wi{u}_1(t^n,\y_N^n)\to\wi{u}_1(\bar{t},\bar{\y}_N), \ \wi{v}_1(s^n,\x_N^n)\to\wi{v}_1(\bar{s},\bar{\x}_N),  \ \text{ as } \ n\to+\infty.
       	\end{aligned}
       	\right.
       \end{equation}
      Moreover, there exist functions $\varphi_n,\psi_n\in\C^{1,2}((0,T)\times\H_N)$ with uniformly continuous derivatives such that $\wi{u}_1-\varphi_n$ and $-\wi{v}_1+\psi_n$ have strict, global maxima at $(t^n,\y_N^n)$ and $(s^n,\x_N^n)$, respectively, and 
       \begin{equation}\label{limits11}
       	\left\{
       	\begin{aligned}
       		(\varphi_n)_t(t^n,\y_N^n)\to\frac{\bar{t}-\bar{s}}{\eta}, \ \ &\D\varphi_n(t^n,\y_N^n)\to\frac{1}{\eps}(\bar{\y}_N-\bar{\x}_N), \ \text{ as } \ n\to+\infty,\\
       		(\psi_n)_t(s^n,\x_N^n)\to\frac{\bar{t}-\bar{s}}{\eta}, \ \ &\D\psi_n(s^n,\x_N^n)\to\frac{1}{\eps}(\bar{\y}_N-\bar{\x}_N), \ \text{ as } \ n\to+\infty,\\
       		\D^2\varphi_n(t^n,\y_N^n)\to Y_N, \ \ &\D^2\psi_n(s^n,\y_N^n)\to X_N, \ \text{ as } \ n\to+\infty,
       	\end{aligned}
       	\right.
       \end{equation}
       where $X_N=\mathbf{P}_N X_N\mathbf{P}_N$, $Y_N=\mathbf{P}_N Y_N\mathbf{P}_N$ are bounded independently of $n$ and satisfy 
       \begin{align}\label{xnyn}
       	-\frac{3}{\eps}\begin{pmatrix} \mathbf{P}_N \ \ \ 0\\0 \  \  \ \mathbf{P}_N \end{pmatrix}\leq\begin{pmatrix} Y_N \ \ \ 0 \\0  \  \ \ X_N\end{pmatrix}\leq\frac{3}{\eps}
       	\begin{pmatrix} \mathbf{P}_N \ \ \ -\mathbf{P}_N \\-\mathbf{P}_N  \ \ \  \mathbf{P}_N\end{pmatrix}.
       \end{align}
       It is clear from \eqref{xnyn} that $X_N,Y_N$ satisfy $Y_N\leq X_N$, that is, $Y_N\xi\cdot\xi\leq X_N\xi\cdot\xi$ for $\xi\in\R^N$, where `$\cdot$' indicates the Euclidean product on $\R^N$.
       
       \vskip 0.2cm
       \noindent
       \textbf{Step-VI:} \emph{Back to infinite-dimensional space.} Let us consider a map $\Phi_N^n:(0,T)\times(0,T)\times\H\times\H\to\R$ given by 
       \begin{align*}
       	\Phi_N^n(t,s,\y,\x)=\wi{u}(t,\y)-\wi{v}(s,\x)-\varphi_n(t,\mathbf{P}_N\y)+\psi_n(s,\mathbf{P}_N\x).
       \end{align*}
       This map has the variable split and by the definition of $\wi{u}$ and $\wi{v}$, it attains global maximum (which can be assumed strict) over $(0,T]\times(0,T]\times\H\times\H$ at some point $(\hat{t}^n,\hat{s}^n,\hat{\y}^n,\hat{\x}^n)\in(0,T]\times(0,T]\times\V\times\V$.
        Setting 
       \begin{align*}
       	\hat{\y}^n=(\mathbf{P}_N\hat{\y}^n,\mathbf{Q}_N\hat{\y}^n), \ 
       	\hat{\x}^n=(\mathbf{P}_N\hat{\x}^n,\mathbf{Q}_N\hat{\x}^n), \ \text{ for every } \ \y_N^{\perp},\x_N^{\perp}\in\H_N^{\perp},
       \end{align*}
       then, from \eqref{supNminN}, we have
       \begin{align}\label{PPqqnn}
       	&\wi{u}_1(\hat{t}^n,\mathbf{P}_N\hat{\y}^n)-\wi{v}_1(\hat{s}^n,\mathbf{P}_N\hat{\x}^n)-
       	\varphi_n(\hat{t}^n,\mathbf{P}_N\hat{\y}^n)+\psi_n(\hat{s}^n,\mathbf{P}_N\hat{\x}^n)
       	\nonumber\\&
       	\geq\wi{u}(\hat{t}^n,\mathbf{P}_N\hat{\y}^n,\mathbf{Q}_N\hat{\y}^n)-
       	\wi{v}(\hat{s}^n,\mathbf{P}_N\hat{\x}^n,\mathbf{Q}_N\hat{\x}^n)-
       	\varphi_n(\hat{t}^n,\mathbf{P}_N\hat{\y}^n)+\psi_n(\hat{s}^n,\mathbf{P}_N\hat{\x}^n)
       	\nonumber\\&
       	\geq\wi{u}(t^n,\y_N^n,\y_N^{\perp})-\wi{v}(s^n,\x_N^n,\x_N^{\perp})-
       	\varphi_n(t^n,\y_N^n)+\psi_n(s^n,\x_N^n),
       \end{align}
       where in the last inequality we have used the fact that $\Phi_N^n$ has a strict global maxima at $(\hat{t}^n,\hat{s}^n,\hat{\y}^n,\hat{\x}^n)$. On taking suprema over $\y_N^{\perp}$ and $\x_N^{\perp}$ in the above inequality, we obtain
       \begin{align}\label{maxNminN}
       	&\wi{u}_1(\hat{t}^n,\mathbf{P}_N\hat{\y}^n)-\wi{v}_1(\hat{s}^n,\mathbf{P}_N\hat{\x}^n)-
       	\varphi_n(\hat{t}^n,\mathbf{P}_N\hat{\y}^n)+\psi_n(\hat{s}^n,\mathbf{P}_N\hat{\x}^n)
       	\nonumber\\&
       	\geq\wi{u}_1(t^n,\y_N^n)-\wi{v}_1(s^n,\x_N^n)-
       	\varphi_n(t^n,\y_N^n)+\psi_n(s^n,\x_N^n).
       \end{align}
       Since $\wi{u}_1-\varphi_n$ and $-\wi{v}_1+\psi_n$ have unique strict, global maxima at $(t^n,\y_N^n)$ and $(s^n,\x_N^n)$, respectively, therefore
       	\eqref{maxNminN} implies
       	\begin{align}\label{limits22}
       		\hat{t}^n=t^n, \ \hat{s}^n=s^n, \  \mathbf{P}_N\hat{\y}^n=\y_N^n \ \text{ and } \  
       		\mathbf{P}_N\hat{\x}^n=\x_N^n,
       	\end{align}  
       	and the fact that $\Phi_N^n$ has a global maximum at $(\hat{t}^n,\hat{s}^n,\hat{\y}^n,\hat{\x}^n)$, the second inequality in \eqref{PPqqnn} gives 
       	\begin{align}\label{limits2}
       		\wi{u}(\hat{t}^n,\hat{\y}^n)=\wi{u}_1(t^n,\y_N^n)  \ \text{ and } \ 
       		\wi{v}(\hat{s}^n,\hat{\x}^n)=\wi{v}_1(s^n,\x_N^n).
       	\end{align}
       	In view of \eqref{limits1} and \eqref{fndmax}, we further conclude 
       	\begin{equation}\label{limits23}
       		\left\{
       		\begin{aligned}
       			\wi{u}(\hat{t}^n,\hat{\y}^n)=\wi{u}_1(t^n,\y_N^n)\to
       			\wi{u}_1(\bar{t},\bar{\y}_N)=\wi{u}(\bar{t},\bar{\y}), \ \text{ as } \ n\to+\infty,\\ \wi{v}(\hat{s}^n,\hat{\x}^n)=\wi{v}_1(s^n,\x_N^n)\to
       				\wi{v}_1(\bar{s},\bar{\x}_N)=\wi{v}(\bar{s},\bar{\x}),  \ \text{ as } \ n\to+\infty.
       		\end{aligned}
       		\right.
       \end{equation}
       From \eqref{limits1}, \eqref{limits22}-\eqref{limits23} together with weak upper-semicontinuity of $\wi{u}$ and weak lower-semicontinuity of $\wi{v}$, we have 
       \begin{align*}
       	\hat{\y}^n\to\bar{\y} \ \text{ and } \ \hat{\x}^n\to\bar{\x} \ \text{ as } 
       	\ n\to+\infty \ \text{ in } \H.
       \end{align*}
      
      Furthermore, by the definition of $\wi{u}$ and $\wi{v}$ and the fact that $\Phi_N^n$ has a strict global maxima at $(\hat{t}^n,\hat{s}^n,\hat{\y}^n,\hat{\x}^n)$, we must have 
      \begin{align}\label{conv3}
      	\|\hat{\y}^n\|_{\V}\leq \mathpzc{C} \ \text{ and } \ \|\hat{\x}^n\|_{\V}\leq \mathpzc{C},
      \end{align}
      for some constant $ \mathpzc{C}$ independent of $n$. Therefore, by an application of the Banach-Alaoglu theorem, we have the following weak convergences (along a subsequence still denoted by the same symbol) as $n\to+\infty$:
\begin{align*}
	\hat{\y}^n\rightharpoonup\wi\y \ \text{ and } \ \hat{\x}^n\rightharpoonup\wi\x, \ \text{ in } \ \V.
\end{align*}
By the uniqueness of weak limits, we further have $\wi\x=\bar{\x}$ and $\wi\y=\bar{\y}$.
Note that weak convergent sequences are bounded. 	The above weak convergence together with \eqref{conv3}, and the fact that  $(\bar{t},\bar{s},\bar{\y},\bar{\x})$ is a point of global maxima for $\Phi$, imply 
\begin{align*}
	\|\hat{\y}^n\|_{\V}\to\|\bar{\y}\|_{\V},  \ \|\hat{\x}^n\|_{\V}\to\|\bar{\x}\|_{\V},  \ \text{ as } \ n\to+\infty. 
\end{align*}
which in turn gives the following strong convergence as $n\to+\infty$ (Radon-Riesz property):
\begin{align}\label{eqn-v-norm-conv}
	\hat{\y}^n\to\bar{\y}\ \text{ and } \ \hat{\x}^n\to\bar{\x} \ \text{ in } \ \V. 
\end{align}

			\noindent
		\vskip 0.2cm
		\textbf{Step-VII:} \emph{Applying definition of viscosity solution.} 
		We now use that $u_{\upgamma}$ and $v_{\upgamma}$ are viscosity subsolution and supersolution, respectively. Let us define a test function 
		\begin{align*}
			\varphi(t,\y):=&\varphi_n(t,\y)+\frac{(\y,\mathbf{Q}_N(\bar{\y}-\bar{\x}))}{\eps}+\frac{\|\mathbf{Q}_N(\y-\bar{\y})\|_{\H}^2}{\eps}-\frac{\|\mathbf{Q}_N(\bar{\y}-\bar{\x})\|_{\H}^2}{2\eps}+
			\delta\|\y\|_{\V}^2.
		\end{align*}
		Since $u_\upgamma$ is a viscosity subsolution of \eqref{thjbcomp} over $(0,T)\times\V$, then by the definition of viscosity subsolution with the test function $\varphi$, we have $\hat{\y}^n\in\V_2$ (even $\hat{\x}^n\in\V_2$ also, in the case of viscosity supersolution $v_\upgamma$), and it satisfies    
		{\small {\begin{align}\label{supsoLdef}
				&(\varphi_n)_t(t_n,\hat{\y}^n)+\frac{1}{2n}\mathrm{Tr}(\Q\D^2\varphi_n(t_n,\hat{\y}^n))
					\nonumber\\&\quad+
					\frac{1}{2n}\mathrm{Tr}\bigg(\frac{1}{\eps}\Q\D^2(\hat{\y}^n,\mathbf{Q}_N(\bar{\y}-\bar{\x}))+\frac{1}{\eps}\Q\D^2\|\mathbf{Q}_N(\hat{\y}^n-\bar{\y})\|_{\H}^2-\frac{1}{\eps}\Q\D^2\|\mathbf{Q}_N(\bar{\y}-\bar{\x})\|_{\H}^2+2\delta\Q(\A+\I)\bigg)
					\nonumber\\&\quad-\frac12 
					\bigg\|\Q^{\frac12}\bigg(\D\varphi_n(t_n,\hat{\y}^n)
					+\frac{1}{\eps}\mathbf{Q}_N(\bar{\y}-\bar{\x})+\frac{2}{\eps}\mathbf{Q}_N(\hat{\y}^n-\bar{\y})
					+2\delta(\A+\I)\hat{\y}^n\bigg)\bigg\|_{\H}^2
				     \nonumber\\&\quad-
					\bigg(\mu\A\hat{\y}^n,\D\varphi_n(t_n,\hat{\y}^n)+\frac{1}{\eps}\mathbf{Q}_N(\bar{\y}-\bar{\x})+\frac{2}{\eps}\mathbf{Q}_N(\hat{\y}^n-\bar{\y})
					+2\delta(\A+\I)\hat{\y}^n\bigg)
					\nonumber\\&\quad-
					\bigg(\alpha\hat{\y}^n,\D\varphi_n(t_n,\hat{\y}^n)+\frac{1}{\eps}\mathbf{Q}_N(\bar{\y}-\bar{\x})+\frac{2}{\eps}\mathbf{Q}_N(\hat{\y}^n-\bar{\y})
					+2\delta(\A+\I)\hat{\y}^n\bigg)
					\nonumber\\&\quad-
					\bigg(\mathcal{B}(\hat{\y}^n),\D\varphi_n(t_n,\hat{\y}^n)+\frac{1}{\eps}\mathbf{Q}_N(\bar{\y}-\bar{\x})+\frac{2}{\eps}\mathbf{Q}_N(\hat{\y}^n-\bar{\y})
					+2\delta(\A+\I)\hat{\y}^n\bigg)
					\nonumber\\&\quad-
					\beta\bigg(\mathcal{C}(\hat{\y}^n),\D\varphi_n(t_n,\hat{\y}^n)+\frac{1}{\eps}\mathbf{Q}_N(\bar{\y}-\bar{\x})+\frac{2}{\eps}\mathbf{Q}_N(\hat{\y}^n-\bar{\y})+2\delta
					(\A+\I)\hat{\y}^n\bigg)
					\nonumber\\&\quad+
					\bigg(\f(t_n),\D\varphi_n(t_n,\hat{\y}^n)+\frac{1}{\eps}\mathbf{Q}_N(\bar{\y}-\bar{\x})+\frac{2}{\eps}\mathbf{Q}_N(\hat{\y}^n-\bar{\y})
					+2\delta
					(\A+\I)\hat{\y}^n\bigg)
					\nonumber\\&\geq\frac{\upgamma}{T^2}.
		\end{align}}}
		The following \emph{derivatives} are immediate 
		\begin{align}\label{derQ}
			\D^2(\hat{\y}^n,\mathbf{Q}_N(\bar{\y}-\bar{\x}))=0, \  \D^2\|\mathbf{Q}_N(\hat{\y}^n-\bar{\y})\|_{\H}^2=2\mathbf{Q}_N \ \text{ and } \ \D^2\|\mathbf{Q}_N(\bar{\y}-\bar{\x})\|_{\H}^2=0.
		\end{align}
		On utilizing \eqref{derQ} in \eqref{supsoLdef} and rearranging the terms, we obtain 
		\begin{align}\label{viscdef1}
			&(\varphi_n)_t(t_n,\hat{\y}^n)+ \frac{1}{2n}\mathrm{Tr}\left(\Q\D^2\varphi_n(t_n,\hat{\y}^n)+\frac{2}{\eps}\Q\mathbf{Q}_N+2\delta(\Q+\Q_1)\right)
			\nonumber\\&\quad-\frac12 
			\bigg\|\Q^{\frac12}\bigg(\mathfrak{T}_n
			+2\delta(\A+\I)\hat{\y}^n\bigg)\bigg\|_{\H}^2-
		\bigg(\mu(\A+\I)\hat{\y}^n+\alpha\hat{\y}^n+\mathcal{B}(\hat{\y}^n)+
		\beta\mathcal{C}(\hat{\y}^n)-\f(t_n),\mathfrak{T}_n\bigg)
			\nonumber\\&\quad+\mu(\hat{\y}^n,\mathfrak{T}_n)-2\delta
			\underbrace{\bigg(\mu\A\hat{\y}^n+\alpha\hat{\y}^n+
				\mathcal{B}(\hat{\y}^n)+
				\beta\mathcal{C}(\hat{\y}^n)-\f(t_n),
				(\A+\I)\hat{\y}^n\bigg)}_{\J}
			\nonumber\\&\geq\frac{\upgamma}{T^2},
		\end{align}
		where $$\mathfrak{T}_n:=\D\varphi_n(t_n,\hat{\y}^n)
		+\frac{1}{\eps}\mathbf{Q}_N(\bar{\y}-\bar{\x})+\frac{2}{\eps}\mathbf{Q}_N(\hat{\y}^n-\bar{\y}).$$

	    Let us now estimate $\J$. We only consider the case when $r>3$ in $d\in\{2,3\}$. The case, when $r=3$ with $2\beta\mu\geq1$ and $d\in\{2,3\}$ can be treated in a similar way.
		From \eqref{syymB3}, \eqref{torusequ} and Cauchy-Schwarz inequality, we calculate
		\begin{align}\label{rg3c}
			&\J=-2\mu\|(\A+\I)\hat{\y}^n\|_{\H}^2-2\alpha\|\hat{\y}^n\|_{\V}^2-
			2\beta\|\hat{\y}^n\|_{\wi\L^{r+1}}^{r+1}+2\mu\|\hat{\y}^n\|_{\V}^2
			\nonumber\\&\quad-
			2(\mathcal{B}(\hat{\y}^n),(\A+\I)\hat{\y}^n)- 2\beta(\mathcal{C}(\hat{\y}^n),\A\hat{\y}^n)+
			2(\f(t),(\A+\I)\hat{\y}^n)
			\nonumber\\&\leq-
			\mu\|(\A+\I)\hat{\y}^n\|_{\H}^2-2\alpha\|\hat{\y}^n\|_{\V}^2-
			2\beta\|\hat{\y}^n\|_{\wi\L^{r+1}}^{r+1}+2\mu\|\hat{\y}^n\|_{\V}^2
			\nonumber\\&\quad-\frac{3\beta}{2}
			\||\hat{\y}^n|^{\frac{r-1}{2}}\nabla\hat{\y}^n\|_{\H}^2+ 2\varrho\|\nabla\hat{\y}^n\|_{\H}^2+\|\f\|_{\V}^2
			+\|\hat{\y}^n\|_{\V}^2.
		\end{align}
			On substituting \eqref{rg3c} into \eqref{viscdef1}, we obtain
			\begin{align}\label{viscdef11}
			&(\varphi_n)_t(t_n,\hat{\y}^n)+ \frac{1}{2n}\mathrm{Tr}\left(\Q\D^2\varphi_n(t_n,\hat{\y}^n)+\frac{2}{\eps}\Q\mathbf{Q}_N+2\delta(\Q+\Q_1)\right)
			\nonumber\\&\quad-
			\bigg(\mu(\A+\I)\hat{\y}^n+\alpha\hat{\y}^n+\mathcal{B}(\hat{\y}^n)+
			\beta\mathcal{C}(\hat{\y}^n)-\f(t_n),\mathfrak{T}_n\bigg)
			\nonumber\\&\quad+\mu(\hat{\y}^n,\mathfrak{T}_n)+2\varrho\|\nabla\hat{\y}^n\|_{\H}^2
			+\|\f\|_{\V}^2+(2\mu+1)\|\hat{\y}^n\|_{\V}^2
			\nonumber\\&\geq\frac{\upgamma}{T^2}+\mu\delta\|(\A+\I)\hat{\y}^n\|_{\H}^2+2\alpha\delta\|\hat{\y}^n\|_{\V}^2+
			2\beta\delta\|\hat{\y}^n\|_{\wi\L^{r+1}}^{r+1}+\frac{3\beta\delta}{2}
			\||\hat{\y}^n|^{\frac{r-1}{2}}\nabla\hat{\y}^n\|_{\H}^2
			\nonumber\\&\quad+
			\frac12 
			\bigg\|\Q^{\frac12}\bigg(\mathfrak{T}_n
			+2\delta(\A+\I)\hat{\y}^n\bigg)\bigg\|_{\H}^2.
			\end{align}
			By using the Cauchy-Schwarz and Young's inequalities, we estimate the following:
		\begin{align}
			|(\mathcal{C}(\hat{\y}^n),\mathfrak{T}_n)|&\leq C\left(\||\hat{\y}^n|^{\frac{r-1}{2}}\nabla\hat{\y}^n\|_{\H}^{\frac{2r}{r+1}}+\|\hat{\y}^n\|_{\wi\L^{r+1}}^r\right)
			\|\mathfrak{T}_n\|_{\H}\nonumber\\&\leq
			\frac{\delta}{4}\||\hat{\y}^n|^{\frac{r-1}{2}}
			\nabla\hat{\y}^n\|_{\H}^2+
			C\|\mathfrak{T}_n\|_{\H}^{r+1}+
			\delta\|\hat{\y}^n\|_{\wi\L^{r+1}}^{r+1},
			\label{appdac1}\\
			|(\mathcal{B}(\hat{\y}^n),\mathfrak{T}_n)|&\leq \frac12\|\mathfrak{T}_n\|_{\H}^2
			+\frac{\beta\delta}{4} \||\hat{\y}^n|^{\frac{r-1}{2}}\nabla\hat{\y}^n\|_{\H}^2
			+\widetilde{\varrho}\|\nabla\hat{\y}^n\|_{\H}^2,\label{appdac2}\\
			|((\A+\I)\hat{\y}^n,\mathfrak{T}_n)|&\leq \frac{\delta}{4}\|(\A+\I)\hat{\y}^n\|_{\H}^2
			+C\|\mathfrak{T}_n\|_{\H}^2,\label{appdac3}\\
			|(\f(t),\mathfrak{T}_n)|&\leq \frac{\delta}{4}\|(\A+\I)\hat{\y}^n\|_{\H}^2+C\|\mathfrak{T}_n\|_{\H}^2\label{appdac4},\\
			|(\hat{\y}^n,\mathfrak{T}_n)|&\leq \frac{1}{2}\|\hat{\y}^n\|_{\H}^2+\frac{1}{2}\|\mathfrak{T}_n\|_{\H}^2,\label{appdac5}
		\end{align}
		where $\widetilde{\varrho}=\frac{r-3}{\delta(r-1)}\left[\frac{4}{\beta\delta (r-1)}\right]^{\frac{2}{r-3}}$. On combining \eqref{appdac1}-\eqref{appdac5} and substituting  into \eqref{viscdef11}, we obtain
			\begin{align}\label{viscdef111}
			&(\varphi_n)_t(t_n,\hat{\y}^n)+ \frac{1}{2n}\mathrm{Tr}\left(\Q\D^2\varphi_n(t_n,\hat{\y}^n)+\frac{2}{\eps}\Q\mathbf{Q}_N+2\delta(\Q+\Q_1)\right)
			\nonumber\\&\quad
			+ \mathpzc{C}\|\hat{\y}^n\|_{\V}^2+ \mathpzc{C}\|\mathfrak{T}_n\|_{\H}^{r+1}+
			 \mathpzc{C}\|\mathfrak{T}_n\|_{\H}^2
			\nonumber\\&\geq\frac{\upgamma}{T^2}+
			\frac{\mu\delta}{2}\|(\A+\I)\hat{\y}^n\|_{\H}^2
			+2\alpha\delta\|\hat{\y}^n\|_{\V}^2+
			\beta\delta\||\hat{\y}^n|^{\frac{r-1}{2}}\nabla\hat{\y}^n\|_{\H}^2+
			\beta\delta\|\hat{\y}^n\|_{\wi\L^{r+1}}^{r+1}
			\nonumber\\&\quad+\frac12 
			\bigg\|\Q^{\frac12}\bigg(\mathfrak{T}_n
			+2\delta(\A+\I)\hat{\y}^n\bigg)\bigg\|_{\H}^2.
		\end{align}
		By utilizing the fact $\mathbf{Q}_N=\I-\mathbf{P}_N$ together with the convergences \eqref{limits1}-\eqref{limits11} and \eqref{eqn-v-norm-conv}, we conclude the following bound:
		\begin{align}\label{ot1}
			\|\mathfrak{T}_n\|_{\H}&\leq
			\bigg\|\D\varphi_n(t_n,\hat{\y}^n)-\frac{1}{\eps}\mathbf{P}_N(\bar{\y}-\bar{\x})\bigg\|_{\H}+\bigg\|\frac{1}{\eps}(\bar{\y}-\bar{\x})+\frac{2}{\eps}\mathbf{Q}_N(\hat{\y}^n-\bar{\y})\bigg\|_{\H}
			\nonumber\\&\leq  \mathpzc{C}(\eps,\|\bar{\y}\|_{\V},\|\bar{\x}\|_{\V}).
		\end{align} 
		Finally, from the convergences \eqref{limits1}-\eqref{limits11}, \eqref{eqn-v-norm-conv} and \eqref{ot1}, the left hand side of \eqref{viscdef111} is bounded independent of $n$. Therefore, on employing the Banach-Alaoglu theorem and the fact that weak limits are unique, we have 
		\begin{align}\label{weakconv1}
			\A\hat{\y}^n\rightharpoonup\A\bar{\y} \  \text{ in } \ \mathcal{D}(\A) \ \text{ as } \ n\to+\infty.
		\end{align}

		On taking the limit supremum as $n\to+\infty$ in \eqref{viscdef1}, we deduce 
			\begin{align}\label{viscdef2}
			&\frac{\bar{t}-\bar{s}}{\eta}+
			\frac{1}{2n}\mathrm{Tr}\left(\Q Y_N+\frac{2}{\eps} \Q\mathbf{Q}_N+2\delta(\Q+\Q_1)\right)
			\nonumber\\&-\frac12 
			\bigg\|\Q^{\frac12}\bigg(\frac{1}{\eps}(\bar{\y}-\bar{\x})
			+2\delta(\A+\I)\bar{\y}\bigg)\bigg\|_{\H}^2-
			\bigg(\mu\A\bar{\y}+\alpha\bar{\y}+\mathcal{B}(\bar{\y})+
			\beta\mathcal{C}(\bar{\y}),\frac{1}{\eps}(\bar{\y}-\bar{\x})\bigg)
			\nonumber\\&+\bigg(\f(\bar{t}),\frac{1}{\eps}(\bar{\y}-\bar{\x})
			+2\delta(\A+\I)\bar{\y}\bigg)\geq\frac{\upgamma}{T^2}.
		\end{align}
	See \cite[Theorem 5.11]{smtm1} for the detailed procedure of how the limit supremum is taken in the above step as $n\to+\infty$. On employing a similar method as above for the supersolution $v_\upgamma$, we arrive at
			\begin{align}\label{viscdef3}
			&\frac{\bar{t}-\bar{s}}{\eta}+
			\frac{1}{2n}\mathrm{Tr}\left(\Q X_N-\frac{2}{\eps} \Q\mathbf{Q}_N-2\delta(\Q+\Q_1)\right)
			\nonumber\\&-\frac12 
			\bigg\|\Q^{\frac12}\bigg(\frac{1}{\eps}(\bar{\y}-\bar{\x})
			-2\delta(\A+\I)\bar{\x}\bigg)\bigg\|_{\H}^2-
			\bigg(\mu\A\bar{\x}+\alpha\bar{\x}+\mathcal{B}(\bar{\x})+
			\beta\mathcal{C}(\bar{\x}),\frac{1}{\eps}(\bar{\y}-\bar{\x})\bigg)
			\nonumber\\&+\bigg(\f(\bar{s}),\frac{1}{\eps}(\bar{\y}-\bar{\x})
			-2\delta(\A+\I)\bar{\x}\bigg)\leq-\frac{\upgamma}{T^2}.
		\end{align}
		The convergence  \eqref{copm2} and the fact that $\Q^{\frac12}\A^{\frac12}$ is bounded on $\H$, yield the following estimate:
		\begin{align}\label{copm23}
		&2\delta\|\Q^{\frac12}(\A+\I)\bar{\y}\|_{\H}+
		2\delta\|\Q^{\frac12}(\A+\I)\bar{\x}\|_{\H}
		\nonumber\\&\leq 2\delta
		\|\Q^{\frac12}(\A+\I)^{\frac12} \big((\A+\I)^{\frac12}\bar{\y}\big)\|_{\H}
		+2\delta
		\|\Q^{\frac12}(\A+\I)^{\frac12} \big((\A+\I)^{\frac12}\bar{\x}\big)\|_{\H}
		\nonumber\\&\leq
		\mathpzc{D}_1\delta(\|\bar{\y}\|_{\V}+\|\bar{\x}\|_{\V})
		\nonumber\\&\leq
		\mathpzc{D}_1\sqrt{\delta},
		\end{align}
	where $\mathpzc{D}_1$ is some positive constant. By using \eqref{copm23} and \eqref{copm3}, we estimate following:
		\begin{align}\label{copm2233}
		&\left|\frac12\bigg\|\Q^{\frac12}\bigg(\frac{1}{\eps}(\bar{\y}-\bar{\x})
		+2\delta(\A+\I)\bar{\y}\bigg)\bigg\|_{\H}^2-
		\frac12\bigg\|\Q^{\frac12}\bigg(\frac{1}{\eps}(\bar{\y}-\bar{\x})
		-2\delta(\A+\I)\bar{\x}\bigg)\bigg\|_{\H}^2\right|
		\nonumber\\&=
		\frac{2\delta}{\eps}\left(\Q^{\frac12}(\bar{\y}-\bar{\x}), \Q^{\frac12}(\A+\I)(\bar{\y}+\bar{\x})\right)+
		2\delta^2\big(\|\Q^{\frac12}(\A+\I)\bar{\y}\|_{\H}^2-
		\|\Q^{\frac12}(\A+\I)\bar{\x}\|_{\H}^2\big)
		\nonumber\\&=
		\frac{2\delta}{\eps}\left(\Q^{\frac12}(\bar{\y}-\bar{\x}), \Q^{\frac12}(\A+\I)(\bar{\y}+\bar{\x})\right)+
		2\delta^2\big(\Q^{\frac12}(\A+\I)(\bar{\y}-\bar{\x}),
		\Q^{\frac12}(\A+\I)(\bar{\y}+\bar{\x})\big)
		\nonumber\\&\leq
		\frac{2\delta}{\eps}\|\Q^{\frac12}(\bar{\y}-\bar{\x})\|_{\H}
		\|\Q^{\frac12}(\A+\I)(\bar{\y}+\bar{\x})\|_{\H}+
		2\delta^2
		\|\Q^{\frac12}(\A+\I)(\bar{\y}-\bar{\x})\|_{\H}
		\|\Q^{\frac12}(\A+\I)(\bar{\y}+\bar{\x})\|_{\H}
		\nonumber\\&\leq
		\mathpzc{D}_2\sqrt{\delta},
	   \end{align}
		where $\mathpzc{D}_2$ is some positive constant. Moreover, from Hypothesis \ref{fhyp} and \eqref{copm2}, we find
		\begin{align}\label{fdeca}
		&2\delta\big|\big(\f(\bar{t}),(\A+\I)\bar{\y}\big)\big|+2\delta
		\big|\big(\f(\bar{s}),(\A+\I)\bar{\x}\big)\big|
		\nonumber\\&=
		2\delta\big|\big((\A+\I)^{\frac12}\f(\bar{t}),(\A+\I)^{\frac12}\bar{\y}
		\big)\big|+2\delta
		\big|\big((\A+\I)^{\frac12}\f(\bar{s}),(\A+\I)^{\frac12}\bar{\x}\big)\big|
		\nonumber\\&\leq
		2\delta\|(\A+\I)^{\frac12}\f(\bar{t})\|_{\H}\|\bar{\y}\|_{\V}+
		2\delta\|(\A+\I)^{\frac12}\f(\bar{s})\|_{\H}\|\bar{\x}\|_{\V}
		\nonumber\\&\leq
		\mathpzc{D}_3\sqrt{\delta},
		\end{align}
for some positive constant $\mathpzc{D}_3$. 
On combining \eqref{viscdef2} with \eqref{viscdef3} and utilizing the estimates \eqref{copm23}-\eqref{fdeca} and the fact that $Y_N\leq X_N$, we obtain
\begin{align}\label{viscdef4}
&-\frac{1}{n}\mathrm{Tr}\left(\frac{2}{\eps} \Q\mathbf{Q}_N+2\delta(\Q+\Q_1)\right)+
\frac{\alpha}{\eps}\|\bar{\y}-\bar{\x}\|_{\H}^2
\nonumber\\&\quad+
\bigg(\mu\A\bar{\y}+\mathcal{B}(\bar{\y})+
\beta\mathcal{C}(\bar{\y})-\mu\A\bar{\x}-\mathcal{B}(\bar{\x})-\beta\mathcal{C}(\bar{\x}),\frac{1}{\eps}(\bar{\y}-\bar{\x})\bigg)
\nonumber\\&\quad+
\bigg(\f(\bar{s})-\f(\bar{t}),\frac{1}{\eps}(\bar{\y}-\bar{\x})\bigg)-
\mathpzc{D}_4\sqrt{\delta}\leq-\frac{2\upgamma}{T^2},
\end{align}
where $\mathpzc{D}_4$ is some positive constant. Since, $\f:[0,T]\to\V$ is bounded and continuous, therefore using \eqref{copm1} and \eqref{copm3}, we find
\begin{align}\label{fdeca1}
\left|\bigg(\f(\bar{t})-\f(\bar{s}),\frac{1}{\eps}(\bar{\y}-\bar{\x})\bigg)\right|
&\leq
\frac{1}{\eps}\|\f(\bar{t})-\f(\bar{s})\|_{\H}\|\bar{\y}-\bar{\x}\|_{\H}
\nonumber\\&\leq
\frac{1}{\eps}\mathpzc{w}_{\f}(|\bar{t}-\bar{s}|)\|\bar{\y}-\bar{\x}\|_{\H}
\nonumber\\&\leq
\mathfrak{L}\mathpzc{w}_{\f}(o(\sqrt{\eta}))\to0 \ \text{ as } \ \eta\to0,
\end{align}
where $\mathpzc{w}_{\f}$ is the modulus of continuity of $\f$. Further, using the properties of the projection operator $\Q_N$ and the fact that $\Tr(\Q)<+\infty$, one can deduce that 
 \begin{align}\label{fdeca2}
 	\Tr(\Q\mathbf{Q}_N)\to0 \ \text{ as } \ n\to+\infty.
 \end{align}
 	Then \eqref{viscdef4} together with \eqref{fdeca1}-\eqref{fdeca2} and the monotonicity estimate \eqref{monoest1} yield
\begin{align}\label{viscdef5}
-\frac{2\delta}{n}\mathrm{Tr}(\Q+\Q_1)+
\frac{\alpha}{\eps}\|\bar{\y}-\bar{\x}\|_{\H}^2
-\frac{\varrho}{\eps}\|\bar{\y}-\bar{\x}\|_{\H}^2-
\mathpzc{D}_4\sqrt{\delta}-\mathpzc{w}_1(N)-\mathpzc{w}_2(\eta)
\leq-
\frac{2\upgamma}{T^2},
\end{align}
where $\mathpzc{w}_1(N)\to0$ as $n\to+\infty$ for $\eps, \delta, \eta$ fixed, and $\mathpzc{w}_2(\eta)\to0$ as $\eta\to0$ for $\eps,\delta$ fixed. 
Finally, in \eqref{viscdef5}  we first let $n\to+\infty$. Then, for fixed $\delta>0$ and $\eps>0$, we take the $\liminf$ as $\eta\to0$. Next, for fixed $\eps>0$, we take the $\liminf$ as $\delta\to0$, and finally we pass to the $\liminf$ as $\eps\to0$. This yields $\upgamma<0$, which is a contradiction. This completes the proof.
\end{proof}

\begin{remark}
	1.) Note that in \eqref{viscdef4}, we employ the monotonicity estimate for the operator $\mu\A+\mathcal{B}(\cdot)+\beta\mathcal{C}(\cdot)$ when $r>3$ (see Lemma \ref{monoest}). It constitutes the most crucial step of our analysis and highlights a key difference from \cite{AS2}.
	For the Navier-Stokes operator $\mu \mathcal{A} + \mathcal{B}(\cdot)$, a direct monotonicity estimate cannot be established due to the presence of the convective term alone. To overcome this difficulty, the authors in \cite{AS2} employed a quantization technique to derive suitable monotonicity estimates.

    2.) In contrast, in our setting, the presence of the absorption term, together with the diffusion term, allow us to control the convective term for $r>3$, as well as $r=3$ under the condition $2\beta\mu\geq1$ in both $d=2,3$. However, for $r<3$ and $r=3$ with $2\beta\mu<1$, the same difficulty persists as in the Navier-Stokes case.
\end{remark}

	\begin{theorem}\label{comparisonappr}
	Assume that Hypotheses \ref{trQ1} and \ref{fhyp} hold and let $g\in\mathrm{Lip}_b(\H)$. Let $u_n$ be a bounded viscosity solution of \eqref{thjbcomp} for $n<+\infty$ and $v$ be a viscosity solution of \eqref{thjbcomp} for $n=+\infty$ such that
	\begin{align}\label{bdd12}
		\lim\limits_{t\to T} \{\big|u_n(t,\y)-g(\y)\big|+\big|v(t,\y)-g(\y)\big|\}
		=0, \ \text{ uniformly on bounded sets of } \ \V,
	\end{align}
	and 
	\begin{align}\label{bddL}
		|v(t,\y)-v(t,\x)|\leq\mathfrak{L}\|\y-\x\|_{\H}, 
	\end{align}
for some $\mathfrak{L}\geq0$ and for all $t\in(0,T]$ and $\y,\x\in\V$. 
Then, there exits a constant $\mathpzc{C}$ independent of $n$ such that
\begin{align}\label{supestmain} 
	\|u_n-v\|_{\infty}\leq\frac{\mathpzc{C}}{\sqrt{n}}.
\end{align}
\end{theorem}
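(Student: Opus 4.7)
Our approach is to adapt the doubling-of-variables argument of Theorem~\ref{comparison}, augmented with a linear barrier in $t$, so as to track quantitatively how the trace perturbation $\tfrac{1}{2n}\Tr(\Q\D^2 u_n)$---the sole difference between the $n$-th HJB equation and its $n=\infty$ limit---controls the discrepancy $u_n - v$. We establish the two bounds $u_n - v \leq C/\sqrt{n}$ and $v - u_n \leq C/\sqrt{n}$ separately; we outline the first, the second being entirely analogous.

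Fix $\lambda := K/\sqrt{n}$ for a constant $K>0$ to be chosen and set $\tilde u_n(t,\y) := u_n(t,\y) - \lambda(T-t)$. A direct computation shows that $\tilde u_n$ is a bounded viscosity subsolution of \eqref{thjbcomp} in the \emph{strict} sense that its subsolution inequality picks up an additional $+\lambda$ on the right-hand side. Assume for contradiction that $\sup(\tilde u_n - v) > 0$. Mirroring Theorem~\ref{comparison} (and inserting the $\upgamma/t$ perturbation if needed to keep $\bar t, \bar s$ bounded away from $0$), consider the doubled functional
\begin{align*}
\Phi(t,s,\y,\x) = \tilde u_n(t,\y) - v(s,\x) - \tfrac{\|\y-\x\|_\H^2}{2\eps} - \delta\bigl(\|\y\|_\V^2 + \|\x\|_\V^2\bigr) - \tfrac{(t-s)^2}{2\eta}.
\end{align*}
Steps~I--VI of Theorem~\ref{comparison} carry over verbatim: the Lipschitz assumption \eqref{bddL} on $v$ yields $\|\bar\y - \bar\x\|_\H \leq 2\mathfrak{L}\eps$ exactly as in \eqref{copm3}; the terminal condition \eqref{bdd12} together with the barrier $-\lambda(T-t)$ forces $\bar t, \bar s$ bounded away from $T$; and the finite-dimensional reduction produces operators $Y_N, X_N$ satisfying \eqref{xnyn}.

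Step~VII is where the new trace contribution enters. Since $v$ satisfies the $n=\infty$ equation, the supersolution inequality for $v$ carries \emph{no} trace term, while the subsolution inequality for $\tilde u_n$ contains $\tfrac{1}{2n}\Tr\!\left(\Q Y_N + \tfrac{2}{\eps}\Q\mathbf{Q}_N + 2\delta(\Q+\Q_1)\right)$. The matrix bound \eqref{xnyn} gives $Y_N \leq \tfrac{3}{\eps}\mathbf{P}_N$, whence $\tfrac{1}{2n}\Tr(\Q Y_N) \leq \tfrac{3\Tr(\Q)}{2n\eps}$, and $\Tr(\Q\mathbf{Q}_N) \to 0$ as $N\to\infty$ since $\Tr(\Q)<\infty$. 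Handling the Hamiltonian difference by the monotonicity estimate \eqref{monoest1} together with $\|\bar\y - \bar\x\|_\H \leq 2\mathfrak{L}\eps$ exactly as in the derivation of \eqref{viscdef5}, and then passing to the limits $N\to\infty$, $\delta\to 0$, $\eta\to 0$, we arrive at the scalar inequality
\begin{align*}
\lambda \leq \frac{3\,\Tr(\Q)}{2n\eps}.
\end{align*}
Optimizing by taking $\eps = 1/\sqrt{n}$ yields $\lambda \leq \tfrac{3\Tr(\Q)}{2\sqrt{n}}$, so any fixed $K > \tfrac{3\Tr(\Q)}{2}$ produces a contradiction. Hence $\tilde u_n \leq v$, giving $u_n - v \leq \lambda T = KT/\sqrt{n}$; the symmetric argument with $\tilde v := v - \lambda(T-t)$ as subsolution and $u_n$ as supersolution---now using the matching lower bound $X_N \geq -\tfrac{3}{\eps}\mathbf{P}_N$ from \eqref{xnyn} to control $\lvert\Tr(\Q X_N)\rvert \leq \tfrac{3\Tr(\Q)}{\eps}$---delivers $v - u_n \leq KT/\sqrt{n}$.

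The main technical obstacle is the careful sign bookkeeping in Step~VII: unlike Theorem~\ref{comparison}, where the combined cancellation $\Q(Y_N - X_N)\leq 0$ absorbed both trace terms, here only one of the two viscosity inequalities carries a trace, so one must rely on the individual operator bounds $\lvert Y_N\rvert,\lvert X_N\rvert \leq \tfrac{3}{\eps}\mathbf{P}_N$ on $\mathbf{P}_N\H$. It is precisely the $1/\eps$ scaling of these bounds that dictates the optimal choice $\eps \sim 1/\sqrt{n}$ and produces the announced convergence rate.
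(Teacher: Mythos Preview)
Your argument is correct and follows the same core strategy as the paper: fix $\eps=1/\sqrt{n}$, insert a linear barrier $\lambda(T-t)$ with $\lambda\sim 1/\sqrt{n}$, double variables, and observe that the sole surviving trace term is $O\!\big(\tfrac{1}{n\eps}\big)=O(1/\sqrt{n})$, which the barrier absorbs. The paper, however, takes a shorter route by \emph{skipping} the finite-dimensional Ishii--Lions reduction (Steps~IV--VI of Theorem~\ref{comparison}) altogether. Since $v$ solves the first-order equation, there is no second trace term to cancel; hence one may freeze $(\bar s,\bar\x)$ and apply the viscosity-subsolution definition to $u_n$ directly with the smooth test function $\tfrac{\sqrt{n}}{2}\|\y-\bar\x\|_\H^2+\delta\|\y\|_\V^2+\tfrac{(t-\bar s)^2}{2\eta}$, whose Hessian in $\H$ is simply $\sqrt{n}\,\I+2\delta(\A+\I)$. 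This yields the trace contribution $\tfrac{1}{2n}\Tr(\sqrt{n}\,\Q)=\tfrac{\Tr(\Q)}{2\sqrt{n}}$ immediately, with constant $\tfrac12\Tr(\Q)$ instead of your $\tfrac32\Tr(\Q)$ coming from the cruder bound $Y_N\le\tfrac{3}{\eps}\mathbf{P}_N$. Your use of the full doubling machinery is not wrong---it still works---but it is heavier than necessary here; the point of the matrices $Y_N\le X_N$ is precisely the \emph{cancellation} of two second-order terms, which is absent in this asymmetric setting. A minor remark: the order of limits should be $N\to\infty$, then $\eta\to 0$, then $\delta\to 0$ (as in \eqref{copm1}--\eqref{copm2}), not $\delta$ before $\eta$ as you wrote.
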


\begin{proof}
	Let $\mathpzc{C}_1>0$ be a constant such that
	\begin{align}\label{trc1}
		\frac12\Tr(\Q)\leq\mathpzc{C}_1.
	\end{align}
	We set 
	\begin{align}\label{vnvndef}
		v_n:=v+\frac{1}{\sqrt{n}}(T-t)\mathpzc{C}_1.
	\end{align}
	
	Then $v_n$ is a viscosity supersolution of 
	\begin{align}\label{viscsupvn}
		(v_n)_t-\frac12\|\Q^{\frac12}\D v_n\|_{\H}^2 + (-\mu\A\y-\mathcal{B}(\y)-\alpha\y-\beta\mathcal{C}(\y)+\f(t),\D v_n)=
	    -\frac{\mathpzc{C}_1}{\sqrt{n}}.
	\end{align}
To proceed further, we follow a similar approach as we performed  in the proof of Theorem \ref{comparison}. Let us fix $\eps=\frac{1}{\sqrt{n}}$. Now, assume  that $u_n\not\leq v_n$. Then, following the same reasoning as in the proof of Theorem \ref{comparison},  for sufficiently small parameters  $\upgamma>0,\delta>0,\eta>0$, the function 
\begin{align}\label{tfunct}
	(t,s,\y,\x)\mapsto u_{n,\upgamma}(t,\y)-v_{n,\upgamma}(s,\x)-
	\frac{\sqrt{n}}{2}\|\y-\x\|_{\H}^2-\frac{(t-s)^2}{2\eta}-
	\delta(\|\y\|_{\V}^2+\|\x\|_{\V}^2),
\end{align}
attains a strict global maxima over $(0,T]\times(0,T]\times\H\times\H$ at some points $(\bar{t},\bar{s},\bar{\y},\bar{\x})\in(0,T]\times(0,T]\times\V\times\V$, where $0<\bar{t},\bar{s}<T$. Here $u_{n,\upgamma}$ and $u_{n,\upgamma}$ are defined analogously to \eqref{ugvg}. Moreover, the limits \eqref{copm1}-\eqref{copm3} are satisfied with $\eps=\frac{1}{\sqrt{n}}$.
Then from \eqref{tfunct}, 
 the function 
\begin{align*}
	(t,\y)\mapsto u_{n,\upgamma}(t,\y)-v_{n,\upgamma}(\bar{s},\bar{\x})-
	\frac{\sqrt{n}}{2}\|\y-\bar{\x}\|_{\H}^2-\frac{(t-\bar{s})^2}{2\eta}-
	\delta(\|\y\|_{\V}^2+\|\bar{\x}\|_{\V}^2)
\end{align*} 
has a strict global maximum at $(\bar{t},\bar{\y})$ in $(0,T)\times\H$. Since, $u_n$ is the viscosity subsolution of \eqref{thjbcomp}, we obtain 
\begin{align}\label{viscdef6}
	&-\frac{\upgamma}{\bar{t}^2}+\frac{\bar{t}-\bar{s}}{\eta}+
	\frac{1}{2n}\mathrm{Tr}\left(\sqrt{n}\Q+2\delta(\Q+\Q_1)\right)
	\nonumber\\&-\frac12 
	\|\Q^{\frac12}\big(\sqrt{n}(\bar{\y}-\bar{\x})
	+2\delta(\A+\I)\bar{\y}\big)\|_{\H}^2+\big(\f(\bar{t}),\sqrt{n}
	(\bar{\y}-\bar{\x})+2\delta(\A+\I)\bar{\y}\big)
	\nonumber\\&+
	\big(-\mu\A\bar{\y}-\alpha\bar{\y}-\mathcal{B}(\bar{\y})-
	\beta\mathcal{C}(\bar{\y}),\sqrt{n}(\bar{\y}-\bar{\x})
	+2\delta(\A+\I)\bar{\y}\big)\geq0.
\end{align}
Now, in view of \eqref{syymB3} and \eqref{torusequ}, we find
\begin{align}\label{DA1}
	&\big(-\mu\A\bar{\y}-\alpha\bar{\y}-\mathcal{B}(\bar{\y})-
	\beta\mathcal{C}(\bar{\y}),(\A+\I)\bar{\y}\big)
	\nonumber\\&=
	-\mu\|\A\bar{\y}\|_{\H}^2-\alpha\|\bar{\y}\|_{\V}^2-
	(\mathcal{B}(\bar{\y}),\A\bar{\y})-
	\beta(\mathcal{C}(\bar{\y}),\A\bar{\y}) -\beta\|\bar{\y}\|_{\wi\L^{r+1}}^{r+1}
	\nonumber\\&\leq-
	\frac{\mu}{2}\|\A\bar{\y}\|_{\H}^2-\alpha\|\bar{\y}\|_{\V}^2-
	\frac{3\beta}{4} \||\bar{\y}|^{\frac{r-1}{2}}\nabla\bar{\y}\|_{\H}^2+
	\varrho\|\nabla\bar{\y}\|_{\H}^2-\beta\|\bar{\y}\|_{\wi\L^{r+1}}^{r+1}.
\end{align}
Substituting \eqref{DA1} into \eqref{viscdef6}, we obtain 
\begin{align}\label{viscdef7}
	&-\frac{\upgamma}{\bar{t}^2}+\frac{\bar{t}-\bar{s}}{\eta}+
    \frac{1}{2n}\mathrm{Tr}\left(\sqrt{n}\Q+2\delta(\Q+\Q_1)
	\right)
\nonumber\\&-\frac12 
\|\Q^{\frac12}\big(\sqrt{n}(\bar{\y}-\bar{\x})
+2\delta(\A+\I)\bar{\y}\big)\|_{\H}^2+\big(\f(\bar{t}),\sqrt{n}
(\bar{\y}-\bar{\x})+2\delta(\A+\I)\bar{\y}\big)
\nonumber\\&+
2\varrho\delta(\|\nabla\bar{\y}\|_{\H}^2+\|\nabla\bar{\x}\|_{\H}^2)+
\big(-\mu\A\bar{\y}-\alpha\bar{\y}-\mathcal{B}(\bar{\y})-
\beta\mathcal{C}(\bar{\y}),\sqrt{n}(\bar{\y}-\bar{\x})\big)
\nonumber\\&\geq
\mu\delta\|\A\bar{\y}\|_{\H}^2+2\alpha\delta\|\bar{\y}\|_{\V}^2+
\frac{3\beta\delta}{2} \||\bar{\y}|^{\frac{r-1}{2}}\nabla\bar{\y}\|_{\H}^2+ 2\beta\delta\|\bar{\y}\|_{\wi\L^{r+1}}^{r+1},
\end{align}
where we have used the fact that $\|\nabla\bar{\y}\|_{\H}^2\leq \|\nabla\bar{\y}\|_{\H}^2+\|\nabla\bar{\x}\|_{\H}^2$. Moreover, from \eqref{tfunct}, we infer that the function
\begin{align*}
	(s,\x)\mapsto v_{n,\upgamma}(s,\x)-u_{n,\upgamma}(\bar{t},\bar{\y})+
	\frac{\sqrt{n}}{2}\|\bar{\y}-\x\|_{\H}^2+\frac{(\bar{t}-s)^2}{2\eta}+
	\delta(\|\bar{\y}\|_{\V}^2+\|\x\|_{\V}^2),
\end{align*} 
has a strict global minimum at $(\bar{s},\bar{\x})$ in $(0,T)\times\H$. Therefore, by using the definition of 
viscosity supersolution  $v_n$ of \eqref{viscsupvn} and performing similar calculations as in \eqref{viscdef7}, we find 
\begin{align}\label{viscdef8}
	&\frac{\upgamma}{\bar{s}^2}+\frac{\bar{t}-\bar{s}}{\eta}
  -\frac12 \|\Q^{\frac12}\big(\sqrt{n}(\bar{\y}-\bar{\x})
	-2\delta(\A+\I)\bar{\x}\big)\|_{\H}^2
	\nonumber\\&\quad
	+\big(\f(\bar{s}),\sqrt{n}(\bar{\y}-\bar{\x})
	-2\delta(\A+\I)\bar{\x}\big)
	\nonumber\\&\quad
	+\big(-\mu\A\bar{\x}-\alpha\bar{\x}-\mathcal{B}(\bar{\x})-
	\beta\mathcal{C}(\bar{\x}),\sqrt{n}(\bar{\y}-\bar{\x})\big)
	\nonumber\\&\quad+
	\mu\delta\|\A\bar{\x}\|_{\H}^2+2\alpha\delta\|\bar{\x}\|_{\V}^2+
	\frac{3\beta\delta}{2} \||\bar{\x}|^{\frac{r-1}{2}}\nabla\bar{\x}\|_{\H}^2+ 2\beta\delta\|\bar{\x}\|_{\wi\L^{r+1}}^{r+1}
	\nonumber\\&
	\leq2\varrho\delta(\|\nabla\bar{\y}\|_{\H}^2+\|\nabla\bar{\x}\|_{\H}^2)-
	\frac{\mathpzc{C}_1}{\sqrt{n}}.
\end{align}
On combining \eqref{viscdef7} with \eqref{viscdef8} and utilizing \eqref{copm2233} with $\eps=\frac{1}{\sqrt{n}}$, \eqref{fdeca}, \eqref{fdeca1}, \eqref{trc1} and monotonicity estimate \eqref{monoest1}, we arrive at 
\begin{align}\label{supest} 
&\frac{2\upgamma}{T^2}
+\sqrt{n}\|\bar{\y}-\bar{\x}\|_{\H}^2+\varrho\|\bar{\y}-\bar{\x}\|_{\H}^2
\nonumber\\&
\leq4\varrho\delta(\|\nabla\bar{\y}\|_{\H}^2+\|\nabla\bar{\x}\|_{\H}^2)
+\frac{\delta}{n}\mathrm{Tr}(\Q+\Q_1)
+\mathpzc{D}_4\sqrt{\delta}+\mathpzc{w}_{\f}(\eta),
\end{align}
where $\mathpzc{w}_{\f}$ is the modulus of continuity of $\f$. In \eqref{supest}, we first take $\limsup$ as $\eta\to0$ for a fixed $\delta>0$, and then we pass to $\limsup$ as $\delta\to0$ along with \eqref{copm2}. It lead us to a contradiction  $\upgamma<0$. Thus, we obtain
\begin{align}\label{supest1}
	u_n\leq v+\frac{1}{\sqrt{n}}(T-t)\mathpzc{C}_1.
\end{align}
On applying similar procedure, as above, to functions $v_n=v-\frac{1}{\sqrt{n}}(T-t)\mathpzc{C}_1$ and $u_n$, we find
	\begin{align}\label{supest2}
			 v-\frac{1}{\sqrt{n}}(T-t)\mathpzc{C}_1\leq u_n.
	\end{align}
Together with \eqref{supest1} and \eqref{supest2}, we obtain \eqref{supestmain}.
\end{proof}

\begin{remark}
	1.) 
	Our choice of $v_n$ in \eqref{vnvndef} differs slightly from the one used in the work \cite{AS2}. Specifically, to obtain a contradiction in step \eqref{supest}, the authors in \cite{AS2} consider the following definition
	\begin{align*}
		v_n=v+\frac{1}{\sqrt{n}}(T-t)(4\mathpzc{C}(\mu,r)\mathfrak{L}^2+\mathpzc{C}_1)+\frac{2\mathfrak{L}^2}{\sqrt{n}}.
	\end{align*}
	The additional constants, that is,  $4\mathpzc{C}(\mu,r)\mathfrak{L}^2+\mathpzc{C}_1$ and $\frac{2\mathfrak{L}^2}{\sqrt{n}}$, were required there because of the monotonicity estimate associated with the Navier-Stokes operator $\mu\A+\mathcal{B}^q(\cdot)$, where $\mathcal{B}^q(\cdot)$ is the quantised bilinear operator (see  \eqref{eqn-bq} and \cite{VBSS} for the properties of the quantised $\mathcal{B}^q(\cdot)$). In our setting, however, for $r>3$ and $r=3$ with $2\beta\mu\geq1$ in $d=2,3$, the nonlinear operator $\mathcal{C}(\cdot)$ plays a crucial role. Its inherent monotonicity (see Lemma \ref{monopropC} and \ref{monoest}) allows us to bypass the introduction of the quantised bilinear operator, leading to a simpler formulation of $v_n$.
	
	2.)
	Let us make some comments before proceeding. Since we are working on $\mathbb{T}^d$ with $d\in\{2,3\}$, so in \eqref{viscdef6} and \eqref{viscdef7}, we cannot utilize the property $(\mathcal{B}(\y),\A\y)=0$, which is true only in $\mathbb{T}^2$. Moreover, unlike in the case of NSE (as done in \cite{AS2}), we have to deal with the additional term $\beta\big(\mathcal{C}(\y),(\A+\I)\y\big)$.
	However, on $\mathbb{T}^d$, as mentioned in the Subsections \ref{advtg1}-\ref{advtg2}, we have the advantage of regularity estimate \eqref{torusequ}, which is crucial to handle this additional term. Moreover, the computation in \eqref{syymB3}, combined with the identity \eqref{torusequ}, is also useful for handling the term  $(\mathcal{B}(\y),\A\y)$ on $\mathbb{T}^3$. In this way, we can handle both the terms, $(\mathcal{B}(\y),\A\y)$ and $(\mathcal{C}(\y),\A\y)$ in both two and three-dimensions. Even in $\mathbb{T}^2$ also, this approach is helpful as we are not using the property $(\mathcal{B}(\y),\A\y)=0$.
\end{remark}

\section{Existence of a Laplace limit at a single time} \setcounter{equation}{0}\label{LaplaceLDP}
In this section, we aim to establish the existence of the Laplace limit for the processes $\Y_n(\cdot)$ at a single time, that is, for the family of processes $\{\Y_n(T)\}_{n\geq1}$, with values in $\H$. For the rest of the work, we assume that $r\in(3,5)$ when $d=3$, $r\in(3,+\infty)$ when $d=2$, and $r=3$ with $2\beta\mu\geq1$ in both $d=2$ and $d=3$.
\begin{proposition}\label{LapunLh}
	Assume that Hypotheses \ref{trQ1} and  \ref{fhyp} hold and let $g\in\mathrm{Lip}_b(\H)$. Consider the function
	\begin{align}\label{LapLacefun}
		\mathpzc{U}_n(t,\y)=-\frac{1}{n}\log\E[e^{-ng(\Y_n(T))}].
	\end{align}
	Then, $\mathpzc{U}_n$ is uniformly bounded in $n$. Moreover, there exist a constant $\mathpzc{C}_1$ and, for every $\widetilde{R}>0$, a constant $\mathpzc{C}_2=\mathpzc{C}_2(\widetilde{R})$ such that
	\begin{align}\label{LapLacefun1}
		|\mathpzc{U}_n(t,\y)-\mathpzc{U}_n(s,\z)|
		\leq\mathpzc{C}_1\|\y-\z\|_{\H}+\mathpzc{C}_2
		\big(\max\{\|\y\|_{\V},\|\z\|_{\V}\}\big)|t-s|^{\frac12}.
	\end{align}
\end{proposition}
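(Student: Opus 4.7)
The plan is to prove the uniform bound, a Lipschitz-in-space estimate, and a $\tfrac12$-Hölder-in-time estimate for $\mathpzc{U}_n$ separately, and then combine them via the triangle inequality $|\mathpzc{U}_n(t,\y)-\mathpzc{U}_n(s,\z)|\le |\mathpzc{U}_n(t,\y)-\mathpzc{U}_n(t,\z)|+|\mathpzc{U}_n(t,\z)-\mathpzc{U}_n(s,\z)|$, which is why the $\|\z\|_{\V}$ in front of $|t-s|^{1/2}$ only needs to depend on one of the two base points (the maximum then appears by switching the splitting). The uniform boundedness is immediate: since $g\in\mathrm{Lip}_b(\H)$, the pathwise inequalities $e^{-n\|g\|_{\infty}}\le e^{-n g(\Y_n(T))}\le e^{n\|g\|_{\infty}}$ yield $|\mathpzc{U}_n(t,\y)|\le\|g\|_{\infty}$ for all $n,t,\y$. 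For the spatial Lipschitz estimate, I fix $t$ and invoke Proposition \ref{cts-dep-soln}(i) pathwise: driving the two processes by the same noise, one has $\|\Y_n(T;t,\y)-\Y_n(T;t,\z)\|_{\H}\le e^{C(T-t)/2}\|\y-\z\|_{\H}$ almost surely. Combined with the $L_g$-Lipschitz property of $g$, this gives the almost-sure pointwise inequality $e^{-n g(\Y_n(T;t,\y))}\le e^{-n g(\Y_n(T;t,\z))}\, e^{nL_g e^{CT/2}\|\y-\z\|_{\H}}$; taking expectations and $-\tfrac1n\log$, then interchanging $\y$ and $\z$, produces the uniform Lipschitz constant $\mathpzc{C}_1=L_g e^{CT/2}$, independent of $n$.

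For the temporal regularity I exploit the Markov property of $\Y_n$ to rewrite, for $s\le t$,
\begin{equation*}
\mathpzc{U}_n(s,\z)=-\tfrac1n\log\E\bigl[e^{-n\mathpzc{U}_n(t,\Y_n(t;s,\z))}\bigr].
\end{equation*}
Setting $Z:=\mathpzc{U}_n(t,\Y_n(t;s,\z))-\mathpzc{U}_n(t,\z)$, the spatial Lipschitz bound just obtained gives $|Z|\le \mathpzc{C}_1\|\Y_n(t;s,\z)-\z\|_{\H}$; sandwiching $e^{-nZ}$ between $e^{\pm n\mathpzc{C}_1\|\Y_n(t;s,\z)-\z\|_{\H}}$ and taking $-\tfrac1n\log\E$ then yields the two-sided bound
\begin{equation*}
-\tfrac1n\log\E\bigl[e^{n\mathpzc{C}_1\|\Y_n(t;s,\z)-\z\|_{\H}}\bigr]\le \mathpzc{U}_n(s,\z)-\mathpzc{U}_n(t,\z)\le -\tfrac1n\log\E\bigl[e^{-n\mathpzc{C}_1\|\Y_n(t;s,\z)-\z\|_{\H}}\bigr].
\end{equation*}
The upper side is the routine one: Jensen's inequality reduces it to $\mathpzc{C}_1 (\E\|\Y_n(t;s,\z)-\z\|_{\H}^2)^{1/2}$, which by Proposition \ref{cts-dep-soln}(ii) is bounded by $\mathpzc{C}_2(\|\z\|_{\V})\sqrt{t-s}$.

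The main obstacle is the matching lower side, namely $\tfrac1n\log\E[e^{n\mathpzc{C}_1\|\Y_n(t;s,\z)-\z\|_{\H}}]\le \mathpzc{C}_2(\|\z\|_{\V})\sqrt{t-s}$ uniformly in $n$. The plan is to decompose $\Y_n(t;s,\z)-\z=D+S$, where $D=\int_s^t[-\mu\A\Y_n-\mathcal{B}(\Y_n)-\alpha\Y_n-\beta\mathcal{C}(\Y_n)+\f]\,d\tau$ is the drift and $S=\tfrac{1}{\sqrt n}\int_s^t\Q^{1/2}d\W$ is the Gaussian stochastic integral, and apply Cauchy--Schwarz in $\omega$ to separate them. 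Since $\int_s^t\Q^{1/2}d\W$ is a Hilbert-valued Gaussian with covariance $(t-s)\Q$, standard Borell-type concentration yields $\tfrac1n\log\E[e^{2n\mathpzc{C}_1\|S\|_{\H}}]\le \mathpzc{C}_1\sqrt{(t-s)\Tr(\Q)/n}+\mathpzc{C}_1^2(t-s)\Tr(\Q)=O(\sqrt{t-s})$. For the drift, Minkowski and the Cauchy--Schwarz inequality in time give $\|D\|_{\H}\le \sqrt{t-s}\,\bigl(\int_s^t\|b(\Y_n(\tau))\|_{\H}^2 d\tau\bigr)^{1/2}$; the AM--GM inequality with balance $a=1/\sqrt{t-s}$ then converts this square root into a linear expression, and an application of Jensen with $\alpha=\sqrt{t-s}\le 1$ (so that $\E[X^\alpha]\le(\E X)^\alpha$) reduces the task to the single exponential moment bound $\E[e^{c n\int_s^t\|b(\Y_n)\|_{\H}^2 d\tau}]\le e^{nC(\|\z\|_{\V})}$ for a sufficiently small $c>0$. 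The heart of the argument is producing this last estimate, which I would obtain by an It\^o-type computation on $e^{c n\int_s^\cdot\|\A\Y_n\|_{\H}^2 d\tau}$ with lower-order corrections, using the identity \eqref{torusequ} to control $(\mathcal{C}(\Y_n),\A\Y_n)$, the calculation \eqref{syymB3} to dominate $(\mathcal{B}(\Y_n),\A\Y_n)$, and the strong-solution energy estimate \eqref{ssee1}, in the exact spirit of Proposition \ref{propexpest}; it is the balancing of $c$ against the coercivity constants stemming from the monotonicity estimate \eqref{monoest1} that finally forces the standing condition \eqref{eqn-con} on $\mu,\alpha,\beta$.
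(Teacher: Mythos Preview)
Your argument for uniform boundedness and for the spatial Lipschitz estimate is correct, and in fact your spatial argument is cleaner than the paper's: by using the pathwise pointwise bound $e^{-ng(\Y_n(T;t,\y))}\le e^{-ng(\Y_n(T;t,\z))}e^{nL_g e^{CT/2}\|\y-\z\|_{\H}}$ and the monotonicity of $-\tfrac1n\log\E[\cdot]$, you obtain an $n$-independent Lipschitz constant $\mathpzc{C}_1=L_g e^{CT/2}$. The paper instead applies the mean value theorem to $\log$ and to $e^{-ng}$, which produces the $n$-dependent constant $e^{2n\|g\|_\infty}\mathrm{Lip}(g)$; this is weaker but is all that is used later (the Lipschitz hypothesis \eqref{uvL} in Theorem~\ref{comparison} is invoked only for fixed $n$).

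The temporal estimate is where your route diverges substantially and runs into trouble. The paper again uses the mean value theorem, reducing directly to the first-moment quantity $\E\big[\|\Y_n(T;t,\y)-\Y_n(T;s,\y)\|_{\H}\big]$, which for $s<t$ is controlled by coupling the two solutions with the same noise on $[t,T]$ and applying Proposition~\ref{cts-dep-soln}(i) pathwise together with Proposition~\ref{cts-dep-soln}(ii) on $[s,t]$; this gives the $\sqrt{|t-s|}$ bound with an $n$-dependent $\mathpzc{C}_2$, and no exponential moments are needed. Your Markov-property sandwich is correct as written, but the lower side forces you to control $\tfrac1n\log\E\big[e^{n\mathpzc{C}_1\|\Y_n(t;s,\z)-\z\|_{\H}}\big]$ uniformly in $n$, and the drift contribution you isolate ultimately requires an exponential moment of $\int_s^t\|\A\Y_n(\tau)\|_{\H}^2\,d\tau$. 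This is \emph{not} a consequence of Proposition~\ref{propexpest}: that proposition yields exponential moments of $\sup_s\|\Y_n(s)\|_{\V}^2$, and in its proof the $\|\A\Y_n\|_{\H}^2$ term coming from $(\mu\A\Y_n,\A\Y_n)$ is entirely absorbed by the estimate \eqref{vse8.3} for $(\mathcal{B}(\Y_n),\A\Y_n)$, leaving no residual $D(\A)$-level control. Obtaining the estimate you need would require a separate It\^o computation (keeping, say, half of the $\mu\|\A\Y_n\|_{\H}^2$ term and controlling the resulting martingale's exponential moments), which you only sketch. Since the paper never uses $n$-uniformity of $\mathpzc{C}_1,\mathpzc{C}_2$, the simpler mean-value-theorem argument is both sufficient and avoids this gap.
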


\begin{proof}
	The uniform boundedness of $\mathpzc{U}_n$ is immediate from the fact that $g\in\mathrm{Lip}_b(\H)$. It remains to prove \eqref{LapLacefun1}. For the brevity of notations, we write $\Y_n^{t,\y}(\cdot)=\Y_n(\cdot;t,\y), \Y_n^{t,\z}(\cdot)=\Y_n(\cdot;t,\z), \Y_n^{s,\y}(\cdot)=\Y_n(\cdot;s,\y)$ and $\Y_n^{s,\z}(\cdot)=\Y_n(\cdot;s,\z)$. Since the expectation of the exponential of a random variable is positive, by the mean value theorem for logarithm function, we calculate  
	\begin{align}\label{udfyz}
		|\mathpzc{U}_n(t,\y)-\mathpzc{U}_n(t,\z)|&=
		\bigg|\frac{1}{n}\log\E[e^{-ng(\Y_n^{t,\y}(T))}]-\frac{1}{n}
		\log\E[e^{-ng(\Y_n^{t,\z}(T))}]\bigg|
		\nonumber\\&\leq
		\frac{1}{n}\frac{|\E[e^{-ng(\Y_n^{t,\y}(T))}-e^{-ng(\Y_n^{t,\z}(T))}]|}
		{\min\{\E[e^{-ng(\Y_n^{t,\y}(T))}],\E[e^{-ng(\Y_n^{t,\z}(T))}]\}}.
	\end{align}
	Since $g\in\mathrm{Lip}_b(\H)\subset \mathrm{C}_b(\H)$, we have the following bound:
	\begin{align*}
		e^{-n\|g\|_{\infty}}\leq e^{-ng(\Y_n^{t,\y}(\cdot))}, e^{-ng(\Y_n^{t,\z}(\cdot))}\leq e^{n\|g\|_{\infty}},
	\end{align*}
	which gives 
	\begin{align}\label{Lowbde}
		\min\{\E[e^{-ng(\Y_n^{t,\y}(\cdot))}],\E[e^{-ng(\Y_n^{t,\z}(\cdot))}]\}
		\geq 	e^{-n\|g\|_{\infty}}.
	\end{align}
	Further, by an application of the mean value theorem to the function $e^{-ng(\Y_n(\cdot))}$, we obtain
	\begin{align}\label{Lipgbd}
		|e^{-ng(\Y_n^{t,\y}(\cdot))}-e^{-ng(\Y_n^{t,\z}(\cdot))}|
		&\leq
		ne^{n\|g\|_{\infty}}|g(\Y_n^{t,\y}(\cdot))-g(\Y_n^{t,\z}(\cdot))|
		\nonumber\\&\leq 
		ne^{n\|g\|_{\infty}}\mathrm{Lip}(g) \|\Y_n^{t,\y}(\cdot)-\Y_n^{t,\z}(\cdot)\|_{\H}.
	\end{align}
	Plugging \eqref{Lowbde}-\eqref{Lipgbd} into \eqref{udfyz}, we obtain
	\begin{align}\label{udfyz1}
		|\mathpzc{U}_n(t,\y)-\mathpzc{U}_n(t,\z)|
		&\leq
		e^{2n\|g\|_{\infty}}\mathrm{Lip}(g)
		\E\big[\|\Y_n^{t,\y}(T)-\Y_n^{t,\z}(T)\|_{\H}\big]
		\nonumber\\&\leq
		e^{2n\|g\|_{\infty}}\mathrm{Lip}(g)\|\y-\z\|_{\H}.
	\end{align}
	Repeating the similar argument and using \eqref{ctsdep8}, we write
	\begin{align}\label{udfyz2}
		|\mathpzc{U}_n(t,\y)-\mathpzc{U}_n(s,\y)|
		&\leq
		e^{2n\|g\|_{\infty}}\mathrm{Lip}(g)
		\E\big[\|\Y_n^{t,\y}(T)-\Y_n^{s,\y}(T)\|_{\H}\big]
		\nonumber\\&\leq
		e^{2n\|g\|_{\infty}}\mathrm{Lip}(g)
		\big(\E\big[\|\Y_n^{t,\y}(T)-\y\|_{\H}]+\E\big[\|\Y_n^{s,\y}(T)-\y\|_{\H}\big]\big)
		\nonumber\\&\leq
		\mathpzc{C}e^{2n\|g\|_{\infty}}\mathrm{Lip}(g)|t-s|^{\frac12},
	\end{align}
	where $\mathpzc{C}=\mathpzc{C}(\mu,\alpha,\beta,R,\|\y\|_{\V},\Tr(\Q),\Tr(\Q_1))$.
	Similarly, one can obtain
	\begin{align}\label{udfyz3}
		|\mathpzc{U}_n(t,\z)-\mathpzc{U}_n(s,\z)|
		&\leq
		e^{2n\|g\|_{\infty}}\mathrm{Lip}(g)
		\E\big[\|\Y_n^{t,\z}(T)-\Y_n^{s,\z}(T)\|_{\H}\big]
		\nonumber\\&\leq
		e^{2n\|g\|_{\infty}}\mathrm{Lip}(g)
		\big(\E\big[\|\Y_n^{t,\z}(T)-\z\|_{\H}\big]+\E\big[\|\Y_n^{s,\z}(T)-\z\|_{\H}\big]\big)
		\nonumber\\&\leq
		\mathpzc{C}e^{2n\|g\|_{\infty}}\mathrm{Lip}(g)|t-s|^{\frac12},
	\end{align}
	where $\mathpzc{C}=\mathpzc{C}(\mu,\alpha,\beta,R,\|\z\|_{\V},\Tr(\Q),\Tr(\Q_1))$. Let us choose $\widetilde{R}=\max\{\|\y\|_{\V},\|\z\|_{\V}\}$. Then, on combining \eqref{udfyz1}-\eqref{udfyz3}, we finally conclude that
	\begin{align*}
		|\mathpzc{U}_n(t,\y)-\mathpzc{U}_n(s,\z)|
		\leq e^{2n\|g\|_{\infty}}\mathrm{Lip}(g)\|\y-\z\|_{\H}+\mathpzc{C}_3
		e^{2n\|g\|_{\infty}}\mathrm{Lip}(g)|t-s|^{\frac12},
	\end{align*}
where 	$\mathpzc{C}_3=\mathpzc{C}_3(\mu,\alpha,\beta,R,\widetilde{R},
	\Tr(\Q),\Tr(\Q_1))$.
\end{proof}

\begin{proposition}\label{Lapvscso}
	Under the assumptions of Proposition \ref{LapunLh}, the function $\mathpzc{U}_n$ is the unique bounded viscosity solution of 
	\begin{equation}\label{LDP1}
		\left\{
		\begin{aligned}
			&(\mathpzc{U}_n)_t+\frac{1}{2n}\mathrm{Tr}(\Q\D^2\mathpzc{U}_n)-
			\frac12\|\Q^{\frac12}\D \mathpzc{U}_n\|_{\H}^2\\&\quad+ (-\mu\A\y-\mathcal{B}(\y)-\alpha\y-\beta\mathcal{C}(\y)+\f(t),\D \mathpzc{U}_n) =0, \ \text{ in } \ (0,T)\times\V, \\
			&\mathpzc{U}_n(T,\y)=g(\y),
		\end{aligned}
		\right.
	\end{equation}
	satisfying \eqref{bdd12}.
\end{proposition}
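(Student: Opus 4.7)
The uniqueness assertion is a direct application of the comparison principle in Theorem~\ref{comparison}. Indeed, Proposition~\ref{LapunLh} ensures that $\mathpzc{U}_n$ is uniformly bounded and globally Lipschitz in $\y$ (with constant $\mathpzc{C}_1$ independent of $t$), while the continuity estimate \eqref{LapLacefun1} together with the Lipschitz property of $g$ and the continuous dependence bound \eqref{ctsdep0.1} imply the terminal condition in the sense of \eqref{bdd12}. Any other bounded viscosity sub/supersolution satisfying the same requirements is therefore sandwiched between $\mathpzc{U}_n$ and itself via Theorem~\ref{comparison}, forcing equality.

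For existence I set $V_n(t,\y) := \E[\exp(-n g(\Y_n(T;t,\y)))]$, so that $\mathpzc{U}_n = -\frac{1}{n}\log V_n$. The Markov property of the strong solution of \eqref{stap}, combined with the bounded continuity of $e^{-n g(\cdot)}$, yields the dynamic programming identity
\begin{equation*}
V_n(t,\y) = \E\bigl[V_n(s,\Y_n(s;t,\y))\bigr], \qquad 0 \leq t \leq s \leq T.
\end{equation*}
To verify the subsolution property I fix a test function $\uppsi = \upvarphi + \mathfrak{h}(\|\cdot\|_{\V})$ and assume, after a standard reduction, that $\mathpzc{U}_n - \uppsi$ attains a strict global maximum equal to zero at some $(t_0,\y_0)\in(0,T)\times\V$. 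Then $V_n \geq e^{-n\uppsi}$ in a neighbourhood with equality at $(t_0,\y_0)$, and the dynamic programming identity gives
\begin{equation*}
e^{-n\uppsi(t_0,\y_0)} = V_n(t_0,\y_0) = \E\bigl[V_n(t_0+h,\Y_n(t_0+h))\bigr] \geq \E\bigl[e^{-n\uppsi(t_0+h,\Y_n(t_0+h))}\bigr].
\end{equation*}
Since $\y_0 \in \V$, Proposition~\ref{weLLp} furnishes strong-solution trajectories in $\mathrm{C}([t_0,T];\V)$ with $\int_{t_0}^T\|\A\Y_n(s)\|_{\H}^2 ds < \infty$ almost surely, so the variational It\^{o} formula applies to the composite process $e^{-n\uppsi(s,\Y_n(s))}$. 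Expanding and taking expectations yields
\begin{equation*}
\E\bigl[e^{-n\uppsi(t_0+h,\Y_n(t_0+h))}\bigr] - e^{-n\uppsi(t_0,\y_0)} = -n\,\E\int_{t_0}^{t_0+h} e^{-n\uppsi(s,\Y_n(s))}\, F[\uppsi](s,\Y_n(s))\, ds,
\end{equation*}
where $F[\uppsi]$ is precisely the HJB operator appearing on the left-hand side of \eqref{LDP1} applied to $\uppsi$; the characteristic quadratic correction $-\tfrac{1}{2}\|\Q^{\frac12}\D\uppsi\|_{\H}^2$ arises from the exponential logarithmic transform. Combining the two displays, dividing by $h$, and letting $h \to 0^+$ gives $F[\uppsi](t_0,\y_0) \geq 0$, which is the required subsolution inequality. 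The supersolution property follows by the symmetric argument with $\uppsi = \upvarphi - \mathfrak{h}(\|\cdot\|_{\V})$.

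The principal obstacle will be the rigorous justification of the It\^{o} step, because the second and third derivatives of $\mathfrak{h}(\|\cdot\|_{\V})$ involve the unbounded operator $\A$ in the sense of Remark~\ref{reFrede}, and the quadratic-variation contribution generates $\|\Q^{\frac12}\D\uppsi\|_{\H}^2$ in which $\Q^{\frac12}\A$ is not bounded on $\H$. I would circumvent this by localizing via the stopping times $\tau_N := \inf\{s\geq t_0 : \|\Y_n(s)\|_{\V} > N\}$, applying the It\^{o} formula up to $\tau_N$ as in \cite[Theorem A.1]{ZBSP} and \cite[Theorem 4.3]{ZBSP1}, controlling the terms $(\A\Y_n,\D\mathfrak{h}) = \tfrac{\mathfrak{h}'(\|\Y_n\|_{\V})}{\|\Y_n\|_{\V}}\|\A\Y_n\|_{\H}^2$ and $\|\Q^{\frac12}\A\Y_n\|_{\H}^2$ via Proposition~\ref{weLLp} and Hypothesis~\ref{trQ1}, and then invoking dominated convergence using the boundedness of $\upvarphi$, $\upvarphi_t$, $\D\upvarphi$, $\D^2\upvarphi$ from Definition~\ref{testD} to send $N \to \infty$. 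Continuity of the trajectory in $\V$ finally legitimizes the division by $h$ and the passage $h \to 0^+$.
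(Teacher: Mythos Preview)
Your overall architecture (logarithmic transform, dynamic programming identity, It\^o expansion of $e^{-n\uppsi}$, then $h\to0^+$) matches the paper, but there is a genuine gap in the limit step. You never establish that the maximum point $\y_0$ lies in $\V_2=\mathcal{D}(\I+\A)$. This is not optional: Definition~\ref{viscsoLndef} makes $\y\in\V_2$ part of the subsolution property, and without it the expression $F[\uppsi](t_0,\y_0)$ is undefined because $\D\mathfrak{h}(\y_0)=\tfrac{\mathfrak{h}'(\|\y_0\|_{\V})}{\|\y_0\|_{\V}}(\A+\I)\y_0$ requires $(\A+\I)\y_0\in\H$. Strong-solution regularity only gives $\Y_n\in\C([t_0,T];\V)\cap\mathrm{L}^2(t_0,T;\mathcal{D}(\A))$, so the initial point $\y_0$ is a priori merely in $\V$.

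The paper closes this gap by exploiting the \emph{sign structure} of the $\mathfrak{h}$-part of the test function. After the It\^o expansion and the dynamic programming inequality, the terms $(\mu\A\Y_n+\alpha\Y_n+\beta\mathcal{C}(\Y_n),(\A+\I)\Y_n)$ coming from $\D\mathfrak{h}$ appear with a favourable sign; combined with \eqref{syymB3} and \eqref{torusequ} this yields the uniform bound $\tfrac{1}{\eps}\E\int_t^{t+\eps}\mathpzc{g}(s,\Y_n(s))\|(\A+\I)\Y_n(s)\|_{\H}^2\,\d s\leq C$ (see \eqref{Ladp9.1}). From this, a Banach--Alaoglu argument along a time sequence $t_m\downarrow t_0$, together with \eqref{ctsdep0.1} and uniqueness of weak limits, forces $\y_0\in\mathcal{D}(\I+\A)$. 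Your stopping-time/dominated-convergence plan does not produce this information.

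Relatedly, ``continuity of the trajectory in $\V$'' is insufficient for the passage $h\to0^+$. The integrand contains $\|\A\Y_n(s)\|_{\H}^2$ and $\||\Y_n|^{\frac{r-1}{2}}\nabla\Y_n\|_{\H}^2$, which are only $\mathrm{L}^1$ in time, not continuous; the paper handles these by weak-limit arguments (\eqref{wknm}--\eqref{wknm2.2}) and weak lower semicontinuity of the norm to obtain a $\liminf$ inequality rather than a direct limit. You should mirror that: first extract the $\mathcal{D}(\A)$-regularity of $\y_0$ from the favourable terms, then take $\liminf_{\eps\to0}$ using Jensen and weak lower semicontinuity for the unbounded quadratic contributions.
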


\begin{proof} 
 The boundedness of the viscosity solution is immediate from the Proposition \ref{LapunLh}.	It suffices to verify that $\mathpzc{U}_n$ is a bounded viscosity subsolution of \eqref{LDP1}. The proof of the bounded viscosity supersolution proceeds in an analogous manner. Let $\psi=\varphi+\mathpzc{h}(\|\cdot\|_{\V})$ be a test function such that the function $\mathpzc{U}_n-\mathpzc{h}(\|\cdot\|_{\V})-\varphi$ attains a local maximum at the point $(t,\y)$. Without loss of generality, we may assume that this maximum is strict and global. Furthermore, due to the boundedness of $\mathpzc{U}_n$, we can also assume that $\varphi,\varphi_t,\D\varphi$ and $\D^2\varphi$ are bounded and uniformly continuous.
	\vskip 0.2cm
	\noindent
	\textbf{The points of local maxima is in $\mathcal{D}(\I+\A)$.} Since $\mathpzc{U}_n-\mathpzc{h}(\|\cdot\|_{\V})-\varphi$ has a local maximum at the point $(t,\y)$, we have
	\begin{align}\label{Ladp}
		&\mathpzc{U}_n(t+\eps,\Y_n(t+\eps))-
		\mathpzc{h}(\|\Y_n(t+\eps)\|_{\V})-
		\varphi(t+\eps,\Y_n(t+\eps))
		\nonumber\\&\leq
		\mathpzc{U}_n(t,\y)-\mathpzc{h}(\|\y\|_{\V})-\varphi(t,\y).
	\end{align}
	Let us define $\mathpzc{V}_n:=e^{-n\mathpzc{U}_n}$. Therefore, from \eqref{Ladp}, we deduce the following:
	\begin{align}\label{Ladp1}
		\frac{\mathpzc{V}_n(t+\eps,\Y_n(t+\eps))}
		{\mathpzc{V}_n(t,\y)}\geq
		e^{-n\big(\mathpzc{h}(\|\Y_n(t+\eps)\|_{\V})+
			\varphi(t+\eps,\Y_n(t+\eps))\big)}
		e^{n\big(\mathpzc{h}(\|\y\|_{\V})+\varphi(t,\y)\big)}.
	\end{align}
	Then, on taking the expectation on both sides of \eqref{Ladp1} and using the Markov property of the process $\Y_n(\cdot)$, we obtain
	\begin{align*}
		e^{-n\big(\mathpzc{h}(\|\y\|_{\V})+\varphi(t,\y)\big)}\geq
		\E\Big[e^{-n\big(\mathpzc{h}(\|\Y_n(t+\eps)\|_{\V})+
			\varphi(t+\eps,\Y_n(t+\eps))\big)}\Big].
	\end{align*}
	On dividing both sides of above inequality by $\eps>0$, we rewrite it as 
	\begin{align}\label{Ladp2}
		0\geq
		\E\frac{1}{\eps}\left[e^{-n\big(\mathpzc{h}(\|\Y_n(t+\eps)\|_{\V})+
			\varphi(t+\eps,\Y_n(t+\eps))\big)}-e^{-n\big(\mathpzc{h}(\|\y\|_{\V})+\varphi(t,\y)\big)}\right].
	\end{align}
	Let us write 
	\begin{align*}
		\mathpzc{H}(t,\y)=e^{-n\big(\mathpzc{h}(\|\y\|_{\V})+
			\varphi(t,\y)\big)}.
	\end{align*}
	From Remark \eqref{reFrede}, we write the following Formal Fr\'echet derivatives:
	\begin{align*}
		&\mathcal{D}_{\y}\mathpzc{H}(t,\y)\w
		\nonumber\\&=-n
		\mathpzc{H}(t,\y)\bigg[
		\frac{\mathpzc{h}'(\|\y\|_{\V})}{\|\y\|_{\V}}
		((\A+\I)^{\frac12}\y,(\A+\I)^{\frac12}\w)
		+\mathcal{D}_{\y}\varphi(t,\y)\w\bigg],
		\ \text{ for } \ \w\in\V,
	\end{align*}
	and
	\begin{align*}
		&\mathcal{D}^2_{\y}\mathpzc{H}(t,\y)
		(\w_1,\w_2)
		\nonumber\\&=
		n^2\mathpzc{H}(t,\y)
		\bigg[
		\frac{\mathpzc{h}'(\|\y\|_{\V})}{\|\y\|_{\V}}
		((\A+\I)^{\frac12}\y,(\A+\I)^{\frac12}\w_1)
		+\mathcal{D}_{\y}\varphi(t,\y)\w_1\bigg]
		\nonumber\\&\qquad\times
		\bigg[
		\frac{\mathpzc{h}'(\|\y\|_{\V})}{\|\y\|_{\V}}
		((\A+\I)^{\frac12}\y,(\A+\I)^{\frac12}\w_2)
		+\mathcal{D}_{\y}\varphi(t,\y)\w_2\bigg]
		\nonumber\\&\quad-n\mathpzc{H}(t,\y)
		\left(-\frac{\mathpzc{h}'(\|\y\|_{\V})}{\|\y\|_{\V}^3}
		+\frac{\mathpzc{h}''(\|\y\|_{\V})}{\|\y\|_{\V}^2}\right)
		((\A+\I)^{\frac12}\y,
		(\A+\I)^{\frac12}\w_1)
		((\A+\I)^{\frac12}\y,(\A+\I)^{\frac12}
		\w_2)
		\nonumber\\&\quad-n
		\mathpzc{H}(t,\y)
		\frac{\mathpzc{h}'(\|\y\|_{\V})}{\|\y\|_{\V}}
		((\A+\I)^{\frac12}\w_1,(\A+\I)^{\frac12}\w_2)
		-n\mathpzc{H}(t,\y)
		\mathcal{D}_{\y}^2\varphi(t,\y)(\w_1,\w_2),
		\ \text{ for } \ \w_1,\w_2\in\V.
	\end{align*}
	Note that 
	\begin{align}\label{traceLDP}
		&\Tr(\Q\mathcal{D}^2\mathpzc{H}(t,\y))
		\nonumber\\&=-n\mathpzc{H}(t,\y)
		\Tr(\Q+\Q_1)\|\y\|_{\V}^2\left(-\frac{\mathpzc{h}'(\|\y\|_{\V})}{\|\y\|_{\V}^3}
		+\frac{\mathpzc{h}''(\|\y\|_{\V})}{\|\y\|_{\V}^2}\right)
		\nonumber\\&\quad-
		n\mathpzc{H}(t,\y)\Tr(\Q+\Q_1)\frac{\mathpzc{h}'(\|\y\|_{\V})}
		{\|\y\|_{\V}}-n\mathpzc{H}(t,\y)
		\Tr(\Q\mathcal{D}_{\y}^2\varphi(t,\y))
		\nonumber\\&\quad+
		n^2\mathpzc{H}(t,\y)\frac{(\mathpzc{h}'(\|\y\|_{\V}))^2}
		{\|\y\|_{\V}^2}\Tr(\Q+\Q_1)\|\y\|_{\V}^2+
		n^2\mathpzc{H}(t,\y)\Tr\big(\Q(\mathcal{D}_{\y}\varphi(t,\y)\otimes\mathcal{D}_{\y}\varphi(t,\y))\big)
		\nonumber\\&\quad+
		2n^2\mathpzc{H}(t,\y)
		\Tr\big(\Q((\A+\I)\y\otimes\mathcal{D}_{\y}\varphi(t,\y)\big)
		\nonumber\\&=
		-n\mathpzc{H}(t,\y)
		\Tr(\Q+\Q_1)\left(-\frac{\mathpzc{h}'(\|\y\|_{\V})}{\|\y\|_{\V}}
		+\mathpzc{h}''(\|\y\|_{\V})\right)
		\nonumber\\&\quad-
		n\mathpzc{H}(t,\y)\Tr(\Q+\Q_1)\frac{\mathpzc{h}'(\|\y\|_{\V})}
		{\|\y\|_{\V}}-n\mathpzc{H}(t,\y)
		\Tr(\Q\mathcal{D}_{\y}^2\varphi(t,\y))
		\nonumber\\&\quad+
		n^2\mathpzc{H}(t,\y)(\mathpzc{h}'(\|\y\|_{\V}))^2
		\Tr(\Q+\Q_1)+
		n^2\mathpzc{H}(t,\y)\Tr\big(\Q(\mathcal{D}_{\y}\varphi(t,\y)\otimes\mathcal{D}_{\y}\varphi(t,\y))\big)
		\nonumber\\&\quad+
		2n^2\mathpzc{H}(t,\y)
		\Tr\big(\Q((\A+\I)\y\otimes\mathcal{D}_{\y}\varphi(t,\y)\big).
	\end{align}

	On applying the infinite dimensional  It\^o formula to the function 
	$e^{-n\big(\mathpzc{h}(\|\cdot\|_{\V})+
		\varphi(s,\cdot)\big)}$, for $s\in[t,t+\eps]$, and to the process $\Y_n(\cdot)$, we get
	\begin{align}\label{Ladp5}
		&e^{-n\big(\mathpzc{h}(\|\Y_n(t+\eps)\|_{\V})+
			\varphi(t+\eps,\Y_n(t+\eps))\big)}
		\nonumber\\&=
		e^{-n\big(\mathpzc{h}(\|\y\|_{\V})+\varphi(t,\y)\big)}-n
		\int_t^{t+\eps}
		e^{-n\big(\mathpzc{h}(\|\Y_n(s)\|_{\V})+
			\varphi(s,\Y_n(s))\big)}\varphi_t(s,\Y_n(s))\d s
		\nonumber\\&
		-n\int_t^{t+\eps}
		e^{-n\big(\mathpzc{h}(\|\Y_n(s)\|_{\V})+
			\varphi(s,\Y_n(s))\big)}
		\bigg(-\mu\A\Y_n(s)-\mathcal{B}(\Y_n(s))-\alpha\Y_n(s)-
		\beta\mathcal{C}(\Y_n(s))
		\nonumber\\&\qquad+
		\f(s),\frac{\mathpzc{h}'(\|\Y_n(s)\|_{\V})}{\|\Y_n(s)\|_{\V}}
		(\A+\I)\Y_n(s)+\D\varphi(s,\Y_n(s))\bigg)\d s
		\nonumber\\&+
		\sqrt{n}\int_t^{t+\eps}
		e^{-n\big(\mathpzc{h}(\|\Y_n(s)\|_{\V})+
			\varphi(s,\Y_n(s))\big)}\bigg(\Q^{\frac12}\d\mathbf{W}(s),
		\frac{\mathpzc{h}'(\|\Y_n(s)\|_{\V})}{\|\Y_n(s)\|_{\V}}
		(\A+\I)\Y_n(s)+\D\varphi(s,\Y_n(s))\bigg)
		\nonumber\\&\quad+
			\frac{1}{2n}\int_t^{t+\eps}
			\Tr(\Q\mathcal{D}^2\mathpzc{H}(s,\Y_n(s)))\d s.
	\end{align}
	On dividing both sides of \eqref{Ladp5} by $\eps>0$, then taking expectation, and using \eqref{Ladp2}, we obtain
	\begin{align}\label{Ladp6}
		0\geq&-\frac{n}{\eps}
		\E\int_t^{t+\eps}
		e^{-n\big(\mathpzc{h}(\|\Y_n(s)\|_{\V})+
			\varphi(s,\Y_n(s))\big)}\varphi_t(s,\Y_n(s))\d s 
		\nonumber\\&-
		\frac{n}{\eps}\E\int_t^{t+\eps}
		e^{-n\big(\mathpzc{h}(\|\Y_n(s)\|_{\V})+
			\varphi(s,\Y_n(s))\big)}
		\bigg(-\mu\A\Y_n(s)-\mathcal{B}(\Y_n(s))-\alpha\Y_n(s)-
		\beta\mathcal{C}(\Y_n(s))
		\nonumber\\&\qquad+
		\f(s),\frac{\mathpzc{h}'(\|\Y_n(s)\|_{\V})}{\|\Y_n(s)\|_{\V}}
		(\A+\I)\Y_n(s)+\D\varphi(s,\Y_n(s))\bigg)\d s
		\nonumber\\&\quad+\frac{1}{2n\eps}
			\E\int_t^{t+\eps}
			\Tr(\Q\mathcal{D}^2\mathpzc{H}(s,\Y_n(s)))\d s.
	\end{align}
	Let us write $\mathpzc{g}(s,\Y_n(s)):=e^{-n\big(\mathpzc{h}(\|\Y_n(s)\|_{\V})+\varphi(s,\Y_n(s))\big)}$, for $s\in[t,t+\eps]$. Note that from the assumptions on $\mathpzc{h}$ (see Definition \ref{testD}), there exists a constant $\varkappa>0$ such that 
	$\frac{\mathpzc{h}'(\theta)}{\theta}\geq\varkappa$ for $\theta\in(0,+\infty)$. Therefore, on rearranging the terms in \eqref{Ladp6} and using the equality \eqref{torusequ}, we rewrite it as follows:
	\begin{align}\label{Ladp7}
		&\frac{n\mu\varkappa}{\eps}
		\E\int_{t}^{t+\eps} 
		\mathpzc{g}(s,\Y_n(s))
		(\|\A\Y_n(s)\|_{\H}^2+\|\nabla\Y_n(s)\|_{\H}^2)\d s+
		\frac{n\alpha\varkappa}{\eps}
		\E\int_{t}^{t+\eps} 
		\mathpzc{g}(s,\Y_n(s))
		\|\Y_n(s)\|_{\V}^2\d s
		\nonumber\\&\quad+
		\frac{n\beta\varkappa}{\eps}
		\E\int_{t}^{t+\eps} 
		\mathpzc{g}(s,\Y_n(s))
		\||\Y_n(s)|^{\frac{r-1}{2}}\nabla\Y_n(s)\|_{\H}^{2}\d s+
		\frac{n\beta\varkappa}{\eps}
		\E\int_{t}^{t+\eps} 
		\mathpzc{g}(s,\Y_n(s))
		\|\Y_n(s)\|_{\wi\L^{r+1}}^{r+1}\d s
		\nonumber\\&\leq
		\frac{n}{\eps}
		\E\int_t^{t+\eps}
		e^{-n\big(\mathpzc{h}(\|\Y_n(s)\|_{\V})+
			\varphi(s,\Y_n(s))\big)}\varphi_t(s,\Y_n(s))\d s 
		\nonumber\\&\quad
		-\frac{n\varkappa}{\eps}
		\E\int_{t}^{t+\eps} 
		\mathpzc{g}(s,\Y_n(s))
		(\mathcal{B}(\Y_n(s)),(\A+\I)\Y_n(s))\d s
		\nonumber\\&\quad-
		\frac{n\varkappa}{\eps}
		\E\int_{t}^{t+\eps} 
		\mathpzc{g}(s,\Y_n(s))
		(\f(s),(\A+\I)\Y_n(s))\d s
		\nonumber\\&\quad-
		\frac{n}{\eps}
		\E\int_{t}^{t+\eps} 
		\mathpzc{g}(s,\Y_n(s))
		(\mu\A\Y_n(s)+\mathcal{B}(\Y_n(s))+\alpha\Y_n(s)+
		\beta\mathcal{C}(\Y_n(s))-
		\f(s),\D\varphi(s,\Y_n(s)))\d s
		\nonumber\\&\quad-
			\frac{1}{2n\eps}\E\int_t^{t+\eps}
			\Tr(\Q\mathcal{D}^2\mathpzc{H}(s,\Y_n(s)))\d s.
	\end{align}
	From Hypotheses \ref{trQ1} and\ref{fhyp}, and an  application of H\"older's and Young's inequality yield
	\begin{align}
		|((\mu\A+\alpha\I)\Y_n,\D\varphi(\cdot,\Y_n))|&\leq \frac{\mu\varkappa}{4}\|\A\Y_n\|_{\H}^2+\mathpzc{C}(1+\|\Y_n\|_{\H}^2),\label{Ladp7.0}\\
		|(\mathcal{B}(\Y_n),\A\Y_n)|
		&\leq\frac{\mu}{4}
		\|\A\Y_n\|_{\H}^2+\frac{\beta}{4}\||\Y_n|^{\frac{r-1}{2}}\nabla\Y_n\|_{\H}^2+\varrho_1\|\nabla\Y_n\|_{\H}^2,\label{Ladp7.1}\\
		|(\mathcal{B}(\Y_n),\D\varphi(\cdot,\Y_n))|&\leq \mathpzc{C}(1+\|\Y_n\|_{\H}^2)
		+\frac{\beta\varkappa}{8}\||\Y_n|^{\frac{r-1}{2}}\nabla\Y_n\|_{\H}^2+\varrho_2\|\nabla\Y_n\|_{\H}^2,\label{Ladp7.2}\\
		|(\mathcal{C}(\Y_n),\D\varphi(\cdot,\Y_n))|&\leq	
		\frac{\varkappa}{8}\||\Y_n|^{\frac{r-1}{2}}
		\nabla\Y_n\|_{\H}^2+\mathpzc{C}(1+\|\Y_n\|_{\H})^{r+1}+ \frac{\varkappa}{2}\|\Y_n\|_{\wi\L^{r+1}}^{r+1},\label{Ladp7.3}\\
		|(\f(\cdot),(\A+\I)\Y_n)|&\leq \mathpzc{C}\|\Y_n\|_{\V},\label{Ladp7.4}\\
		|(\f(\cdot),\D\varphi(\Y_n))|&\leq \mathpzc{C}\big(1+\|\Y_n\|_{\H}\big),\label{Ladp7.5}
	\end{align}
	where 
	$\varrho_2:=\frac{r-3}{\varkappa(r-1)}\left[\frac{8}{\beta\varkappa(r-1)}\right]^{\frac{2}{r-3}}$ and  $\varrho_1:=\frac{r-3}{\mu(r-1)}\left[\frac{8}{\beta
		\mu(r-1)}\right]^{\frac{2}{r-3}}$. 
	On substituting \eqref{Ladp7.0} to \eqref{Ladp7.5} in \eqref{Ladp7}, we obtain
	\begin{align}\label{Ladp9}
		&\frac{n\mu\varkappa}{\eps}
		\E\int_{t}^{t+\eps} 
		\mathpzc{g}(s,\Y_n(s))
		(\|\A\Y_n(s)\|_{\H}^2+\|\nabla\Y_n(s)\|_{\H}^2)\d s+
		\frac{n\alpha\varkappa}{\eps}
		\E\int_{t}^{t+\eps} 
		\mathpzc{g}(s,\Y_n(s))
		\|\Y_n(s)\|_{\V}^2\d s
		\nonumber\\&\quad+
		\frac{n\beta\varkappa}{\eps}
		\E\int_{t}^{t+\eps} 
		\mathpzc{g}(s,\Y_n(s))
		\||\Y_n(s)|^{\frac{r-1}{2}}\nabla\Y_n(s)\|_{\H}^{2}\d s+
		\frac{n\beta\varkappa}{2\eps}
		\E\int_{t}^{t+\eps} 
		\mathpzc{g}(s,\Y_n(s))
		\|\Y_n(s)\|_{\wi\L^{r+1}}^{r+1}\d s
		\nonumber\\&\leq\mathpzc{C},  
	\end{align} 
	where $\mathpzc{C}>0$ is a constant, which is independent of $\eps$. By making the use of \eqref{ssee1}, we further deduce the following bound from \eqref{Ladp9}:
	\begin{align}\label{Ladp9.1}
		\frac{1}{\eps}\E\int_{t}^{t+\eps} 
		\mathpzc{g}(s,\Y_n(s))
		\|(\A+\I)\Y_n(s)\|_{\H}^2\d s\leq\mathpzc{C}.
	\end{align}
	Let $M>0$ be a number with $M>\|\y\|_{\V}$ such that if we consider the following set:
	\begin{align*}
		\mathpzc{S}:=\left\{\omega\in\Omega: \sup\limits_{t\leq s\leq T}\|\Y_n(s)\|_{\V}\leq M\right\},
	\end{align*}
	Then, by an application of Markov's inequality, we have $\P(	\mathpzc{S})>0$. It then follows from \eqref{Ladp9.1} that
	\begin{align*}
		\frac{1}{\eps}\E\int_{t}^{t+\eps} 
		\|(\A+\I)\Y_n(s)\|_{\H}^2\mathds{1}_{\mathpzc{S}}\d s \leq\mathpzc{C}.
	\end{align*}
	Therefore, there exists a sequence $\{t_m\}_{m\in\N}$ with $t_m\to t$ as $m\to\infty$ such that $\Y_n(t_m)$ is bounded in $\mathrm{L}^2(\mathpzc{S};\mathcal{D}(\I+\A))$. Thus, the Banach-Alaoglu theorem yields the existence of a sequence $\{t_m\}_{m\in\N}$ (still denoted by the same symbol) such that 
	\begin{align*}
		\Y_n(t_m)\rightharpoonup\wi\y \ \text{ in } \mathrm{L}^2(\mathpzc{S};\mathcal{D}(\I+\A)) \ \text{ as }  \ m\to\infty.
	\end{align*}
	However, from \eqref{ctsdep0.1}, we also have 
	\begin{align*}
		\Y_n(t_m)\to\y \ \text{in} \ \mathrm{L}^2(\mathpzc{S};\H)\text{ as } m\to\infty.
	\end{align*}
	So, finally, the uniqueness of weak limits gives $\y=\wi\y\in\mathcal{D}(\I+\A)$.
	
	\vskip 0.2cm
	\noindent
	\textbf{The subsolution inequality.} Let us now pass the limit as `$\eps\to0$' in \eqref{Ladp6} in order to get the subsolution inequality. We rewrite \eqref{Ladp6} as follows:
	\begin{align}\label{convLDP}
		0\geq&
		-\frac{n}{\eps}
		\E\int_t^{t+\eps}
		\mathpzc{g}(s,\Y_n(s))\varphi_t(s,\Y_n(s))\d s
		\nonumber\\&
		-\frac{n}{\eps}\E\int_t^{t+\eps}
		\mathpzc{g}(s,\Y_n(s))\frac{\mathpzc{h}'(\|\Y_n(s)\|_{\V})}{\|\Y_n(s)\|_{\V}}
		\big(-\mu\A\Y_n(s)-\mathcal{B}(\Y_n(s))-\alpha\Y_n(s)
		\nonumber\\&\quad-
		\beta\mathcal{C}(\Y_n(s))+
		\f(s),(\A+\I)\Y_n(s)\big)\d s
		\nonumber\\&
		-\frac{n}{\eps}\E\int_t^{t+\eps}\mathpzc{g}(s,\Y_n(s))
		\big(-\mu\A\Y_n(s)-\mathcal{B}(\Y_n(s))-\alpha\Y_n(s)-
		\beta\mathcal{C}(\Y_n(s))
		\nonumber\\&\quad+
		\f(s),\D\varphi(s,\Y_n(s))\big)\d s
		+\frac{1}{2n\eps}\E\int_t^{t+\eps}
			\Tr(\Q\mathcal{D}^2\mathpzc{H}(s,\Y_n(s)))\d s.
	\end{align}
	We now examine the convergence of each term on the right hand side of \eqref{convLDP} individually.
	We note that
	\begin{align}\label{convLDP1}
		&\frac{1}{\eps}
		\E\int_{t}^{t+\eps}
		\mathpzc{g}(s,\Y_n(s))\varphi_t(s,\Y_n(s))\d s-
		\mathpzc{g}(t,\y)\varphi_t(t,\y)
		\nonumber\\&=
		\frac{1}{\eps}\E\int_{t}^{t+\eps}
		(\mathpzc{g}(s,\Y_n(s))-\mathpzc{g}(t,\y))\varphi_t(s,\Y_n(s))\d s
		\nonumber\\&\quad+
		\frac{1}{\eps}\E\int_{t}^{t+\eps}
		\mathpzc{g}(t,\y)(\varphi_t(s,\Y_n(s))-\varphi_t(t,\y))\d s.
	\end{align}
	Let us choose $\mathfrak{K}_{s,t}$ in such a way that $$\mathpzc{h}(\|\Y_n(s)\|_{\V})+\varphi(s,\Y_n(s))<\mathfrak{K}_{s,t}<
	\mathpzc{h}(\|\y\|_{\V})+\varphi(t,\y),$$ for $s\in[t,t+\eps]$. By applying mean value theorem and using the fact that $\mathpzc{h}\in\C^2([0,\infty))$ and $\varphi$ is uniformly continuous and bounded, we write
	\begin{align}\label{convLDP2}
		&|\mathpzc{g}(s,\Y_n(s))-\mathpzc{g}(t,\y)|
		\nonumber\\&\leq
		ne^{-n\mathfrak{K}_{s,t}}\big(\underbrace{|\mathpzc{h}(\|\Y_n(s)\|_{\V})-\mathpzc{h}(\|\y\|_{\V})|}_{\text{ use mean value theorem}}+
		\underbrace{|\varphi(s,\Y_n(s))-\varphi(t,\y)|}_{\text{use modulus of continuity}}\big)
		\nonumber\\&\leq
		ne^{-n\mathfrak{K}_{s,t}}\left(\max\limits_{\xi\in[t,t+\eps]}|\mathpzc{h}'(\xi)|
		\big|\|\Y_n(s)\|_{\V}-\|\y\|_{\V}\big|+
		\omega_{\varphi}(\eps+\|\Y_n(s)-\y\|_{\H})\right),
	\end{align}
	where $\omega_{\varphi}$ is some local modulus of continuity of $\varphi$.
	We choose $$\mathfrak{y}_1:=\max\limits_{s\in[t,t+\eps]}e^{-n\mathfrak{K}_{s,t}}
	\max\left\{1,\max\limits_{\xi\in[t,t+\eps]}|\mathpzc{h}'(\xi)|\right\}.$$ Then, \eqref{convLDP2} yields
	\begin{align}\label{convLDP3}
		|\mathpzc{g}(s,\Y_n(s))-\mathpzc{g}(t,\y)|
		\leq
		\mathfrak{y}_1\left(\|\Y_n(s)-\y\|_{\V}+
		\omega_{\varphi}(\eps+\|\Y_n(s)-\y\|_{\H})\right).
	\end{align}
	Further, since $\varphi_t$ is uniformly continuous and bounded, we can deduce the following estimate for $s\in[t,t+\eps]$
	\begin{align}\label{convLDP4}
		|\varphi_t(s,\Y_n(s))-\varphi_t(t,\y)|\leq
		\omega_{\varphi_t}(\eps+\|\Y_n(s)-\y\|_{\H}),
	\end{align} 
	where $\omega_{\varphi_t}$ is some local modulus of continuity of $\varphi_t$. By incorporating \eqref{convLDP3}-\eqref{convLDP4} into \eqref{convLDP1}, along with \eqref{ctsdep0.1}-\eqref{ctsdep0.2}, and noting that $\y\in\mathcal{D}(\I+\A)$ and $\varphi_t$ is uniformly continuous, we obtain
	\begin{align}\label{convLDP5}
		&\left|\frac{1}{\eps}
		\E\int_{t}^{t+\eps}
		\mathpzc{g}(s,\Y_n(s))\varphi_t(s,\Y_n(s))\d s-
		\mathpzc{g}(t,\y)\varphi_t(t,\y)\right|
		\nonumber\\&=
		\max\limits_{s\in[t,t+\eps]}|\varphi_t(s,\Y_n(s))|
		\left(\frac{1}{\eps}\E\int_{t}^{t+\eps}\big|\mathpzc{g}(s,\Y_n(s))
		-\mathpzc{g}(t,\y)\big|\d s\right)
		\nonumber\\&\quad+
		|\mathpzc{g}(t,\y)|\left(\frac{1}{\eps}\E\int_{t}^{t+\eps}
		|\varphi_t(s,\Y_n(s))-\varphi_t(t,\y)|\d s\right).
		\nonumber\\&\leq
		\mathfrak{y}_1\max\limits_{s\in[t,t+\eps]}|\varphi_t(s,\Y_n(s))|
		\left(\frac{1}{\eps}\int_{t}^{t+\eps}\big(\E\|\Y_n(s)-\y\|_{\V}+
		\E[\omega_{\varphi}(\eps+\|\Y_n(s)-\y\|_{\H})]\big)\d s\right)
		\nonumber\\&\quad+
		|\mathpzc{g}(t,\y)|\left(\frac{1}{\eps}\int_{t}^{t+\eps}
		\E[\omega_{\varphi_t}(\eps+\|\Y_n(s)-\y\|_{\H})]\d s\right)
		\nonumber\\&\leq
		\mathfrak{y}_1\max\limits_{s\in[t,t+\eps]}|\varphi_t(s,\Y_n(s))|
		\left(\frac{1}{\eps}\int_{t}^{t+\eps}\sqrt{ \omega_{\y}(\eps)}\d s+
		\frac{1}{\eps}\int_{t}^{t+\eps}\E[\omega_{\varphi}(\eps+\|\Y_n(s)-\y\|_{\H})]
		\d s\right)
		\nonumber\\&\quad+
		|\mathpzc{g}(t,\y)|\left(\frac{1}{\eps}\int_{t}^{t+\eps}
		\E[\omega_{\varphi_t}(\eps+\|\Y_n(s)-\y\|_{\H})]\d s\right).
	\end{align}
     Therefore, the validity of the first integral in \eqref{convLDP} follows in the limit as $\eps\to0$ (see \eqref{convLDP5}). 
		Furthermore, invoking the linearity of the trace operator, the assumptions on $\mathpzc{h}$ (see the definition of test function \ref{testD}), together with  \eqref{ctsdep0.1}-\eqref{ctsdep0.2}, one can deduce the following for the final integral term in \eqref{convLDP}:
		\begin{align*}
			\frac{1}{2n\eps}\E\int_t^{t+\eps}
			\Tr(\Q\mathcal{D}^2\mathpzc{H}(s,\Y_n(s)))\d s
			\to     
			\frac{1}{2n}\Tr(\Q\mathcal{D}^2\mathpzc{H}(t,\y)) \ \text{ in } \ 
			\mathrm{L}^2(\Omega;\H) \ \text{ as } \ \eps\to0.
	\end{align*}
	Using \eqref{Ladp9.1}, we calculate  
		\begin{align}\label{banachLDP}
			&\E\left\|\frac{1}{\eps}\int_t^{t+\eps}
			(\mathpzc{g}(s,\Y_n(s)))^{\frac12}\left(\frac{\mathpzc{h}'(\|\Y_n(s)\|_{\V})}
			{\|\Y_n(s)\|_{\V}}\right)^{\frac12}
			(\A+\I)\Y_n(s)\d s\right\|_{\H}^2
			\nonumber\\&\leq
			\frac{1}{\eps}\E\int_t^{t+\eps}
			\mathpzc{g}(s,\Y_n(s))\frac{\mathpzc{h}'(\|\Y_n(s)\|_{\V})}{\|\Y_n(s)\|_{\V}}
			\|(\A+\I)\Y_n(s)\|_{\H}^2\d s.
			\nonumber\\&\leq\left(\max\limits_{s\in[t,T]}\mathpzc{h}''
			(\|\Y_n(s)\|_{\V})\right)\left[
			\frac{1}{\eps}\E\int_t^{t+\eps}
			\mathpzc{g}(s,\Y_n(s))
			\|(\A+\I)\Y_n(s)\|_{\H}^2\d s\right]
			\nonumber\\&\leq
			\mathpzc{C},
		\end{align}
		where we have used the mean value theorem to $\mathpzc{h}'$ and the fact that $\mathpzc{h}'(0)=0$. 
	Thus, on applying the Banach-Alaoglu theorem, we obtain a sequence $\eps_n\to0$ and an element $\wi\y\in\mathrm{L}^2(\Omega;\H)$ such that 
	\begin{align}\label{wknm}
		\wi\y_n:=\frac{1}{\eps_n}\int_{t}^{t+\eps_n}
		e^{-\frac{n}{2}\big(\mathpzc{h}(\|\Y_n(s)\|_{\V})+\varphi(s,\Y_n(s))\big)}
		\left(\frac{\mathpzc{h}'(\|\Y_n(s)\|_{\V})}
		{\|\Y_n(s)\|_{\V}}\right)^{\frac12}(\A+\I)\Y_n(s)
		\d s\rightharpoonup\wi\y,
	\end{align}
	in $\mathrm{L}^2(\Omega;\H)$ as $n\to+\infty$. One can write
	\begin{align}\label{wknmdiff}
		&(\A+\I)^{-1}\wi\y_n-(\mathpzc{g}(t,\y))^{\frac12}
		\left(\frac{\mathpzc{h}'(\|\y\|_{\V})}
		{\|\y\|_{\V}}\right)^{\frac12}\y
		\nonumber\\&=
		\frac{1}{\eps_n}\int_{t}^{t+\eps_n}
		(\mathpzc{g}(s,\Y_n(s)))^{\frac12}
		\left(\frac{\mathpzc{h}'(\|\Y_n(s)\|_{\V})}
		{\|\Y_n(s)\|_{\V}}\right)^{\frac12}\Y_n(s)\d s-
		(\mathpzc{g}(t,\y))^{\frac12}
		\left(\frac{\mathpzc{h}'(\|\y\|_{\V})}
		{\|\y\|_{\V}}\right)^{\frac12}\y
		\nonumber\\&=\underbrace{
			\frac{1}{\eps_n}\int_{t}^{t+\eps_n}
			\big((\mathpzc{g}(s,\Y_n(s)))^{\frac12}-(\mathpzc{g}(t,\y))^{\frac12}\big)
			\left(\frac{\mathpzc{h}'(\|\Y_n(s)\|_{\V})}
			{\|\Y_n(s)\|_{\V}}\right)^{\frac12}\Y_n(s)\d s}_{I}
		\nonumber\\&\quad+\underbrace{
			\frac{1}{\eps_n}\int_{t}^{t+\eps_n}
			(\mathpzc{g}(t,\y))^{\frac12}\left[
			\left(\frac{\mathpzc{h}'(\|\Y_n(s)\|_{\V})}
			{\|\Y_n(s)\|_{\V}}\right)^{\frac12}-
			\left(\frac{\mathpzc{h}'(\|\y\|_{\V})}
			{\|\y\|_{\V}}\right)^{\frac12}\right]\Y_n(s)\d s}_{II}
		\nonumber\\&\quad+\underbrace{
			\frac{1}{\eps_n}\int_{t}^{t+\eps_n}
			(\mathpzc{g}(t,\y))^{\frac12}
			\left(\frac{\mathpzc{h}'(\|\y\|_{\V})}
			{\|\y\|_{\V}}\right)^{\frac12}(\Y_n(s)-\y)\d s}_{III}.
	\end{align}
	Using \eqref{convLDP3}, \eqref{ctsdep0.1}-\eqref{ctsdep0.2} and the fact that $\y\in\mathcal{D}(\I+\A)$, it follows that $I$ and $III$ converges to 0 in $\mathrm{L}^2(\Omega;\H)$ as $n\to+\infty$. Furthermore, by the assumption on $\mathpzc{h}$ (see Definition \ref{testD}) and the application of mean value theorem, we find
	\begin{align}\label{wknmdiff2}
		&\left|\left(\frac{\mathpzc{h}'(\|\Y_n(s)\|_{\V})}
		{\|\Y_n(s)\|_{\V}}\right)^{\frac12}-
		\left(\frac{\mathpzc{h}'(\|\y\|_{\V})}
		{\|\y\|_{\V}}\right)^{\frac12}\right|
		\nonumber\\&\leq
		\frac{1}{\left(\frac{\mathpzc{h}'(\|\y\|_{\V})}
			{\|\y\|_{\V}}\right)^{\frac12}}\frac{1}{\|\Y_n(s)\|_{\V}}\left[
		\max\limits_{\theta\in[t,t+\eps]}|\mathpzc{h}''(\theta)|+
		\frac{\mathpzc{h}'(\|\y\|_{\V})}
		{\|\y\|_{\V}}\right]\|\Y_n(s)-\y\|_{\V}.
	\end{align}
	Using \eqref{wknmdiff2} and \eqref{ctsdep0.1}-\eqref{ctsdep0.2}, the integral $II$ converges to $0$ as $n\to+\infty$ in $\mathrm{L}^2(\Omega;\H)$. Consequently, from \eqref{wknmdiff}, one can deduce that  
	\begin{align*}
		(\A+\I)^{-1}\wi\y_n\to(\mathpzc{g}(t,\y))^{\frac12}
		\left(\frac{\mathpzc{h}'(\|\y\|_{\V})}
		{\|\y\|_{\V}}\right)^{\frac12}\y \ \text{ as } \ n\to+\infty.
	\end{align*}
	Therefore, by the uniqueness of the weak limit, it follows that  $$\wi\y=(\mathpzc{g}(t,\y))^{\frac12}
	\left(\frac{\mathpzc{h}'(\|\y\|_{\V})}
	{\|\y\|_{\V}}\right)^{\frac12}(\A+\I)\y.$$
	Moreover, in view of \eqref{ssee1}, \eqref{ctsdep0.1}-\eqref{ctsdep0.2}, one can also verify the following limits:
	\begin{align}\label{wknm1.1}
		\frac{1}{\eps_n}\int_{t}^{t+\eps_n}\mathpzc{g}(s,\Y_n(s))\frac{\mathpzc{h}'(\|\Y_n(s)\|_{\V})}{\|\Y_n(s)\|_{\V}}
		\|\Y_n(s)\|_{\H}^2\d s
		\to \mathpzc{g}(t,\y)\frac{\mathpzc{h}'(\|\y\|_{\V})}{\|\y\|_{\V}}
		\|\y\|_{\H}^2
	\end{align}
	and 
	\begin{align}\label{wknm1.2}
		\frac{1}{\eps_n}\int_{t}^{t+\eps_n} \mathpzc{g}(s,\Y_n(s))\frac{\mathpzc{h}'(\|\Y_n(s)\|_{\V})}{\|\Y_n(s)\|_{\V}}\|\nabla\Y_{n}(s)\|_{\H}^2\d s \to\mathpzc{g}(t,\y)\frac{\mathpzc{h}'(\|\y\|_{\V})}{\|\y\|_{\V}}
		\|\nabla\y\|_{\H}^2,
	\end{align} 
	in $\mathrm{L}^2(\Omega;\H)$ as $n\to+\infty$. From \eqref{banachLDP} and \eqref{wknm}, using Jensen's inequality and  the weak lower semicontinuity property of norm, we obtain
	\begin{align}\label{wknm1}
		&\liminf_{n\to+\infty}\E\frac{1}{\eps_n}\int_{t}^{t+\eps_n}\mathpzc{g}(s,\Y_n(s))\frac{\mathpzc{h}'(\|\Y_n(s)\|_{\V})}{\|\Y_n(s)\|_{\V}}
		\|(\A+\I)\Y_n(s)\|_{\H}^2
		\d s\nonumber\\&\geq\liminf_{n\to+\infty} \E\left\|\frac{1}{\eps_n}\int_{t}^{t+{\eps_n}}
		(\mathpzc{g}(s,\Y_n(s)))^{\frac12}\left(\frac{\mathpzc{h}'(\|\Y_n(s)\|_{\V})}
		{\|\Y_n(s)\|_{\V}}\right)^{\frac12}
		(\A+\I)\Y_n(s)\d s \right\|_{\H}^2\nonumber\\&\geq
		\mathpzc{g}(t,\y)\frac{\mathpzc{h}'(\|\y\|_{\V})}{\|\y\|_{\V}}\|(\A+\I)\y\|_{\H}^2.
	\end{align}
	A similar argument as we performed  above yields 
	\begin{align}\label{wknm2}
		\frac{1}{\eps_n}\int_{t_0}^{t_0+\eps_n}(\A+\I)\Y_{n}(s)\d s\rightharpoonup(\A+\I)\y_0 \  \text{ in } \ \mathrm{L}^2(\Omega;\H)
		\  \text{ as } \ n\to+\infty.
	\end{align}
	Moreover, since $\|\A\Y_{n}\|_{\H}^2=\|(\A+\I)\Y_{n}\|_{\H}^2-\|\Y_{n}\|_{\H}^2-2\|\nabla\Y_{n}\|_{\H}^2$, therefore by using \eqref{wknm1.1}-\eqref{wknm1}, we have the following lower bound:
	\begin{align}\label{wknm2.2}
		&\liminf_{n\to+\infty}\E\frac{1}{\eps_n}\int_{t_0}^{t_0+\eps_n} \mathpzc{g}(s,\Y_n(s))\frac{\mathpzc{h}'(\|\Y_n(s)\|_{\V})}{\|\Y_n(s)\|_{\V}}
		\|\A\Y_{n}(s)\|_{\H}^2\d s
		\nonumber\\&\geq
		\mathpzc{g}(t,\y)\frac{\mathpzc{h}'(\|\y\|_{\V})}{\|\y\|_{\V}}\|\A\y\|_{\H}^2.
	\end{align}
	Finally, in view of \eqref{wknm1.1}-\eqref{wknm1.2} and \eqref{wknm2.2}, the following limit is immediate:
	\begin{align}\label{wknmpas8}
		&\liminf_{n\to+\infty}\E\frac{1}{\eps_n}\int_{t_0}^{t_0+\eps_n}
		\mathpzc{g}(s,\Y_n(s))\frac{\mathpzc{h}'(\|\Y_n(s)\|_{\V})}{\|\Y_n(s)\|_{\V}}
		\big((\mu\A+\alpha\I)\Y_n(s),(\A+\I)\Y_n(s)\big)\d s
		\nonumber\\&\geq   
		\mathpzc{g}(t,\y)\frac{\mathpzc{h}'(\|\y\|_{\V})}{\|\y\|_{\V}}
		\big((\mu\A+\alpha\I)\y,(\A+\I)\y\big).
	\end{align}
	The second and third integrals in \eqref{convLDP}, whose integrands involve the bilinear and nonlinear operators $\mathcal{B}(\cdot)$ and $\mathcal{C}(\cdot)$ respectively, can now be addressed by invoking \eqref{wknmpas8} along with \eqref{wknm2} and \eqref{wknmdiff2}. The convergence of the exponential factor is established similarly to the argument in \eqref{convLDP3}. We refer to \cite[Theorem 6.1]{smtm1} for a more detailed account of the these arguments. Hence, by denoting the test function $\psi(t,\y)=\varphi(t,\y)+\mathpzc{h}(\|\y\|_{\V})$, and on passing $\liminf$ into \eqref{convLDP}, we finally obtain a following subsolution inequality:
	\begin{align*}
		0\geq&
		-n\mathpzc{g}(t,\y)\left[\psi_t(t,\y)+\big(-\mu\A\y-\mathcal{B}(\y)-\alpha\y-
		\beta\mathcal{C}(\y)+\f(s),\D\psi(t,\y)\big)\right]
		\nonumber\\&+
		\mathpzc{g}(t,\y)\frac{1}{2n}
		\Tr\left[\Q\big(-n\mathcal{D}\psi(t,\y)\otimes\mathcal{D}\psi(t,\y)+
		\mathcal{D}^2\psi(t,\y)\big)\right].
	\end{align*} 
	The uniqueness of $\mathpzc{U}_n$ follows from Proposition \ref{LapunLh} and Theorem \ref{comparison}.
\end{proof}

\subsection{Limiting HJB equation}\label{passnhjb}
 On passing the limit into \eqref{LDP1}, we formally obtain the following first order equation:
\begin{equation}\label{LDP2}
	\left\{
	\begin{aligned}
	&	\mathcal{U}_t-
		\frac12\|\Q^{\frac12}\D \mathcal{U}\|_{\H}^2+ (-\mu\A\y-\mathcal{B}(\y)-\alpha\y-\beta\mathcal{C}(\y)+\f(t),\D \mathcal{U}) =0, \\
	&	\mathcal{U}(T,\y)=g(\y), \ \text{ in } \ (0,T)\times\V.
	\end{aligned}
	\right. 
\end{equation}
Note that, since
\begin{align*}
	-\frac12\|\Q^{\frac12}\mathpzc{p}\|_{\H}^2=
	\inf_{\z\in\H}\left\{(\Q^{\frac12}\z,\mathpzc{p})
	+\frac12\|\z\|_{\H}^2\right\},
\end{align*}
we can interpret \eqref{LDP2} as the first order HJB equation associated with the optimal control problem of CBF equations which is given below.

Let $t\in[0,T]$ and $\y\in\H$. We consider the following controlled CBF equations described by the velocity vector field $\Y(\cdot):[t,T]\times\mathbb{T}^d\to\R^d$:
\begin{equation}\label{stapdet}
	\left\{
	\begin{aligned}
		\frac{\d\Y(s)}{\d s}&= -\mu\A\Y(s)-\mathcal{B}(\Y(s))-\alpha\Y(s)-\beta\mathcal{C}(\Y(s))+
		\f(s)+\Q^{\frac12}\u(s) ,  \  \text{ in } \ (t,T)\times\H, \\
		\Y(t)&=\y\in\H,
	\end{aligned}
	\right.
\end{equation}
where  $\u(\cdot)\in\mathrm{L}^2(t,T;\H)$ is some given control function. We want to minimize the following cost functional associated with the state equation \eqref{stapdet}:
\begin{align}\label{costLDP}
	\mathcal{J}(t,\y;\u(\cdot)):=\frac12\int_t^T \|\u(s)\|_{\H}^2\d s+
	g(\Y(T)),
\end{align}
over all controls $\u(\cdot)\in\mathrm{L}^2(t,T;\H)$. It is well-known from \cite[Theorem 4.2]{SMTM} that the controlled CBF system  \eqref{stapdet} has a unique strong solution 
\begin{align*}
	\Y\in \C([t,T];\V)\cap\mathrm{L}^2(t,T;\mathcal{D}(\A))\cap\mathrm{L}^{r+1}(t,T;\wi{\L}^{p(r+1)}),
\end{align*}  
where $p\in[2,\infty)$ for $d=2$ and $p=3$ for $d=3$, 
for any $\u(\cdot)\in\mathrm{L}^2(t,T;\H)$ and $\y\in\V$. Therefore, the cost functional \eqref{costLDP} is well-defined.
Further, since $g$ is bounded, the minimization of the cost functional \eqref{costLDP} can be restricted to the following class of functions:
\begin{align*}
	\u(\cdot)\in\mathscr{M}_t:=\left\{\u(\cdot)\in\mathrm{L}^2(t,T;\H):\int_t^T \|\u(s)\|_{\H}^2\d s\leq K=2\|g\|_{\infty}\right\}.
\end{align*}
The value function of the optimal control problem \eqref{stapdet}-\eqref{costLDP} is defined as 
\begin{align}\label{valueLDP}
	\mathcal{V}(t,\y)=\inf\limits_{\u(\cdot)\in\mathrm{L}^2(t,T;\H)}
	\mathcal{J}(t,\y;\u(\cdot))=
	\inf\limits_{\u(\cdot)\in\mathscr{M}_t}
	\mathcal{J}(t,\y;\u(\cdot)).
\end{align}
Note that one can verify that the value function $\mathcal{V}(\cdot)$ satisfies the following dynamic programming principle:
	\begin{align}\label{dynprp}
		\mathcal{V}(t,\y)=\inf\limits_{\z(\cdot)\in\mathrm{L}^2(t,T;\H)}
		\left\{\int_t^{\upeta}\|\z(s)\|_{\H}^2\d s+ \mathcal{V}(\upeta,\Y(\upeta;t,\y,\z(\cdot)))\right\},
	\end{align}
	for all $0\leq t\leq\upeta\leq T$ and $\y\in\V$.
	
Our aim is first to demonstrate that $\mathcal{V}$ is indeed the viscosity solution of \eqref{LDP2}. Once this is achieved, we can then conclude that the Laplace limit can be identified with the limit obtained from the convergence of viscosity solution of \eqref{LDP1}. Let us first recall the well-posedness results for the CBF equations. The following proposition addresses the continuity properties and energy estimates of the solution to controlled CBF equations \eqref{stapdet}. The proof of this proposition is standard. A comprehensive explanation together with the full proof can be found in \cite{SMTM,smtm2}.
\begin{proposition}
	Assume that Hypotheses \ref{trQ1} and \ref{fhyp} hold. Let $\u(\cdot)\in\mathscr{M}_t$. Then
	\begin{enumerate}
		\item For any initial data $\y\in\H$, there exists a \emph{unique weak solution} $\Y(\cdot)=\Y(\cdot;t,\y,\u(\cdot))$ of the state equation \eqref{stapdet}. Moreover, the following uniform energy estimates for $s\in[t,T]$ holds:
		\begin{align}\label{engest11}
			&\|\Y(s)\|_{\H}^2
			+\mu\int_t^{s}\|\nabla\Y(\tau)\|_{\H}^2\d\tau+
			\alpha\int_t^{s}\|\Y(\tau)\|_{\H}^2\d\tau+
			\beta\int_t^{s}\|\Y(\tau)\|_{\widetilde{\L}^{r+1}}^{r+1}\d\tau
			\nonumber\\&\leq
			 \mathpzc{C}(\alpha,
			\|\Q^{\frac12}\|_{\mathscr{L}(\H;\H)},R,\|\y\|_{\H},
			\|\u\|_{\mathrm{L}^2(t,T;\H)})
			e^{s-t},
		\end{align}
		for all $s\in[t,T]$ and
		\begin{align}\label{engest22}
			&\sup\limits_{s\in[t,T]}\|\Y(s)\|_{\H}^2
			+\mu\int_t^{T}\|\nabla\Y(\tau)\|_{\H}^2\d\tau+
			\alpha\int_t^{T}\|\Y(\tau)\|_{\H}^2\d\tau+
			\beta\int_t^{T}\|\Y(\tau)\|_{\widetilde{\L}^{r+1}}^{r+1}\d\tau
			\nonumber\\&\leq 
			 \mathpzc{C}(\alpha,\|\Q^{\frac12}\|_{\mathscr{L}(\H;\H)},R,\|\y\|_{\H},
			\|\u\|_{\mathrm{L}^2(t,T;\H)},T).
		\end{align}
		\item Furthermore, for any $\y\in\V$, there exists a \emph{unique strong solution} $\Y(\cdot)=\Y(\cdot;t,\y,\u(\cdot))$ of the state equation \eqref{stapdet} satisfying the following uniform energy estimates:
		\begin{align}\label{engest1}
			&\|\nabla\Y(s)\|_{\H}^2
			+\mu\int_t^{s}\|\A\Y(\tau)\|_{\H}^2\d\tau+
			\alpha\int_t^{s}\|\nabla\Y(\tau)\|_{\H}^2\d\tau
			\nonumber\\&\quad+
			\beta\int_t^{s}
			\||\Y(\tau)|^{\frac{r-1}{2}}\nabla\Y(\tau)\|_{\H}^{2}\d\tau
			\nonumber\\&\leq \big(\|\nabla\y\|_{\H}^2+ \mathpzc{C}(\mu,\beta,
			\|\A^{\frac12}\Q^{\frac12}\|_{\mathscr{L}(\H;\H)},R,
			\|\u\|_{\mathrm{L}^2(t,T;\H)})(s-t)\big)
			e^{\varrho(s-t)},
		\end{align}
		for all $s\in[t,T]$, and
		\begin{align}\label{engest2}
			&\sup\limits_{s\in[t,T]}\|\nabla\Y(s)\|_{\H}^2+ \int_t^{T}\|\A\Y(\tau)\|_{\H}^2\d\tau
			+\int_t^{T}\||\Y(\tau)|^{\frac{r-1}{2}}\nabla\Y(\tau)\|_{\H}^{2} \d\tau
			\nonumber\\&\leq  \mathpzc{C}(T,\mu,\beta,\|\y\|_{\V},
			\|\A^{\frac12}\Q^{\frac12}\|_{\mathscr{L}(\H;\H)},R,
			\|\u\|_{\mathrm{L}^2(t,T;\H)}).
		\end{align}
		\item  For each $\y_1,\y_2\in\V$, there exists a constant $\mathpzc{C}$ independent of $t,\y_1,\y_2$ and $\u(\cdot)$ such that 
		\begin{align}\label{ctsdep0d}
			&\|\Y_1(s)-\Y_2(s)\|_{\H}^2+\int_t^s \|\nabla(\Y_1-\Y_2)(\tau)\|_{\H}^2\d\tau+\int_t^s \|\Y_1(\tau)-\Y_2(\tau)\|_{\wi\L^{r+1}}^{r+1}\d\tau\nonumber\\&\leq
			\|\y_1-\y_2\|_{\H}^2 e^{\mathpzc{C}(s-t)},
			\ \text{ for all } \ s\in[t,T],
		\end{align} 
		where $\Y_1(\cdot)=\Y_1(\cdot;t,\y_1,\u(\cdot))$ and $\Y_2(\cdot)=\Y_2(\cdot;t,\y_2,\u(\cdot))$ are two strong solutions of \eqref{stapdet}, with $\Y_1(t)=\y_1$ and $\Y_2(t)=\y_2$.  
		
		\item  For every $\y\in\V$, we have
		\begin{align}\label{ctsdep0.1d}
			\|\Y(s)-\y\|_{\H}^2\leq  \mathpzc{C}\left(\mu,\beta,T,R,\|\y\|_{\V},\|\u\|_{\mathrm{L}^2(t,T;\H)},
			\Tr(\Q)\right)(s-t), 
		\end{align} 
		for all $s\in[t,T]$.
		
		\item  For every $\y\in\V$ and $\mathpzc{M}>0$, there exists a modulus $\omega$ such that if 
		$\|\u\|_{\mathrm{L}^2(t,T;\H)}\leq\mathpzc{M}$, then
		\begin{align*}
			\|\Y(s)-\y\|_{\V}^2\leq\omega_{\y,\mathpzc{M}}(s-t), 
		\end{align*}
		for all  $s\in[t,T]$.
	\end{enumerate}
\end{proposition}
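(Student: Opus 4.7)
The proposition collects five standard facts about the controlled CBF system \eqref{stapdet}. Since the structure of \eqref{stapdet} is nearly identical to the abstract SCBF equations \eqref{stap} (with the stochastic term replaced by a deterministic forcing $\mathcal{P}\Q^{\frac12}\u$), my plan is to mirror the strategy used for Propositions \ref{weLLp} and \ref{cts-dep-soln}, but in a deterministic, pathwise setting. In particular, I would follow \cite{SMTM, smtm2}, where the same estimates are carried out in full detail.

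For parts (1)--(2), I would construct finite-dimensional Galerkin approximations $\Y_N$ in $\H_N = \mathrm{span}\{\e_1,\ldots,\e_N\}$, obtain a priori bounds in the norms indicated in \eqref{engest11}--\eqref{engest2}, and pass to the limit using the monotonicity property of $\mu\A + \mathcal{B}(\cdot) + \alpha\I + \beta\mathcal{C}(\cdot)$ from Lemma \ref{monoest} together with a deterministic Minty--Browder argument. The energy bound \eqref{engest11} comes from testing the equation against $\Y$ and using Cauchy--Schwarz together with Young's inequality to absorb $(\Q^{\frac12}\u,\Y)\leq \tfrac{\alpha}{2}\|\Y\|_{\H}^2 + \tfrac{1}{2\alpha}\|\Q^{\frac12}\|_{\mathscr{L}(\H;\H)}^2\|\u\|_{\H}^2$, followed by Gr\"onwall. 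Estimate \eqref{engest22} then follows by integrating over $[t,T]$ and invoking the bound $\|\u\|_{\mathrm{L}^2(t,T;\H)}^2\leq K$ inherent in $\mathscr{M}_t$. For the strong-solution bounds \eqref{engest1}--\eqref{engest2}, I would test against $\A\Y$, invoke the torus identity \eqref{torusequ} to keep $(\mathcal{C}(\Y),\A\Y)$ with the correct sign, and invoke \eqref{syymB3} from Remark \ref{BLrem} to absorb $(\mathcal{B}(\Y),\A\Y)$ into $\mu\|\A\Y\|_{\H}^2$ and $\beta\||\Y|^{\frac{r-1}{2}}\nabla\Y\|_{\H}^2$, with the residual $\varrho\|\nabla\Y\|_{\H}^2$ handled by Gr\"onwall. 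The control contribution $(\A^{\frac12}\Q^{\frac12}\u, \A^{\frac12}\Y)$ is bounded by Cauchy--Schwarz via Hypothesis \ref{trQ1}.

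For part (3), I would set $\Z = \Y_1 - \Y_2$ and write the energy equation for $\|\Z\|_{\H}^2$. Crucially, since both states share the same control $\u$, the forcing term $\mathcal{P}\Q^{\frac12}\u$ cancels, and the monotonicity estimate \eqref{monoest1} (valid for $r>3$ and for $r=3$ with $2\beta\mu\geq 1$) yields
\begin{align*}
\tfrac{1}{2}\tfrac{\d}{\d s}\|\Z\|_{\H}^2 + \tfrac{\mu}{2}\|\nabla\Z\|_{\H}^2 + \tfrac{1}{2^{r}}\|\Z\|_{\wi{\L}^{r+1}}^{r+1} \leq \varrho\|\Z\|_{\H}^2,
\end{align*}
after which \eqref{ctsdep0d} is immediate from Gr\"onwall's inequality. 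Part (4) proceeds exactly as in the proof of Proposition \ref{cts-dep-soln}(ii): write $\Z(s) = \Y(s) - \y$ in integral form, test against $\Z$, use Hypothesis \ref{fhyp}, Cauchy--Schwarz on the nonlinear terms as in \eqref{ctsdep6}--\eqref{ctsdep7} (with the stochastic correction $\Tr(\Q)(s-t)$ replaced by $\|\u\|_{\mathrm{L}^2(t,T;\H)}^2$), and invoke the uniform bounds \eqref{engest1}--\eqref{engest2} in $\V$.

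The main obstacle is part (5), the $\V$-modulus of continuity. A naive energy argument gives only $\|\Y(s)-\y\|_{\H}^2 \leq C(s-t)$, not the $\V$-bound. My plan is to combine two ingredients: (a) weak continuity $\Y(s) \rightharpoonup \y$ in $\V$ as $s\downarrow t$, which follows from the uniform $\V$-bound \eqref{engest2} together with $\H$-continuity from \eqref{ctsdep0.1d} and a Banach--Alaoglu/uniqueness-of-limits argument; and (b) norm convergence $\|\Y(s)\|_{\V}\to\|\y\|_{\V}$, for which I would test the equation against $\A\Y$ and carefully integrate the resulting energy identity from $t$ to $s$, controlling each term on the right-hand side by a quantity depending only on $\mathpzc{M}$ and $\|\y\|_{\V}$, and uniformly small as $s\downarrow t$. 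Combining weak convergence and norm convergence yields strong convergence in $\V$ by the Radon--Riesz property, and the uniformity of the decay rate in $\u\in\mathscr{M}_t$ with $\|\u\|_{\mathrm{L}^2(t,T;\H)}\leq \mathpzc{M}$ defines the modulus $\omega_{\y,\mathpzc{M}}$. This is the only step where the deterministic nature of \eqref{stapdet} actually simplifies the analysis relative to Proposition \ref{cts-dep-soln}(iii), since one does not need to handle stochastic fluctuations.
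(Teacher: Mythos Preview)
Your proposal is correct and matches the paper's treatment: the paper does not give a proof but simply states that ``the proof of this proposition is standard'' and refers to \cite{SMTM,smtm2}, and your sketch follows exactly the standard route used there (Galerkin plus Minty--Browder for existence, testing against $\Y$ and $\A\Y$ for the energy bounds, Lemma~\ref{monoest} for part~(3), the argument of Proposition~\ref{cts-dep-soln}(ii) for part~(4), and weak-plus-norm convergence via Radon--Riesz for part~(5)). The only point worth flagging is that in part~(5) the Radon--Riesz argument yields strong convergence in $\V$ along each sequence, but to extract a genuine \emph{modulus} $\omega_{\y,\mathpzc{M}}$ uniform over all $\u$ with $\|\u\|_{\mathrm{L}^2}\leq\mathpzc{M}$ one typically argues by contradiction and compactness (if no modulus existed, pick $s_n\downarrow t$ and $\u_n\in\mathscr{M}_t$ violating the bound, then use the uniform $\V$-bound \eqref{engest2} and the energy inequality \eqref{engest1} to force $\|\Y_n(s_n)\|_{\V}\to\|\y\|_{\V}$ along a subsequence); you gesture at this with ``uniformity of the decay rate,'' and making it explicit is routine.
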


\begin{proposition}\label{comparisondet}
	Let $g\in\mathrm{Lip}_b(\H)$ and $\u(\cdot)\in\mathscr{M}_t$. Moreover, assume that Hypotheses \ref{trQ1} and \ref{fhyp} are satisfied. Then 
	\begin{itemize}
		\item[a.)] the value function $\mathcal{V}$ defined in \eqref{valueLDP} is bounded;
		\item[b.)] there exists a constant $\mathpzc{D}_1$ and, for every $R>0$, a constant $\mathpzc{D}_2=\mathpzc{D}_2(R)$ such that 
		\begin{align}\label{valuedff5}
			|\mathcal{V}(t_1,\y_1)-\mathcal{V}(t_2,\y_2)|\leq
			\mathpzc{D}_1\|\y_1-\y_2\|_{\H}+\mathpzc{D}_2
			(\max\{\|\y_1\|_{\V},\|\y_2\|_{\V}\})|t_1-t_2|^{\frac12},
		\end{align}
		for all $\y_1,\y_2\in\V$ and all  $t_1,t_2\in[0,T]$.
	\end{itemize} 
\end{proposition}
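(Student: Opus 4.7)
The plan is to establish $(a)$ directly from the boundedness of $g$, and to split $(b)$ into a spatial Lipschitz estimate and a temporal $\tfrac12$-H\"older estimate, which I will then combine by the triangle inequality.

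For part $(a)$, the lower bound is immediate: since $\frac12\int_t^T\|\u(s)\|_{\H}^2\d s\geq 0$ and $g$ is bounded, every admissible cost satisfies $\mathcal{J}(t,\y;\u(\cdot))\geq -\|g\|_{\infty}$. For the upper bound, I take the trivial control $\u\equiv \boldsymbol{0}$, which is admissible, yielding $\mathcal{V}(t,\y)\leq g(\Y(T;t,\y,\boldsymbol{0}))\leq \|g\|_{\infty}$. Together, $\|\mathcal{V}\|_{\infty}\leq\|g\|_{\infty}$.

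For the spatial estimate, I fix $\upvarepsilon>0$ and pick $\u^{*}\in\mathscr{M}_{t}$ that is $\upvarepsilon$-optimal for $\mathcal{V}(t,\y_2)$. Denoting $\Y_1(\cdot)=\Y(\cdot;t,\y_1,\u^{*})$ and $\Y_2(\cdot)=\Y(\cdot;t,\y_2,\u^{*})$, the running cost is identical, so
\begin{align*}
\mathcal{V}(t,\y_1)-\mathcal{V}(t,\y_2)&\leq \mathcal{J}(t,\y_1;\u^{*})-\mathcal{J}(t,\y_2;\u^{*})+\upvarepsilon\\
&= g(\Y_1(T))-g(\Y_2(T))+\upvarepsilon\\
&\leq \mathrm{Lip}(g)\|\Y_1(T)-\Y_2(T)\|_{\H}+\upvarepsilon.
\end{align*}
Invoking \eqref{ctsdep0d}, $\|\Y_1(T)-\Y_2(T)\|_{\H}\leq e^{\mathpzc{C}T/2}\|\y_1-\y_2\|_{\H}$. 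Letting $\upvarepsilon\to 0$ and swapping the roles of $\y_1,\y_2$ yields the spatial bound with $\mathpzc{D}_1:=\mathrm{Lip}(g)e^{\mathpzc{C}T/2}$, independent of $\y_1,\y_2,t$.

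For the temporal estimate I may assume $t_1<t_2$ and work at a fixed $\y\in\V$. The key tool is the dynamic programming principle \eqref{dynprp}. Choosing $\u\equiv \boldsymbol{0}$ on $[t_1,t_2]$ and extending by any admissible control thereafter, DPP gives $\mathcal{V}(t_1,\y)\leq \mathcal{V}(t_2,\Y(t_2;t_1,\y,\boldsymbol{0}))$, so by the spatial estimate and \eqref{ctsdep0.1d},
\begin{align*}
\mathcal{V}(t_1,\y)-\mathcal{V}(t_2,\y)\leq \mathpzc{D}_1\|\Y(t_2;t_1,\y,\boldsymbol{0})-\y\|_{\H}\leq \mathpzc{D}_1\sqrt{\mathpzc{C}(\mu,\beta,T,R,\|\y\|_{\V},\Tr(\Q))}\,|t_2-t_1|^{\frac12}.
\end{align*}
For the reverse inequality, I pick an $\upvarepsilon$-optimal $\u^{*}\in\mathscr{M}_{t_1}$ for $\mathcal{V}(t_1,\y)$. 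Since $\u^{*}\in\mathscr{M}_{t_1}$, the bound $\|\u^{*}\|^{2}_{\mathrm{L}^2(t_1,T;\H)}\leq 2\|g\|_{\infty}$ holds uniformly, so \eqref{ctsdep0.1d} applies with a constant depending only on $\|\y\|_{\V}$, $\mu,\beta,T,R,\Tr(\Q)$ (and $\|g\|_{\infty}$). Applying DPP with $\u^{*}$ on $[t_1,t_2]$ and then the spatial estimate,
\begin{align*}
\mathcal{V}(t_1,\y)+\upvarepsilon &\geq \tfrac12\int_{t_1}^{t_2}\|\u^{*}(s)\|_{\H}^{2}\d s+\mathcal{V}(t_2,\Y(t_2;t_1,\y,\u^{*}))\\
&\geq \mathcal{V}(t_2,\y)-\mathpzc{D}_1\|\Y(t_2;t_1,\y,\u^{*})-\y\|_{\H}.
\end{align*}
Sending $\upvarepsilon\to 0$ and using \eqref{ctsdep0.1d} again yields the reverse bound, and hence the temporal H\"older estimate with a constant $\mathpzc{D}_2(\|\y\|_{\V})$ that is monotone in $\|\y\|_{\V}$. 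Combining the two estimates via the triangle inequality and replacing $\|\y_{2}\|_{\V}$ by $\max\{\|\y_1\|_{\V},\|\y_2\|_{\V}\}$ gives \eqref{valuedff5}.

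The main subtlety is ensuring that the time-continuity constant $\mathpzc{D}_2$ does not blow up with the choice of near-optimal controls; this is exactly why $g$ bounded allows one to restrict to the class $\mathscr{M}_{t}$, providing a uniform $\mathrm{L}^2$ bound on controls and thus a uniform constant in \eqref{ctsdep0.1d}. A minor technical point is the verification of \eqref{dynprp}, which follows from the uniqueness of \eqref{stapdet} and a standard concatenation-of-controls argument, and I would state it as a separate lemma if needed.
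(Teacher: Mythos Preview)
Your proof is correct. The spatial estimate and part $(a)$ match the paper's argument essentially line for line. For the temporal estimate, however, you take a different route: you invoke the dynamic programming principle \eqref{dynprp} to reduce the time increment to a spatial increment at the intermediate time $t_2$, and then recycle the spatial Lipschitz bound you already proved together with \eqref{ctsdep0.1d}. The paper instead works directly at the terminal time $T$: taking a (near-)optimal control $\u$ for $\mathcal{V}(t_1,\y)$ and the extension $\widetilde{\u}$ (zero on $[t_1,t_2)$, then $\u$) as a competitor for $\mathcal{V}(t_2,\y)$, it compares $g(\Y(T;t_1,\y,\u))$ with $g(\Y(T;t_2,\y,\u))$ via the flow property and \eqref{ctsdep0d}--\eqref{ctsdep0.1d}. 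Your approach is cleaner in that it reuses the spatial estimate rather than re-running continuous dependence at time $T$; the paper's approach avoids explicitly invoking DPP (though DPP is stated just above the proposition anyway). Your remark that the uniform $\mathrm{L}^2$-bound from $\mathscr{M}_t$ is what keeps the constant in \eqref{ctsdep0.1d} under control is exactly the point, and is used implicitly in the paper as well.
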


\begin{proof}
	Since $g\in\mathrm{Lip}_b(\H)$ and control $\u(\cdot)\in\mathscr{M}_t$, therefore, the boundedness of the value function $\mathcal{V}(\cdot)$ is immediate.
	From \eqref{costLDP}-\eqref{valueLDP} and the fact that $g\in\mathrm{Lip}_b(\H)$, we find
	\begin{align}\label{valuedff7}
		|\mathcal{V}(t,\y_1)-\mathcal{V}(t,\y_2)|&=
		\left|\inf\limits_{\u(\cdot)\in\mathscr{M}_t}
		\mathcal{J}(t,\y_1;\u(\cdot))-
		\inf\limits_{\u(\cdot)\in\mathscr{M}_t}
		\mathcal{J}(t,\y_2;\u(\cdot))\right|
		\nonumber\\&\leq
		\sup\limits_{\u(\cdot)\in\mathscr{M}_t}
		\big|\mathcal{J}(t,\y_1;\u(\cdot))-
		\mathcal{J}(t,\y_2;\u(\cdot))\big|
		\nonumber\\&=
		\sup\limits_{\u(\cdot)\in\mathscr{M}_t}
		|g(\Y(T;t,\y_1,\u(\cdot)))-g(\Y(T;t_2,\y_2,\u(\cdot)))|
		\nonumber\\&\leq
		\|g\|_{\text{Lip}}\|\Y(T;t,\y_1,\u(\cdot))-\Y(T;t_2,\y_2,\u(\cdot))\|_{\H}
		\nonumber\\&\leq
		\mathpzc{C}\|g\|_{\text{Lip}}\|\y_1-\y_2\|_{\H},
	\end{align}
	where in the last step we have used \eqref{ctsdep0d}. 
	Now, let $t_1,t_2\in[0,T]$, with $t_1\leq t_2$. For any optimal control $\u(\cdot)\in\mathscr{M}_{t_1}$, we obtain
		\begin{align}\label{valuedff1}
			\mathcal{V}(t_1,\y)\leq\int_{t_1}^T\|\u(s)\|_{\H}^2\d s+
			g(\Y(T;t_1,\y,\u(\cdot))).
		\end{align}
		We define
		\begin{equation*}
			\wi\u(t):=
			\left\{
			\begin{aligned}
				\boldsymbol{0}, \ \text{ if } \ t_1\leq t<t_2,\\
				\u(t),  \ \text{ if } \ t_2\leq t<T.
			\end{aligned}
			\right.
		\end{equation*}
		Then, we find
		\begin{align*}
			\frac12\int_{t_2}^T \|\wi\u(s)\|_{\H}^2\d s=
			\frac12\int_{t_1}^T \|\wi\u(s)\|_{\H}^2\d s\leq
			\frac12\int_{t_1}^T \|\u(s)\|_{\H}^2\d s,
		\end{align*}
		which implies that $\wi\u(\cdot)\in\mathscr{M}_{t_2}$ and we have
		\begin{align}\label{valuedff2}
			\mathcal{V}(t_2,\y)\leq\int_{t_2}^T\|\wi\u(s)\|_{\H}^2\d s+
			g(\Y(T;t_2,\y,\wi\u(\cdot)))\leq
			\int_{t_2}^T\|\u(s)\|_{\H}^2\d s+
			g(\Y(T;t_2,\y,\u(\cdot))).
		\end{align}
		Therefore, from \eqref{valuedff1}-\eqref{valuedff2} and \eqref{ctsdep0.1d}, we estimate
		\begin{align}\label{valuedff3}
			\mathcal{V}(t_2,\y)-\mathcal{V}(t_1,\y)
			&\leq-\int_{t_2}^{t_1}\|\u(s)\|_{\H}^2\d s +g(\Y(T;t_2,\y,\u(\cdot)))-g(\Y(T;t_1,\y,\u(\cdot)))
			\nonumber\\&\leq
			|g(\Y(T;t_2,\y,\u(\cdot)))-g(\Y(T;t_1,\y,\u(\cdot)))|
			\nonumber\\&\leq
			\|g\|_{\text{Lip}}\|\Y(T;t_2,\y,\u(\cdot))-\Y(T;t_1,\y,\u(\cdot))\|_{\H}
			\nonumber\\&\leq
			\mathpzc{C}\|g\|_{\text{Lip}}|t_2-t_1|^{\frac12}.
		\end{align}
		On changing the role of $t_1$ and $t_2$, we get
		\begin{align}\label{valuedff4}
			\mathcal{V}(t_1,\y)-\mathcal{V}(t_2,\y)
			\leq
			\mathpzc{C}\|g\|_{\text{Lip}}|t_1-t_2|^{\frac12}.
		\end{align} 
		On combining \eqref{valuedff3}-\eqref{valuedff4}, we arrive at
		\begin{align}\label{valuedff6}
			|\mathcal{V}(t_1,\y)-\mathcal{V}(t_2,\y)|
			\leq
			\mathpzc{C}\|g\|_{\text{Lip}}|t_1-t_2|^{\frac12}.
	\end{align}
	Finally, combining \eqref{valuedff7} and \eqref{valuedff6}, one can obtain \eqref{valuedff5}.
\end{proof}

\begin{proposition}\label{Lapvscso1}
	The value function $\mathcal{V}$ defined in \eqref{valueLDP} is the unique bounded viscosity solution of \eqref{LDP2} satisfying \eqref{bdd12}. 
\end{proposition}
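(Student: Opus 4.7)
My plan is to establish four things in turn: boundedness, the terminal condition, the subsolution and supersolution properties, and uniqueness. Boundedness follows immediately from Proposition \ref{comparisondet}(a), and since $\mathcal{V}(T,\y)=g(\y)$ by definition of the cost \eqref{costLDP}, the continuity estimate \eqref{valuedff5} combined with the Lipschitz property of $g$ yields condition \eqref{bdd12}. Uniqueness will then follow from Theorem \ref{comparison} applied with $n=+\infty$: the required Lipschitz bound \eqref{uvL} is precisely \eqref{valuedff5}, and both comparison hypotheses hold.

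For the subsolution property, I take a test function $\psi=\varphi+\mathfrak{h}(\|\cdot\|_{\V})$ and assume that $\mathcal{V}-\psi$ has a strict global maximum at $(t,\y)\in(0,T)\times\V$. For any constant control $\z_0\in\H$, the DPP \eqref{dynprp} together with the local maximum property gives
\begin{align*}
\psi(t+\eps,\Y^{\z_0}(t+\eps))-\psi(t,\y)\geq -\tfrac{1}{2}\!\int_t^{t+\eps}\!\!\|\z_0\|_{\H}^2\,ds,
\end{align*}
where $\Y^{\z_0}(\cdot)$ solves \eqref{stapdet} with the constant control $\z_0$. Dividing by $\eps$, applying the chain rule to $\psi(\cdot,\Y^{\z_0}(\cdot))$, and then taking the infimum over $\z_0\in\H$ (recognising that $\inf_{\z_0\in\H}\{(\Q^{1/2}\z_0,\D\psi)+\tfrac{1}{2}\|\z_0\|_{\H}^2\}=-\tfrac{1}{2}\|\Q^{1/2}\D\psi\|_{\H}^2$) yields the subsolution inequality in the limit $\eps\to 0$. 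The crucial preliminary step is to show $\y\in\V_2$; for this I use the particular choice $\z_0=\boldsymbol{0}$ and exploit the penalisation from $\mathfrak{h}$, exactly as in Proposition \ref{Lapvscso}. The fact that $\mathfrak{h}'(\theta)/\theta\geq\varkappa>0$ combined with the identity \eqref{torusequ} and estimate \eqref{syymB3} produces, after averaging and using \eqref{ssee1}, a uniform-in-$\eps$ bound on $\frac{1}{\eps}\int_t^{t+\eps}\|(\A+\I)\Y^0(\tau)\|_{\H}^2\,d\tau$; the weak compactness of $\Y^0(\tau_m)$ in $\mathcal{D}(\A)$ along a suitable sequence $\tau_m\to t$, combined with the strong convergence $\Y^0(\tau_m)\to\y$ in $\H$ from \eqref{ctsdep0.1d}, forces $\y\in\V_2$ by uniqueness of weak limits.

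For the supersolution property, I assume $\mathcal{V}+\psi$ has a local minimum at $(t,\y)$ and use the reverse direction of the DPP: given $\eps>0$, choose an $\eps^2$-optimal control $\z^{\eps}(\cdot)\in\mathscr{M}_t$ such that
\begin{align*}
\mathcal{V}(t,\y)+\eps^2\geq \tfrac{1}{2}\!\int_t^{t+\eps}\!\!\|\z^{\eps}(s)\|_{\H}^2\,ds+\mathcal{V}(t+\eps,\Y^{\z^{\eps}}(t+\eps)).
\end{align*}
The a priori bound $\int_t^T\|\z^{\eps}\|_{\H}^2\,ds\leq 2\|g\|_{\infty}$ coming from the restriction to $\mathscr{M}_t$ gives the compactness needed to extract weak limits along $\eps\to 0$. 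Repeating the chain-rule calculation along $\Y^{\z^\eps}(\cdot)$, using the minimum property in the opposite direction, and then passing to the limit gives the supersolution inequality. The $\V_2$-regularity of the minimum point is derived analogously.

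The main obstacle will be controlling the nonlinear terms $(\mathcal{B}(\y),\D\psi)$ and $(\mathcal{C}(\y),\D\psi)$ when passing to the limit $\eps\to 0$, and, for the $\V_2$-regularity argument, simultaneously handling the bilinear term $(\mathcal{B}(\Y^0),(\A+\I)\Y^0)$ and the nonlinear term $(\mathcal{C}(\Y^0),\A\Y^0)$. The strategy will mirror the treatment in Proposition \ref{Lapvscso}: combine the weak convergence $\Y^{\z}(\cdot)\rightharpoonup\y$ in $\V_2$ with strong convergence in $\H$ from \eqref{ctsdep0.1d}--\eqref{ctsdep0.2}, use the local Lipschitz estimates of Lemma \ref{monopropC} for $\mathcal{C}(\cdot)$, the identity \eqref{torusequ} to absorb $(\mathcal{C}(\y),\A\y)$ into the damping, and the bilinear estimates \eqref{syymB1}--\eqref{syymB3} (valid for $r>3$ or $r=3$ with $2\beta\mu\geq 1$) to control $(\mathcal{B}(\y),(\A+\I)\y)$; the absence of the trace smoothing term (unlike in the second-order case) makes the $\V_2$-regularity step more delicate and genuinely dependent on the strict positivity of $\mathfrak{h}'$. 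Once both viscosity inequalities are in place, Theorem \ref{comparison} with $n=+\infty$ concludes the proof.
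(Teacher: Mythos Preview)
Your proposal is correct and follows essentially the same route as the paper: both use the dynamic programming principle \eqref{dynprp} together with the chain rule along trajectories of \eqref{stapdet}, extract the $\V_2$-regularity of the extremum point from the coercivity provided by $\mathfrak{h}'(\theta)/\theta\geq\varkappa>0$ combined with \eqref{torusequ} and \eqref{syymB3}, and conclude uniqueness via Theorem \ref{comparison} with $n=+\infty$ and the Lipschitz estimate \eqref{valuedff5}. The only minor discrepancies are that the paper chooses to detail the supersolution side (with an $\eps^2$-optimal control) rather than the subsolution side, and that your reference to \eqref{ssee1} should instead point to the deterministic energy estimates \eqref{engest1}--\eqref{engest2}.
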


\begin{proof}
	The boundedness of the value function $\mathcal{V}$ follows from part (a) of Proposition \ref{comparisondet}. It remains to prove that $\mathcal{V}$ is a viscosity solution to the HJB equation \eqref{LDP2}. We will focus on sketching the proof of viscosity supersolution only. The steps of the proof of viscosity subsolution follow analogously. Let $\psi(t,\y)=\varphi(t,\y)-\mathpzc{h}(\|\y\|_{\V})$ be a test function and suppose that $\mathcal{V}-\psi$ attains a local minimum at $(t,\y)$. From \eqref{dynprp}, for every $0<\eps<T-t$, there exists a control $\u_{\eps}(\cdot)$ such that 
	\begin{align*}
		\mathcal{V}(t,\y)+\eps^2>\int_{t}^{t+\eps}
		\|\u_\eps(s)\|_{\H}^2\d s+ \mathcal{V}(t+\eps,\Y_\eps(t+\eps)),
	\end{align*}
	where $\Y_\eps(\cdot)=\Y(\cdot;t,\y,\u_\eps(\cdot))$. Since, $\mathcal{V}-\psi$ has a local minimum at $(t,\y)$, we write from above strict inequality
	\begin{align}\label{ULDP1}
		\eps&\geq\frac{1}{\eps}(\underbrace{\varphi(t+\eps,\Y_\eps(t+\eps))-
			\varphi(t,\y)-\mathpzc{h}(\|\Y_\eps(t+\eps)\|_{\V})
			+\mathpzc{h}(\|\y\|_{\V})}_{\text{use \cite[Proposition 5.5, Chapter 2]{XJJMY}}})+
		\frac{1}{\eps}\int_{t}^{t+\eps}\|\u_\eps(s)\|_{\H}^2\d s
		\nonumber\\&\geq
		\frac{1}{\eps}\int_{t_0}^{t_0+\eps}\bigg(\varphi_t(s,\Y_\eps(s))-\big(\mu\A\Y_\eps(s)+\mathcal{B}(\Y_\eps(s))+\alpha\Y_\eps(s)+
		\beta\mathcal{C}(\Y_\eps(s)),\D\varphi(s,\Y_\eps(s))\big)
		\nonumber\\&\qquad+
		\big(\f(s),\D\varphi(s,\Y_\eps(s))\big)
		+\big(\Q^{\frac12}\u_\eps(s),\D\varphi(s,\Y_\eps(s))\big)\bigg)\d s
		\nonumber\\&\qquad+\frac{1}{\eps}\int_{t_0}^{t_0+\eps}
		\underbrace{\frac{\mathpzc{h}'(\|\Y_\eps(s)\|_{\V})}
			{\|\Y_\eps(s)\|_{\V}}}_{\geq\varkappa>0}
		\bigg(\big(\mu\A\Y_\eps(s)+\alpha\Y_\eps(s),(\A+\I)\Y_\eps(s)\big)+
		(\mathcal{B}(\Y_\eps(s)),(\A+\I)\Y_\eps(s))
		\nonumber\\&\qquad+
		\beta(\mathcal{C}(\Y_\eps(s)),(\A+\I)\Y_\eps(s))-\big(\f(s),
		(\A+\I)\Y_\eps(s)\big)-\big(\Q^{\frac12}\u_\eps(s),
		(\A+\I)\Y_\eps(s)\big)\bigg)\d s
		\nonumber\\&\qquad+  
		\frac{1}{\eps}\int_{t}^{t+\eps}\|\u_\eps(s)\|_{\H}^2\d s.
	\end{align}
	From the assumptions on $\varphi$ (see Definition \ref{testD} of test function ), we deduce 
	\begin{align}\label{unsol1}
		\|\D\varphi(\cdot,\Y_\eps)\|_{\H}\leq \mathpzc{C}(1+\|\Y_\eps\|_{\H}), \ \ 
		|\varphi_t(\cdot,\Y_\eps)|\leq \mathpzc{C}(1+\|\Y_\eps\|_{\H}).
	\end{align} 
	Applying the Cauchy-Schwarz inequality together with \eqref{unsol1}, and using $\|\Q^{\frac12}\|_{\mathscr{L}(\H)}^2\leq\Tr(\Q)$, we obtain the following calculation:
	\begin{align}\label{ULDP}
		&\big(\u_\eps(s),\Q^{\frac12}\D\varphi(s,\Y_\eps(s))\big)+
		\big(\u_\eps(s),\Q^{\frac12}(\A+\I)\Y_\eps(s)\big)
		\nonumber\\&\geq
		-\frac{1}{2}\|\u_\eps(s)\|_{\H}^2-\Tr(\Q)\|(\A+\I)\Y_\eps(s)\|_{\H}^2
		-\mathpzc{C}(1+\|\Y_\eps(s)\|_{\H}^2).
	\end{align}
	Furthermore, by employing the Cauchy Schwarz and Young's inequalities, together with \eqref{unsol1}, Remark \ref{rg3L3r}, the identity \eqref{torusequ} and Hypothesis \ref{fhyp}, we estimate following:
	\begin{align}
		\mu|(\A\Y_\eps,\D\varphi(\cdot,\Y_\eps))|&\leq \frac{\mu\varkappa}{4}\|\A\Y_\eps\|_{\H}^2+\mathpzc{C}
		(1+\|\Y_\eps\|_{\H}^2),
		\label{vdp5}\\
		|(\mathcal{B}(\Y_\eps),(\A+\I)\Y_\eps)|
		&\leq\frac{\mu}{2}
		\|\A\Y_\eps\|_{\H}^2+\frac{\beta}{4}\||\Y_\eps|^{\frac{r-1}{2}}
		\nabla\Y_\eps\|_{\H}^2+\varrho_3\|\nabla\Y_\eps\|_{\H}^2,\label{vdp5.1}\\
		|(\mathcal{B}(\Y_\eps),\D\varphi(\cdot,\Y_\eps))|&\leq \mathpzc{C}(1+\|\Y_\eps\|_{\H}^2)
		+\frac{\beta\varkappa}{4}\||\Y_\eps|^{\frac{r-1}{2}}\nabla\Y_\eps\|_{\H}^2+\varrho_4\|\nabla\Y_\eps\|_{\H}^2,\label{vdp5.2}\\
		|(\mathcal{C}(\Y_\eps),\D\varphi(\cdot,\Y_\eps))|&\leq
		\frac{\varkappa}{4}\||\Y_\eps|^{\frac{r-1}{2}}
		\nabla\Y_\eps\|_{\H}^2+
		\mathpzc{C}(1+\|\Y_\eps\|_{\H})^{r+1}+
		\varkappa\|\Y_\eps\|_{\wi\L^{r+1}}^{r+1}.
		\label{vdp55}\\
		(\mathcal{C}(\Y_\eps),(\A+\I)\Y_\eps)&\geq
		\||\Y_\eps(s)|^{\frac{r-1}{2}}\nabla\Y_\eps(s)\|_{\H}^2+
		\|\Y_\eps(s)\|_{\wi\L^{r+1}}^{r+1},\label{vdp5.3}\\
		|(\f(\cdot),\D\varphi(\cdot,\Y_\eps))|&\leq 
		\mathpzc{C}\big(1+\|\Y_\eps\|_{\H}\big), \
		|(\f(\cdot),(\A+\I)\Y_\eps)|\leq \mathpzc{C}\|\Y_\eps\|_{\V},\label{vdp6LDP}
	\end{align}
	where 
	$\varrho_4:=\frac{r-3}{2\varkappa(r-1)}\left[\frac{4}{\beta\varkappa (r-1)}\right]^{\frac{2}{r-3}}$ and  $\varrho_3:=\frac{r-3}{2\mu(r-1)}\left[\frac{4}{\beta
		\mu(r-1)}\right]^{\frac{2}{r-3}}$. By combining \eqref{ULDP}–\eqref{vdp6LDP}, substituting the result into \eqref{ULDP1}, and applying the energy estimate \eqref{engest1}, we obtain the following bound:
	\begin{align*}
		\frac{1}{\eps}\int_{t}^{t+\eps}\|\A\Y_\eps(s)\|_{\H}^2\d s\leq\mathpzc{C}=\mathpzc{C}(\mu,\alpha,R,\beta,\|\y\|_{\V},\Tr(\Q),
		\Tr(\Q_1)),
	\end{align*}
	where we have used the boundedness of the control term $\u_\eps(\cdot)$, that is, the boundedness of the term $\|\u_\eps\|_{\mathrm{L}^2(t,t+\eps;\H)}$. Thus, there exist sequences $\{\eps_n\}_{n\in\N}$ with $\eps_n\to0$ and $\{t_n\}_{n\in\N}\subset(t,t+\eps_n)$ such that
	\begin{align*}
		\|\Y_{\eps_n}(s)\|_{\V_2}^2\leq\mathpzc{C}.
	\end{align*}
	Then, by the Banach-Alaoglu theorem, we have (along a subsequence, still denoted by the same) $\Y_{\eps_n}(t_n)\rightharpoonup\bar{\y}$ in $\V_2$  as $n\to+\infty.$ On the other hand, from  \eqref{ctsdep0.1d}, we have $\Y_{\eps_n}(t_n)\to\y$ in $\H$ as $n\to+\infty$ and hence $\y=\bar{\y}\in\V_2$.
	
	We now take the $\liminf$ as $\eps\to0$ in \eqref{ULDP1}. This is justified by using the boundedness of $\|\u_\eps(\cdot)\|_{\mathrm{L}^2(t,t+\eps;\H)}$, energy estimates \eqref{engest1}, \eqref{engest2} and arguments analogous to those used in the proof of \cite[Theorem 6.1]{smtm2} (see also \cite[Theorem 6.3]{AS2}).
		Hence, we obtain the following supersolution inequality
		\begin{align*}
			0\geq&
			-\varphi_t(t,\y)+\big(-\mu\A\y-\mathcal{B}(\y)-\alpha\y-
			\beta\mathcal{C}(\y)+\f(s),\D\psi(t,\y)\big)
			-\frac{1}{2}\|\Q^{\frac12}\D\psi(t,\y)\|_{\H}^2.
		\end{align*}
		Furthermore, since $\mathcal{V}$ is Lipschitz continuous with respect to the space variable (see Proposition \ref{comparisondet}), the uniqueness of $\mathcal{V}$ follows immediate from Theorem \ref{comparison}.
\end{proof}

The following result is an immediate consequence of Proposition \ref{Lapvscso}, \ref{Lapvscso1}  and Theorem \ref{comparisonappr}, which shows the convergence of the sequence of viscosity solutions $\{\mathpzc{U}_n\}_{n\in\N}$ of \eqref{LDP1} and identifies its limit as a viscosity solution of the limiting HJB equation \eqref{LDP2}.

\begin{corollary}\label{comparisonds}
	Under Hypotheses \ref{trQ1} and \ref{fhyp}, and for $g \in \mathrm{Lip}_b(\mathcal{H})$, let $\mathpzc{U}_n$ and $\mathcal{V}$ denote the unique bounded viscosity solutions of \eqref{LDP1} and \eqref{LDP2}, respectively. Then we have:
	\begin{align*}
		\lim\limits_{n\to+\infty}\|\mathpzc{U}_n-
		\mathcal{V}\|_{\infty}=0. 
	\end{align*}
\end{corollary}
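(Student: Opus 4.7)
The plan is to invoke Theorem \ref{comparisonappr} directly with the identifications $u_n = \mathpzc{U}_n$ and $v = \mathcal{V}$, since this theorem already provides a quantitative rate $\|u_n - v\|_\infty \leq \mathpzc{C}/\sqrt{n}$ between bounded viscosity solutions of the second order HJB equation \eqref{LDP1} and the first order HJB equation \eqref{LDP2}. Passing to the limit $n \to +\infty$ then immediately yields the desired convergence. So the entire proof is a matter of verifying the hypotheses of Theorem \ref{comparisonappr}.

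First I would record that the two objects $\mathpzc{U}_n$ and $\mathcal{V}$ are indeed admissible inputs. By Proposition \ref{Lapvscso}, $\mathpzc{U}_n$ is the unique bounded viscosity solution of \eqref{LDP1} with $\mathpzc{U}_n(T,\y)=g(\y)$, and its uniform boundedness in $n$ follows from Proposition \ref{LapunLh} together with the boundedness of $g$. By Proposition \ref{Lapvscso1}, the value function $\mathcal{V}$ is the unique bounded viscosity solution of the limiting first-order equation \eqref{LDP2}, which is exactly \eqref{thjbcomp} with $n=+\infty$, and $\mathcal{V}(T,\y)=g(\y)$.

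Next I would verify the behaviour as $t \to T$ required by \eqref{bdd12}. Since $\mathpzc{U}_n(T,\y)=g(\y)$ and $\mathcal{V}(T,\y)=g(\y)$, the continuity estimate \eqref{LapLacefun1} of Proposition \ref{LapunLh} implies
\begin{equation*}
|\mathpzc{U}_n(t,\y)-g(\y)| \leq \mathpzc{C}_2(\|\y\|_{\V})|T-t|^{1/2} \to 0
\end{equation*}
as $t \to T$, uniformly on bounded sets of $\V$; likewise, the continuity estimate \eqref{valuedff5} of Proposition \ref{comparisondet} gives the same property for $\mathcal{V}$. Moreover, \eqref{valuedff7} in the proof of Proposition \ref{comparisondet} shows that $\mathcal{V}$ is Lipschitz in the space variable with respect to the $\H$-norm, uniformly in $t$, which supplies the Lipschitz assumption \eqref{bddL} with $\mathfrak{L} = \mathpzc{C}\|g\|_{\mathrm{Lip}}$.

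With every hypothesis of Theorem \ref{comparisonappr} now verified for $u_n = \mathpzc{U}_n$ and $v = \mathcal{V}$, the theorem directly yields
\begin{equation*}
\|\mathpzc{U}_n - \mathcal{V}\|_\infty \leq \frac{\mathpzc{C}}{\sqrt{n}},
\end{equation*}
with $\mathpzc{C}$ independent of $n$, and letting $n \to +\infty$ finishes the proof. There is no genuine obstacle in this argument; it is a short bookkeeping exercise collecting the properties already established for $\mathpzc{U}_n$ and $\mathcal{V}$. The real work, namely controlling the nonlinear operator $\mu\A+\mathcal{B}(\cdot)+\beta\mathcal{C}(\cdot)$ through its monotonicity (Lemma \ref{monoest}) and handling the $\sqrt{n}$-scaled penalisation in the doubling-of-variables argument, has already been absorbed into Theorem \ref{comparisonappr}.
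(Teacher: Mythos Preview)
Your proposal is correct and follows exactly the approach the paper intends: the paper states that Corollary \ref{comparisonds} is an immediate consequence of Propositions \ref{Lapvscso}, \ref{Lapvscso1} and Theorem \ref{comparisonappr}, and you have simply spelled out the verification of the hypotheses (boundedness, terminal behaviour \eqref{bdd12}, and the Lipschitz condition \eqref{bddL} for $\mathcal{V}$) in more detail than the paper itself provides.
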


\subsection{Existence of a Laplace limit at a single time}
Let us now establish the existence of a Laplace limit at a single time for the original equation \eqref{stap}. For a function $g\in\mathrm{Lip}_b(\H)$ and $t>0$, we define
\begin{align}\label{vngyd}
	\mathscr{V}_n(t)g(\y):=\frac{1}{n}\log\E\big[e^{-ng(\Y_n(t))}\big|
	\Y_n(0)=\y\big],
\end{align} 
where $\Y_n(\cdot)=\Y_n(\cdot;0,\y)$ is the solution of the system \eqref{stap} on $[0,+\infty)$. Using \eqref{LapLacefun}, with terminal time $T=t$ and starting at $0$, we can write \eqref{vngyd} as 
	\begin{align*}
		\mathscr{V}_n(t)g(\y)=-\mathpzc{U}_n(0,\y).
\end{align*}
From this point onward, and throughout this section, we assume that Hypotheses \ref{trQ1} and \ref{fhyp} are satisfied for all subsequent results. 
Considering Corollary \ref{comparisonds}, one can establish the following result:
\begin{lemma}\label{vfLDP1} 
	Let $t\geq0$, $\Y_n(t)=\Y_n(t;0,\y)$ and $g\in\mathrm{Lip}_b(\H)$. Then
	\begin{align*}
		\mathscr{V}(t)g(\y):=\lim\limits_{n\to+\infty}
		\mathscr{V}_n(t)g(\y)=-\mathcal{V}(0,\y),
	\end{align*}   
	uniformly on bounded subsets of $\V$, where $\mathcal{V}$ is the value function, defined in \eqref{dynprp}, with terminal time $T=t$.
	     exists uniformly on bounded subsets of $\V$.
\end{lemma}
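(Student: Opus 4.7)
The plan is to reduce the lemma to a direct application of Corollary \ref{comparisonds}, so the entire proof is bookkeeping around the previously established viscosity-solution machinery. First, I would record the key identification linking the two objects: comparing \eqref{LapLacefun} (with terminal time $T=t$ and initial datum prescribed at time $0$) with the definition \eqref{vngyd}, one sees immediately that
\begin{align*}
\mathscr{V}_n(t)g(\y) = -\mathpzc{U}_n(0,\y),
\end{align*}
since $-\tfrac{1}{n}\log\E[e^{-ng(\Y_n(t))}\mid \Y_n(0)=\y] = \mathpzc{U}_n(0,\y)$ by the Markov property of $\Y_n(\cdot)$.

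Next, I would invoke the two characterisation results: by Proposition \ref{Lapvscso}, with terminal time $T = t$, the function $\mathpzc{U}_n$ is the unique bounded viscosity solution of the second-order HJB equation \eqref{LDP1} satisfying \eqref{bdd12}; by Proposition \ref{Lapvscso1}, the value function $\mathcal{V}$ associated to the deterministic control problem \eqref{stapdet}--\eqref{valueLDP} with terminal time $t$ is the unique bounded viscosity solution of the first-order HJB equation \eqref{LDP2} (again with terminal datum $g$) satisfying \eqref{bdd12}. Both functions are bounded uniformly in $n$ by Propositions \ref{LapunLh} and \ref{comparisondet}, so they fall under the hypotheses of Theorem \ref{comparisonappr} through Corollary \ref{comparisonds}.

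Finally, applying Corollary \ref{comparisonds} yields
\begin{align*}
\|\mathpzc{U}_n - \mathcal{V}\|_{\infty} \leq \frac{\mathpzc{C}}{\sqrt{n}} \longrightarrow 0 \quad \text{as } n \to +\infty,
\end{align*}
where the sup-norm is taken over $(0,t)\times\V$. Evaluating at $(0,\y)$ and using the identification from the first step gives
\begin{align*}
|\mathscr{V}_n(t)g(\y) + \mathcal{V}(0,\y)| = |\mathpzc{U}_n(0,\y) - \mathcal{V}(0,\y)| \leq \frac{\mathpzc{C}}{\sqrt{n}},
\end{align*}
uniformly in $\y\in\V$, hence in particular uniformly on bounded subsets of $\V$. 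This establishes the stated limit with $\mathscr{V}(t)g(\y) = -\mathcal{V}(0,\y)$. There is no real obstacle once the two PDE characterisations are in place; the only subtlety worth emphasising is that the convergence produced by Corollary \ref{comparisonds} is in the uniform norm, which immediately delivers the ``uniformly on bounded subsets of $\V$'' clause in the statement.
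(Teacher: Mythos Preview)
Your proposal is correct and follows exactly the approach the paper intends: the paper states this lemma immediately after the sentence ``Considering Corollary \ref{comparisonds}, one can establish the following result'' and gives no separate proof, so the argument is precisely the bookkeeping you describe---identify $\mathscr{V}_n(t)g(\y)=-\mathpzc{U}_n(0,\y)$, invoke Propositions \ref{Lapvscso} and \ref{Lapvscso1}, and apply Corollary \ref{comparisonds}.
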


The following results (Lemma \ref{vfLDP2} and Proposition \ref{vfLDP3}) extends the previous result to a broader class of functions $g$, with the motivation drawn from \cite{AS2}.
\begin{lemma}\label{vfLDP2}
	Let $g$, $g_m$ are weakly sequentially continuous functions on $\V$ such that 
	\begin{itemize}
		\item the sequence $\{g_m\}_{m\in\N}$ satisfies
		$\|g_m\|_{\infty}\leq M$, for some $M>0$;
		\item $g_m\to g$ uniformly on bounded subsets of $\V$.
	\end{itemize}
	Then, for every $\mathpzc{R}>0$, $\eps>0,$ there exists $m_0\in\N$ such that 
	\begin{align}\label{expLDP1}
		\text{ if } m,n\geq m_0 \ \text{ then } 
		\sup\limits_{\|\y\|\leq \mathpzc{R}}|\mathscr{V}_n(t)g_m(\y)- \mathscr{V}_n(t)g(\y)|\leq\eps.
	\end{align} 
\end{lemma}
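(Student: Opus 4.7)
The plan is to control the difference $|\mathscr{V}_n(t)g_m(\y) - \mathscr{V}_n(t)g(\y)|$ by splitting the Laplace integral defining these quantities over a large $\V$-ball and its complement, exploiting the uniform convergence on the ball and the exponential moment estimate of Proposition \ref{propexpest} to control the tail. A preliminary observation I would make is that $g$ itself is globally bounded by $M$ on $\V$: any $\y\in\V$ lies in the bounded set $\{\|\z\|_{\V}\leq\|\y\|_{\V}+1\}$, on which $g_m\to g$ uniformly, and the pointwise bound $\|g_m\|_\infty\leq M$ passes to the limit. This yields in particular $\E[e^{-ng(\Y_n(t))}]\geq e^{-nM}$ and similarly for $g_m$.

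Next, I would fix $\mathpzc{R}>0$ and apply Proposition \ref{propexpest} (with terminal time $t$, which is admissible since our running condition \eqref{eqn-con} is in force) together with Markov's inequality to obtain, uniformly for $\|\y\|_{\V}\leq\mathpzc{R}$,
\begin{align*}
\P\big(\|\Y_n(t)\|_{\V}>K\,\big|\,\Y_n(0)=\y\big)\leq \mathpzc{c}_2(\mathpzc{R})\,\exp\!\big(n(\mathpzc{c}_3-\mathpzc{c}_1 K^2)\big),
\end{align*}
for any $K>0$ and any admissible $\mathpzc{c}_1\in(0,\mathpzc{c}_1^0)$. Setting $\delta_m:=\sup_{\|\z\|_{\V}\leq K}|g_m(\z)-g(\z)|$, which tends to $0$ as $m\to+\infty$ by hypothesis, the decomposition
\begin{align*}
\E\big[e^{-ng_m(\Y_n(t))}\big]\leq e^{n\delta_m}\E\big[e^{-ng(\Y_n(t))}\big]+e^{nM}\,\P\big(\|\Y_n(t)\|_{\V}>K\big)
\end{align*}
combined with the lower bound $\E[e^{-ng(\Y_n(t))}]\geq e^{-nM}$ gives
\begin{align*}
\frac{\E[e^{-ng_m(\Y_n(t))}]}{\E[e^{-ng(\Y_n(t))}]}\leq e^{n\delta_m}+\mathpzc{c}_2(\mathpzc{R})\,\exp\!\big(n(2M+\mathpzc{c}_3-\mathpzc{c}_1 K^2)\big).
\end{align*}
Taking $\frac{1}{n}\log$ and choosing $K$ large enough that $\mathpzc{c}_1 K^2>2M+\mathpzc{c}_3$, the right-hand side is bounded by $\delta_m+o_n(1)$, uniformly in $\|\y\|_{\V}\leq\mathpzc{R}$. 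A symmetric argument, obtained by swapping the roles of $g$ and $g_m$, controls the reverse quotient. Thus, choosing first $K$ as above, then $m_0$ so large that $\delta_m<\eps/2$ for $m\geq m_0$ and simultaneously the additive $o_n(1)$ term is below $\eps/2$ for $n\geq m_0$, we obtain the asserted bound \eqref{expLDP1}.

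The main obstacle is the tension between the factor $e^{nM}$ produced by the pointwise bound $\|g_m\|_\infty\leq M$ on the exceptional set and the decay of $\P(\|\Y_n(t)\|_{\V}>K)$. Overcoming this requires the full strength of the Gaussian-type exponential tail supplied by Proposition \ref{propexpest}: the quadratic rate $\mathpzc{c}_1 K^2$ must dominate the affine quantity $2M+\mathpzc{c}_3$, and this is precisely what the damping mechanism (via condition \eqref{eqn-con}) guarantees, since $K$ can be chosen as large as needed while $\mathpzc{c}_1$ remains a fixed positive constant uniformly in $n$ and in $\|\y\|_{\V}\leq\mathpzc{R}$.
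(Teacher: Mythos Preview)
Your proposal is correct and follows essentially the same route as the paper's proof: both arguments split the Laplace integral over $\{\|\Y_n(t)\|_{\V}\leq K\}$ and its complement, use the exponential moment bound of Proposition~\ref{propexpest} together with Markov's inequality to make the tail probability $\leq e^{-cn}$ with $c>2M$, and then exploit the uniform convergence $g_m\to g$ on the $\V$-ball to control the main term. The only cosmetic differences are that the paper writes the two-sided bound \eqref{expLDP14} directly and invokes the elementary logarithm inequalities of Lemma~\ref{LOG}, whereas you divide by $\E[e^{-ng(\Y_n(t))}]$ and argue by symmetry; neither choice affects the substance.
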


\begin{proof}
	Let $\|\y\|_{\V}\leq K$ and choose $\mathpzc{R}>K$. Define a set
	\begin{align*}
		\mathcal{S}_1^{n,\mathpzc{R}}:=
		\left\{\omega\in\Omega:\|\Y_n(t)\|_{\V}>\mathpzc{R}\right\}.
	\end{align*}
		Then, by an application of Markov's inequality and exponential moment estimate \eqref{expmoest}, we find
	\begin{align*}
		\P(	\mathcal{S}_1^{n,\mathpzc{R}})&=
		\P\left\{\omega\in\Omega:\sup\limits_{0\leq s \leq t} \|\Y_n(s)\|_{\V}>\mathpzc{R}\right\}
		\nonumber\\&=
		\P\left\{\omega\in\Omega:e^{n\mathpzc{c}_1\sup\limits_{0\leq s \leq t} \|\Y_n(s)\|_{\V}^2}> e^{n\mathpzc{c}_1{\mathpzc{R}}^2}\right\}
		\nonumber\\&\leq
		\frac{1}{e^{n\mathpzc{c}_1{\mathpzc{R}}^2}}
		\E\bigg[e^{n\mathpzc{c}_1\sup\limits_{0\leq s \leq t} \|\Y_n(s)\|_{\V}^2}\bigg]=
		\frac{1}{e^{n\mathpzc{c}_1{\mathpzc{R}}^2}}
		\E\bigg[\sup\limits_{0\leq s \leq t} e^{n\mathpzc{c}_1 \|\Y_n(s)\|_{\V}^2}\bigg]
		\nonumber\\&\leq
		\frac{1}{e^{n\mathpzc{c}_1{\mathpzc{R}}^2}}
		e^{n\mathpzc{c}_2}.
	\end{align*}
	In the final step, we have used the continuity and strict monotonicity of the exponential function, along with the continuity of the norm $\|\cdot\|_{\V}$. These properties allow us to interchange the supremum and the exponential.
	Therefore, there exists $\mathpzc{R}>\widetilde{\mathpzc{R}}(K)$, depending on $K$ such that 
	\begin{align}\label{expLDP11}
		\P(\mathcal{S}_1^{n,\mathpzc{R}})\leq e^{-3Mn}.
	\end{align}
	Given that $g_m\to g$ uniformly on bounded subsets of $\V$. It means, for every $\eps>0$ and for every bounded subset, say $B_{\mathpzc{R}}:=\{\y\in\V:\|\y\|_{\V}\leq \mathpzc{R}\}$, of $\V$, there exists $m_1\in\N$ such that for all $m\geq m_1$, we have
	\begin{align}\label{expLDP12}
		\sup\limits_{\y\in B_{\mathpzc{R}}}|g_m(\y)-g(\y)|\leq\frac{\eps}{2}.
	\end{align}
	We write
	\begin{align}\label{expLDP13}
		\E \big[e^{-ng_m(\Y_n(\cdot))}\big]&=
		\E \big[e^{-ng(\Y_n(\cdot))}\big]+
		\E\bigg[\big(e^{-ng_m(\Y_n(\cdot))}-e^{-ng(\Y_n(\cdot))}
		\big)\mathds{1}_{\Omega\setminus\mathcal{S}_1^{n,\mathpzc{R}}}\bigg]
		\nonumber\\&\quad+
		\E\bigg[\big(e^{-ng_m(\Y_n(\cdot))}-e^{-ng(\Y_n(\cdot))}
		\big)\mathds{1}_{\mathcal{S}_1^{n,\mathpzc{R}}}\bigg]
		\nonumber\\&=
		\E e^{-ng_m(\Y_n(\cdot))}+
		\E\bigg[e^{-ng(\Y_n(\cdot))}\big
		(e^{-n(g_m(\Y_n(\cdot))-g(\Y_n(\cdot)))}-1\big) \mathds{1}_{\Omega\setminus\mathcal{S}_1^{n,\mathpzc{R}}}\bigg]
		\nonumber\\&\quad+
		\E\bigg[\big(e^{-ng_m(\Y_n(\cdot))}-e^{-ng(\Y_n(\cdot))}
		\big)\mathds{1}_{\mathcal{S}_1^{n,\mathpzc{R}}}\bigg].
	\end{align} 
	Utilizing \eqref{expLDP11} and \eqref{expLDP12} in \eqref{expLDP13}, we obtain the following lower and upper bound in:
	\begin{align*}
		&\E\big[e^{-ng(\Y_n(\cdot))}\big]+
		\E\big[e^{-ng(\Y_n(\cdot))}\big](e^{-\frac{n\eps}{2}}-1)-
		2e^{-2Mn}
		\nonumber\\&\leq
		\E \big[e^{-ng_m(\Y_n(\cdot))}\big]
		\nonumber\\&\leq
		\E \big[e^{-ng(\Y_n(\cdot))}\big]+
		\E\big[e^{-ng(\Y_n(\cdot))}\big](e^{\frac{n\eps}{2}}-1)+
		2e^{-2Mn}.
	\end{align*}
	On simplifying the above inequality, we obtain 
	\begin{align}\label{expLDP14}
		&\E\big[e^{-ng(\Y_n(\cdot))}\big]e^{-\frac{n\eps}{2}}
		\left(1-\frac{2e^{-2Mn}}
		{\E\big[e^{-ng(\Y_n(\cdot))}\big]e^{-\frac{n\eps}{2}}}\right)
		\nonumber\\&\leq
		\E\big[e^{-ng_m(\Y_n(\cdot))}\big]
		\nonumber\\&\leq
		\E\big[e^{-ng(\Y_n(\cdot))}\big]e^{\frac{n\eps}{2}}
		\left(1+\frac{2e^{-2Mn}}
		{\E\big[e^{-ng(\Y_n(\cdot))}\big]e^{\frac{n\eps}{2}}}\right).
	\end{align}
	On taking logarithm in \eqref{expLDP14} and employing Lemma \ref{LOG}, we find
	\begin{align}\label{expLDP15}
		&\frac{1}{n}\log\E\big[e^{-ng(\Y_n(\cdot))}\big]-\frac{\eps}{2}-
		\frac{1}{n}\frac{4e^{-3Mn}}
		{\E\big[e^{-ng(\Y_n(\cdot))}\big]e^{-\frac{n\eps}{2}}}
		\nonumber\\&\leq
		\frac{1}{n}\log\E\big[e^{-ng_m(\Y_n(\cdot))}\big]
		\nonumber\\&\leq
		\frac{1}{n}\log\E\big[e^{-ng(\Y_n(\cdot))}\big]+\frac{\eps}{2}+
		\frac{1}{n}\frac{2e^{-3Mn}}
		{\E\big[e^{-ng(\Y_n(\cdot))}\big]e^{\frac{n\eps}{2}}}.
	\end{align}
	Finally, from \eqref{expLDP11}, we deduce the following from \eqref{expLDP15}:
	\begin{align*}
		&\frac{1}{n}\log\E\big[e^{-ng(\Y_n(\cdot))}\big]-\frac{\eps}{2}-
		\frac{4}{n}e^{-Mn}e^{\frac{n\eps}{2}}
		\nonumber\\&\leq
		\frac{1}{n}\log\E\big[e^{-ng_m(\Y_n(\cdot))}\big]
		\nonumber\\&\leq
		\frac{1}{n}\log\E\big[e^{-ng(\Y_n(\cdot))}\big]+\frac{\eps}{2}+
		\frac{2}{n}e^{-Mn}e^{-\frac{n\eps}{2}},
	\end{align*}
	which completes the proof of \eqref{expLDP1}. 
\end{proof}

\begin{proposition}\label{vfLDP3}
	Let $g$ be bounded and weakly sequentially continuous on $\V$. Then the following statements hold true:
	\begin{itemize}
		\item[(a)] For every $n\in\N$, the function $\mathscr{V}_n(t)g$ is weakly sequentially continuous on $\V$.
		\item[(b)] For every $\y\in\V$, the following limit
		\begin{align*}
			\mathscr{V}(t)g(\y)=\lim\limits_{n\to+\infty}
			\mathscr{V}_n(t)g(\y),
		\end{align*}
		exists and is uniform on bounded subsets of $\V$. In particular, $\mathscr{V}(t)g(\y)$ is weakly sequentially continuous on $\V$.
		\item[(c)] If $g_n$'s are weakly sequentially continuous on $\V$, such that $\|g_n\|_{\infty}\leq M$ for all $n\in\N$, and $g_n\to g$ uniformly on bounded subsets of $\V,$ then
		\begin{align}\label{expLDP2233}
			\lim\limits_{n\to+\infty} \mathscr{V}_n(t)g_n(\y)=
			\mathscr{V}(t)g(\y),
		\end{align}
		uniformly on bounded subsets of $\V$.
	\end{itemize}
\end{proposition}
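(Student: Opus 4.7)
My plan for proving Proposition \ref{vfLDP3} is to handle the three parts sequentially, each building on the previous and on the auxiliary Lemmas \ref{vfLDP1} and \ref{vfLDP2}.

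For part (a), I would argue as follows. Given $\y_k \rightharpoonup \y$ in $\V$, the compact embedding $\V \hookrightarrow \H$ forces $\y_k \to \y$ in $\H$ and $\|\y_k\|_{\V}$ to be uniformly bounded. The continuous dependence estimate \eqref{ctsdep0} from Proposition \ref{cts-dep-soln} then yields $\Y_n(t;0,\y_k) \to \Y_n(t;0,\y)$ in $\H$ $\P$-almost surely. The uniform $\V$-energy bound \eqref{ssee1}, combined with extracting a weakly $\V$-convergent subsequence pointwise and using uniqueness of the $\H$-limit to identify the weak $\V$-limit, gives $\Y_n(t;0,\y_k)(\omega) \rightharpoonup \Y_n(t;0,\y)(\omega)$ in $\V$ almost surely; since the limit is independent of the extracted subsequence, the whole sequence converges weakly in $\V$ a.s. Weak sequential continuity of $g$ on $\V$ then ensures $g(\Y_n(t;0,\y_k)) \to g(\Y_n(t;0,\y))$ almost surely. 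Since $e^{-ng(\cdot)} \leq e^{n\|g\|_{\infty}}$, the dominated convergence theorem delivers convergence of the expectations in \eqref{vngyd}, and the uniform lower bound $\E[e^{-ng(\Y_n(t))}] \geq e^{-n\|g\|_{\infty}} > 0$ lets us pass through the logarithm to conclude $\mathscr{V}_n(t)g(\y_k) \to \mathscr{V}_n(t)g(\y)$.

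For part (b), the plan is to approximate $g$ by a sequence $\{g_m\} \subset \mathrm{Lip}_b(\H)$ with $\|g_m\|_{\infty} \leq \|g\|_{\infty}$ such that $g_m \to g$ uniformly on bounded subsets of $\V$. The construction exploits the observation that on any ball $B_R := \{\y\in\V : \|\y\|_{\V} \leq R\}$, the compact embedding makes $g|_{B_R}$ continuous in the $\H$-topology and hence uniformly $\H$-continuous, since $B_R$ is relatively $\H$-compact; a suitably truncated McShane/inf-convolution regularisation then produces $g_m$ with the required properties. Applying Lemma \ref{vfLDP1} to each $g_m$ gives $\mathscr{V}_n(t)g_m(\y) \to \mathscr{V}(t)g_m(\y)$ uniformly on bounded subsets of $\V$ as $n \to \infty$, while Lemma \ref{vfLDP2} provides uniformity in $n$ for the convergence $|\mathscr{V}_n(t)g_m(\y) - \mathscr{V}_n(t)g(\y)| \to 0$ as $m \to \infty$ on bounded $\V$-subsets. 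A standard $\varepsilon/3$ argument shows that $\{\mathscr{V}_n(t)g\}$ is uniformly Cauchy in $n$ on bounded subsets of $\V$, hence converging to a limit $\mathscr{V}(t)g$. Weak sequential continuity of $\mathscr{V}(t)g$ follows from part (a) via uniform convergence on bounded subsets, since any weakly convergent sequence in $\V$ is bounded.

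Part (c) then reduces to a triangle inequality combining Lemma \ref{vfLDP2} applied to $\{g_n\}$ and $g$ (which gives $|\mathscr{V}_n(t)g_n - \mathscr{V}_n(t)g| \to 0$ uniformly in $n$ and on bounded subsets) with part (b) ($|\mathscr{V}_n(t)g - \mathscr{V}(t)g| \to 0$ uniformly on bounded subsets of $\V$). The principal technical obstacle will be the construction of the approximating sequence $\{g_m\}$ in part (b): the delicate point is that $g$ need not be $\H$-continuous globally on $\V$, so the truncation parameter in the inf-convolution must be chosen carefully to reconcile the global $L^{\infty}$ bound with uniform approximation on bounded $\V$-sets. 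Beyond that, the remaining steps are essentially routine given Lemmas \ref{vfLDP1} and \ref{vfLDP2} together with the continuous dependence and exponential moment estimates established earlier.
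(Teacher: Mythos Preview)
Your plan for parts (b) and (c) matches the paper's: approximate $g$ by $g_m\in\mathrm{Lip}_b(\H)$ via sup/inf-convolution of a truncated extension, combine Lemmas~\ref{vfLDP1} and~\ref{vfLDP2} in a Cauchy argument, and obtain (c) from (b) plus Lemma~\ref{vfLDP2} by a triangle inequality. No issues there, and you correctly flag the construction of $\{g_m\}$ as the main technical point.

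The gap is in part (a). You claim that the expectation bound \eqref{ssee1} allows you to extract, for almost every $\omega$, a weakly $\V$-convergent subsequence of $\{\Y_n(t;0,\y_k)(\omega)\}_k$. But \eqref{ssee1} only controls $\E\big[\|\Y_n(t;0,\y_k)\|_{\V}^p\big]$ uniformly in $k$; it does \emph{not} give $\sup_k\|\Y_n(t;0,\y_k)(\omega)\|_{\V}<\infty$ for a.e.\ $\omega$, which is what a pointwise Banach--Alaoglu extraction would require. So the step ``$\Y_n(t;0,\y_k)(\omega)\rightharpoonup\Y_n(t;0,\y)(\omega)$ in $\V$ a.s.'' is unjustified as written, and without it you cannot feed the weak sequential continuity of $g$ pointwise in $\omega$.

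The paper circumvents this by never arguing pointwise in $\omega$. On each ball $B_m=\{\|\z\|_{\V}\le m\}$, weak sequential continuity of $g$ together with the $\H$-compactness of $B_m$ yields a uniform $\H$-modulus of continuity $\uptheta_m$ for $e^{-ng}$. One then splits $\Omega$ into the event $\{\max(\|\Y_n^1\|_{\V},\|\Y_n^2\|_{\V})\le m\}$ and its complement: the complement is controlled via Markov's inequality and \eqref{ssee1}, while on the good event one uses $\uptheta_m$ together with \eqref{ctsdep0} to obtain
\[
\big|e^{n\mathscr{V}_n(t)g(\y_1)}-e^{n\mathscr{V}_n(t)g(\y_2)}\big|\le \frac{2C}{m^2}e^{n\|g\|_{\infty}}+\uptheta_m\big(C\|\y_1-\y_2\|_{\H}\big),
\]
from which weak sequential continuity follows by sending $m\to\infty$ after $\|\y_1-\y_2\|_{\H}\to 0$. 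Your argument is easily repaired along the same lines: replace the pointwise weak-$\V$ extraction by this probabilistic splitting, and then dominated convergence goes through.
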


\begin{proof}
	\noindent
	\textbf{(a).}
	Let us fix $\mathpzc{R}>0$ and take $\y_1,\y_2$ in $\V$ with $\|\y_1\|_{\V}\leq\mathpzc{R},\|\y_2\|_{\V}\leq\mathpzc{R}$. Let $\Y_n^1$ and $\Y_n^2$ be two solutions of \eqref{stap} in the sense of Definition \ref{def-var-strong} such that 
	$\Y_n^1(0)=\y_1$ and $\Y_n^2(0)=\y_2$. Let $m>0$ be any large number. Then, by an application of Markov's inequality and \eqref{ssee1}, we calculate
	\begin{align}\label{expLDP2}
		\P\big(\max\{\|\Y_n^1\|_{\V},\|\Y_n^2\|_{\V}\}>m\big)&=
		\P\big((\max\{\|\Y_n^1\|_{\V},\|\Y_n^2\|_{\V}\})^2>m^2\big)
		\nonumber\\&\leq
		\frac{1}{m^2}\E[(\max\{\|\Y_n^1\|_{\V},\|\Y_n^2\|_{\V}\})^2]
		\nonumber\\&\leq
		\frac{1}{m^2}\E[\|\Y_n^1\|_{\V}^2+\|\Y_n^2\|_{\V}^2]
		\nonumber\\&\leq
		\frac{\mathpzc{C}}{m^2},
	\end{align}
	where $\mathpzc{C}=\mathpzc{C}(\mathpzc{R},T,\Q_1,\mu,\alpha,\beta)$. 
	Let $\uptheta_m$ be the local modules of continuity of $e^{-n g}$ in the $\|\cdot\|_{\H}-$norm on set $\{\z\in\V:\|\z\|_{\V}\leq m\}$. By definition it means 
	\begin{align*}
		\uptheta_m(\delta)=\sup\{|e^{-n g(\z_1)}-e^{-n g(\z_2)}|:\|\z_1\|_{\V},\|\z_2\|_{\V}\leq m, \ \|\z_1-\z_2\|_{\H}\leq\delta\}.
	\end{align*}
	Furthermore, by the properties of modulus of continuity, we have
	\begin{align}\label{expLDP21}
		|e^{-n g(\z_1)}-e^{-n g(\z_2)}|\leq \uptheta_m(\|\z_1-\z_2\|_{\H}),
	\end{align}
	for all $\z_1,\z_2$ in the set $\{\z\in\V:\|\z\|_{\V}\leq m\}$. Let us define the set
		\begin{align*}
		\mathcal{S}_{2,m}:=
		\left\{\omega\in\Omega:\|\Y_n^1(t)\|_{\V},\|\Y_n^2(t)\|_{\V}\leq m \right\}.
	\end{align*}
	Now, from \eqref{ctsdep0}, \eqref{expLDP2} and \eqref{expLDP21} and the concavity of $\uptheta_m$,
	we calculate 
	\begin{align}\label{expLDP22}
		&|e^{n\mathscr{V}_n(t)g(\y_1)}-e^{n\mathscr{V}_n(t)g(\y_2)}|
		\nonumber\\&=
		\big|\E\big[e^{-ng(\Y_n^1(t))}\big]-\E\big[e^{-ng(\Y_n^2(t))}\big]\big|
		\nonumber\\&\leq
		\big|\E\big[(e^{-ng(\Y_n^1(t))}-e^{-ng(\Y_n^2(t))})
		\mathds{1}_{\mathcal{S}_{2,m}^c}\big]\big|
		+\big|\E\big[(e^{-ng(\Y_n^1(t))}-e^{-ng(\Y_n^2(t))})
		\mathds{1}_{\mathcal{S}_{2,m}}\big]\big|
		\nonumber\\&\leq
		\frac{2\mathpzc{C}}{m^2}e^{n\|g\|_{\infty}}+
		\E\big[\uptheta_m(\|\Y_n^1(t)-\Y_n^2(t)\|_{\H})\big]
		\nonumber\\&\leq
		\frac{2\mathpzc{C}}{m^2}e^{n\|g\|_{\infty}}+
		\uptheta_m\big(\E\big[\|\Y_n^1(t)-\Y_n^2(t)\|_{\H}\big]\big)
		\nonumber\\&\leq
		\frac{2\mathpzc{C}}{m^2}e^{n\|g\|_{\infty}}+
		\uptheta_m\big(\mathpzc{C}\|\y_1-\y_2\|_{\H}\big).
	\end{align}
	Note that for sufficiently large $m$, and using the compact embedding 
	$\V\hookrightarrow\H$, it follows from \eqref{expLDP22} that the map $\y\mapsto e^{n\mathscr{V}_n(t)g(\y)}$ is weakly sequentially continuous. Since the logarithm is a continuous function, we conclude that $\mathscr{V}_n(t)g$ is weakly sequentially continuous in $\H$.  
	\vskip 0.2cm
	\noindent
	\textbf{(b).} 
	Note that since $g:\V\to\R$ is bounded and weakly sequentially continuous, we can find functions $g_m\in\mathrm{Lip}_b(\H)$ which converge to $g$ uniformly on bounded subsets of $\V$, that is,
	\begin{align}\label{denseLP}
		\sup\limits_{\|\y\|_{\V}\leq\mathpzc{k}}|g_m(\y)-g(\y)|\to0 \ \text{ as } m\to\infty, \ \text{ for every } \mathpzc{k}>0.
	\end{align}
	Let us first establish \eqref{denseLP}. To do this, for every $m\geq1$, we first define $\tilde{g}:\H\to\R$ by 
	\begin{equation*}
		\tilde{g}(\y)=
		\left\{
		\begin{aligned}
			g(\y), \ \ \|\y\|_{\V}\leq m,\\
			-\|g\|_{\infty}, \ \text{ elsewhere }.
		\end{aligned}
		\right.
	\end{equation*}
	We claim that $\tilde{g}$ \emph{is upper-semicontinuous on} $\H$.
	For this, define a closed ball
	\begin{align*}
		B_m:=\{\y\in\V:\|\y\|_{\V}\leq m\}.
	\end{align*}
	
	Note that $B_m$ is closed in $\H$. We consider the following two cases:
	
	\textbf{Case-I:} \emph{When} $\y\in B_m$.  Let $\{\y_n\}_{n\in\N}$ be a sequence in $\H$ such $\y_n\to\y$ in $\H$. If $\y_n\in B_m$, for each $n\in\N$, the Banach-Alaoglu theorem yields a subsequence  $\{\y_{n_j}\}_{j\in\N}$ in $\V$ such that $\y_{n_j}\rightharpoonup\hat{\y}$ in $\V$ as $j\to\infty$. Further, the compact embedding $\V\hookrightarrow\H$ gives $\y_{n_j}\to\hat{\y}$ in $\H$ as $j\to\infty$. By the uniqueness of weak limits, we have $\hat{\y}=\y$. Then, along a subsequence, by weak sequential continuity of $g$, we find 
	\begin{align*}
		\limsup\limits_{j\to\infty} \tilde{g}(\y_{n_j})=
		\limsup\limits_{j\to\infty} g(\y_{n_j})=
		g(\y)=\tilde{g}(\y).
	\end{align*} 
	If $\y_n\notin B_m$ for some $n$, say $n=n_0$, then $\tilde{g}(\y_{n_0})=-\|g\|_{\infty}$. Then, we have
	\begin{align*}
		\limsup\limits_{n_0\to\infty} \tilde{g}(\y_{n_0})=
		-\|g\|_{\infty}\leq g(\y)=\tilde{g}(\y).
	\end{align*}
	
	\textbf{Case-II:} \emph{When} $\y\notin B_m$. Let $\{\y_n\}_{n\in\N}$ be a sequence in $\H$ such $\y_n\to\y$ in $\H$. Then, there exists $N\in\N$ such that $\y_n$ does not belong to $B_m$ for sufficiently large $n\geq N$. 
Therefore, by definition of $\tilde{g}$, we find
	\begin{align*}
		\limsup\limits_{n\to+\infty} \tilde{g}(\y_{n})=
		-\|g\|_{\infty}=\tilde{g}(\y).
	\end{align*}
	Therefore, in all the cases, we conclude that, $\tilde{g}$ \emph{is upper-semicontinuous on} $\H$. Let us define, for any $\delta>0$, a $\delta$ sup-convolution of $\tilde{g}$ as
	\begin{align*}
		\tilde{g}_{\delta}(\y)=\sup\limits_{\z\in\H}\left\{\tilde{g}(\z)-
		\frac{\|\y-\z\|_{\H}^2}{\delta}\right\}.
	\end{align*}
	Then, $\tilde{g}_{\delta}\in\mathrm{Lip}_b(\H)$ and, since the embedding $\V\hookrightarrow\H$ is compact, one can have the following uniform convergence (see \cite[Appendix D.3]{GFAS}):
	\begin{align*}
		\tilde{g}_{\delta}\to\tilde{g}=g \ \text{ uniformly on } B_m \ \text{ as } \ \delta\to0.
	\end{align*}
	Therefore, for small $\delta=\delta(m)$, denoting $g_m:=\tilde{g}_{\delta(m)}$, we can have 
	\begin{align*}
		\sup\limits_{\|\y\|_{\V}\leq m}
		|g_m(\y)-g(\y)|\leq\frac{1}{m}.
	\end{align*}
	
	Consider the functions $g_m,m\in\N$ defined in the said way as above. Then, it follows from Lemma \ref{vfLDP2} that for every $\eps, R>0,$ there exists $n_0$ such that if $n,m_1,m_2\geq m_0$, we have
	\begin{align}\label{expLDP23}
		\sup\limits_{\|\y\|_{\V}\leq R} |\mathscr{V}_n(t)\g_{m_1}(\y)-
		\mathscr{V}_n(t)\g_{m_2}(\y)|\leq 2\eps.
	\end{align}    
	Moreover, since $g_{m_1}, g_{m_2} \in \mathrm{Lip}_b(\H)$, according to Lemma \ref{vfLDP1}, one can take the limit as $n \to +\infty$ in \eqref{expLDP23} to obtain
	\begin{align}\label{expLDP24}
		\sup\limits_{\|\y\|_{\V}\leq R} |\mathscr{V}(t)\g_{m_1}(\y)-
		\mathscr{V}(t)\g_{m_2}(\y)|\leq 2\eps,
	\end{align}
	for all $m_1,m_2\geq n_0$. This shows that the sequence $\{\mathscr{V}(t)\g_m\}_{m\in\N}$ is uniformly Cauchy in the ball $B_m$, and since, $B_m$ is closed in $\H$ with respect to $\|\cdot\|_{\V}-$norm, therefore, $\lim\limits_{m\to\infty}\mathscr{V}(t)g_m(\y)$ exists uniformly on bounded subsets of $\V$ (which is also clear from the representation formula for  $\mathscr{V}(t)g_m(\y)$). Again, using the convergence of $\mathscr{V}_n(t)g_{m_0}$ as $n\to+\infty$ provided by Lemma \ref{vfLDP1}, we obtain from \eqref{expLDP1}  that for $n\geq n_1=n_1(m_0)$
	\begin{align}\label{expLDP25}
		\sup\limits_{\|\y\|_{\V}\leq R} |\mathscr{V}(t)\g_{m_0}(\y)-
		\mathscr{V}_n(t)\g(\y)|\leq 2\eps.
	\end{align}
	Combining \eqref{expLDP24} and \eqref{expLDP25} we finally have that for 
	$n\geq n_1$
	\begin{align*}
		\sup\limits_{\|\y\|_{\V}\leq R} |\lim\limits_{m\to\infty}\mathscr{V}(t)\g_{m}(\y)-
		\mathscr{V}_n(t)\g(\y)|\leq 4\eps.
	\end{align*}
     This shows that 
		\begin{align}\label{expLDP27}
			\mathscr{V}(t)g(\y)=\lim\limits_{n\to+\infty}\mathscr{V}_n(t)\g(\y)=
			\lim\limits_{m\to\infty}\mathscr{V}(t)\g_{m}(\y),
		\end{align}
		uniformly on bounded sets of $\V$. The weak sequential continuity of $\mathscr{V}(t)g$ follows directly from \eqref{expLDP27} and part $(a)$.
	
	\vskip 0.2cm
	\noindent
	\textbf{(c).} Since 
	$$|\mathscr{V}_n(t)g_n(\y)-\mathscr{V}(t)g(\y)|
	\leq|\mathscr{V}_n(t)g_n(\y)-\mathscr{V}_n(t)g(\y)|+|\mathscr{V}_n(t)g(\y)-
	\mathscr{V}(t)g(\y)|,$$ therefore the convergence \eqref{expLDP2233} follows directly from Part $(b)$ and  Lemma \ref{vfLDP2}.
\end{proof}

\begin{remark} 
		According to Propositions \ref{vfLDP1} and \ref{vfLDP3}, if $g$ is bounded and weakly sequentially continuous on $\V$, then $\mathscr{V}(t)g(\y)$ can be represented as follows:
		\begin{align*}
			\mathscr{V}(t)g(\y)=-\inf\limits_{\u(\cdot)\in\mathrm{L}^2(0,t;\H)}
			\left\{\frac12\int_0^t \|\u(s)\|_{\H}^2\d s+g(\Y(t;0,\y))\right\}.
	\end{align*}
\end{remark}

\section{Large deviation principle}\setcounter{equation}{0}\label{LaDePr}
In this section, we show that the sequence $\{\Y_n(\cdot)\}$, where $\Y_n(\cdot)$ is a solution to \eqref{stap} on $[0,+\infty)$ with $\Y_n(0)=\y\in\H$, satisfies the LDP in the space $\C([0,+\infty);\H)$ (equipped with the topology of local uniform convergence). As we mentioned in Subsection \ref{FKLDP}, we first prove the LDP in the path space $\D([0,+\infty);\H)$ and then by using the $\C-$exponential tightness, we establish the LDP in $\C([0,+\infty);\H)$.

\subsection{Existence of a Laplace limit at multiple times}\label{LapLmuL}
%
Let us consider the functional 
\begin{align}\label{LDPsemgp}
	\widetilde{\mathscr{V}}_n(t)g(\y):=-\mathscr{V}_n(t)g(\y)=-\frac{1}{n}\log\E\big[e^{-ng(\Y_n(t))}\big|
	\Y_n(0)=\y\big],
\end{align} 
where $g\in\mathrm{Lip}_b(\H)$. By the Markov property of the solution $\Y_n(\cdot)$, one can establish the following semigroup identity: 
	\begin{align*}
		\widetilde{\mathscr{V}}_n(t+s)g(\y)&=-
		\frac{1}{n}\log\E\big[e^{-ng(\Y_n(t+s))}\big|\Y_n(0)=\y\big]
		\nonumber\\&=-
		\frac{1}{n}\log\E\big[\E\big[e^{-ng(\Y_n(t+s))}\big|\Y_n(s)\big]\big|\Y_n(0)=\y\big]
		\nonumber\\&=-
		\frac{1}{n}\log\E[e^{-n\widetilde{\mathscr{V}}_n(t)g(\Y_n(s))}\big|\Y_n(0)=\y]
		\nonumber\\&=
		\widetilde{\mathscr{V}}_n(s)(\widetilde{\mathscr{V}}_n(t)g(\y)),
	\end{align*}
	for all $s,t\geq0$. Moreover, for $0\leq t_1\leq\cdots\leq t_m$ and $g_1,\ldots,g_m\in\C_b(\H)$, we find by using the Markov property of the solution $\Y_n(\cdot)$ and by the property of conditional expectation
\begin{align}\label{LDPsemg1}
	\E\big[e^{-n(g_1(\Y_n(t_1))+\ldots+g_m(\Y_n(t_m)))}\big]&=
	\E\big[\E\big[e^{-n(g_1(\Y_n(t_1))+\ldots+g_m(\Y_n(t_m)))}\big]\big|\mathscr{F}_{t_{m-1}}\big]
	\nonumber\\&=
	\E\big[\E\big[\underbrace{e^{-n(g_1(\Y_n(t_1))+\ldots+g_{m-1}(\Y_n(t_{m-1})))}}_{\text{measurable with respect to $\mathscr{F}_{t_{m-1}}$}} e^{-n g_m(\Y_n(t_m))}\big]
\big|\mathscr{F}_{t_{m-1}}\big]
	\nonumber\\&=
	\E\big[e^{-n(g_1(\Y_n(t_1))+\ldots+g_{m-1}(\Y_n(t_{m-1})))}
	\underbrace{[\E[e^{-n g_m(\Y_n(t_m))}]\big|\mathscr{F}_{t_{m-1}}]}_{\text{Markov property}}\big]
	\nonumber\\&=
	\E\big[e^{-n(g_1(\Y_n(t_1))+\ldots+g_{m-1}(\Y_n(t_{m-1})))}\underbrace{\E[e^{-n g_m(\Y_n(t_{m-1}))}]}_{:=e^{n\mathscr{V}_n(t_m-t_{m-1})
			g_m(\Y_n(t_{m-1}))}}\big]
	\nonumber\\&=
	\E\big[e^{-n(g_1(\Y_n(t_1))+\ldots+g_{m-1}(\Y_n(t_{m-1}))-\mathscr{V}_n(t_m-t_{m-1})g_m(\Y_n(t_{m-1})))}\big].
\end{align}
In a similar way, we calculate further
\begin{align}\label{LDPsemg2}
	&\E\big[e^{-n(g_1(\Y_n(t_1))+\ldots+g_{m-1}(\Y_n(t_{m-1}))-
		\mathscr{V}_n(t_m-t_{m-1})g_m(\Y_n(t_{m-1})))}\big]
	\nonumber\\&=
	\E\big[\E\big[e^{-n(g_1(\Y_n(t_1))+\ldots+g_{m-1}(\Y_n(t_{m-1}))-
		\mathscr{V}_n(t_m-t_{m-1})g_m(\Y_n(t_{m-1})))}\big]\big|
	\mathscr{F}_{t_{m-2}}\big]
	\nonumber\\&=
	\E\big[\E\big[\underbrace{e^{-n(g_1(\Y_n(t_1))+\ldots+g_{m-2}(\Y_n(t_{m-2})))}}_{\text{measurable with respect to $\mathscr{F}_{t_{m-2}}$}} e^{-n (g_{m-1}(\Y_n(t_{m-1}))-\mathscr{V}_n(t_m-t_{m-1})g_m(\Y_n(t_{m-1})))}\big]
	\big|\mathscr{F}_{t_{m-2}}\big]
	\nonumber\\&=
	\E\big[e^{-n(g_1(\Y_n(t_1))+\ldots+g_{m-2}(\Y_n(t_{m-2})))}
	\underbrace{[\E\big[e^{-n (g_{m-1}(\Y_n(t_{m-1}))-\mathscr{V}_n(t_m-t_{m-1})g_m(\Y_n(t_{m-1})))}\big]\big|\mathscr{F}_{t_{m-2}}]}_{\text{Markov property}}\big]
	\nonumber\\&=
	\E\big[e^{-n(g_1(\Y_n(t_1))+\ldots+g_{m-2}(\Y_n(t_{m-2})))}
	\underbrace{\E\big[e^{-n(g_{m-1}(\Y_n(t_{m-2}))- \mathscr{V}_n(t_m-t_{m-1})g_m(\Y_n(t_{m-2})))}\big]}_{:=e^{n\mathscr{V}_n(t_{m-1}-t_{m-2})(g_{m-1}-\mathscr{V}_n(t_m-t_{m-1}))(\Y_n(t_{m-2}))}}\big]
	\nonumber\\&=
	\E\big[e^{-n(g_1(\Y_n(t_1))+\ldots+g_{m-2}(\Y_n(t_{m-2}))-\mathscr{V}_n(t_{m-1}-t_{m-2})(g_{m-1}-\mathscr{V}_n(t_m-t_{m-1})g_m)(\Y_n(t_{m-2})))}\big].
\end{align}
Proceeding with the same methodology as in \eqref{LDPsemg1}–\eqref{LDPsemg2}, and continuing the iteration up to time $t_1$, we ultimately arrive at the following result:
\begin{align}\label{LDPsemg3}
	&\E\big[e^{-n(g_1(\Y_n(t_1))+\ldots+g_m(\Y_n(t_m)))}\big]
	\nonumber\\&=
	e^{n\mathscr{V}_n(t_1)(g_1-\mathscr{V}_n(t_2-t_1)
		(g_2-\ldots-\mathscr{V}_n(t_m-t_{m-1})g_m)\ldots)(\y)}.
\end{align}
\vskip 0.2cm 
\noindent
\textbf{Verifying condition \eqref{expLmt} of Proposition \ref{exptightD}.} 
From \eqref{LDPsemg3}, we write
\begin{align}\label{LDPsemg33}
	&\frac{1}{n}\log\E\big[e^{-n(g_1(\Y_n(t_1))+\ldots+g_m(\Y_n(t_m)))}\big]
	\nonumber\\&=
	\mathscr{V}_n(t_1)(g_1-\mathscr{V}_n(t_2-t_1)
	(g_2-\ldots-\mathscr{V}_n(t_m-t_{m-1})g_m)\ldots)(\y).
\end{align}
Therefore, existence of the limit \eqref{expLmt} is equivalent to the existence of the following limit:
\begin{align}\label{LDPsemg4}
	\lim\limits_{n\to+\infty}[\mathscr{V}_n(t_1)(g_1-\mathscr{V}_n(t_2-t_1)
	(g_2-\ldots-\mathscr{V}_n(t_m-t_{m-1})g_m)\ldots)(\y)].
\end{align}
We now consider each of the individual term appearing in \eqref{LDPsemg4}.  Notice that from  Proposition \ref{vfLDP3}, for $g_m\in\mathrm{Lip}_b(\H)$, the functions $\mathscr{V}_n(t_m-t_{m-1})g_m$ are uniformly bounded and weakly sequentially continuous on $\V$. Moreover, the limit
\begin{align*}
	\lim\limits_{n\to+\infty}\mathscr{V}_n(t_m-t_{m-1})g_m(\y)=
	\mathscr{V}(t_m-t_{m-1})g_m(\y),  \ \text{uniformly on bounded subsets of } \V,
\end{align*}
exists. Similarly, the functions $\mathscr{V}_n(t_{m-1}-t_{m-2})(g_{m-1}-\mathscr{V}_n(t_m-t_{m-1})g_m)$, for $g_m,g_{m-1}\in\mathrm{Lip}_b(\H)$, are also uniformly bounded and weakly sequentially continuous in $\V$. Further, in view of the above the limit, the following limit:
\begin{align*}
	&\lim\limits_{n\to+\infty}\mathscr{V}_n(t_{m-1}-t_{m-2})(g_{m-1}-\mathscr{V}_n(t_m-t_{m-1})g_m)(\y)
	\nonumber\\&=
	\mathscr{V}(t_{m-1}-t_{m-2})(g_{m-1}-\mathscr{V}(t_m-t_{m-1})g_m)(\y),
	\ \text{uniformly on bounded subsets of } \V,
\end{align*}
exists. Continuing like this we finally conclude that
\begin{align}\label{LDPsemg5}
	&\lim\limits_{n\to+\infty}[\mathscr{V}_n(t_1)(g_1-\mathscr{V}_n(t_2-t_1)
	(g_2-\ldots-\mathscr{V}_n(t_m-t_{m-1})g_m)\ldots)(\y)].
	\nonumber\\&=
	\lim\limits_{n\to+\infty}[\mathscr{V}(t_1)(g_1-\mathscr{V}(t_2-t_1)
	(g_2-\ldots-\mathscr{V}(t_m-t_{m-1})g_m)\ldots)(\y)],
\end{align}
uniformly on bounded subsets of $\V$. Thus, from \eqref{LDPsemg33} and \eqref{LDPsemg5}, it follows that the limit \eqref{expLmt} exists.

\subsection{Exponential tightness}\label{exptghtpath} We now need to show that the sequence $\{\Y_n(\cdot)\}_{n\geq1}$ is exponentially tight in the space $\D([0,+\infty);\H)$. To achieve this, we invoke Theorem \ref{LDPsemg6}, which says that the exponential tightness of the $\H-$valued process $\{\Y_n(\cdot)\}_{n\geq1}$ is equivalent to that of real valued process  $\{g(\Y_n(\cdot))\}_{n\geq1}$, where $g:\H\to\R$ is some real-valued function.

The following lemma (see \cite[Lemma 8.2]{AS2}) is useful to get the exponential tightness.
\begin{lemma}\label{mathscrA}
	Let $m\in\N$. Define a set
	\begin{align*}
		\mathscr{B}=\bigg\{\sum\limits_{i=1}^m g_i(\|\y-\y_i\|_{\H}): \ 
		&\y_i\in\V, \ g_i\in\C^2([0,+\infty)),\\& \ g_i'(0)=0, \ g_i, \ g_i', \ g_i'' \ \text{ are bounded} \bigg\}.
	\end{align*}
	Then, the following statements are true:
	\begin{itemize}
		\item $\mathscr{B}$ is closed under addition and isolates points in $\H$.
		\item If $g\in\mathscr{B},$ then for every $r>0$, there exists a constant $\mathpzc{C}=\mathpzc{C}(g,r)\geq0$ such that
		\begin{align}\label{gLDP}
			\sup\limits_{\|\y\|_{\V}\leq r} \|\mathcal{D}g(\y)\|_{\V}\leq\mathpzc{C}.
		\end{align}
	\end{itemize} 
\end{lemma}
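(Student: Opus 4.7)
The plan is to verify the three claims separately, starting from essentially standard constructions. The key observation for the whole argument is that the assumption $g_i'(0)=0$ together with the boundedness of $g_i''$ forces the ratio $g_i'(s)/s$ to remain bounded on $[0,+\infty)$, which is precisely what will make the Fr\'echet derivative $\mathcal{D}g$ land in $\V$ with a controlled norm despite $g$ being defined only through the $\H$-norm.

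Closure under addition is immediate: concatenating the indexing sets of two elements of $\mathscr{B}$ produces another element of $\mathscr{B}$ with $m_1+m_2$ summands. To show that $\mathscr{B}$ isolates points, fix $\y\in\H$, $\eps>0$, and a compact set $\mathcal{K}\subset\H$. Using the density of $\V$ in $\H$, first choose $\y_0\in\V$ with $\|\y-\y_0\|_{\H}<\eps/4$. Next construct a single profile $\psi\in\C^2([0,+\infty))$ that is identically zero on $[0,\eps/4]$ and identically one on $[\eps/2,+\infty)$, non-decreasing in between, so that $\psi'(0)=0$ and $\psi,\psi',\psi''$ are bounded. Setting $g(\z):=-M\,\psi(\|\z-\y_0\|_{\H})$ for $M>1/\eps$ produces an element of $\mathscr{B}$ with $m=1$ and $g_1=-M\psi$. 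Then $g(\y)=0$ because $\|\y-\y_0\|_{\H}\leq\eps/4$; $g\leq 0$ everywhere; and for $\z\in\mathcal{K}\cap B_\eps^c(\y)$ the triangle inequality gives $\|\z-\y_0\|_{\H}>3\eps/4>\eps/2$, so $g(\z)=-M<-1/\eps$. This is exactly the three conditions of Definition \ref{isopoi}.

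For the derivative bound, I would use that for $g(\y)=\sum_{i=1}^m g_i(\|\y-\y_i\|_{\H})$ the Fr\'echet derivative in $\H$ is
\begin{equation*}
\mathcal{D}g(\y)=\sum_{i=1}^m \frac{g_i'(\|\y-\y_i\|_{\H})}{\|\y-\y_i\|_{\H}}\,(\y-\y_i),
\end{equation*}
with the usual convention at $\y=\y_i$, which is harmless since $|g_i'(s)|\leq\|g_i''\|_\infty\, s$ by the mean value theorem combined with $g_i'(0)=0$. This same inequality shows that the scalar factor $|g_i'(\|\y-\y_i\|_{\H})|/\|\y-\y_i\|_{\H}$ is uniformly bounded by $\|g_i''\|_\infty$, regardless of $\y$. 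Because each $\y_i\in\V$, the vector $\y-\y_i$ belongs to $\V$ whenever $\y\in\V$, and taking $\V$-norms yields
\begin{equation*}
\|\mathcal{D}g(\y)\|_{\V}\leq \sum_{i=1}^m \|g_i''\|_\infty\,\|\y-\y_i\|_{\V}\leq \sum_{i=1}^m \|g_i''\|_\infty\,(r+\|\y_i\|_{\V})=:\mathpzc{C}(g,r),
\end{equation*}
for every $\y$ with $\|\y\|_{\V}\leq r$. This is exactly \eqref{gLDP}. I do not expect a serious obstacle here; the only subtle point worth emphasizing in the write-up is the role of $g_i'(0)=0$ in making the singular factor $1/\|\y-\y_i\|_{\H}$ admissible, which is what promotes the naive $\H$-gradient into a $\V$-bounded object.
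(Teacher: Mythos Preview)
Your proposal is correct and follows the same approach the paper sketches: the paper's own proof is a single sentence pointing to Definition \ref{isopoi} and the density of $\V$ in $\H$, and you carry out exactly that construction explicitly (plus the derivative bound, which the paper does not spell out at all). Your write-up is more complete than the paper's and the argument is sound; the role you highlight for $g_i'(0)=0$ in controlling $g_i'(s)/s$ via the mean value theorem is precisely the mechanism that makes \eqref{gLDP} work.
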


\begin{proof}
	Using  Definition \ref{isopoi} of a family of isolates points and the fact that $\V$ is dense in $\H$, one can prove that $\mathscr{B}$ isolates points in $\H$.
\end{proof}
\vskip 0.2cm 
\noindent
\textbf{Verifying exponential tightness.} 
Let us now conclude that the process $\{\Y_n(\cdot)\}_{n\in\N}$ is exponentially tight in 
$\D([0,+\infty);\H)$. For this, it is suffices to verify all the conditions stated in Theorem \ref{LDPsemg6}.

\begin{theorem}\label{exptightLDP6}
	The sequence $\{\Y_n(\cdot)\}_{n\in\N}$, where $\Y_n(\cdot)$ is a solution to the SCBF system \eqref{stap} with $\Y_n(0)=\y$, is exponentially tight in
	$\D([0,+\infty);\H)$.
\end{theorem}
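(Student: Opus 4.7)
The plan is to apply the Feng--Kurtz criterion (Theorem \ref{LDPsemg6}), which reduces the desired exponential tightness of $\{\Y_n(\cdot)\}_{n\in\N}$ in $\D([0,+\infty);\H)$ to verifying two conditions: \emph{exponential compact containment} and \emph{weak exponential tightness} with respect to a family $\mathscr{A}\subset\C(\H)$ that is closed under addition and isolates points in $\H$.

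For the compact containment, fix $T,M>0$. The exponential moment bound of Proposition \ref{propexpest} together with the monotonicity of the exponential and Markov's inequality yields, for any $R>0$,
\begin{align*}
\P\Big(\sup_{0\leq s\leq T}\|\Y_n(s)\|_{\V}>R\Big)
&=\P\Big(\sup_{0\leq s\leq T}e^{n\mathpzc{c}_1\|\Y_n(s)\|_{\V}^2}>e^{n\mathpzc{c}_1 R^2}\Big)\leq\mathpzc{c}_2 e^{-n(\mathpzc{c}_1 R^2-\mathpzc{c}_3)}.
\end{align*}
Choosing $R=R(M)$ large enough so that $\mathpzc{c}_1 R^2-\mathpzc{c}_3\geq M$, and defining $\mathcal{K}_{M,T}:=\{\y\in\H:\|\y\|_{\V}\leq R\}$, which is compact in $\H$ by the compact embedding $\V\hookrightarrow\H$, we obtain the required exponential compact containment.

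For the weak exponential tightness we take $\mathscr{A}=\mathscr{B}$ as in Lemma \ref{mathscrA}; this family is closed under addition and isolates points in $\H$. It remains to show that for every $g\in\mathscr{B}$ the real-valued process $\{g(\Y_n(\cdot))\}_{n\geq1}$ is exponentially tight in $\D([0,+\infty);\R)$. Boundedness of $g$ makes compact containment trivial, so we need an exponential Aldous--Puhalskii modulus-of-continuity condition: for every $T,\eta>0$ and every family of $\mathscr{F}_t$-stopping times $\tau_n\leq T$,
\begin{align*}
\lim_{\delta\downarrow 0}\limsup_{n\to+\infty}\frac{1}{n}\log\sup_{\tau_n\leq T}\P\big(|g(\Y_n(\tau_n+\delta))-g(\Y_n(\tau_n))|>\eta\big)=-\infty.
\end{align*}
To verify this, apply It\^o's formula to $g(\Y_n(\cdot))$ to write
\begin{align*}
g(\Y_n(t))-g(\Y_n(s))
&=\int_s^t\Big\langle{-}\mu\A\Y_n(r)-\mathcal{B}(\Y_n(r))-\alpha\Y_n(r)-\beta\mathcal{C}(\Y_n(r))+\f(r),\D g(\Y_n(r))\Big\rangle\d r\\
&\quad+\frac{1}{2n}\int_s^t\Tr\big(\Q\D^2 g(\Y_n(r))\big)\d r+\frac{1}{\sqrt{n}}\int_s^t\big(\D g(\Y_n(r)),\Q^{\frac12}\d\mathbf{W}(r)\big).
\end{align*}
Introduce the localizing stopping time $\uptheta_n^R:=\inf\{t\geq 0:\|\Y_n(t)\|_{\V}>R\}$. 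On $\{\uptheta_n^R>T\}$, estimate \eqref{gLDP} from Lemma \ref{mathscrA} bounds $\|\D g(\Y_n)\|_{\V}$ and $\|\D^2 g(\Y_n)\|$ uniformly by a constant $\mathpzc{C}_R$, so that the drift is pathwise dominated by $\mathpzc{C}_R'\delta$ and the quadratic variation of the martingale part is at most $\mathpzc{C}_R''\delta/n$; meanwhile Proposition \ref{propexpest} gives $\P(\uptheta_n^R\leq T)\leq \mathpzc{c}_2 e^{-n(\mathpzc{c}_1 R^2-\mathpzc{c}_3)}$. An exponential martingale inequality (the supermartingale property of $\exp\{\lambda M_n(t)-\tfrac{\lambda^2}{2}\langle M_n\rangle_t\}$ applied on the good set) then yields
\begin{align*}
\P\Big(\sup_{s\in[\tau_n,\tau_n+\delta]}|M_n(s)-M_n(\tau_n)|>\eta/2,\ \uptheta_n^R>T\Big)\leq 2e^{-n\eta^2/(8\mathpzc{C}_R''\delta)},
\end{align*}
and combining with the deterministic drift bound and $\P(\uptheta_n^R\leq T)$ gives the desired $\frac{1}{n}\log$-decay after letting $n\to\infty$ and then $\delta\to 0$.

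The main obstacle is the quantitative exponential oscillation control of the stochastic integral part of the decomposition, since the drift is pathwise tame on the $\V$-localized set. This is handled by the exponential supermartingale estimate described above, which requires only that the integrand $\Q^{\frac12}\D g(\Y_n)$ be uniformly bounded on $\{\uptheta_n^R>T\}$, a fact supplied by \eqref{gLDP}, Hypothesis \ref{trQ1}, and the exponential localization coming from Proposition \ref{propexpest}. Assembling the two verified conditions and applying Theorem \ref{LDPsemg6} completes the proof.
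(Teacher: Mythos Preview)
Your proposal is correct and shares the same overall architecture as the paper: both verify Theorem \ref{LDPsemg6} with $\mathscr{A}=\mathscr{B}$ from Lemma \ref{mathscrA}, handle exponential compact containment via Markov's inequality and the exponential moment bound \eqref{expmoest}, apply It\^o's formula to $g(\Y_n)$, and localize using the $\V$-ball event $\{\sup_{s\le T}\|\Y_n(s)\|_{\V}\le R\}$ together with Proposition \ref{propexpest}.

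The genuine difference lies in how you establish exponential tightness of the scalar processes $\{g(\Y_n)\}$. The paper invokes the conditional Laplace-transform criterion \cite[Theorem 4.1]{JFTGK}: it shows that $\E\big[e^{n\lambda(g(\Y_n(t+s))-g(\Y_n(t)))}\mid\mathscr{F}_t\big]\le \E\big[e^{\mathfrak{q}_n(s,\lambda,T)}\mid\mathscr{F}_t\big]$ for an explicit $\mathfrak{q}_n$, the key device being to complete the stochastic integral in the It\^o expansion to a Dol\'eans-Dade exponential martingale with conditional expectation $1$ (cf. \eqref{conLDP}--\eqref{conLDP2}). You instead use the exponential Aldous--Puhalskii stopping-time criterion and control the martingale increment directly via the exponential supermartingale inequality $\P(\sup|M_n|>\eta/2)\le 2e^{-n\eta^2/(8\mathpzc{C}_R''\delta)}$, while the drift is pathwise $O(\delta)$ on the localized set. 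Your route is slightly more elementary (it avoids the $\mathfrak{q}_n$ machinery and the Dol\'eans-Dade identity) and makes the exponential smallness of oscillations transparent; the paper's route integrates more seamlessly with the Feng--Kurtz framework it cites and yields a single explicit comparison random variable. One minor point worth spelling out in your write-up is the order of limits: for a target level $L$ you first fix $R$ so that $\mathpzc{c}_1 R^2-\mathpzc{c}_3\ge L$, and only then send $\delta\downarrow 0$ (so that both $\mathpzc{C}_R'\delta<\eta/2$ and $\eta^2/(8\mathpzc{C}_R''\delta)\ge L$), since $\mathpzc{C}_R',\mathpzc{C}_R''$ depend on $R$; this is implicit in your last paragraph but should be made explicit. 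The drift bounds you need on the localized set are exactly \eqref{bLDP}--\eqref{cLDP} in the paper.
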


\begin{proof}
	The proof is divided into the following number of steps:
	\vskip 0.2 cm
	\noindent
	\textbf{Step-I:}\emph{ $\{\Y_n(\cdot)\}_{n\geq1}$ satisfies the exponential compact containment condition.}
	Let us take $\mathcal{K}_{M,T}=\{\y\in\V: \|\y\|_{\V}\leq r\}$ for sufficiently big $r>0$, will be specified later. Note that the set $\mathcal{K}_{M,T}$ is compact in $\H$. 
	By using  Markov's inequality and \eqref{expmoest}, we calculate 
	\begin{align*}
		&\P\big(\{\text{there exists } \ 0\leq t\leq T \ \text{ such that } \ \Y_n(t)\notin\mathcal{K}_{M,T}\}\big)
		\nonumber\\&=
		\P\big(\{\omega\in\Omega: \|\Y_n(t)\|_{\V}>r\}\big)
		\nonumber\\&=
		\P\bigg(\{\omega\in\Omega: \sup\limits_{s\in[t,T]} e^{n\mathpzc{c}_1\|\Y_n(t)\|_{\V}^2}>e^{n\mathpzc{c}_1r^2}\}\bigg)
		\nonumber\\&\leq
		\frac{1}{e^{n\mathpzc{c}_1r^2}}
		\E\left[\sup\limits_{s\in[t,T]}e^{n\mathpzc{c}_1\|\Y_n(s)\|_{\V}^2}
		\right]
		\nonumber\\&\leq e^{n(\mathpzc{c}_2-\mathpzc{c}_1r^2)}.
	\end{align*}
	Consequently, we find
	\begin{align*}
		\frac{1}{n}\log\P\big(\{\text{there exists } \ 0\leq t\leq T \ \text{ such that } \ X_n(t)\notin\mathcal{K}_{M,T}\}\big)\leq\mathpzc{c}_2-\mathpzc{c}_1r^2
		\leq-M,
	\end{align*}
	provided $r\geq r_0:=\sqrt{\frac{\mathpzc{c}_2+M}{\mathpzc{c}_1}}$.     
	This shows that the $\H-$valued process $\{\Y_n(\cdot)\}_{n\geq1}$ satisfies the required  condition.
	\vskip 0.2 cm
	\noindent
	\textbf{Step-II:}\emph{ $\{\Y_n(\cdot)\}_{n\geq1}$ is weakly exponentially tight.}
	Let $\mathscr{B}$ be the family of functions which satisfies the requirement given in Lemma \ref{mathscrA}. To prove the weak exponential tightness of the solution $\Y_n(\cdot)$, we need to show that for every $g\in\mathscr{A}$, the sequence $\{g(\Y_n)\}_{n\in\N}$ is exponentially tight in $\D([0,+\infty);\R)$. As mentioned in \cite[Theorem 4.1, Chapter 4]{JFTGK}, it suffices to prove the following:
	
	 For $s>0$ and $\lambda\in\R$, there exist random variables $\mathfrak{q}_n(s,\lambda,T)$, non decreasing in $s$, such that for $0\leq t\leq t+s\leq T$, the inequality
		\begin{align}\label{exptght1}
			\E\big[e^{n\lambda(g(\Y_n(t+s))-g(\Y_n(t)))}\big|\mathscr{F}_t\big]
			\leq\E\big[e^{\mathfrak{q}_n(s,\lambda,T)}\big|\mathscr{F}_t\big]
		\end{align}
		holds, and in addition
		\begin{align}\label{exptght2}
			\lim\limits_{s\to0}\limsup\limits_{n\to+\infty} \frac{1}{n}
			\log\E\big[e^{\mathfrak{q}_n(s,\lambda,T)}\big]=0.
	\end{align}
	Let $r>0$ be sufficiently large such that if 
	\begin{align*}
		\mathpzc{S}_3^n:=
		\{\omega\in\Omega: \ \text{ there exists } \ 0\leq t\leq T \ \text{ such that } 
		\ \|\Y_n(t)\|_{\V}>r\},
	\end{align*}
	then we obtain the following bound:
	\begin{align}\label{exptght3}
		\E[\mathds{1}_{\mathpzc{S}_3^n}]=\P(\mathpzc{S}_3^n)\leq e^{-2n|\lambda|\|g\|_{\infty}}.
	\end{align}
	We denote $\mathpzc{S}_4^n:=\Omega\setminus\mathpzc{S}_3^n$. We write the left hand side of \eqref{exptght1} as follows:
	\begin{align}\label{exptght4}
		&\E\big[e^{n\lambda(g(\Y_n(t+s))-g(\Y_n(t)))}|\mathscr{F}_t\big]
		\nonumber\\&=
		\E\big[e^{n\lambda(g(\Y_n(t+s))-g(\Y_n(t)))}\mathds{1}_{\mathpzc{S}_3^n}|
		\mathscr{F}_t\big]+
		\E\big[e^{n\lambda(g(\Y_n(t+s))-g(\Y_n(t)))}\mathds{1}_{\mathpzc{S}_4^n}|
		\mathscr{F}_t\big]
		\nonumber\\&\leq
		\E\big[e^{2n\lambda\|g\|_{\infty}}\mathds{1}_{\mathpzc{S}_3^n}|
		\mathscr{F}_t\big]+\E\big[e^{n\lambda(g(\Y_n(t+s))-g(\Y_n(t)))}
		\mathds{1}_{\mathpzc{S}_4^n}|\mathscr{F}_t\big].
	\end{align}
		On applying It\^o's formula to the function 
	$g(\cdot)$ on $[t,t+s]$ for $t\in[0,T]$, and to the process $\Y_n(\cdot)$, we get
	\begin{align}\label{conLDP}
		&\E\big[\exp\big(n\lambda(g(\Y_n(t+s))-g(\Y_n(t)))\big)
		\mathds{1}_{\mathpzc{S}_4^n}|\mathscr{F}_t\big]
		\nonumber\\&=
		\E\bigg[\exp\bigg(n\lambda\int_t^{t+s}
		\big(-\mu\A\Y_n(\tau)-\mathcal{B}(\Y_n(\tau))-\alpha\Y_n(\tau)-
		\beta\mathcal{C}(\Y_n(\tau))+
		\f(s),\D g(\Y_n(\tau))\big)\d \tau
		\nonumber\\&\quad+
		\frac{n\lambda}{\sqrt{n}}\int_t^{t+s}
		\big(\Q^{\frac12}\d\mathbf{W}(s),\D g(\Y_n(\tau))\big)+
		\frac{n\lambda}{2n}\int_t^{t+s}
		\Tr(\Q\D^2g(\Y_n(\tau)))\d\tau\bigg)\mathds{1}_{\mathpzc{S}_4^n}
		\bigg|\mathscr{F}_t\bigg]
		\nonumber\\&=
		\E\bigg[\exp\bigg(n\lambda\int_t^{t+s}
		\big(-\mu\A\Y_n(\tau)-\mathcal{B}(\Y_n(\tau))-\alpha\Y_n(\tau)-
		\beta\mathcal{C}(\Y_n(\tau))+
		\f(s),\D g(\Y_n(\tau))\big)\d \tau
		\nonumber\\&\quad+
		\int_t^{t+s}\bigg(\frac{n\lambda}{2n}
		\Tr(\Q\D^2g(\Y_n(\tau)))+\frac{n\lambda^2}{2}\|\Q^{\frac12}
		\D g(\Y_n(\tau))\|_{\H}^2-\frac{n\lambda^2}{2}\|\Q^{\frac12}
		\D g(\Y_n(\tau))\|_{\H}^2\bigg)\d\tau
		\nonumber\\&\quad+
		\frac{n\lambda}{\sqrt{n}}\int_t^{t+s}
		\big(\Q^{\frac12}\d\mathbf{W}(s),\D g(\Y_n(\tau))\big)\bigg)\mathds{1}_{\mathpzc{S}_4^n}
		\bigg|\mathscr{F}_t\bigg]
		\nonumber\\&\leq
		\E\bigg[\exp\bigg(n\lambda\int_t^{t+s}
		\big(-\mu\A\Y_n(\tau)-\mathcal{B}(\Y_n(\tau))-\alpha\Y_n(\tau)-
		\beta\mathcal{C}(\Y_n(\tau))+
		\f(s),\D g(\Y_n(\tau))\big)\d \tau
		\nonumber\\&\quad+
		\int_t^{t+s}\bigg(\frac{\lambda}{2}
		\Tr(\Q\D^2g(\Y_n(\tau)))+\frac{n\lambda^2}{2}\|\Q^{\frac12}
		\D g(\Y_n(\tau))\|_{\H}^2\bigg)\d\tau\bigg)
		\nonumber\\&\quad\times
		\exp\bigg(-\frac{n\lambda^2}{2}\int_t^{t+s}\|\Q^{\frac12}
		\D g(\Y_n(\tau))\|_{\H}^2\d\tau+\sqrt{n}\lambda\int_t^{t+s}
		\big(\Q^{\frac12}\d\mathbf{W}(s),\D g(\Y_n(\tau))\big)\bigg)\mathds{1}_{\mathpzc{S}_4^n}
		\bigg|\mathscr{F}_t\bigg].
	\end{align}
	By using H\"older's inequality and Sobolev embedding $\V\hookrightarrow\wi\L^4$, we estimate 
	\begin{align}\label{bLDP}
		|(\mathcal{B}(\Y_n),\D g(\Y_n))|=|b(\Y_n,\D g(\Y_n), \Y_n)| \leq\|\Y_n\|_{\wi\L^4}^2\|\D g(\Y_n)\|_{\V}
		\leq\mathpzc{C}\|\Y_n\|_{\V}^2\|\D g(\Y_n)\|_{\V}.
	\end{align}  
	Since, the nonlinear operator $\mathcal{C}(\cdot)$ is well-defined from $\wi\L^{r+1}$ to $\wi\L^{\frac{r+1}{r}}$ and in view of the Sobolev embedding $\V\hookrightarrow\wi\L^{r+1}$, for $r\in[1,5]$, we have $\D g(\Y_n)\in\wi\L^{r+1}$. Therefore, we find
	\begin{align}\label{cLDP}
		|(\mathcal{C}(\Y_n),\D g(\Y_n))|&\leq\|\mathcal{C}(\Y_n)\|_{\wi\L^{\frac{r+1}{r}}}
		\|\D g(\Y_n)\|_{\L^{r+1}}=\|\Y_n\|_{\L^{r+1}}\|\D g(\Y_n)\|_{\L^{r+1}}
		\nonumber\\&\leq\mathpzc{C}\|\Y_n\|_{\V}^{r}\|\D g(\Y_n)\|_{\V}.
	\end{align}
	From Hypothesis \ref{fhyp}, \eqref{gLDP} and \eqref{bLDP}-\eqref{cLDP}, and the fact that $g\in\mathscr{A}$, we conclude from \eqref{conLDP} that
	\begin{align}\label{conLDP1}
		&\E[\exp\big(n\lambda(g(\Y_n(t+s))-g(\Y_n(t)))\big)
		\mathds{1}_{\mathpzc{S}_4^n}|\mathscr{F}_t]
		\nonumber\\&\leq
		\E\bigg[\exp\bigg(n\lambda\int_t^{t+s}
		\big(-\mu\A\Y_n(\tau)-\mathcal{B}(\Y_n(\tau))-\alpha\Y_n(\tau)-
		\beta\mathcal{C}(\Y_n(\tau))+
		\f(s),\D g(\Y_n(\tau))\big)\d \tau
		\nonumber\\&\quad+
		\int_t^{t+s}\bigg(\frac{\lambda}{2}
		\Tr(\Q\D^2g(\Y_n(\tau)))+\frac{n\lambda^2}{2}\|\Q^{\frac12}
		\D g(\Y_n(\tau))\|_{\H}^2\bigg)\d\tau\bigg)\mathcal{M}(t,t+s) \mathds{1}_{\mathpzc{S}_4^n}
		\bigg|\mathscr{F}_t\bigg]
		\nonumber\\&\leq
		\E\bigg[e^{n\wi{\mathpzc{C}}(\lambda,r,g)s}\mathcal{M}(t,t+s)
		\mathds{1}_{\mathpzc{S}_4^n}
		\bigg|\mathscr{F}_t\bigg]
		\nonumber\\&=e^{n\wi{\mathpzc{C}}(\lambda,r,g)s}
		\E\bigg[\mathcal{M}(t,t+s)\bigg|\mathscr{F}_t\bigg],
	\end{align}
	where $\wi{\mathpzc{C}}(\lambda,r,g)$ is some positive constant
	and 
	\begin{align*}
		\mathcal{M}(t,u)=\exp\bigg(\sqrt{n}\lambda\int_t^{u}
		\big(\Q^{\frac12}\d\mathbf{W}(s),
		\D g(\Y_n(\tau))\big)-\frac{n\lambda^2}{2}\int_t^{u}\|\Q^{\frac12}
		\D g(\Y_n(\tau))\|_{\H}^2\d\tau\bigg)
	\end{align*}
	is the  Dol\'eans-Dade exponential of the martingale $\sqrt{n}\lambda\int_t^{u}
	\big(\Q^{\frac12}\d\mathbf{W}(s),
	\D g(\Y_n(\tau))\big)$. It is worth mentioning that $\mathcal{M}(t,u)$ is a solution to the following stochastic differential equation 
		\begin{align*}
			\d\mathcal{M}(t,u)=\mathcal{M}(t,u)\left(\sqrt{n}\lambda\int_t^{u}
			\big(\Q^{\frac12}\d\mathbf{W}(s),
			\D g(\Y_n(\tau))\big)\right) \ \text{ with } \ 	\mathcal{M}(t,t)=1.
		\end{align*}
		Moreover, by the properties of  Dol\'eans-Dade exponential (see \cite[Chapter 8]{FCK}), $\mathcal{M}(t,u)$ is a martingale with respect to the filtration $\mathscr{F}_u$. Therefore, we have $\E[\mathcal{M}(t,t+s)|\mathscr{F}_t]=1$ and \eqref{conLDP1} yields
		\begin{align}\label{conLDP2}
			\E\big[\exp\big(n\lambda(g(\Y_n(t+s))-g(\Y_n(t)))\big)
			\mathds{1}_{\mathpzc{S}_4^n}|\mathscr{F}_t\big]
			\leq e^{n\wi{\mathpzc{C}}(\lambda,r,g)s}.
	\end{align}
	Thus \eqref{exptght4}, together with \eqref{conLDP2}, gives
	\begin{align}\label{exptght5}
		\E\big[e^{n\lambda(g(\Y_n(t+s))-g(\Y_n(t)))}\big|\mathscr{F}_t\big]
		\leq
		\E\big[e^{n\wi{\mathpzc{C}}(\lambda,r,g)s}+e^{2n\lambda\|g\|_{\infty}}
		\mathds{1}_{\mathpzc{S}_3^n}\big|
		\mathscr{F}_t\big].
	\end{align}
	Let us now take $\mathfrak{q}_n(s,\lambda,T)=\log\big(
	e^{n\wi{\mathpzc{C}}(\lambda,r,g)s}+e^{2n\lambda\|g\|_{\infty}}
	\mathds{1}_{\mathpzc{S}_3^n}\big)$. It is clear that $\mathfrak{q}_n(s,\lambda,T)$ is non decreasing in $s$ for all $\lambda\in\R$ and therefore condition \eqref{exptght1} is fulfilled. Moreover, from \eqref{exptght3} and Lemma \ref{LOG}, we find
	\begin{align*}
		\frac{1}{n}\log\E[e^{\mathfrak{q}_n(s,\lambda,T)}]&=
		\frac{1}{n}\log\E\big[e^{n\wi{\mathpzc{C}}(\lambda,r,g)s} +e^{2n\lambda\|g\|_{\infty}}
		\mathds{1}_{\mathpzc{S}_3^n}\big]
		\nonumber\\&\leq
		\frac{1}{n}\log(e^{n\wi{\mathpzc{C}}(\lambda,r,g)s} +1)
		\nonumber\\&=
		\wi{\mathpzc{C}}(\lambda,r,g)s+
		\frac{1}{n}\log(1+e^{-n\wi{\mathpzc{C}}(\lambda,r,g)s})
		\nonumber\\&=
		\wi{\mathpzc{C}}(\lambda,r,g)s+
		\frac{1}{n}e^{-n\wi{\mathpzc{C}}(\lambda,r,g)s},
	\end{align*}
	and hence condition \eqref{exptght2} is also fulfilled. Thus, we finally conclude that $\{g(\Y_n)\}_{n\in\N}$ is exponentially tight in  $\D([0,+\infty);\R)$ for every $g\in\mathscr{B
	}$ and this completes the proof.
\end{proof}  
We have shown in Theorems \ref{LDPsemg6} and \ref{exptightLDP6} that the sequence of stochastic process $\{\Y_n(\cdot)\}_{n\geq1}$ is exponentially tight in $\D([0,+\infty);\H)$. Moreover, the existence of Laplace limit is immediate from \eqref{LDPsemg33} and \eqref{LDPsemg5}. Therefore, in view of Proposition \ref{exptightD}, it follows that the sequence of stochastic processes $\{\Y_n(\cdot)\}_{n\geq1}$ satisfies LDP in the space $\D([0,+\infty);\H)$. 

\subsection{Large deviation principle in the continuous space} As mentioned in the introduction, we now conclude the main objective of this work, that is, establishing the LDP in the space $\C([0,+\infty);\H)$. 
\begin{theorem}(LDP in $\C([0,+\infty);\H)$)\label{mainthm}
	The sequence of stochastic process $\{\Y_n(\cdot)\}_{n\geq1}$, where $\Y_n(\cdot)=\Y_n(\cdot;0,\y)$ is a solution to the SCBF system \eqref{stap} with $\Y_n(0)=\y$, satisfies the large deviation principle in $\C([0,+\infty);\H)$.
\end{theorem}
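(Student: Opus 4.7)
The plan is to combine the path-space large deviation principle in $\D([0,+\infty);\H)$, which has already been assembled in Subsections \ref{LapLmuL}--\ref{exptghtpath}, with the $\C$-exponential tightness of $\{\Y_n(\cdot)\}_{n\geq 1}$, and then invoke Theorem \ref{refthm} to transfer the LDP to the compact uniform topology. I will restrict the discussion to an arbitrary fixed horizon $[0,T]$; the statement on $[0,+\infty)$ then follows by the usual projective limit argument since convergence in $\C([0,+\infty);\H)$ is convergence on every compact subinterval.

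First, I would verify that all the hypotheses of Proposition \ref{exptightD} are satisfied. Exponential tightness of $\{\Y_n(\cdot)\}_{n\geq 1}$ in $\D([0,T];\H)$ is exactly the content of Theorem \ref{exptightLDP6}, whose proof uses the exponential compact containment established through the exponential moment bound of Proposition \ref{propexpest} together with the weak exponential tightness supplied by the family $\mathscr{B}$ in Lemma \ref{mathscrA}. The existence of a Laplace limit at multiple times for test functions in $\mathscr{B}$ has been reduced via the Markov identity \eqref{LDPsemg33} to the existence of the iterated limit in \eqref{LDPsemg4}, which in turn follows from Proposition \ref{vfLDP3}(b)--(c) by a nested induction, since each inner function $g_j-\mathscr{V}_n(t_{j+1}-t_j)(g_{j+1}-\ldots)$ remains uniformly bounded and weakly sequentially continuous on $\V$, and the limits pass uniformly on bounded subsets of $\V$. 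Consequently, Proposition \ref{exptightD} applies and yields the LDP in $\D([0,T];\H)$ with the good rate function $I$ in \eqref{ratefun}.

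Next, I would verify the $\C$-exponential tightness condition in Definition \ref{cextght}. This is the crucial, yet remarkably painless, step. By Proposition \ref{weLLp}, each solution $\Y_n(\cdot)$ of \eqref{stap} admits a modification with paths in $\C([0,T];\H)$, $\P$-almost surely. Consequently, for every $\omega$ outside a $\P$-null set, the map $s\mapsto\Y_n(s,\omega)$ is continuous from $[0,T]$ into $(\H,\|\cdot\|_\H)$, so $\Y_n(s,\omega)=\Y_n(s-,\omega)$ for every $s\in(0,T]$ and hence
\begin{equation*}
\sup_{s\leq T}\mathpzc{d}(\Y_n(s),\Y_n(s-))=0,\quad\P\text{-a.s.}
\end{equation*}
Therefore, for every $\eta>0$,
\begin{equation*}
\P\Big(\sup_{s\leq T}\mathpzc{d}(\Y_n(s),\Y_n(s-))\geq\eta\Big)=0,
\end{equation*}
which makes the logarithm equal to $-\infty$, and \eqref{cexptght} is trivially fulfilled uniformly in $n$.

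Finally, I would apply Theorem \ref{refthm}. Letting $\P_n$ denote the law of $\Y_n(\cdot)$ on $\D([0,T];\H)$, both hypotheses of that theorem are in force: $\{\P_n\}_{n\geq 1}$ is $\C$-exponentially tight (Step 2) and satisfies the LDP in the Skorohod topology with the good rate function $I$ of \eqref{ratefun} (Step 1). Hence the same family satisfies the LDP with the same rate function $I$ in the compact uniform topology. Since the continuous paths of $\Y_n(\cdot)$ make the laws $\P_n$ concentrated on $\C([0,T];\H)$, and since a good rate function $I$ with $I(x)<+\infty$ forces $x$ to be continuous (otherwise $x$ could be approximated by Skorohod neighbourhoods disjoint from $\C([0,T];\H)$ along which the limit \eqref{expLmt} would be attained by continuous paths), the resulting principle holds on $\C([0,T];\H)$ with the advertised upper and lower bounds, completing the proof. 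The step I expect to require the most care is merely the bookkeeping in this last paragraph: ensuring that the rate function inherited from Proposition \ref{exptightD} is automatically supported on $\C([0,T];\H)$ so that the restriction from $\D$ to $\C$ preserves both open and closed sets for the variational bounds.
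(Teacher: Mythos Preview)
Your proposal is correct and follows essentially the same route as the paper's proof: both establish the LDP in $\D([0,T];\H)$ via Proposition \ref{exptightD} and Theorem \ref{exptightLDP6}, verify $\C$-exponential tightness trivially from the $\P$-a.s.\ continuity of paths in Proposition \ref{weLLp}, and then invoke Theorem \ref{refthm}. Your write-up is simply more expansive in recapping the Laplace-limit machinery already assembled in Subsections \ref{LapLmuL}--\ref{exptghtpath}, and adds a remark on the rate function being supported on continuous paths, which the paper leaves implicit.
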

\begin{proof}
	It is important to note that the process $\Y_n(\cdot)$ has $\P-$a.s. continuous paths in $\H$ (see Proposition \ref{weLLp}). It implies that 
	$\sup\limits_{s\in[t,T]}\|\Y_n(s)-\Y_n(s-)\|_{\H}=0$, $\P-$a.s. Consequently, for any $\eta>0$, we find that 
	$\P\left(\sup\limits_{s\leq T}\|\Y_n(s)-\Y_n(s-)\|_{\H}\geq\eta\right)=0$. 
	Therefore, the condition \eqref{cexptght} of Definition \ref{cextght} holds immediately, that is, the process $\Y_n(\cdot)$ is $\C-$exponentially tight in 
	$\D([0,+\infty);\H)$. Therefore, all the assumptions of Theorem \ref{refthm} are fulfilled, and it follows that the sequence $\{\Y_n(\cdot)\}_{n\geq1}$ satisfies the large deviation principle in $\C([0,+\infty);\H)$. Furthermore, the rate function admits the explicit representation given in \eqref{ratefun} (see Proposition \ref{exptightD}). This concludes the proof.
\end{proof}

	\begin{appendix}\renewcommand{\thesection}{\Alph{section}}
	\numberwithin{equation}{section}
	\section{A useful result}\label{useca}
The following inequalities are classical. For completeness, we record their proofs here.
	 \begin{lemma}\label{LOG}
	 	\begin{enumerate}
	 	\item 	For any $x\geq0$, we have 
	 		\begin{align*}
	 			 \log(1+x)\leq x.
	 		\end{align*}
	 		\item For $0<x\leq\frac{1}{2}$, we infer 
	 		\begin{align*}
	 			-2x\leq\log(1-x).
	 		\end{align*}
	 			\end{enumerate}
	\end{lemma}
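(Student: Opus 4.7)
Both inequalities are standard and can be established by the same elementary device: reduce each to showing that a suitable auxiliary function is nonnegative on the relevant interval, and verify this via the sign of its derivative.

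For part $(1)$, the plan is to introduce $f(x):=x-\log(1+x)$ on $[0,+\infty)$, observe that $f(0)=0$, and compute
\begin{align*}
f'(x)=1-\frac{1}{1+x}=\frac{x}{1+x}\geq 0 \quad \text{for } x\geq 0.
\end{align*}
Hence $f$ is nondecreasing on $[0,+\infty)$, so $f(x)\geq f(0)=0$ for all $x\geq 0$, which is precisely $\log(1+x)\leq x$. An equally short alternative is to invoke concavity of $\log(1+\cdot)$ on $[0,+\infty)$ together with the tangent-line inequality at $0$, since the graph of a concave function lies below any of its tangent lines.

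For part $(2)$, the idea is analogous. Define $g(x):=\log(1-x)+2x$ on $[0,1/2]$, which is well defined since $1-x\geq 1/2>0$. Clearly $g(0)=0$, and
\begin{align*}
g'(x)=-\frac{1}{1-x}+2=\frac{1-2x}{1-x}\geq 0 \quad \text{for } 0\leq x\leq \tfrac{1}{2}.
\end{align*}
Therefore $g$ is nondecreasing on $[0,1/2]$, which gives $g(x)\geq g(0)=0$, i.e. $-2x\leq \log(1-x)$ on $(0,1/2]$.

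There is essentially no obstacle here: both arguments are one-line monotonicity checks once the auxiliary function is chosen. The only minor care needed is to ensure $1-x$ stays bounded away from $0$ in part $(2)$, which is guaranteed by the restriction $x\leq 1/2$ and is exactly what makes the constant $2$ on the left-hand side admissible; any constant strictly smaller than $-1/\log(1/2)$'s reciprocal regime would fail without such a cutoff.
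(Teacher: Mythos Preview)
Your proof is correct. For part~(1) your argument coincides exactly with the paper's, which also records that $x-\log(1+x)$ is nondecreasing on $[0,+\infty)$.

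For part~(2) the paper takes a slightly different route: it expands $\log(1-x)=-\sum_{k\geq 1} x^k/k$, bounds this below by the geometric series $-\sum_{k\geq 1} x^k=-x/(1-x)$, and then uses $1/(1-x)\leq 2$ for $x\leq 1/2$. Your derivative test on $g(x)=\log(1-x)+2x$ is more direct and avoids the series altogether; both arguments ultimately rest on the same elementary bound $1/(1-x)\leq 2$ on $[0,1/2]$, so neither offers a real advantage over the other. Your closing remark about the admissible constant is a bit garbled, but it is a side comment and does not affect the proof.
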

\begin{proof}
	Part $(1)$ is an immediate consequence of $x-\log(1+x)$ being a non-decreasing function for $x\geq0$. Moreover, since
	\begin{align*}
		\log(1-x)=-\sum_{k=1}^{\infty} \frac{x^k}{k}\geq
		-\sum_{k=1}^{\infty} x^k=\frac{-x}{1-x}\geq-2x,
	\end{align*}
	where we used the fact that $\frac{-1}{1-x}\geq-2$ for $x\leq\frac12$, therefore, part $(2)$ follows.
\end{proof}

	\end{appendix}
	\medskip
	\noindent
	\textbf{Acknowledgments:} 
	The first author expresses sincere gratitude to the Ministry of Education, Government of India (MHRD), for financial support. The research work of M. T. Mohan was funded by the National Board for Higher Mathematics (NBHM), Department of Atomic Energy, Government of India, under Project No. 02011/13/2025/NBHM(R.P)/R\&D II/1137.

\end{document}